\newcommand{\adjunction}{\@ifstar\named@adjunction\normal@adjunction}
\newcommand{\normal@adjunction}[4]{%
	#1\colon #2%
	\mathrel{\vcenter{%
			\offinterlineskip\m@th
			\ialign{%
				\hfil$##$\hfil\cr
				\longrightharpoonup\cr
				\noalign{\kern-.3ex}
				\smallbot\cr
				\longleftharpoondown\cr
			}%
	}}%
	#3 \noloc #4%
}
\newcommand{\named@adjunction}[4]{%
	#2%
	\mathrel{\vcenter{%
			\offinterlineskip\m@th
			\ialign{%
				\hfil$##$\hfil\cr
				\scriptstyle#1\cr
				\noalign{\kern.1ex}
				\longrightharpoonup\cr
				\noalign{\kern-.3ex}
				\smallbot\cr
				\longleftharpoondown\cr
				\scriptstyle#4\cr
			}%
	}}%
	#3%
}
\newcommand{\longrightharpoonup}{\relbar\joinrel\rightharpoonup}
\newcommand{\longleftharpoondown}{\leftharpoondown\joinrel\relbar}
\newcommand\noloc{%
	\nobreak
	\mspace{6mu plus 1mu}
	{:}
	\nonscript\mkern-\thinmuskip
	\mathpunct{}
	\mspace{2mu}
}
\newcommand{\smallbot}{%
	\begingroup\setlength\unitlength{.15em}%
	\begin{picture}(1,1)
		\roundcap
		\polyline(0,0)(1,0)
		\polyline(0.5,0)(0.5,1)
	\end{picture}%
	\endgroup
}
\newcommand*{\doublerightarrow}[2]{\mathrel{
		\settowidth{\@tempdima}{$\scriptstyle#1$}
		\settowidth{\@tempdimb}{$\scriptstyle#2$}
		\ifdim\@tempdimb>\@tempdima \@tempdima=\@tempdimb\fi
		\mathop{\vcenter{
				\offinterlineskip\ialign{\hbox to\dimexpr\@tempdima+1em{##}\cr
					\rightarrowfill\cr\noalign{\kern.5ex}
					\rightarrowfill\cr}}}\limits^{\!#1}_{\!#2}}}
\newcommand*{\triplerightarrow}[1]{\mathrel{
		\settowidth{\@tempdima}{$\scriptstyle#1$}
		\mathop{\vcenter{
				\offinterlineskip\ialign{\hbox to\dimexpr\@tempdima+1em{##}\cr
					\rightarrowfill\cr\noalign{\kern.5ex}
					\rightarrowfill\cr\noalign{\kern.5ex}
					\rightarrowfill\cr}}}\limits^{\!#1}}}
\newcommand{\NN}{\mathbb{N}}
\newcommand{\ZZ}{\mathbb{Z}}
\newcommand{\LL}{\mathbb{L}}
\newcommand{\RR}{\mathbb{R}}
\newcommand{\PP}{\mathbb{P}}
\def\sE{E}
\def\sP{P}
\def\QQ{\mathbb{Q}}
\def\C{\mathcal{C}}
\def\D{\mathcal{D}}
\def\G{\mathcal{G}}
\def\F{\mathcal{F}}
\def\SS{\mathcal{S}}
\def\M{\mathcal{M}}
\def\I{\mathcal{I}}
\def\cS{\mathcal{S}}
\def\T{\mathcal{T}}
\def\P{\mathcal{P}}
\def\Q{\mathcal{Q}}
\def\R{\mathcal{R}}
\renewcommand{\L}{\mathcal{L}}
\def\U{\mathcal{U}}
\def\O{\mathcal{O}}
\def\g{\mathfrak{g}}
\def\tpop{\T_\P \Op(\mathcal{S})}
\def\lrarsimeq{\overset{\simeq}{\lrar}}
\def\lrarsimequ{\underset{\simeq}{\lrar}}
\def\l{\langle}
\def\r{\rangle}
\newcommand{\HSwarrow}{\kern0.05ex\vcenter{\hbox{\Huge\ensuremath{\Swarrow}}}\kern0.05ex}
\newcommand{\hSwarrow}{\kern0.05ex\vcenter{\hbox{\huge\ensuremath{\Swarrow}}}\kern0.05ex}
\newcommand{\LLSwarrow}{\kern0.05ex\vcenter{\hbox{\LARGE\ensuremath{\Swarrow}}}\kern0.05ex}
\newcommand{\LSwarrow}{\kern0.05ex\vcenter{\hbox{\Large\ensuremath{\Swarrow}}}\kern0.05ex}
\newcommand{\HSearrow}{\kern0.05ex\vcenter{\hbox{\Huge\ensuremath{\Searrow}}}\kern0.05ex}
\newcommand{\hSearrow}{\kern0.05ex\vcenter{\hbox{\huge\ensuremath{\Searrow}}}\kern0.05ex}
\newcommand{\LLSearrow}{\kern0.05ex\vcenter{\hbox{\LARGE\ensuremath{\Searrow}}}\kern0.05ex}
\newcommand{\LSearrow}{\kern0.05ex\vcenter{\hbox{\Large\ensuremath{\Searrow}}}\kern0.05ex}
\newcommand{\HDownarrow}{\kern0.05ex\vcenter{\hbox{\Huge\ensuremath{\Downarrow}}}\kern0.05ex}
\newcommand{\hDownarrow}{\kern0.05ex\vcenter{\hbox{\huge\ensuremath{\Downarrow}}}\kern0.05ex}
\newcommand{\LLDownarrow}{\kern0.05ex\vcenter{\hbox{\LARGE\ensuremath{\Downarrow}}}\kern0.05ex}
\newcommand{\LDownarrow}{\kern0.05ex\vcenter{\hbox{\Large\ensuremath{\Downarrow}}}\kern0.05ex}
\newcommand{\HUparrow}{\kern0.05ex\vcenter{\hbox{\Huge\ensuremath{\Uparrow}}}\kern0.05ex}
\newcommand{\hUparrow}{\kern0.05ex\vcenter{\hbox{\huge\ensuremath{\Uparrow}}}\kern0.05ex}
\newcommand{\LLUparrow}{\kern0.05ex\vcenter{\hbox{\LARGE\ensuremath{\Uparrow}}}\kern0.05ex}
\newcommand{\LUparrow}{\kern0.05ex\vcenter{\hbox{\Large\ensuremath{\Uparrow}}}\kern0.05ex}
\newcommand\restr[2]{{
		\left.\kern-\nulldelimiterspace 
		#1 
		\vphantom{\big|} 
		\right|_{#2} 
}}
\newtheorem{thm}{Theorem}[subsection]
\newtheorem{cor}[thm]{Corollary}
\newtheorem{lem}[thm]{Lemma}
\newtheorem{prop}[thm]{Proposition}
\numberwithin{thm}{subsection}
\numberwithin{equation}{subsection}
\theoremstyle{definition}
\newtheorem{define}[thm]{Definition}
\newtheorem{example}[thm]{Example}
\newtheorem{examples}[thm]{Examples}
\newtheorem{dfn}[thm]{Definition}
\newtheorem{cons}[thm]{Construction}
\newtheorem{notn}[thm]{Notation}
\newtheorem{notns}[thm]{Notations}
\newtheorem{obs}[thm]{Observation}
\newtheorem{obss}[thm]{Observations}
\newtheorem{conv}[thm]{Conventions}
\newtheorem{rem}[thm]{Remark}
\DeclareMathOperator{\hocolim}{hocolim}
\DeclareMathOperator{\colim}{colim}
\DeclareMathOperator{\Ext}{Ext}
\DeclareMathOperator{\Com}{Com}
\DeclareMathOperator{\Ex}{Ex}
\DeclareMathOperator{\Id}{Id}
\DeclareMathOperator{\sD}{D}
\DeclareMathOperator{\id}{id}
\DeclareFontFamily{OT1}{pzc}{}
\DeclareFontShape{OT1}{pzc}{m}{it}{<-> s * [1.10] pzcmi7t}{}
\DeclareMathAlphabet{\mathpzc}{OT1}{pzc}{m}{it}
\DeclareMathOperator{\cof}{cof}
\DeclareMathOperator{\Alg}{Alg}
\DeclareMathOperator{\Ass}{Ass}
\DeclareMathOperator{\Free}{Free}
\DeclareMathOperator{\LMod}{LMod}
\DeclareMathOperator{\Disc}{Disc}
\DeclareMathOperator{\RMod}{RMod}
\DeclareMathOperator{\Pairs}{Pairs}
\DeclareMathOperator{\Rect}{Rect}
\DeclareMathOperator{\diag}{diag}
\DeclareMathOperator{\op}{op}
\DeclareMathOperator{\rE}{E}
\DeclareMathOperator{\Map}{Map}
\DeclareMathOperator{\red}{red}
\DeclareMathOperator{\proj}{proj}
\DeclareMathOperator{\Sets}{Sets}
\DeclareMathOperator{\Top}{Top}
\DeclareMathOperator{\Cat}{Cat}
\DeclareMathOperator{\Mod}{Mod}
\DeclareMathOperator{\Sym}{Sym}
\DeclareMathOperator{\Fun}{Fun}
\DeclareMathOperator{\Un}{Un}
\DeclareMathOperator{\Ob}{Ob}
\DeclareMathOperator{\Sp}{Sp}
\DeclareMathOperator{\Ho}{Ho}
\DeclareMathOperator{\forget}{forgetful}
\DeclareMathOperator{\Op}{Op}
\DeclareMathOperator{\nsOp}{nsOp}
\DeclareMathOperator{\Ab}{Ab}
\DeclareMathOperator{\Env}{Env}
\DeclareMathOperator{\Tw}{Tw}
\DeclareMathOperator{\der}{h}
\DeclareMathOperator{\BMod}{BMod}
\DeclareMathOperator{\aug}{aug}
\DeclareMathOperator{\Conf}{Conf}
\DeclareMathOperator{\h}{h}
\DeclareMathOperator{\sMod}{sMod}
\DeclareMathOperator{\St}{St}
\DeclareMathOperator{\Spectra}{Spectra}
\DeclareMathOperator{\hocofib}{hocofib}
\DeclareMathOperator{\cov}{cov}
\DeclareMathOperator{\Def}{Def}
\DeclareMathOperator{\defi}{def}
\DeclareMathOperator{\IbMod}{IbMod}
\DeclareMathOperator{\Hom}{Hom}
\DeclareMathOperator{\rL}{L}
\DeclareMathOperator{\Fr}{Fr}
\DeclareMathOperator{\rR}{R}
\DeclareMathOperator{\sH}{H}
\DeclareMathOperator{\rP}{P}
\DeclareMathOperator{\sN}{N}
\DeclareMathOperator{\sT}{T}
\DeclareMathOperator{\sCol}{Col}
\DeclareMathOperator{\Seq}{Seq}
\DeclareMathOperator{\Coll}{Coll}
\DeclareMathOperator{\nsColl}{nsColl}
\DeclareMathOperator{\Set}{Set}
\DeclareMathOperator{\sU}{U}
\DeclareMathOperator{\sF}{F}
\DeclareMathOperator{\sG}{G}
\DeclareMathOperator{\E}{E}
\DeclareMathOperator{\HT}{HT}
\DeclareMathOperator{\End}{End}
\DeclareMathOperator{\Fin}{Fin}
\DeclareMathOperator{\Sing}{Sing}
\DeclareMathOperator{\sS}{S}
\def\x{\overset}
\def\Hom{\textrm{Hom}}
\def\End{\textrm{End}}
\newcommand{\tgpd}{\kern0.05ex\vcenter{\hbox{\footnotesize\ensuremath{2}}}\kern0.05ex\mathcal{G}pd} 
\def\rar{\rightarrow}
\def\lrar{\longrightarrow}
\def\ovl{\overline}
\title{Quillen cohomology of enriched operads}
\author{\texttt{Hoang Truong}}
\begin{document}

\begin{abstract} 
A modern insight due to Quillen, which is further developed by Lurie, asserts that many cohomology theories of
interest are particular cases of a single construction, which allows one to define cohomology groups in an abstract
setting using only intrinsic properties of the category (or $\infty$-category) at hand. This universal cohomology theory
is known as Quillen cohomology. In any setting, Quillen cohomology of a given object is classified by its
cotangent complex. The main purpose of this paper is to study Quillen cohomology of operads enriched over a general base category. Our main result provides an explicit formula for computing Quillen cohomology of enriched operads, based on a procedure of taking certain infinitesimal models of their cotangent complexes. Furthermore, we propose a natural construction of the twisted arrow $\infty$-categories of simplicial operads. We then assert that the cotangent complex of a simplicial operad can be represented as a spectrum valued functor on its twisted arrow $\infty$-category.

 When working in stable base categories such as chain complexes and spectra, Francis and Lurie proved the existence of a fiber sequence relating the cotangent complex and Hochschild complex of an $\E_n$-algebra, from which a conjecture of Kontsevich is verified. {We establish an analogous fiber sequence for the operad $\E_n$ itself, in the topological setting.}
\end{abstract}

\maketitle

\tableofcontents

\bigskip

\fontsize{10.5}{13}\selectfont

\medskip

\section{Introduction}\label{s:introduction}

\subsection{An overview of the subject}

A widespread idea in the domain of homotopy theory is to study a given object of interest by associating to it various kinds of cohomology groups. From generalized cohomology theories for spaces and various Ext groups in homological algebra, through group cohomology, sheaf cohomology, Hochschild cohomology and Andr{\'e}-Quillen cohomology, such  invariants vary from fairly useful to utterly indispensable. A modern insight due to Quillen \cite{Quillen}, which is further developed by Lurie \cite{Lurieha}, asserts that all these cohomology theories are particular cases of a single universal construction, which allows one to define cohomology groups in an abstract setting using only intrinsic properties of the category (or $\infty$-category) at hand. This universal cohomology theory is known as \textbf{Quillen cohomology}.

In Quillen's approach, cohomology of an object of interest is classified by its derived abelianization. Suppose we are given a model category $\textbf{M}$. Recall that an \textbf{abelian group object} in $\textbf{M}$ is an object $A$ equipped with two maps $* \lrar A$ and $A\times A \lrar A$ subject to the classical axioms of an abelian group. For each object $X \in \textbf{M}$, the category of abelian group objects in $\textbf{M}_{/X}$, denoted $\Ab(\textbf{M}_{/X})$, possibly inherits a model structure transferred from that of $\textbf{M}$. In this situation, the free-forgetful adjunction $\F :  \adjunction*{}{\textbf{M}_{/X}}{\Ab(\textbf{M}_{/X})}{} : \U $ forms a Quillen adjunction. The \textbf{cotangent complex} of $X$, denoted by $\rL_X$, is then defined to be $\rL_X := \LL \F (X)$ the \textbf{derived abelianization of} $X$. Moreover, given an object $M \in \Ab(\textbf{M}_{/X})$, the $n$'th \textbf{Quillen cohomology group of $X$ with coefficients in} $M$ is computed by the formula 
$$ \sH^{n}_Q(X;M) = \pi_0 \Map^{\h}_{\Ab(\textbf{M}_{/X})} (\rL_X , M[n])$$
where $M[n]$ refers to the $n$-suspension of $M$ in $\Ab(\textbf{M}_{/X})$. The work of Quillen was first devoted to establishing a proper cohomology theory for rings and commutative algebras (cf. \cite{Quillen2}), which is nowadays generalized into the operadic context as described in the following example. 
\begin{example}\label{e:Qopealg1} {Let $\textbf{k}$ be a commutative ring that contains the field $\QQ$ of rational numbers, and let $\P$ be an operad enriched over dg $\textbf{k}$-modules}. For each $\P$-algebra $A$, the free-forgetful adjunction $ \adjunction*{}{(\Alg_\P)_{/A}}{\Ab((\Alg_\P)_{/A})}{}$ is (homotopically) equivalent to the adjunction 
	$$ \Omega^{A}(-) : \adjunction*{}{(\Alg_\P)_{/A}}{\Mod^A_{\P}}{} : A \ltimes (-)$$
	in which $\Mod^A_{\P}$ refers to the category of $A$\textbf{-modules over} $\P$, the left adjoint sends $B \in (\Alg_\P)_{/A}$ to $\Omega^{A}(B)$ the \textbf{module of K{\"a}hler differentials} of $B$ over $A$, while the right adjoint sends $M \in \Mod^A_{\P}$ to $A \ltimes M$ the \textbf{square-zero extension} of $A$ by $M$. Therefore, after sending coefficients into $\Mod^A_{\P}$, the $n$'th Quillen cohomology group of $A$ with coefficients in an object $M \in \Mod^A_{\P}$ is given by
	$$ \sH^{n}_Q(A;M) = \pi_0 \Map^{\h}_{\Mod^A_{\P}} (\Omega^{A}(A^{\cof}) , M[n]) $$
	where $A^{\cof}$ is a cofibrant resolution of $A$ in $\Alg_\P$. (See, e.g., \cite{Loday, Vallette, YonatanCotangent}).
\end{example}  

{Quillen's approach} has certain limitations, despite its success. Indeed, there is not a known criterion assuring the existence of the transferred model structure on abelian group objects in a given model category and moreover, even when realized, this model category structure is not invariant under Quillen equivalences. Improving the work of Quillen, Lurie (\cite{Lurieha}) established the cotangent complex formalism, when working in the $\infty$-categorical framework. In his approach, the notion of derived abelianization is extended to that of \textbf{stabilization}, which itself is inspired by the classical theory of spectra. Let $\C$ be a presentable $\infty$-category and let $X$ be an object of $\C$. Consider the over $\infty$-category  $\C_{/X}$. Conceptually, the stabilization of $\C_{/X}$ is given by the $\infty$-categorical limit of the tower
$$ \cdots  \x{\Omega}{\lrar} (\C_{/X})_* \x{\Omega}{\lrar} (\C_{/X})_* \x{\Omega}{\lrar} (\C_{/X})_* $$
where $\Omega$ refers to the desuspension functor on $(\C_{/X})_*$ the pointed $\infty$-category associated to $\C_{/X}$. As in \cite{Lurieha}, we will refer to the stabilization of $\C_{/X}$ as the \textbf{tangent category to $\C$ at $X$} and denote it by $\T_X\C$. By construction, $\T_X\C$ is automatically a \textbf{stable $\infty$-category}. Moreover, the presentability of $\C$ implies that the canonical functor $\T_X\C \lrar \C_{/X}$ admits a left adjoint called \textbf{suspension spectrum functor} and denoted by $\Sigma^{\infty}_+ : \C_{/X} \lrar \T_X\C$. By this way, the \textbf{cotangent complex of} $X$ is defined  to be $\rL_X := \Sigma^{\infty}_+(X)$. Moreover, the $n$'th Quillen cohomology group of $X$ with coefficients in a given object $M \in \T_X\C$ is now formulated as 
$$ \sH^{n}_Q(X;M)  := \pi_0 \Map_{\T_X\C}(\rL_X , M[n]) .$$
We refer the readers to \cite{YonatanCotangent} for a discussion on the naturality of the evolution from Quillen's approach to Lurie's, and a comparison between them also.

For necessary computations in abstract homotopy theory, model categories seem to be the most favorable environment. Fortunately, the Lurie's setting mentioned above was completely translated into the model categorical language, thanks to the recent works of  Y. Harpaz, J. Nuiten and M. Prasma \cite{YonatanBundle, YonatanCotangent}. Following the setting given in \cite{YonatanBundle}, tangent (model) categories come after a procedure of taking left Bousfield localizations of model categories of interest. Nevertheless, the obstacle is that left Bousfield localizing usually requires the left properness. The recent result of Batanin-White \cite{White} allows one to take left Bousfield localizations of \textbf{semi model categories} (cf., e.g., \cite{Fresse1, Barwick, Spitzweck}) without requiring the left properness. Inspired by this, under our setting, tangent categories exist as semi model categories, {which are basically as convenient as} (full) model categories.

The main purpose of this paper is to formulate Quillen cohomology of operads enriched over a general symmetric monoidal model category, which we will refer to as the \textbf{base category}. Given a base category $\cS$, we denote by $\Op_C(\cS)$ the category of \textbf{$\cS$-enriched $C$-colored operads} with $C$ being some fixed set of colors, yet the one we really care about is the category of  \textbf{$\cS$-enriched operads} (with non-fixed sets of colors), which will be denoted by $\Op(\cS)$. Under some suitable conditions, $\Op(\cS)$ admits the \textbf{canonical model structure}, according to Caviglia's \cite{Caviglia}. In particular, when $\cS$ is the category $\Set_\Delta$ of simplicial sets equipped with the standard model structure, the canonical model structure on $\Op(\Set_\Delta)$ agrees with the \textbf{Cisinski-Moerdijk model structure}, which was known to be a model for the theory of $\infty$\textbf{-operads} (cf. \cite{Cisinski}). 

Given an $\cS$-enriched $C$-colored operad $\P$, one can consider it  either as an object of $\Op_C(\SS)$ or as an object of $\Op(\cS)$. As mentioned, our main focus is on the latter case. Therefore, we will refer to the Quillen cohomology of $\P\in\Op(\mathcal{S})$ as its \textbf{(proper) Quillen cohomology}. In contrast, the corresponding cohomology of $\P\in\Op_C(\mathcal{S})$ is referred to as its \textbf{reduced Quillen cohomology}. Some attention was given in the literature to the reduced Quillen cohomology of operads. For instance, in the context of dg operads, this was studied by Loday-Merkulov-Vallette \cite{Loday, Vallette}, (in which the resultant is described in terms of derivations, similarly as in Example \ref{e:Qopealg1}). On the other hand, the problems of formulating Quillen cohomology of operads and investigating its applications have not yet been considered so far.

It has been widely acknowledged that Quillen cohomology theory keeps a key role in the study of \textbf{deformation theory} and \textbf{obstruction theory}. Let us discuss these in what follows.

Naively speaking, a deformation of an object of interest under $``$small perturbation'' is an object of the same type which is $``$equivalent'' to the original object. In our setting, a small perturbation is precisely an \textbf{artinian dg} $\textbf{k}$-\textbf{algebra} with $\textbf{k}$ being a field of characteristic $0$, i.e. a (connective) augmented commutative dg $\textbf{k}$-algebra $R$ of finite dimension such that the augmentation map $R \rar \textbf{k}$ exhibits the $0$'th homology of $R$ as a local $\textbf{k}$-algebra. For instance, in the context of algebraic objects (e.g., dg modules, dg categories and dg operads), a deformation of an object $X$ under a small perturbation $R \rar \textbf{k}$ is by definition an object $Y$ (parameterized over $R$) coming together with a weak equivalence $Y\otimes_R\textbf{k} \lrarsimeq X$. One can then organize all the deformations of $X$ into a category such that every morphism is an equivalence  of deformations. The $\infty$-groupoid associated to this category will be called the \textbf{space of deformations of $X$ over} $R$, denoted by $\Def(X,R)$. In a somewhat more abstract setting presented in a parallel paper (joint with Y. Harpaz), we propose the notion of spaces of deformations for various types of object. We then show that Quillen cohomology of a given object indeed classifies the homotopy type of its spaces of deformations. Moreover, we show that the functor $R \mapsto \Def(X,R)$   forms a \textbf{formal moduli problem} in the sense of \cite{Luriefmp}. It implies that the deformations of $X$ are $``$governed'' by a single dg Lie algebra (cf. \cite{Luriefmp, Pridham}).

For two topological spaces $X$ and $Y$, understanding maps from $X$ to $Y$ (up to homotopy) is a classical problem in homotopy theory. Suppose that $Y$ is simply connected. As the first step, one filters $Y$ by its \textbf{Postnikov tower}:
$$  \cdots \lrar P_n(Y) \lrar P_{n-1}(Y) \lrar \cdots \lrar  P_1(Y) \lrar  P_0(Y)  .$$
The problem is therefore reduced to understanding maps $X \lrar P_n(Y)$ that extend some given map $f : X \lrar P_{n-1}(Y)$. It is known that the obstruction to a section $P_{n-1}(Y) \lrar P_n(Y)$ is classified by a single cohomology class  $k_{n-1} \in \sH^{n+1}(P_{n-1}(Y) ; \pi_n Y)$. More generally, the obstruction to an extension $X \lrar P_n(Y)$ of $f$ is classified by the image of $k_{n-1}$ under the induced map $$f^{*} :  \sH^{n+1}(P_{n-1}(Y) ; \pi_n Y) \lrar  \sH^{n+1}(X ; \pi_n Y) .$$ The latter is called the \textbf{obstruction class of $f$}. Note that the ordinary cohomology of spaces is nothing but a particular case of Quillen cohomology. The authors of \cite{DKSmith} generalized the above machinery to study the obstruction theory of \textbf{simplicial categories} with fixed set of objects, in which case obstruction classes are indeed contained in Quillen cohomology groups. The obstruction theory of \textbf{dg categories} was considered in \cite{Tabuada1}, in which case Quillen cohomology plays a central role again.  We believe that the present paper may open the way to the study of obstruction theory of simplicial operads.

\subsection{Main statements}

We now present a summary of our main results together with some historical background. We would like to divide this subsection into three parts.

\subsubsection{\textbf{Operadic tangent categories and cotangent complex of enriched operads.}}

 Suppose we are given a sufficiently nice base category $\cS$ (cf. Conventions \ref{convention}). As the starting point, we extend a result of \cite{YonatanCotangent}, which we now recall. Let $\C \in \Cat(\cS)$ be an $\cS$-enriched category. {We} denote by $\Cat_C(\cS)$ the category of $\cS$-enriched categories {that admit $C := \Ob(\C)$ as the fixed set of objects, and denote} by $\BMod(\C)$ the category of $\C$\textbf{-bimodules}. There is a sequence of the obvious Quillen adjunctions $$\adjunction*{}{\BMod(\C)_{\C/}}{\Cat_C(\cS)_{\C/}}{} \adjunction*{}{}{\Cat(\cS)_{\C/}}{} .$$

\begin{thm}\label{t:cattangent} (Y. Harpaz, J. Nuiten and  M. Prasma \cite{YonatanCotangent}) The above sequence induces a sequence of Quillen equivalences connecting the associated tangent categories:
	\begin{equation}\label{eq:QseqYH}
		\adjunction*{\simeq}{\T_{\C}\BMod(\C)}{\T_{\C}\Cat_C(\cS)}{} \adjunction*{\simeq}{}{\T_{\C}\Cat(\cS)}{} 
	\end{equation}
\end{thm}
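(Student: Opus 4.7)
The approach is to analyze the two Quillen adjunctions in the displayed sequence separately and show that, although neither is a Quillen equivalence itself, each becomes one after passing to tangent categories via stabilization. The guiding principle, formalized in \cite{YonatanBundle, YonatanCotangent}, is that $\T_{\C}(-)$ is sensitive only to the infinitesimal behavior near $\C$, so any Quillen adjunction whose derived (co)unit becomes invertible on pointed objects of the form $\C \rar \D \rar \C$ (and their iterated loops) will stabilize to a Quillen equivalence.

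I would first verify that each pair is indeed a Quillen adjunction for the appropriate slice (semi) model structures. This is essentially formal once the requisite transferred model structures exist, invoking the machinery of \cite{Caviglia, White}. The non-formal ingredient is the adjunction relating bimodules to $C$-colored categories, in which the square-zero extension $\C \ltimes M$ plays a role directly analogous to the one described in Example \ref{e:Qopealg1}.

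For the equivalence $\T_{\C}\Cat_C(\cS) \simeq \T_{\C}\Cat(\cS)$, the crucial observation concerns loop objects in $\Cat(\cS)_{\C/, /\C}$. A pointed object is a factorization $\C \rar \D \rar \C$ of the identity, so its loop object $\Omega\D$ is modeled by the homotopy pullback $\C \times^{\h}_{\D} \C$. Since the composite on object sets is already the identity on $C$, the map $C \rar \Ob(\D)$ is injective, forcing $\Ob(\Omega\D) = C$. Iterating $\Omega$ shows that every $\Omega$-spectrum in $\Cat(\cS)_{\C/, /\C}$ is automatically supported on the fixed color set $C$; combined with the fact that the inclusion $\Cat_C(\cS) \hookrightarrow \Cat(\cS)$ is fully faithful on mapping spaces into $\C$-colored categories, this gives the desired Quillen equivalence on tangent categories.

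For the equivalence $\T_{\C}\BMod(\C) \simeq \T_{\C}\Cat_C(\cS)$, I would argue in analogy with the classical identification of the cotangent complex of an algebra with its module of K\"ahler differentials. The square-zero extension realizes every $\C$-bimodule as an abelian group object in $\Cat_C(\cS)_{/\C}$, so the adjunction models abelianization. Since $\BMod(\C)$ is already additive, the canonical comparison $\T_{\C}\BMod(\C) \lrar \BMod(\C)$ is an equivalence, and the problem reduces to identifying stabilized pointed $C$-colored categories over $\C$ with bimodules. I expect the main obstacle to be precisely this recognition principle: showing that every spectrum object in $\Cat_C(\cS)_{\C/,/\C}$ is, up to equivalence, a suspension spectrum of some $\C \ltimes M$. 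This requires careful control over the loop-space computation in $\Cat_C(\cS)$, and it is precisely here that the fibrancy hypothesis on $\C$ is essential, ensuring that the relevant homotopy pullbacks are computed correctly and that the comparison map from bimodule spectra to categorical spectra is genuinely an equivalence on homotopy groups.
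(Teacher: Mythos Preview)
This theorem is not proved in the present paper; it is quoted from \cite{YonatanCotangent} as background. The paper's own contribution is the operadic analogue (Theorem~\ref{t:mainoptangentsum}), whose proof in \S\ref{s:optangent} mirrors the method of \cite{YonatanCotangent}. So any comparison must be against that method, which differs substantially from what you outline.

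Your treatment of the second equivalence contains a genuine error. You write that ``$\BMod(\C)$ is already additive, so the canonical comparison $\T_{\C}\BMod(\C) \lrar \BMod(\C)$ is an equivalence.'' This is false unless the base $\cS$ is itself stable: $\BMod(\C) \cong \Fun(\C^{\op}\otimes\C,\cS)$ is no more stable than $\cS$ is, and for $\cS = \Set_\Delta$ it certainly is not. The identification you want only holds in the situation of Theorem~\ref{t:mainoptangent'}, under an explicit stability hypothesis on $\cS$. Consequently the ``recognition principle'' you sketch---that every spectrum object in $\Cat_C(\cS)_{\C//\C}$ is a suspension spectrum of some $\C\ltimes M$---cannot be right as stated, and the square-zero/abelianization picture you invoke is precisely the classical Quillen framework that the introduction contrasts with the stabilization framework actually used.

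The method in \cite{YonatanCotangent}, replicated for operads in \S\ref{s:infbimod}--\S\ref{s:bimod}, is quite different. The equivalence $\T_\C\BMod(\C)\simeq\T_\C\Cat_C(\cS)$ is obtained by applying the Comparison Theorem (Theorem~\ref{comparison}): one writes $\Cat_C(\cS)_{\C/}$ as algebras over an enveloping operad whose underlying category encodes $\BMod(\C)$, and the theorem asserts directly that the induced adjunction on stabilizations is a Quillen equivalence. No analysis of square-zero extensions or abelian group objects enters. For $\T_\C\Cat_C(\cS)\simeq\T_\C\Cat(\cS)$, your loop-object intuition is closer to the mark, but the actual argument (cf.\ Proposition~\ref{Quilleneq1} and \cite[Lemma~3.1.13]{YonatanCotangent}) proceeds via the differentiability criterion of \cite[Corollary~2.4.9]{YonatanBundle}: one checks that the derived counit becomes an equivalence after one application of $\Omega$ and that the adjunction is differentiable, which together force the stabilized adjunction to be an equivalence.
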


We now fix $\P$ to be a {$\Sigma$-cofibrant} $C$-colored operad in $\cS$. We {will} let $\BMod(\P)$ and $\IbMod(\P)$ respectively denote the categories of $\P$\textbf{-bimodules} and \textbf{infinitesimal $\P$-bimodules}. Note that the structure of bimodules over an operad is not linear in general. The notion of infinitesimal bimodules comes naturally as the linearization of that usual notion of operadic bimodules, and in particular, is much more convenient for computations. Our creed is to exhibit the Quillen cohomology groups of an operad in terms of infinitesimal bimodules over it. We start with observing that there is a sequence of Quillen adjunctions
$$ \adjunction*{}{\IbMod(\P)_{\mathcal{P}/}}{\BMod(\P)_{\mathcal{P}/}}{} \adjunction*{}{}{\Op_C(\mathcal{S})_{\mathcal{P}/}}{} \adjunction*{}{}{\Op(\mathcal{S})_{\mathcal{P}/}}{} $$
formed by the obvious induction-restriction adjunctions.

\begin{thm}\label{t:mainoptangentsum} (\ref{t:mainoptangent}, \ref{t:mainoptangent'}) The above sequence induces a sequence of Quillen equivalences connecting the associated tangent categories:
	$$\adjunction*{\simeq}{\T_\P\IbMod(\P)}{\T_\P\BMod(\P)}{} \adjunction*{\simeq}{}{\T_\P\Op_C(\mathcal{S})}{} \adjunction*{\simeq}{}{\T_\P\Op(\mathcal{S})}{}.$$
	Moreover, when $\cS$ is in addition stable containing a strict zero object, all the categories above are Quillen equivalent to $\IbMod(\P)$.
	
\end{thm}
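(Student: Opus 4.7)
The plan is to establish each Quillen equivalence in the chain separately and then combine them. In all three cases, the relevant Quillen adjunction between slice categories under $\P$ follows essentially formally from the induction-restriction adjunctions associated to the forgetful functors, together with the hypotheses that $\P$ is fibrant and $\Sigma$-cofibrant (which is what allows the transferred and slice model structures to behave well). The nontrivial content lies in promoting each adjunction to a Quillen equivalence after passing to tangent categories, using the general principle that stabilization retains only first-order (linear) data about $\P$. Throughout, one checks the criterion that both the derived unit and derived counit become stable equivalences after applying $\Sigma^{\infty}_+$.

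For the rightmost equivalence $\T_\P \Op_C(\mathcal{S}) \simeq \T_\P \Op(\mathcal{S})$, the strategy mirrors the proof of Theorem \ref{t:cattangent}: changing the color set is a discrete operation that becomes invisible under stabilization, since any square-zero deformation of $\P$ in $\Op(\mathcal{S})$ must preserve the color set $C$ up to canonical isomorphism. Concretely, the slice $\Op(\mathcal{S})_{/\P}$ stratifies by maps of color sets, and only the stratum lying over $\id_C$ survives $\Sigma^{\infty}_+$. For the middle equivalence $\T_\P\BMod(\P) \simeq \T_\P \Op_C(\mathcal{S})$, we exploit the operadic square-zero extension construction $M \mapsto \P \oplus M$ (cf. Example \ref{e:Qopealg1}): every $\P$-bimodule extends to an operad over $\P$, and although not every operad over $\P$ arises this way, the non-square-zero part of the structure is quadratic in the fiber and is killed by stabilization. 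For the leftmost equivalence $\T_\P \IbMod(\P) \simeq \T_\P \BMod(\P)$, we use that an ordinary bimodule encodes multi-point insertions while an infinitesimal bimodule records only one insertion at a time; the discrepancy lies in higher operadic compositions that contribute quadratic-and-higher terms to any pointed deformation, so it vanishes after passage to the tangent category. Making this precise requires comparing cofibrant models on both sides (for instance via an operadic bar-cobar style resolution), and verifying that the comparison map becomes a stable equivalence. This is the step where we expect the bulk of the technical work to lie, since one must carefully track how multilinear operadic composition degenerates under infinitesimal perturbation.

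For the final claim, when $\mathcal{S}$ is stable with a strict zero object, the category $\IbMod(\P)$ inherits a natural presentable stable structure, as infinitesimal bimodules over $\P$ form an enriched functor category valued in a stable category. A general principle then applies: for any presentable stable $\infty$-category $\D$ and any object $X \in \D$, the canonical functor $\T_X \D \lrar \D$ is an equivalence, since the pointed slice $(\D_{/X})_*$ is already stable and further stabilization is redundant. Applying this to $\D = \IbMod(\P)$ at $X = \P$ (the unit infinitesimal bimodule) gives $\T_\P \IbMod(\P) \simeq \IbMod(\P)$, and transport along the chain of equivalences established in the first part of the theorem produces the corresponding equivalences for the other three tangent categories.
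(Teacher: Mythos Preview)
Your high-level intuition is sound---the slogan ``quadratic-and-higher terms are killed by stabilization'' is exactly what drives the proof---but the proposal does not identify the concrete tool that makes this precise, and the decomposition you suggest differs from the paper's in a way that creates extra work.

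The paper's engine is the \emph{Comparison Theorem} of Harpaz--Nuiten--Prasma (Theorem~\ref{comparison}): for a $\Sigma$-cofibrant stably admissible operad $\mathcal{O}$, the inclusion $\mathcal{O}_{\leqslant 1}\hookrightarrow\mathcal{O}$ induces a Quillen equivalence $\Sp(\Alg^{\aug}_{\mathcal{O}_{\leqslant 1}})\simeq\Sp(\Alg^{\aug}_{\mathcal{O}})$. This single statement packages precisely your ``higher-order terms vanish after stabilization'' heuristic. The paper then applies it twice: once to the enveloping operad $\Env(\textbf{O}_C,\P)$ (whose algebras are $\Op_C(\mathcal{S})_{\P/}$ and whose underlying-category functors are $\IbMod(\P)$) to obtain $\T_\P\IbMod(\P)\simeq\T_\P\Op_C(\mathcal{S})$, and once to the operad $\textbf{B}^{\P/}$ (whose algebras are $\BMod(\P)_{\P/}$ and whose underlying category is $\textbf{Ib}^{\P}$) to obtain $\T_\P\IbMod(\P)\simeq\T_\P\BMod(\P)$. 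The middle equivalence $\T_\P\BMod(\P)\simeq\T_\P\Op_C(\mathcal{S})$ is then deduced by 2-out-of-3, not proved directly. Your proposed direct attack on this middle step via square-zero extensions $M\mapsto\P\oplus M$ is a reasonable heuristic but would require essentially reproving the Comparison Theorem in that specific case; the paper avoids this entirely.

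For the rightmost equivalence $\T_\P\Op_C(\mathcal{S})\simeq\T_\P\Op(\mathcal{S})$, your description is closer to the mark: the paper does not use the Comparison Theorem here but instead verifies differentiability of the adjunction (Lemma~\ref{LRisdifferentible}) and applies a general criterion from \cite{YonatanBundle} about derived (co)units becoming stable equivalences. This is where conditions (S5)--(S6) on $\mathcal{S}$ are used.

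Your treatment of the stable case is essentially what the paper does: $\IbMod(\P)\cong\Fun(\textbf{Ib}^{\P},\mathcal{S})$ is stable when $\mathcal{S}$ is, and one checks (Lemma~\ref{l:kerstable}) that the kernel adjunction $\IbMod(\P)\rightleftarrows\IbMod(\P)_{\P//\P}$ is a Quillen equivalence, after which $\Sigma^\infty\dashv\Omega^\infty$ is an equivalence on an already-stable category.
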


To give a convenient formula for the Quillen cohomology of $\P$, we describe the derived image of the cotangent complex $\rL_\P \in \T_\P\Op(\mathcal{S})$ under the right Quillen equivalence $\T_\P\Op(\mathcal{S}) \lrarsimeq \T_\P\IbMod(\P)$. To this end, we first prove the following key proposition, which extends  [\cite{YonatanCotangent}, Proposition 3.2.1]. {We will assume further that $\P$ is cofibrant, and is good in the sense of Definition \ref{S9}.}

\begin{prop}\label{cotantgentcplxOperadandBimodulesum}(\ref{cotantgentcplxOperadandBimodule}) Under the Quillen equivalence $\T_\P \Op(\mathcal{S}) \simeq \T_\P \BMod(\mathcal{P})$, the cotangent complex $\rL_{\mathcal{P}}$ is identified with $\rL^{b}_{\mathcal{P}}[-1]$ where $\rL^{b}_{\mathcal{P}}$ signifies the cotangent complex of $\P$ considered as a bimodule over itself.
\end{prop}

The object $\rL^{b}_{\mathcal{P}}$ can be well calculated, so that we can get a nice description of its derived image in $\T_\P\IbMod(\P)$. We {will} denote the latter by $\widetilde{\rL}_\P$. Furthermore, we denote by $\sS^{n}$ the \textbf{pointed $n$-sphere} in $\cS$, and denote by $\sS_C^{n}$ the $C$-collection which has $\sS_C^{n}(c;c)=\sS^{n}$ for every $c\in C$ and agrees with $\emptyset_\cS$ on the other levels. 

\begin{thm}\label{t:qcohomenriched}(\ref{conclusionCotangentCplx}, \ref{c:quillencohom}) The $n$'th Quillen cohomology group of $\P$ with coefficients in a given object $M\in \T_\P\IbMod(\P)$ is computed by the formula
	$$ \sH^{n}_Q(\P;M) \cong \pi_0\Map^{\h}_{\T_\P\IbMod(\P)}(\widetilde{\rL}_\P,M[n+1])$$
	in which $\widetilde{\rL}_\P \in \T_\P\IbMod(\P)$ is the prespectrum with $(\widetilde{\rL}_\P)_{n,n} = \P\circ \sS_C^{n}$ for $n\geqslant0$. If $\cS$ is in addition stable and contains a strict zero object $0$ then the $n$'th Quillen cohomology group of $\P$ with coefficients in an object $M\in \IbMod(\P)$ is computed by
	$$ \sH^{n}_Q(\P;M) \cong  \pi_0\Map^{\h}_{\IbMod(\P)}(\ovl{\rL}_\P,M[n+1])$$
	where $\ovl{\rL}_\P \in \IbMod(\mathcal{P})$ is given at each $C$-sequence $\overline{c} := (c_1,\cdots,c_m;c)$ by $$\ovl{\rL}_\P(\overline{c}) = \P(\ovl{c})\otimes \hocolim_n\Omega^{n} [\, (\sS^{n})^{\otimes m}  \times^{\h}_{1_\mathcal{S}}  0\,].$$ 
\end{thm}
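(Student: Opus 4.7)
The plan is to proceed in three stages, exploiting the sequence of Quillen equivalences already established in Theorem~\ref{t:mainoptangentsum}. First, I would rewrite the definition of Quillen cohomology
$$\sH^n_Q(\P,M) = \pi_0 \Map^{\h}_{\T_\P\Op(\cS)}(\rL_\P, M[n])$$
and transport both sides along the right Quillen equivalence $\T_\P\Op(\cS) \lrarsimeq \T_\P\IbMod(\P)$ given in Theorem~\ref{t:mainoptangentsum}. Because this equivalence preserves homotopy mapping spaces and the suspension functor, the computation reduces entirely to identifying the derived image of $\rL_\P$ in $\T_\P\IbMod(\P)$.

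The main work is therefore to trace $\rL_\P$ through the two right Quillen equivalences $\T_\P\Op(\cS) \lrarsimeq \T_\P\BMod(\P) \lrarsimeq \T_\P\IbMod(\P)$. For the first leg, I would invoke the operadic analogue of Theorem~\ref{t:cotangentcatsum} (i.e.\ Proposition~\ref{cotantgentcplxOperadandBimodule}), which shows that the derived image of $\rL_\P$ in $\T_\P\BMod(\P)$ is $\rL_\P^{\bi}[-1]$, the desuspension of the cotangent complex of $\P$ regarded as a bimodule over itself. This immediately accounts for the crucial shift $M[n] \mapsto M[n+1]$ in the final formula. For the second leg, I would use condition~(S8) to build an explicit cofibrant replacement of $\P$ inside $\BMod(\P)_{\P/}$ in a form compatible with the infinitesimal-bimodule structure. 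Applying $\Sigma^{\infty}_+$ termwise and using the concrete model of tangent categories from \cite{YonatanBundle} as (semi) model categories of prespectra, the suspension spectrum of $\P$ (shifted) is then represented by the prespectrum whose $(n,n)$-entry is $\P \circ \sS_C^{n}$: at each simplicial level, a pointed $n$-sphere is inserted at every color and the free--infinitesimal-bimodule construction computes the desired suspension. This yields the description $\widetilde{\rL}_\P$ and hence the first displayed formula.

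To pass from the prespectrum $\widetilde{\rL}_\P \in \T_\P\IbMod(\P)$ to the object $\ovl{\rL}_\P \in \IbMod(\P)$ in the stable case, I would invoke the last Quillen equivalence of Theorem~\ref{t:mainoptangentsum}, namely $\T_\P\IbMod(\P) \lrarsimeq \IbMod(\P)$, which is available when $\cS$ is stable with a strict zero object. Under this equivalence, a prespectrum $X_{\bullet,\bullet}$ is sent to the homotopy colimit $\hocolim_n \Omega^{n} X_{n,n}$ computed levelwise in $\IbMod(\P)$. Applying this to $\widetilde{\rL}_\P$ and evaluating at a $C$-sequence $\overline{c} = (c_1,\ldots,c_m;c)$, the arity-$m$ component of $\P \circ \sS_C^{n}$ is (up to a homotopy pullback coming from basepoint data against the strict zero object) $\P(\ovl{c}) \otimes (\sS^{n})^{\otimes m}$. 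Commuting $\hocolim_n \Omega^{n}$ past the tensor with $\P(\ovl{c})$, which is valid in a stable base, recovers precisely the formula for $\ovl{\rL}_\P(\ovl{c})$.

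The main obstacle, and the step that forces the technical hypothesis (S8), will be the explicit identification of the cofibrant replacement and of $\Sigma^{\infty}_+ \P$ at the prespectrum level: one has to show that the naive guess $(n,n) \mapsto \P \circ \sS_C^{n}$ actually assembles into a cofibrant (hence correctly derived) $\Omega$-prespectrum representing the image of $\rL_\P^{\bi}[-1]$. Following the strategy of Dwyer--Hess \cite{Hess}, (S8) is exactly the axiom ensuring that the composition product interacts well with spheres and pushouts, so that free infinitesimal-bimodule extensions commute with the cotensor by $\sS^{n}$ up to weak equivalence. Once this compatibility is verified, the remaining identifications are formal manipulations of mapping spaces and stable homotopy colimits.
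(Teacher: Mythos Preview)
Your overall architecture is right and matches the paper: transport $\rL_\P$ along the chain of right Quillen equivalences, pick up the shift $[-1]$ from Proposition~\ref{cotantgentcplxOperadandBimodule}, and then identify the resulting prespectrum in $\T_\P\IbMod(\P)$. However, you have misidentified both where condition~(S8) enters and how the prespectrum description $(\widetilde{\rL}_\P)_{n,n}\simeq\P\circ\sS_C^{n}$ is obtained.

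Condition~(S8) is used \emph{only} in the first leg, inside the proof of Proposition~\ref{cotantgentcplxOperadandBimodule}: it guarantees that the left derived functor $\LL\E:\BMod(\P)^{*}\to\Op_C(\cS)_{\P\sqcup\P/}$ sends $\P$ to itself, which is the key input for showing $\F^{b}_\P(\rL^{b}_{\P/\P\circ\P})\simeq\rL_{\P/\I_C}[1]$ (Lemma~\ref{l:relcotanbimod}). It has nothing to do with building cofibrant replacements of $\P$ as a bimodule, nor with ``composition product interacting with spheres''; your last paragraph mischaracterises it entirely.

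The actual computation of $\widetilde{\rL}_\P$ is elementary and does not use (S8). The paper's trick (Lemma~\ref{bimodules&leftmodules} and Computations~\ref{computation1}) is that the forgetful functor $\BMod(\P)\to\LMod(\P)$ is a \emph{left} Quillen functor when $\P$ is $\Sigma$-cofibrant, so suspensions in $\BMod(\P)_{\P//\P}$ may be computed after forgetting to left modules. Since $\P$ is \emph{free} as a left module over itself (generated by $\I_C$), one has $\P\sqcup_l\P\cong\P\circ\sS_C^{0}$ and inductively $\Sigma^{n}(\P\sqcup_l\P)\simeq\P\circ\sS_C^{n}$. No cofibrant replacement of $\P$ as a bimodule is needed: the map $\P\sqcup_b\P^{\cof}\to\P\sqcup_b\P$ is already a weak equivalence precisely because that forgetful functor is left Quillen. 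The passage $\T_\P\BMod(\P)\to\T_\P\IbMod(\P)$ then just takes the underlying prespectrum (differentiability plus \cite[Corollary~2.4.8]{YonatanBundle}), and the stable case follows as you indicated. Rewrite your ``second leg'' and ``main obstacle'' paragraphs around this left-module argument rather than around~(S8).
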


Besides that, we find a connection between Quillen cohomology and reduced Quillen cohomology of $\P$, expressed as follows.

\begin{thm}\label{c:longexseq1sum} (\ref{c:longexseq1}) Given an object $M\in \T_\P\IbMod(\P)$, there is  a long exact sequence of abelian groups of the form
	$$ \cdots \longrightarrow \sH^{n-1}_Q (\mathcal{P};M) \lrar  \sH^{n}_{Q,r} (\mathcal{P};M) \lrar \sH^{n}_{Q,\red} (\mathcal{P};M) \lrar \sH^{n}_Q (\mathcal{P};M) \lrar \sH^{n+1}_{Q,r} (\mathcal{P};M)  \lrar \cdots $$
	where $\sH^{\bullet}_{Q,r} (\mathcal{P};-)$ refers to Quillen cohomology group of $\P$ when regarded as a right module over itself and $\sH^{\bullet}_{Q,\red} (\mathcal{P};-)$ refers to reduced Quillen cohomology group of $\P$.
\end{thm}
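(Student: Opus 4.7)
My plan is to derive the long exact sequence from a cofiber sequence in the tangent model category $\T_\P\IbMod(\P)$ of the form
$$\rL_\P \longrightarrow \rL_\P^{\red} \longrightarrow \rL_\P^{r},$$
where $\rL_\P$, $\rL_\P^{\red}$ and $\rL_\P^r$ respectively denote the transports to $\T_\P\IbMod(\P)$ of the cotangent complexes of $\P$ viewed as an operad in $\mathcal{S}$, as a $C$-colored operad, and as a right module over itself. Applying the contravariant functor $\pi_0\Map^h_{\T_\P\IbMod(\P)}(-,M[n])$ to this cofiber sequence and unwinding the long exact sequence of homotopy groups of the resulting fiber sequence of mapping spectra produces the sequence claimed, with the indexing forced by the shift conventions in the formula $\sH^n_Q(\P,M)=\pi_0\Map^h(\widetilde{\rL}_\P,M[n+1])$ of Theorem \ref{t:qcohomenriched}.

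The first step is to construct the three objects and the connecting maps. By the chain of Quillen equivalences of Theorem \ref{t:mainoptangentsum}, both $\rL_\P$ and $\rL_\P^{\red}$ can be transported to $\T_\P\IbMod(\P)$; by Proposition \ref{cotantgentcplxOperadandBimodule} and its analogue with $\Op_C(\mathcal{S})$ replacing $\Op(\mathcal{S})$, they admit explicit prespectrum models built from formulas of the type $\P\circ \sS_C^n$, differing by a suspension shift. For $\rL_\P^r$ I would transport the cotangent complex of $\P$ as a right $\P$-module via the adjunction coming from the forgetful functor $\IbMod(\P)\to\RMod(\P)$, producing an analogous infinitesimal prespectrum model by the same strategy as in Theorem \ref{t:qcohomenriched}. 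The Quillen adjunctions $\RMod(\P)_{\P/}\to\BMod(\P)_{\P/}\to\Op_C(\mathcal{S})_{\P/}\to\Op(\mathcal{S})_{\P/}$, together with the relative desuspensions, then supply the comparison maps $\rL_\P\to\rL_\P^{\red}\to\rL_\P^r$ in $\T_\P\IbMod(\P)$.

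The main obstacle is to verify that these maps fit together into a cofiber sequence. Conceptually, passing from operadic deformations in $\Op(\mathcal{S})$ to $C$-colored deformations restricts to those preserving the set of colors, while passing further to right-$\P$-module deformations discards the left-multiplication structure; the cofiber $\rL_\P^{\red}/\rL_\P$ should therefore detect deformations of the color structure, and these are encoded precisely by the right $\P$-module structure of $\P$. I plan to carry out the verification by a direct comparison of the three prespectrum models level by level, using the formula $(\widetilde{\rL}_\P)_{n,n}=\P\circ\sS_C^n$ and its $C$-colored and right-module analogues. The delicate point is the bookkeeping of the various $[-1]$-shifts arising from the analogues of Proposition \ref{cotantgentcplxOperadandBimodule} in the three ambient model categories; this bookkeeping is precisely what makes the cofiber nontrivial in general, despite the tangent categories in Theorem \ref{t:mainoptangentsum} all being Quillen-equivalent, and allows the identification of the cofiber with the transported right-module cotangent complex.
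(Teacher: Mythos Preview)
Your target is correct: the long exact sequence comes from a cofiber sequence in $\T_\P\IbMod(\P)$ relating the three transported cotangent complexes, and up to rotation this is exactly what the paper produces (Lemma \ref{cofibersequence1}). The gap is in how you propose to construct that cofiber sequence.

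The paper does not build it by level-wise comparison of explicit prespectrum models. Instead it uses the relative cotangent complex cofiber sequence of the unit map $\eta:\I_C\to\P$ in $\Op(\mathcal S)$,
\[
\rL_{\P/\I_C}\;\longrightarrow\;\LL\eta_!^{op}(\rL_{\I_C})[1]\;\longrightarrow\;\rL_\P[1],
\]
which exists for free (Definition \ref{d:relcotangent}), transports it to $\T_\P\IbMod(\P)$ via $\RR\U^{ib}_\P$, and then identifies the three terms separately: $\rL_{\P/\I_C}$ with the image of $\rL_\P^{\red}$ by Lemma \ref{CotangentCplxOpC&OP}; $\rL_\P[1]$ with $\widetilde{\rL}_\P$ by Corollary \ref{conclusionCotangentCplx}; and $\LL\eta_!^{op}(\rL_{\I_C})$ with the image of $\rL_\P^r$, using that $\P$ is free on $\I_C$ as a right $\P$-module together with the observation that $\widetilde{\rL}_{\I_C}$ is just the cotangent complex of $\I_C\in\Coll_C(\mathcal S)$. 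The unit map $\eta$ is the organizing principle; your proposal never mentions it.

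Your anticipated ``analogue of Proposition \ref{cotantgentcplxOperadandBimodule} with $\Op_C(\mathcal S)$ replacing $\Op(\mathcal S)$'' is also not correct as stated. The proof of Lemma \ref{l:relcotanbimod} shows that under $\T_\P\BMod(\P)\simeq\T_\P\Op_C(\mathcal S)$ the reduced cotangent complex $\rL_\P^{\red}$ corresponds to $\rL^b_{\P/\P\circ\P}[-1]$, not to $\rL^b_\P[-1]$; there is no extra shift coming from passing to $\Op_C$, and the bimodule avatar is genuinely different from the one for $\rL_\P$. So the three objects do not admit parallel ``$\P\circ\sS_C^{n}$''-type models that one could compare directly. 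Likewise, the comparison maps you hope to extract from the chain $\RMod(\P)_{\P/}\to\cdots\to\Op(\mathcal S)_{\P/}$ are not the maps in the cofiber sequence; those maps come from the boundary of the relative cotangent triangle of $\eta$, and there is no shortcut around it.
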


\subsubsection{\textbf{Twisted arrow $\infty$-categories and the cotangent complex of simplicial operads.}}

{Our result concerning the cotangent complex of simplicial operads (or $\infty$-operads) will provide a generalization for the following.}

\begin{thm}\label{t:Qcohominftycat} (Y. Harpaz, J. Nuiten  and M. Prasma \cite{YonatanCotangent}) Let $\C$ be a fibrant simplicial category. There is an equivalence of $\infty$-categories $$\T_\C\Cat(\Set_\Delta)_\infty \simeq \Fun(\Tw(\C) , \Sp) $$ with $\Sp$ being the $\infty$-category of spectra and $\Tw(\C)$ being the \textbf{twisted arrow} $\infty$-\textbf{category} of $\C$. Furthermore, the cotangent complex $\rL_\C\in\T_\C\Cat(\Set_\Delta) $ is then identified {with} the constant functor $\Tw(\C) \lrar \Sp$ on $\mathbb{S}[-1]$, i.e., the desuspension of the sphere spectrum. Consequently, the $n$'th Quillen cohomology group of $\C$ with coefficients in a functor $\F:\Tw(\C) \lrar \Sp$ is given by $$ \sH^{n}_Q(\C ; \F) = \pi_{-n-1} \lim\F.$$
\end{thm}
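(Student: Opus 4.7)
The plan is to pass through the bimodule side of the Quillen equivalence $\T_\C\Cat(\Set_\Delta) \simeq \T_\C\BMod(\C)$ furnished by Theorem \ref{t:cattangent}, so that it will suffice to identify $\T_\C\BMod(\C)_\infty$ with $\Fun(\Tw(\C),\Sp)$ and to transport the cotangent complex through that identification. I would split the argument into three steps: (i) present $\BMod(\C)_\infty$ as an $\infty$-category of simplicial-set-valued diagrams indexed by $\Tw(\C)$; (ii) use that stabilization commutes with limits of presentable $\infty$-categories to obtain $\T_\C\BMod(\C)_\infty \simeq \Fun(\Tw(\C),\Sp)$; and (iii) combine Theorem \ref{t:cotangentcatsum} with a computation of the bimodule cotangent complex $\rL^b_\C$ to pin down $\rL_\C$ and read off the cohomology formula.

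For step (i), the idea is that a $\C$-bimodule $M$ is the data of simplicial sets $M(c,d)$ for $c,d \in \Ob(\C)$ together with compatible left and right $\C$-actions, and these actions are precisely the functoriality encoded by the twisted arrow category: a morphism $(c \to d) \to (c' \to d')$ in $\Tw(\C)$ is a factorization $c' \to c \to d \to d'$ whose induced map $M(c,d) \to M(c',d')$ is given by pre- and post-composition. I would realize this as an equivalence $\BMod(\C)_\infty \simeq \Fun(\Tw(\C), \Set_\Delta)$ via unstraightening of the bifibration associated with the hom-bifunctor, under which the identity bimodule corresponds to the functor $(c\to d) \mapsto \C(c,d)$.

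For step (ii), since stabilization commutes with cotensors of presentable $\infty$-categories, the tangent category at $F_\C$ (the functor corresponding to $\C$) decomposes as $\Fun(\Tw(\C), \T_{F_\C(-)}\Set_\Delta)$, and because the tangent $\infty$-category of spaces at any base is $\Sp$, this reduces to $\Fun(\Tw(\C), \Sp)$. For step (iii), Theorem \ref{t:cotangentcatsum} identifies $\rL_\C$ with $\rL^b_\C[-1] = \Sigma^\infty_+(\C)[-1]$, so I need to compute the suspension spectrum of the identity bimodule; since in the parametrized-spectra picture the suspension spectrum of a base is the constant sphere bundle, and $\C$ is the terminal object of $\BMod(\C)_{/\C}$, it passes to the constant functor at $\mathbb{S}$. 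Hence $\rL_\C$ is identified with the constant functor at $\mathbb{S}[-1]$, and the cohomology formula follows from
\[
\sH^{n}_Q(\C, \F) = \pi_0 \Map_{\Fun(\Tw(\C),\Sp)}(\mathrm{const}_{\mathbb{S}[-1]}, \F[n]) = \pi_0 \lim_{\Tw(\C)} \F[n+1] = \pi_{-n-1}\lim \F,
\]
using that the constant diagram functor is left adjoint to $\lim$ and that mapping out of the unit sphere computes $\Omega^\infty$.

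The main obstacle I expect is making step (i) rigid enough that stabilization in $\BMod(\C)$ matches pointwise stabilization in $\Fun(\Tw(\C), \Set_\Delta)$ under the identification, and that the match intertwines the suspension spectrum functor; this requires a careful comparison between the two tangent-category presentations, one coming from the parametrized spectrum formalism of \cite{YonatanBundle} and the other from fiberwise straightening. Verifying these compatibilities is what makes the explicit identification of $\rL_\C$ with the constant functor at $\mathbb{S}[-1]$ non-formal; everything else is then bookkeeping with adjunctions.
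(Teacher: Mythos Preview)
The theorem you are addressing is quoted in the paper as a result of Harpaz--Nuiten--Prasma and is not proved in this paper; however, the paper does prove the operadic generalization (Theorem \ref{c:main}), and the chain of equivalences \eqref{eq:seqeqvl} used there makes clear what the argument for categories should look like. Against that template, your step (i) contains a genuine error.

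You claim an equivalence $\BMod(\C)_\infty \simeq \Fun(\Tw(\C),\Set_\Delta)$, justified by saying the bimodule actions ``are precisely the functoriality encoded by the twisted arrow category.'' This is false: the functoriality of a bimodule $M(c,d)$ is indexed by \emph{pairs of objects} with separate left and right actions, i.e.\ by $\C^{\op}\times\C$, not by arrows of $\C$. A map in $\Tw(\C)$ from $f:c\to d$ to $f':c'\to d'$ records not only $c'\to c$ and $d\to d'$ but the \emph{constraint} that $f'$ is the resulting composite; a generic bimodule has no structure that sees this constraint. Concretely, if $\C$ is discrete on two objects then $\BMod(\C)\simeq\Set_\Delta^{4}$ while $\Tw(\C)$ has two objects, so $\Fun(\Tw(\C),\Set_\Delta)\simeq\Set_\Delta^{2}$.

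What is correct, and what the paper's operadic argument specializes to, is one level higher in the slice tower. Unstraightening gives $\BMod(\C)=\Fun(\C^{\op}\times\C,\Set_\Delta)\simeq(\Set_\Delta^{\cov})_{/\sN(\C^{\op}\times\C)}$, and under this the identity bimodule $\Map_\C$ is sent to the left fibration $\Tw(\C)\to\sN(\C^{\op}\times\C)$ (see \eqref{eq:uncat}). Hence it is the \emph{pointed slice} $\BMod(\C)_{\C//\C}$ that identifies with $(\Set_\Delta^{\cov})_{\Tw(\C)//\Tw(\C)}\simeq\Fun(\Tw(\C),(\Set_\Delta)_*)$, and only after stabilizing does one land in $\Fun(\Tw(\C),\Spectra)$. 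Your step (ii) reasoning about pointwise tangent categories does not repair this, because it presupposes the false identification from step (i); once step (i) is corrected to work at the level of $\BMod(\C)_{\C//\C}$ rather than $\BMod(\C)$, step (ii) becomes the single observation that stabilization of $\Fun(\Tw(\C),(\Set_\Delta)_*)$ is computed pointwise. Your step (iii) is then essentially correct: under the corrected identification the identity bimodule becomes the terminal object, its suspension spectrum is the constant sphere, and Theorem \ref{t:cotangentcatsum} supplies the shift by $-1$.
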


The construction of twisted arrow $\infty$-categories (of $\infty$-categories) $$\Tw(-) : \Cat_\infty \lrar \Cat_\infty $$ was originally introduced by Lurie [\cite{Lurieha}, $\S$5.2]. We extend that to the construction of \textbf{twisted arrow} $\infty$-\textbf{categories of simplicial operads}. Let $\P$ be a fibrant simplicial operad. Conceptually, the twisted arrow $\infty$-category $\Tw(\P)$ is defined to be the \textbf{covariant unstraightening} of the simplicial functor $\P : \textbf{Ib}^{\mathcal{P}} \lrar \Set_\Delta$, which encodes the data of $\P$ as an infinitesimal bimodule over itself (see $\S$\ref{s:operadicmodules}). In particular, objects of $\Tw(\P)$ are precisely the \textit{operations} of $\P$ (i.e., the vertices of spaces of operations of $\P$), while the morphisms of $\Tw(\P)$ are given by the factorizations of operations. For examples, $\Tw(\Com)$ is equivalent to $\Fin_*^{\op}$ the (opposite) category of finite pointed sets, while $\Tw(\Ass)$ is equivalent to the simplex category $\Delta$.  Moreover, we also give a model for the twisted arrow $\infty$-category of the \textbf{little $n$-discs operad} $\E_n$ in {$\S$\ref{s:twoperad}}.

\begin{thm}\label{c:mainsum} (\ref{c:main}) Let $\P$ be a fibrant and $\Sigma$-cofibrant simplicial operad. Then there is an equivalence of $\infty$-categories
	$$ \T_\P\Op(\Set_\Delta)_\infty \lrarsimeq \Fun(\Tw(\P) , \Sp) .$$
	Moreover, under this equivalence, the cotangent complex $\rL_\P \in \T_\P\Op(\Set_\Delta)_\infty$ is identified {with} the desuspension of the functor $ \F_\P : \Tw(\P)  \lrar \Sp$ given on objects by sending each operation $\mu\in\P$ of arity $m$  to $\F_\P(\mu) = \mathbb{S}^{\oplus m}$ the $m$-fold coproduct of the sphere spectrum. Consequently, the $n$'th Quillen cohomology group of $\P$ with coefficients in a given functor $\F : \Tw(\P) \lrar \Sp$ is given by the formula
	$$ \sH^{n}_Q (\P ; \F) = \pi_{0}\Map_{\Fun(\Tw(\P) , \Sp)} (\F_\P , \F[n+1]).$$
\end{thm}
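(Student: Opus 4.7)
The plan is to transport the cotangent complex $\rL_\P$ across the Quillen equivalence $\T_\P\Op(\Set_\Delta) \simeq \T_\P\IbMod(\P)$ of Theorem \ref{t:mainoptangentsum}, and then reinterpret $\T_\P\IbMod(\P)_\infty$ as a functor $\infty$-category via straightening-unstraightening. Under this combined equivalence, the explicit infinitesimal model $\widetilde{\rL}_\P$ supplied by Theorem \ref{t:qcohomenriched} should translate into the functor $\F_\P$ on $\Tw(\P)$, after which the Quillen cohomology formula reads off directly from the general definition.

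The first key ingredient is the identification $\T_\P\IbMod(\P)_\infty \simeq \Fun(\Tw(\P), \textbf{Spectra})$. Recall from $\S$\ref{s:algebrasencodingmodules} that an infinitesimal $\P$-bimodule is precisely a simplicial functor $\textbf{Ib}^\P \to \Set_\Delta$, where $\textbf{Ib}^\P$ is the simplicial enriched category encoding the notion of being an infinitesimal bimodule over $\P$, and that $\P$ itself corresponds to a distinguished such functor. Passing to $\infty$-categories yields an equivalence $\IbMod(\P)_\infty \simeq \Fun(\textbf{Ib}^\P_\infty, \cS_\infty)$, and by its very construction the twisted arrow $\infty$-category $\Tw(\P)$ is the covariant unstraightening of the functor $\P : \textbf{Ib}^\P_\infty \to \cS_\infty$. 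Lurie's unstraightening equivalence for the slice therefore gives
$$ \IbMod(\P)_{\infty,/\P} \simeq \Fun(\textbf{Ib}^\P_\infty, \cS_\infty)_{/\P} \simeq \Fun(\Tw(\P), \cS_\infty), $$
and since stabilization of a functor $\infty$-category into pointed spaces is computed objectwise, we obtain $\T_\P\IbMod(\P)_\infty \simeq \Fun(\Tw(\P), \textbf{Spectra})$.

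The second ingredient is tracking $\rL_\P$ through this chain. Combining Theorem \ref{t:mainoptangentsum} with Theorem \ref{t:qcohomenriched}, the cotangent complex corresponds to the prespectrum $\widetilde{\rL}_\P$ with $(\widetilde{\rL}_\P)_{n,n} = \P \circ \sS_C^n$. Since $\sS_C^n$ is concentrated in unary arity, the circle product in arity $m$ takes the form
$$ (\P \circ \sS_C^n)(c_1,\ldots,c_m;c) \cong \P(c_1,\ldots,c_m;c) \times (\sS^n)^{\times m}, $$
so straightening over $\Tw(\P)$ produces the assignment sending an operation $\mu \in \P(c_1,\ldots,c_m;c)$ at prespectrum level $(n,n)$ to the pointed space $(\sS^n)^{\times m}$. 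Passing to the associated spectrum and using the stable equivalence $\hocolim_n \Omega^n (\sS^n)^{\times m} \simeq \mathbb{S}^{\times m}$ produces the functor $\mu \mapsto \mathbb{S}^{\times m}$, with the desuspension in the statement accounting for the shift $[n+1]$ rather than $[n]$ already visible in Theorem \ref{t:qcohomenriched}. The Quillen cohomology formula then follows from the definition of Quillen cohomology as $\pi_0$ of the mapping spectrum in the stable $\infty$-category $\Fun(\Tw(\P), \textbf{Spectra})$.

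The principal obstacle will be the first ingredient: executing the straightening-unstraightening comparison in the simplicial enriched setting, and verifying that the various passages to $\infty$-categories commute with taking the slice over $\P$ and with stabilization. In particular, one must confirm that $\Tw(\P)$, as constructed combinatorially in the paper, genuinely agrees up to equivalence with the covariant unstraightening of the presheaf represented by $\P \in \IbMod(\P)$. Once this identification is in place, the tracking of $\widetilde{\rL}_\P$ is essentially a manipulation of circle products combined with the standard computation of the stable homotopy type of a finite product of spheres.
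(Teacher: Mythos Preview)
Your proposal is correct and follows essentially the same route as the paper. The paper implements your chain of equivalences at the model-categorical level as the explicit zig-zag $\T_\P\Op(\Set_\Delta) \xrightarrow{\U^{ib}_\P} \T_\P\IbMod(\P) \xrightarrow{\U n^{\Sp}_{\P//\P}} \Sp((\Set_\Delta^{\cov})_{\Tw(\P)//\Tw(\P)}) \xrightarrow{\St^{\Sp}_{\Tw(\P)}} \Fun(\mathfrak{C}[\Tw(\P)],\Spectra)$: unstraighten over $\textbf{Ib}^\P$ (so that $\P$ goes to $\Tw(\P)$), pass to the pointed slice, then straighten back over $\Tw(\P)$---which is exactly the model-level realization of your identification $\Fun(\textbf{Ib}^\P_\infty,\cS_\infty)_{/\P}\simeq\Fun(\Tw(\P),\cS_\infty)$, and the fiberwise computation of $\F_\P(\mu)\simeq(\sS^k)^{\times m}$ is carried out via the same circle-product formula $(\P\circ\sS^n_C)(\overline{c})\cong\P(\overline{c})\times(\sS^n)^{\times m}$ together with the fiber identification for unstraightenings (Remark~\ref{r:fiberkan}).
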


For example, we consider the case where $\P=\E_\infty$. Denote by $\Mod_{\textbf{k}}$ the category of modules over a given commutative ring $\textbf{k}$. A \textbf{right $\Gamma$-module} is by definition a functor $T : \Fin_*^{\op} \lrar \Mod_{\textbf{k}}$. There is a significant invariant for right $\Gamma$-modules given by the \textbf{stable cohomotopy groups} (cf. \cite{Pirashvili}), which plays a key role in the $\E_\infty$-\textbf{obstruction theory} initiated by Robinson \cite{Robinson}. In Proposition \ref{p:stablecohomotopy}, we show that for each integer $n$ there is an isomorphism 
$$ \pi^n(T) \cong  \sH^{n-1}_Q (\E_\infty ; \widetilde{T})$$
between the $n$'th stable cohomotopy group of $T$ and the $(n-1)$'th Quillen cohomology group of $\E_\infty$ with coefficients in the induced functor $\widetilde{T} : \Tw(\E_\infty) \simeq \sN(\Fin_*^{\op}) \lrar \Sp$. This observation reaffirms the fact that the obstruction for an \textbf{$\E_\infty$-structure} is indeed controlled by the Quillen cohomology groups of $\E_\infty$ (cf. [\cite{Robinson}, Theorem 5]). 

\subsubsection{\textbf{Quillen principle and the cotangent complex of $\E_n$-operads.}}

In the context of algebraic objects, Quillen cohomology has attracted a lot of attention in literature because of its central role in the deformation theory. A traditional method for computing the \textbf{deformation complex} (or \textbf{tangent complex}) of a given object is to relate the cotangent complex to a \textbf{Hochschild-type complex} of that object via a nice enough fiber sequence. When such a fiber sequence exists, we will say that the object is subject to the \textbf{Quillen principle}. Let us recall some fundamental results based on that kind of thinking.  

{Let $\textbf{k}$ be a commutative ring that contains the field $\QQ$.}

\begin{example}  At the beginning, for $A$ a cofibrant $\textbf{k}$-algebra, Quillen \cite{Quillen2} showed that there is a fiber sequence of $A$-bimodules of the form
	\begin{equation}\label{eq:Qprin1}
		\rL_A \lrar A\otimes A \lrar A
	\end{equation}  
	in which $A\otimes A$ represents the free $A$-bimodule generated by $\textbf{k}$ and the second map is induced by the unit map $\textbf{k}\lrar A$. This tells us that the deformation complex of $A$ can be described via the Hochschild cohomology of $A$, which is really accessible using the bar resolution.
\end{example}

\begin{example} Lie algebras are subject to the Quillen principle as well. Indeed, for $\g$ a {cofibrant} Lie algebra over $\textbf{k}$, there is a fiber sequence of (left) $\g$-modules:
	\begin{equation}\label{eq:Qprin2}
		\rL_\g \lrar \sU(\g) \lrar \textbf{k} 
	\end{equation}
	in which $\sU(\g)$ is the universal enveloping algebra (regarded as the free $\g$-module generated by $\textbf{k}$) and $\textbf{k}$ is equipped with the trivial $\g$-action. {Due to the above fiber sequence, we may describe the deformation complex of $\g$ using the usual \textbf{Lie cohomology} of $\g$, which can be viewed as a variant of the Hochschild-cohomology formalism.}
\end{example}

\begin{example} In fact, an analogue was observed for discrete groups even earlier. For a group $G$, one paid attention to the \textbf{augmentation ideal} $I_G$ fitting into the following exact sequence of $G$-modules:  
	\begin{equation}\label{eq:Qprin3}
		I_G \lrar \mathbb{Z}[G] \lrar \mathbb{Z}
	\end{equation}
	where $\mathbb{Z}$ is equipped with the trivial $G$-action. It turns out that $I_G$ is a model for the \textbf{Beck module} of $G$, which is nothing but the (underived) cotangent complex of $G$. A generalized version of \eqref{eq:Qprin3} for \textbf{simplicial groups} can be found in \cite{YonatanCotangent}.
\end{example}

Furthermore, Kontsevich \cite{Ko} predicted the existence of a fiber sequence relating the tangent complex and the Hochschild cohomology complex of a dg $\E_n$-algebra (up to a shift by $n$). A generalization of this conjecture was proved independently by Francis [\cite{Francis}, Theorem 2.26] and Lurie [\cite{Lurieha}, Theorem 7.3.5.1], when working in suitable stable base categories. Let $\C$ be a stable presentable symmetric monoidal $\infty$-category whose monoidal product distributes over colimits, and let $A$ be an $\E_n$-algebra in $\C$. Denote by $\Mod_A^{\E_n}(\C)$ the $\infty$-category of $A$-modules over $\E_n$. The authors first show that there is a composed equivalence of $\infty$-categories:
	\begin{equation}\label{eq:composedeq}
	\T_A\Alg_{\E_n}(\C) \lrarsimeq \T_A\Mod_A^{\E_n}(\C) \x{\Psi_A}{\lrarsimequ} \Mod_A^{\E_n}(\C).
\end{equation}
We use the same notation $\rL_A$ to denote the cotangent complex of $A\in\Alg_{\E_n}(\C)$ and also, its images in $\T_A\Mod_A^{\E_n}(\C)$ and $\Mod_A^{\E_n}(\C)$. The authors verify a higher version of \eqref{eq:Qprin1}, meaning that the Quillen principle holds for $\E_n$-algebras, as indicated below.

\begin{thm}(Francis \cite{Francis}, Lurie \cite{Lurieha}) There is a fiber sequence in $\Mod_A^{\E_n}(\C)$ of the form 
	\begin{equation}\label{eq:cofibEnalg}
		\Free(\textbf{1}) \x{\eta_A}{\lrar} A \lrar \rL_A[n]
	\end{equation}
	where $\Free$ denotes the free functor $\C \lrar \Mod_A^{\E_n}(\C)$ and $\textbf{1}$ refers to the unit of $A$.
\end{thm}

This indeed gives a solution for Kontsevich's conjecture because the middle term of \eqref{eq:cofibEnalg} represents the Hochschild complex of $A$ itself.

\begin{rem}\label{rem:un} It is natural to ask how an unstable-analogue of \eqref{eq:cofibEnalg} (i.e. when the base category is no longer stable) can be formulated. Let us first discuss the sequence \eqref{eq:cofibEnalg} further. Note that under the equivalence $\Mod_A^{\E_n}(\C) \simeq \T_A\Mod_A^{\E_n}(\C)$, the map $\eta_A : \Free(\textbf{1}) \lrar A$ is identified with the map
	$$ \Sigma^\infty_+(\eta_A) \lrar \rL^m_A $$
where $\Sigma^\infty_+(\eta_A)$ is the image of $\eta_A$ under the functor $\Sigma^\infty_+: \Mod_A^{\E_n}(\C)_{/A} \lrar \T_A\Mod_A^{\E_n}(\C)$ and  $\rL^m_A$ denotes the cotangent complex of $A$ regarded as a module over itself. So the sequence \eqref{eq:cofibEnalg} is identified with {a} fiber sequence in $\T_A\Mod_A^{\E_n}(\C)$ {of the form}
\begin{equation}\label{eq:cofibEnalggen}
	\Sigma^\infty_+(\eta_A) \lrar \rL^m_A \lrar \rL_A[n]
\end{equation}
Moreover, note that in the composed functor \eqref{eq:composedeq}, the first functor is an equivalence even in general base categories, whilst $\Psi_A$ is one due to the stability of $\C$. Therefore, we can say that an unstable-analogue of \eqref{eq:cofibEnalg}, if any, should take the form \eqref{eq:cofibEnalggen}. Alternatively, it means  $\rL_A[n]$ is equivalent to $\rL_{A/\Free(\textbf{1})}$ the \textbf{relative cotangent complex} of $\eta_A$ (see Definition \ref{d:relcotangent}).
\end{rem}

{A proof of the existence of \eqref{eq:cofibEnalggen} for topological $\E_n$-algebras can now be found in \cite{Hoang1}, where we consider not only the cotangent complex of such objects but also algebras over a general enriched operad.} In the present paper, it is more convenient for us to concentrate solely on the cotangent complex of topological $\E_n$-operads.

We assert that the cotangent complex of the operad  $\E_n$ is also subject to the Quillen principle. We denote by $\mu_0\in\E_n(0)$ the unique nullary operation of $\E_n$ regarded as an object of $\Tw(\E_n)$. Furthermore, denote by $\eta_!(\mathbb{S})$ the left Kan extension of the map $\{\mathbb{S}\} \lrar \Sp$ along the inclusion $\eta :\{\mu_0\} \lrar \Tw(\E_n)$.
\begin{thm}\label{p:cotantEnsum} (\ref{p:cotantEn}, \ref{c:cotantEn})  There is a canonical fiber sequence in $\Fun(\Tw(\E_n),\Sp)$ of the form
	\begin{equation}\label{eq:cotantEnsum}
		\eta_!(\mathbb{S}) \lrar \ovl{\mathbb{S}} \lrar \F_{\E_n}[n],
	\end{equation}
	where $\ovl{\mathbb{S}}$ denotes the constant functor on the sphere spectrum. Consequently, for a given functor $\F : \Tw(\E_n) \lrar \Sp$, there is a long exact sequence of abelian groups of the form
	$$ \cdots \lrar \sH^{-k-n-1}_Q(\E_n ; \F) \lrar \pi_k\lim\F \lrar \pi_k\F(\mu_0) \lrar \sH^{-k-n}_Q(\E_n ; \F) \lrar \pi_{k-1}\lim\F \lrar \cdots .$$
	In particular, when $\F(\mu_0)=0$ then for every $k\in\ZZ$, we have a canonical isomorphism
	$$ \sH^{-k-n}_Q(\E_n ; \F) \lrarsimeq \pi_{k-1}\lim\F .$$
\end{thm}

\begin{rem}\label{r:QprinEn1} By Theorem \ref{c:mainsum}, the sequence \eqref{eq:cotantEnsum} {can be equivalently rewritten as}
	\begin{equation}\label{eq:cotantEnsum1}
		\eta_!(\mathbb{S}) \lrar \ovl{\mathbb{S}} \lrar \rL_{\E_n}[n+1]
	\end{equation}
{Here we use the same notation for the image of the cotangent complex $\rL_{\E_n} \in \T_\P\Op(\Set_\Delta)_\infty$ in $\Fun(\Tw(\E_n),\Sp)$}. We denote by $\Free^{ib}(\mu_0)$ the free infinitesimal $\E_n$-bimodule generated by a single point (concentrated in level $0$), {and write $\eta_{\E_n} : \Free^{ib}(\mu_0) \lrar \E_n$ for the map} classified by $\mu_0 \in \E_n(0)$. We can show that under the equivalence of $\infty$-categories:
	 $$\Fun(\Tw(\E_n),\Sp)\simeq\T_{\E_n}\IbMod(\E_n)_\infty,$$ 
	 the map $\eta_!(\mathbb{S}) \lrar \ovl{\mathbb{S}}$ is identified with the {canonical} map $\Sigma^\infty_+(\eta_{\E_n}) \lrar \rL^{ib}_{\E_n}$ where $\rL^{ib}_{\E_n}$ denotes the cotangent complex of $\E_n$ regarded as an infinitesimal bimodule over itself. Thus, according to the logic as in Remark \ref{rem:un}, we will refer to \eqref{eq:cotantEnsum} as the \textbf{Quillen principle for topological $\E_n$-operads}.
\end{rem}

 \begin{rem}\label{r:QprinEn2} The Quillen principle for $\E_n$-operads in stable base categories, such as chain complexes over a commutative ring and symmetric spectra, becomes even more outstanding. Namely, in this case we may obtain a fiber sequence of infinitesimal $\E_n$-bimodules {of the form}
 	\begin{equation}\label{eq:cofibEnstab}
 	 \Free^{ib}(\mu_0) \x{\eta_{\E_n}}{\lrar} \E_n \lrar  \rL_{\E_n}[n+1]
 	\end{equation}
{Here, we maintain the same notation for the image of $\rL_{\E_n} \in \T_{\E_n}\Op(\mathcal{S})$} under the equivalence $\T_{\E_n}\Op(\mathcal{S}) \simeq \IbMod(\E_n)$ (see Theorem \ref{t:mainoptangentsum}). This verifies the operadic-analogue of Kontsevich's conjecture because the middle term of \eqref{eq:cofibEnstab} represents the Hochschild complex of $\E_n$ itself. On a different aspect, applying the functor $(-)\circ_{\E_n} A$ (cf. Example \ref{ex:mainTcomex2}) to \eqref{eq:cofibEnstab}, we may recover the fiber sequence \eqref{eq:cofibEnalg} of Francis-Lurie. This reveals that Quillen cohomology of algebras over an operad {can be} controlled by Quillen cohomology of the operad itself. We will address these in a subsequent paper.
 \end{rem}

\subsection{A preview on $\sT$-cohomology}

We are giving a brief preview of what we would like to develop in future work. {Let us first discuss} the construction $\Tw(-)$ a little more. We let $\textbf{S}:=(\Set_\Delta)_\infty$ denote the \textbf{$\infty$-category of spaces}. Let $\P$ be a fibrant simplicial operad ({which, for simplicity, has a single color}). For an infinitesimal $\P$-bimodule $M$, we can associate to it a functor $M^\star : \Tw(\P) \lrar \textbf{S}$ taking an operation $\mu \in \P(k)$ to the space $M(k)$. As discussed in Remark \ref{r:mainTcom}, we have a weak equivalence of spaces:
\begin{equation}\label{eq:mainTcom}
	\lim_{\Tw(\P)}M^\star \lrarsimeq \Map^{\h}_{\IbMod(\P)}(\P,M)
\end{equation}

The story becomes more interesting when examining some special cases of $\P$ and $M$. For the case of the little discs operads, it will be noteworthy that there is an $\infty$-categorical filtration of the nerve of $\Fin_*^{\op}$:
$$ \sN(\Delta) \simeq \Tw(\E_1) \lrar \Tw(\E_2) \lrar \cdots \lrar \Tw(\E_\infty) \simeq \sN(\Fin_*^{\op}) .$$

\begin{example}\label{ex:mainTcomex1} Let $f : \P \lrar \Q$ be a map of single-colored simplicial operads. Considering $\Q$ as an infinitesimal $\P$-bimodule with the structure induced by $f$, we obtain a functor $\Q^\star : \Tw(\P) \lrar \textbf{S}$. In particular, for the case where $\P=\E_1$, then $\Q^\star$ is identified {with} a cosimplicial space, that was first considered by McClure-Smith \cite{ClureSmith}, and developed further in \cite{Hess} with the aim of studying {the space of maps $\E_1 \lrar \Q$}. More generally, {for the case in which $\P=\E_n$ for some $n<\infty$ and $\Q$ is such that $\Q(0)\simeq \Q(1) \simeq *$, then} according to [\cite{DuVic}, Theorem 1.10], we have weak equivalences
	$$ \lim_{\Tw(\E_n)}\Q^\star \, \simeq \, \Map^{\h}_{\IbMod(\E_n)}(\E_n,\Q) \, \simeq \, \Omega^{n+1}\Map^{\h}_{\Op_*}(\E_n,\Q)$$ 
	where $\Map^{\h}_{\Op_*}(-,-)$ refers to the derived mapping space between simplicial single-colored operads, {and the $(n+1)$-fold loop space is taken at the base point $f$.}
\end{example}

{Let us assume further that $\P$ is $\Sigma$-cofibrant.}

\begin{example}\label{ex:mainTcomex2} Let $A\in \Alg_\P(\Set_\Delta)$ be a $\P$-algebra. For any $A$-module $N$, the \textbf{endomorphism $\Sigma_*$-object} $\End_{A,N}$ {is defined by, for each $k\geq0$, letting $$\End_{A,N}(k) := \Map_{\Set_\Delta}(A^{\times \, k},N).$$}
	Moreover, this construction determines the right adjoint of a Quillen adjunction:
	$$ \adjunction{(-)\circ_\P A \, }{ \, \IbMod(\P)}{\Mod_\P^A}{\, \End_{A,-}}  $$
	in which the left adjoint sends each $M\in\IbMod(\P)$ to the relative composite product $M\circ_\P A$. (Notice
	that in a general base category, it is necessary to require that $A$ has a cofibrant underlying object). {Thus
		we have a chain of weak equivalences}
	$$ \lim_{\Tw(\P)}\End_{A,N}^\star \, \simeq \, \Map^{\h}_{\IbMod(\P)}(\P,\End_{A,N}) \, \simeq \, \Map^{\h}_{\Mod_\P^A}(A,N), $$
{provided further that $A\in \Alg_\P(\Set_\Delta)$ is cofibrant and $N\in\Mod_\P^A$ is fibrant.}
\end{example}

Now let $\cS$ be a symmetric monoidal model category equipped with a {weak monoidal Quillen adjunction} $$\adjunction{\mathbb{F} \, }{ \, \Set_\Delta}{\cS}{\, \mathbb{U}}$$
{(cf. \cite{Schwede})}. The cases of interest will be $\Sp^{\Sigma}$ the category of symmetric spectra, the category $\C(\textbf{k})$ of chain complexes over a commutative ring $\textbf{k}$, and $\Set_\Delta$ itself. For an infinitesimal $\mathbb{F}\P$-bimodule $M$, {by adjunction we obtain a weak equivalence of the form}
\begin{equation}\label{eq:mainTcom1}
	\lim_{\Tw(\P)}(\mathbb{U}M)^\star \lrarsimeq \Map^{\h}_{\IbMod(\mathbb{F}\P)}(\mathbb{F}\P,M)
\end{equation}
Here the images through  $\mathbb{F}$ and $\mathbb{U}$ are assumed to be in the right homotopy type already.

\begin{example}\label{ex:mainTcomex3} Let $A$ be a {cofibrant} $\mathbb{F}\P$-algebra and let $N$ be a {fibrant} $A$-module. Picking up the weak equivalences from Example \ref{ex:mainTcomex2}, we obtain a weak equivalence of the form  $$\lim_{\Tw(\P)}(\mathbb{U}\End_{A,N})^\star \, \simeq \, \Map^{\h}_{\Mod_{\mathbb{F}\P}^A}(A,N).$$
	Due to this, when $\P = \Ass$ and $\cS=\C(\textbf{k})$ (resp. $\cS=\Sp^{\Sigma}$), we recover the classical definition of Hochschild cohomology (resp. topological Hochschild cohomology). More generally, when $\P =\E_n$ (for $n<\infty$), we deduce that the space $\lim_{\Tw(\P)}(\mathbb{U}\End_{A,N})^\star$ represents the $\E_n$-Hochschild cohomology of $A$ with coefficients in $N$.
\end{example}

Motivated by the observations above and by the results given in the previous subsection, we propose a cohomology theory for simplicial operads, for which we call \textbf{$\sT$-cohomology}. Here the letter $``\sT$'' stands for $``$twist''. This is because the definition of $\sT$-cohomology is based on the construction of twisted arrow $\infty$-categories of operads. 

\begin{define} Let $\P$ be a fibrant {and $\Sigma$-cofibrant} simplicial operad. The $\sT$-\textbf{cohomology complex of} $\P$ with coefficients in a functor $\F : \Tw(\P) \lrar \cS_\infty$ is defined by $$\HT^\star(\P ; \F) := \lim\F.$$ Dually, the $\sT$-\textbf{homology complex of} $\P$ with coefficients in a functor $\G : \Tw(\P)^{\op} \lrar \cS_\infty$ is defined by $$\HT_\star(\P ; \G):=\colim\G.$$
\end{define}

\begin{rem} When $\cS_\infty=\Sp$ the $\infty$-category of spectra, then $\sT$-cohomology of $\P$ coincides with Quillen cohomology of $\P$ considered as an infinitesimal bimodule over itself ({see} Remark \ref{r:QprinEn1}){; and additionally, for} the case where $\P=\E_n$ {then} $\sT$-cohomology is very close to Quillen cohomology as mentioned in Theorem \ref{p:cotantEnsum} and Remark \ref{r:QprinEn2}.
\end{rem}

\begin{rem} {Unfortunately,} $\sT$-cohomology of the operad $\E_\infty$ is not at all interesting because the category $\Tw(\E_\infty)\simeq\sN(\Fin_*^{\op})$ contains $\l 0 \r$ as its zero object. Despite this, for a given functor $\F:\Tw(\E_\infty) \lrar \cS_\infty$, it would be interesting to consider $\sT$-cohomology of $\E_n$ with coefficients in the restriction of $\F$ to $\Tw(\E_n)$, i.e. the composed functor
	$$ \Tw(\E_n) \lrar \Tw(\E_\infty) \x{\F}{\lrar} \cS_\infty.$$
For the base category of modules over a field of characteristic $0$, this type of (co)homology was first considered by Loday \cite{Lod}, {and developed further} by Pirashvili \cite{Pirash}, in which the obtained (co)homology groups admit a \textbf{Hodge-type decomposition} (see {loc. cit.} for more details). 
\end{rem}

\begin{rem} Notice also that when $\P$ is concentrated in arity $1$ (i.e. $\P$ is identified with an $\infty$-category), then $\sT$-cohomology is just the usual cohomology of $\infty$-categories, which is a natural generalization of the \textbf{Baues-Wirsching cohomology} for small categories (cf. \cite{BauWi}). Furthermore, when $\cS_\infty=\Sp$, then $\sT$-cohomology of $\P$ is (up to a shift) equivalent to Quillen cohomology {of $\P$}, according to Theorem \ref{t:Qcohominftycat} of Harpaz-Nuiten-Prasma. 
\end{rem}

{In summary, the spirit of the proposed cohomology theory can be outlined as follows}: $\sT$-cohomology is established in the fashion of \textbf{functor cohomology}, and can be thought of as a kind of  generalized Hochschild cohomology that is controlled by operads and besides that, in many cases of interest, $\sT$-cohomology gives an approximation to Quillen cohomology, according to the Quillen principle mentioned earlier. Moreover, as we have seen, {it produces a proper cohomology theory not only for	operads but also for algebras over them}.

\medskip

\underline{\textbf{Organization of the paper.}} In $\S$\ref{s:background}, we recall briefly some necessary facts relevant to enriched operads and various types of modules over an operad. This section is also devoted to the most important concepts in the paper including tangent category, cotangent complex and Quillen cohomology groups. In $\S$\ref{s:convention}, we set up several conditions on the base category, which we work with throughout $\S$\ref{s:optangent} and $\S$\ref{chap:Qcohomenrichedop}. Section \ref{s:optangent} is devoted to the proof of Theorem \ref{t:mainoptangentsum}. In $\S$\ref{chap:Qcohomenrichedop}, we first set up an extra condition on the base category and by the way, provide several illustrations for this condition. The ultimate goal of this section is to prove Theorem \ref{t:qcohomenriched}. Section \ref{s:simplicialoperad} is devoted to Quillen cohomology of simplicial operads. We shall discuss the construction of twisted arrow $\infty$-categories of simplicial operads, after having described the unstraightening of simplicial (co)presheaves. We then explain how this construction classifies Quillen cohomology of simplicial operads.   

\smallskip

\underline{\textbf{Acknowledgements.}} This paper is part of the author's PhD thesis, written under the supervision of Yonatan Harpaz at {Universit{\'e} Sorbonne Paris Nord}. The author is greatly indebted to his advisor for suggesting the problems and for carefully reading earlier drafts of the paper. The author also extends gratitude to the anonymous referees for their valuable comments and suggestions, which have greatly contributed to improving the paper. Special thanks to one referee for providing an idea that allowed the author to significantly improve the computation of $\Tw(\E_n)$. The author acknowledges support
from the French project ANR-16-CE40-0003 ChroK.

\section{Background and notations}\label{s:background}

This section is devoted to the basic concepts and notations we work with throughout the paper. We first recall briefly fundamental notions relevant to enriched operads, various types of modules over an operad and their homotopy theories. The last subsection is devoted to the needed concepts relevant to the Quillen cohomology theory.

\subsection{Enriched operads}\label{s:operad}

\smallskip

Let $\SS$ be a \textbf{symmetric monoidal category}. Given a set $C$, regarded as the set of \textbf{colors}, we denote by 
$$\Seq(C) := \{(c_1,\cdots,c_n;c) \, | \, c_i,c \in C , n\geqslant0\}$$ and refer to it as the collection of $C$-\textbf{sequences}. For each $n\geq0$, we let $\Sigma_n$ denote the \textbf{symmetric group on $n$ elements}.

\begin{dfn} The data of a \textbf{symmetric} $C$-\textbf{collection} (also called a $C$-\textbf{symmetric sequence}) in $\SS$ consist of a collection $$M = \{M(c_1,\cdots,c_n;c)\}_{(c_1,\cdots,c_n;c) \in \Seq(C)}$$ of objects in $\cS$ and such that for each $(c_1,\cdots,c_n;c) \in \Seq(C)$ and $\sigma\in \Sigma_n$, there is a map of the form $$\sigma^{*} : M(c_1,\cdots,c_n;c) \lrar M(c_{\sigma(1)},\cdots,c_{\sigma(n)};c) .$$ These maps define a right action by $\Sigma_n$ in the sense that, for every $\sigma,\tau\in \Sigma_n$, we have $\sigma^{*}\tau^*=(\tau\sigma)^*$ and $\iota_n^* = \Id$ where $\iota_n \in \Sigma_n$ signifies the trivial permutation. With the obvious maps, symmetric $C$-collections in $\SS$ form a category, denoted by $\Coll_C(\SS)$.
\end{dfn}

\begin{rem}\label{r:refomcollection} Let $\sP(C)$ denote the groupoid whose set of objects is given by $$\Ob(\sP(C)) =\{\emptyset\} \sqcup (\bigsqcup_{n\geq1} C^{\times n})$$ and whose morphisms consist of the identity $\Id_\emptyset$ and the morphisms of the form $$\sigma^{*} : (c_1,\cdots,c_n) \lrar (c_{\sigma(1)},\cdots,c_{\sigma(n)})$$ with $\sigma\in \Sigma_n$, $n\geqslant1$. One can then show that the category $\Coll_C(\SS)$ is isomorphic to $\Fun(\sP(C) \times C , \cS)$ the category of $\cS$-enriched functors from $\sP(C) \times C$ to $\cS$. 
\end{rem}

The well known \textbf{composite product}
$$ -\circ- : \Coll_C(\SS) \times \Coll_C(\SS) \lrar \Coll_C(\SS)$$ endows  $\Coll_C(\SS)$ with a monoidal structure. The monoidal unit will be denoted by $\I_C$, {where $\I_C(c;c)=1_{\cS}$ for every $c\in C$ and $\I_C$ takes the value $\emptyset_{\cS}$ otherwise.} (See, e.g., \cite{Yonatan, Pavlov, Yau}). 

\begin{dfn} A \textbf{symmetric} $C$-\textbf{colored operad} in $\cS$ is a monoid in the monoidal category $(\Coll_C(\SS),-\circ-,\I_C)$. We denote by $\Op_C(\SS)$ the category of symmetric $C$-colored operads.
\end{dfn}

\begin{rem} Unwinding definition, a symmetric $C$-colored operad in $\cS$ is a symmetric $C$-collection $\P$ equipped with

	$\bullet$ a \textbf{composition} whose data consist of the $\Sigma_*$-equivariant maps of the form 
	$$ \P(c_1,\cdots,c_n;c) \otimes \P(c_{1,1},\cdots,c_{1,k_1};c_1) \otimes \cdots \otimes \P(c_{n,1},\cdots,c_{{n,k_n}};c_n) \lrar \P(c_{1,1},\cdots,c_{1,k_1}, \cdots, c_{n,1},\cdots,c_{{n,k_n}} ; c) , $$

	$\bullet$ and for each color $c \in C$, a \textbf{unit operation} $\id_c : 1_\cS \lrar \P(c;c)$.

	\noindent The composition must satisfy the essential axioms of associativity and unitality.
\end{rem}

Given $\P \in \Op_C(\SS)$, each object $\P(c_1,\cdots,c_n;c)$ will be called a \textbf{space of $n$-ary operations}. Note that the collection of $1$-ary operations of $\P$, denoted by $\P_1$, inherits an obvious $\cS$-enriched category structure. We shall refer to $\P_1$ as the \textbf{underlying category of}  $\P$.

The notion of a \textbf{nonsymmetric $C$-colored operad} (resp. \textbf{nonsymmetric $C$-collection}) is the same as that of a symmetric $C$-colored operad (resp. symmetric  $C$-collection) without the action of symmetric groups. We denote by $\nsColl_C(\cS)$ (resp. $\nsOp_C(\cS)$) the category of nonsymmetric $C$-collections (resp. nonsymmetric $C$-colored operads) in $\cS$. The natural passage from nonsymmetric to symmetric context is performed by the \textbf{symmetrization functor} $\Sym$. Explicitly, the functor $\Sym : \nsColl_C(\cS) \lrar \Coll_C(\cS)$ is given by sending each nonsymmetric $C$-collection $M$ to $\Sym(M)$ with
$$ \Sym(M)(c_1,\cdots,c_n;c) = \bigsqcup_{\sigma \in \Sigma_n} M(c_{\sigma(1)},\cdots,c_{\sigma(n)} ; c) .$$
The structure maps are induced by the multiplication of permutations in an evident way. By construction, the functor $\Sym$ forms a left adjoint to the associated forgetful functor.

\begin{rem} Given two objects $M , N \in \nsColl_C(\cS)$, {there is an obvious natural inclusion in $\nsColl_C(\cS)$ of the form} 
	{$$ M \, \ovl{\circ} \, N \lrar \Sym(M) \circ \Sym(N) $$
where $-\ovl{\circ}-$ refers to the composite product of nonsymmetric $C$-collections. This}	induces a natural map $\Sym(M\,{\ovl{\circ}}\, N) \lrar \Sym(M) \circ \Sym(N)$. It can then be verified that the latter is a natural isomorphism of symmetric $C$-collections with the inverse $$\Sym(M) \circ \Sym(N) \lrarsimeq \Sym(M \, {\ovl{\circ}} \, N)$$ being induced by the concatenation of linear orders 
$$ \Sigma_k \times \Sigma_{n_1} \times \cdots \times \Sigma_{n_k} \lrar \Sigma_{n_1+\cdots+n_k} .$$
Moreover, it is easy to see that the functor $\Sym$ preserves monoidal units and thus, it is strong monoidal.
\end{rem}

By the above remark, the functor $\Sym : \nsColl_C(\cS) \lrar \Coll_C(\cS)$ descends to a functor between monoids
$$ \Sym : \nsOp_C(\cS) \lrar \Op_C(\cS) ,$$
which forms a left adjoint to the associated forgetful functor.

\begin{rem} Since {we are mostly interested in} the symmetric context, from now on, unless
	otherwise specified, we shall omit the word $``$symmetric'' when mentioning an object of $\Op_C(\SS)$ (or $\Coll_C(\SS)$).
\end{rem}

One can integrate all the categories $\Op_C(\SS)$ for $C \in \Sets$ into a single category of $\cS$-enriched operads, just like the way one establishes {the category of categories.}

\begin{dfn} The \textbf{category of} $\SS$-\textbf{enriched operads}, denoted by $\Op(\SS)$, is the (contravariant) \textbf{Grothendieck construction}
	$$ \Op(\SS) := \int_{C\in \Sets} \Op_C(\SS) $$
	in which for each map $\alpha : C \rar D$ of sets, the corresponding functor $\alpha^{*} : \Op_D(\SS) \lrar \Op_C(\SS)$ is given by taking $\Q\in\Op_D(\SS)$ to $\alpha^{*}\Q$ with 
	$$ \alpha^{*}\Q(c_1,\cdots,c_n;c) := \Q(\alpha(c_1),\cdots,\alpha(c_n);\alpha(c)) .$$
\end{dfn}

Unwinding definition, an object of $\Op(\SS)$ is a pair $(C,\P)$ with $C\in \Sets$ and $\P\in \Op_C(\SS)$, while a morphism $(C,\P) \lrar (D,\Q)$ consists of a map $\alpha : C \rar D$ of sets and a map $f :  \P \lrar \alpha^{*}\Q$ of $C$-colored operads.

\begin{rem} For $\P\in \Op_C(\SS)$, there is an adjunction 
\begin{equation}\label{eq:L_PandR_P}
	\mathcal{L}_\P : \adjunction*{}{\Op_C(\mathcal{S})_{\mathcal{P}/}}{\Op(\mathcal{S})_{\mathcal{P}/}}{} : \mathcal{R}_\P
\end{equation}
where the left adjoint $\mathcal{L}_\P$ is the obvious embedding functor. Let $\mathcal{P}\overset{f}{\longrightarrow} \mathcal{Q}$ be an object of $\Op(\mathcal{S})_{\mathcal{P}/}$. Then $\mathcal{R}_\P(\mathcal{Q})$ is given on each level by the restriction of colors:
$$ \mathcal{R}_\P(\mathcal{Q}) (c_1,\cdots,c_n;c) := \mathcal{Q}(f(c_1),\cdots,f(c_n);f(c)) .$$
\end{rem}

\smallskip

\subsection{Various types of operadic modules}\label{s:operadicmodules}

\smallskip

Let $\P$ be a $C$-colored operad in $\cS$.  We let $\LMod(\P)$ (resp. $\RMod(\P)$) denote the {category} of \textbf{left (resp. right)} $\P$-\textbf{modules}. Moreover, we let $\BMod(\P)$ and $\IbMod(\P)$ respectively denote the categories of $\P$-\textbf{bimodules} and \textbf{infinitesimal} $\P$-\textbf{bimodules}. Let us revisit these quickly. 

\smallskip

Operadic left module (resp. right module, bimodule) is the usual notion of left module (resp. right module, bimodule) over an operad when one  regards operads as monoids in the monoidal category of symmetric sequences. More explicitly, these are expressed as follows.

\begin{dfn} \label{d:operadicmodules}

		(i) A \textbf{left $\mathcal{P}$-module} is a $C$-collection $M$ equipped with a \textit{left} $\mathcal{P}$-\textit{action} map $\mathcal{P}\circ M\lrar M$ whose data consist of the $\Sigma_*$-equivariant maps of the form
		$$  \P(c_1,\cdots,c_n;c) \otimes M(d_{1,1},\cdots,d_{1,k_1};c_1) \otimes \cdots \otimes M(d_{n,1},\cdots,d_{n,k_n};c_n) \lrar M(d_{1,1},\cdots,d_{1,k_1},\cdots,d_{n,1},\cdots,d_{n,k_n};c) $$
		satisfying the classical axioms of associativity and unitality for left modules.
		
		(ii) Dually, a \textbf{right $\mathcal{P}$-module} is a $C$-collection $M$ equipped with a \textit{right} $\mathcal{P}$-\textit{action} map $M \circ \P \lrar M$ whose data consist of the $\Sigma_*$-equivariant maps of the form
		$$  M(c_1,\cdots,c_n;c) \otimes \P(d_{1,1},\cdots,d_{1,k_1};c_1) \otimes \cdots \otimes \P(d_{n,1},\cdots,d_{n,k_n};c_n) \lrar M(d_{1,1},\cdots,d_{1,k_1},\cdots,d_{n,1},\cdots,d_{n,k_n};c) $$
		satisfying the classical axioms of associativity and unitality for right modules.
		
		(iii) A $\P$\textbf{-bimodule} is a $C$-collection $M$ equipped with both a left and a right $\mathcal{P}$-module structure. These must satisfy the essential axiom of  compatibility. 
\end{dfn}  

Moreover, a $\P$-algebra is simply a left $\P$-module concentrated in level 0. More precisely, this is expressed as follows.

\begin{dfn} A $\P$-\textbf{algebra} is an object  $A \in \cS^{\times \, C}$ which is equipped, for each $(c_1,\cdots,c_n;c)$, with a $\P$-action map 
	$$ \P(c_1,\cdots,c_n;c)\otimes A(c_1)\otimes\cdots\otimes A(c_n) \lrar A(c) .$$
These maps must satisfy the essential axioms of associativity, unitality and equivariance. We denote by $\Alg_\P(\SS)$ the category of $\P$-algebras.
\end{dfn}

\begin{rem} When $\P$ is concentrated in arity 1 then $\P$ is simply an $\cS$-enriched category. In this situation, the category of $\P$-algebras coincides with $\Fun(\P,\cS)$ the category of $\cS$-valued enriched functors on $\P$.
\end{rem}

\begin{example}\label{ex:initialalgebra} The collection of nullary (= 0-ary) operations of $\P$, denoted $\P_0$, inherits an obvious $\P$-algebra structure and moreover, $\P_0$ then becomes an initial object in $\Alg_\P(\SS)$.
\end{example}

\begin{example}\label{ex:operadofoperads} It is noteworthy that for each set $C$, there exists a $\Seq(C)$-colored operad in $\cS$, denoted by $\textbf{O}_C$, whose algebras are precisely the $C$-colored operads, i.e., there is an isomorphism of categories $\Alg_{\textbf{O}_C}(\cS) \cong \Op_C(\cS)$. We will refer to $\textbf{O}_C$ as the \textbf{operad of $C$-colored operads}. (Cf., e.g., [\cite{Guti}, \S 3]).
\end{example}

The structure of left modules (and hence, bimodules) over an operad is not linear in general. The notion of \textit{infinitesimal left modules (bimodules)}, as introduced by Merkulov-Vallette (\cite{Vallette}), essentially appears as the linearization of the previous one. In terms of enriched colored operads, these are defined as follows.

\begin{dfn}\phantomsection\label{d:infbmod}
	\begin{enumerate}
		\item An \textbf{infinitesimal left $\mathcal{P}$-module} is a $C$-collection $M$ equipped with the action maps of the form 
		$$ \circ^{l}_i : \P(c_1,\cdots,c_n;c) \otimes M(d_1,\cdots,d_m;c_i) \lrar M(c_1,\cdots,c_{i-1},d_1,\cdots,d_m,c_{i+1},\cdots,c_n;c) $$
		which are $\Sigma_*$-equivariant and satisfy the classical axioms of  associativity and  unitality for left modules.
		
		\item Dually, an \textbf{infinitesimal right $\mathcal{P}$-module} is a $C$-collection $M$ equipped with the action maps of the form
		$$ \circ^{r}_i : M(c_1,\cdots,c_n;c) \otimes \P(d_1,\cdots,d_m;c_i) \lrar M(c_1,\cdots,c_{i-1},d_1,\cdots,d_m,c_{i+1},\cdots,c_n;c) $$
		which are $\Sigma_*$-equivariant and satisfy the classical axioms of  associativity and unitality for right modules.
		
		\item An \textbf{infinitesimal $\mathcal{P}$-bimodule} is a $C$-collection $M$ equipped with both an infinitesimal left and an infinitesimal right $\mathcal{P}$-module structure which together satisfy the essential compatibility.
	\end{enumerate}
\end{dfn}

\begin{rem} The structure of an infinitesimal right $\P$-module is equivalent to that of a (non-infinitesimal) right $\P$-module.
\end{rem}

The definition of an infinitesimal $\mathcal{P}$-bimodule can be reformulated using the diagramatical language, which has certain advantages over the above definition. To this end, one starts with the notion of \textbf{infinitesimal composite product}:
$$ - \circ_{(1)} - : \Coll_C(\cS) \times \Coll_C(\cS) \lrar \Coll_C(\cS), $$
which can be thought of as the $``$right linearization'' of the composite product $ - \circ - $. Formally, given two $C$-collections $M$ and $N$, the $C$-collection $M\circ_{(1)} N$ is given by  the maximal sub-collection of $M\circ (\mathcal{I}_C \sqcup N)$ which is linear in $N$. To be precise, looking at the explicit formula of $M\circ (\mathcal{I}_C \sqcup N)$, we have on each level that $(M\circ_{(1)} N) (c_1,\cdots,c_n;c)$ is the sub-object of $M\circ (\mathcal{I}_C \sqcup N) (c_1,\cdots,c_n;c)$ consisting of the \textit{multi-tensor products} which contain one and only one factor in $N$. The readers can find this construction in [\cite{Loday}, Section 6.1]. Observe now that for each $M\in \Coll_C(\cS)$ there is a natural inclusion $$ \mathcal{P}\circ_{(1)}(\mathcal{P}\circ_{(1)}M) \lrar (\mathcal{P}\circ_{(1)}\mathcal{P})\circ_{(1)}M .$$ 
On other hand, the (partial) composition in $\mathcal{P}$ gives a map $\mu_{(1)} : \mathcal{P}\circ_{(1)}\mathcal{P} \lrar \mathcal{P}$. The following is equivalent to Definition \ref{d:infbmod}(i).

\begin{dfn} An \textbf{infinitesimal left} $\mathcal{P}$-\textbf{module} is a $C$-collection $M$ equipped with an action map $\mathcal{P}\circ_{(1)}M \lrar M$ satisfying the classical axioms of associativity and unitality for left modules, which are depicted as the commutativity of the following diagrams

	\begin{center}
		\begin{tikzpicture}[commutative diagrams/every diagram]
			\node (P0) at (90:2.3cm) {$\mathcal{P}\circ_{(1)}(\mathcal{P}\circ_{(1)}M)$};
			\node (P1) at (90+72:2cm) {$(\mathcal{P}\circ_{(1)}\mathcal{P})\circ_{(1)}M$} ;
			\node (P2) at (90+2*72:2cm) {\makebox[5ex][r]{$\mathcal{P}\circ_{(1)}M$}};                                     
			\node (P3) at (90+3*72:2cm) {\makebox[5ex][l]{$M$}};
			\node (P4) at (90+4*72:2cm) {$\mathcal{P}\circ_{(1)}M$};
			\path[commutative diagrams/.cd, every arrow, every label]
			(P0) edge node[swap] {} (P1)
			(P1) edge node[swap] {} (P2)
			(P2) edge node {} (P3)
			(P4) edge node {} (P3)
			(P0) edge node {} (P4);
		\end{tikzpicture}\qquad%
		\begin{tikzcd}
			\mathcal{I}_C\circ_{(1)}M \arrow[r ] \arrow[dr, "\cong	"]
			& \mathcal{P}\circ_{(1)}M  \arrow[d]\\
			& M
		\end{tikzcd}
	\end{center}

\end{dfn}

\noindent Now notice that for each $M\in\Coll_C(\cS)$ there is a natural inclusion 
$$ (\P\circ_{(1)}M)\circ \P \lrar (\P\circ \P)\circ_{(1)} (M \circ \P) . $$
The following is equivalent to Definition \ref{d:infbmod}(iii).
\begin{dfn}\label{d:infbimod} An infinitesimal $\mathcal{P}$-bimodule is a $C$-collection $M$ endowed with an infinitesimal left $\mathcal{P}$-module structure, exhibited by a map $\mathcal{P}\circ_{(1)}M \lrar M$ and with a right $\mathcal{P}$-module structure, exhibited by a map $M \circ \P \lrar M$. These are subject to the axiom of compatibility, depicted as the commutativity of the following diagram

	\begin{center}
		\begin{tikzpicture}[commutative diagrams/every diagram]
			\node (P0) at (90:2.3cm) {$(\P\circ_{(1)}M)\circ \P$};
			\node (P1) at (90+72:2cm) {$(\P\circ \P)\circ_{(1)} (M \circ \P)$} ;
			\node (P2) at (90+2*72:2cm) {\makebox[5ex][r]{$\P\circ_{(1)}M$}};                                     
			\node (P3) at (90+3*72:2cm) {\makebox[5ex][l]{$M$}};
			\node (P4) at (90+4*72:2cm) {$M\circ\P$};
			\path[commutative diagrams/.cd, every arrow, every label]
			(P0) edge node[swap] {} (P1)
			(P1) edge node[swap] {} (P2)
			(P2) edge node {} (P3)
			(P4) edge node {} (P3)
			(P0) edge node {} (P4);
		\end{tikzpicture}
	\end{center}
	
\end{dfn}

\begin{rem}\label{r:freeinfbimod} In the above diagram, the $C$-collection $(\P\circ_{(1)}M)\circ \P$ does not represent the free infinitesimal $\P$-bimodule generated by $M$. (This does not even carry a canonical infinitesimal $\P$-bimodule structure). To find the exact one, we first factor the free functor $\Free^{ib} : \Coll_C(\cS)  \lrar \IbMod(\P)$ as $$ \Coll_C(\cS) \x{\F_1}{\lrar} \RMod(\P) \x{\F_2}{\lrar} \IbMod(\P)$$ where $\F_1$ and $\F_2$ are the left adjoints to the associated forgetful functors. It can then be shown that $\F_1\cong(-)\circ\P$ and $\F_2 \cong \P\circ_{(1)}(-)$. In conclusion, the functor $\Free^{ib}$ is given by the formula  $$\Free^{ib}=\mathcal{P}\circ_{(1)}( - \circ \mathcal{P} ).$$ 
\end{rem}

\begin{rem} Note that the map $\mu_{(1)} : \mathcal{P}\circ_{(1)}\mathcal{P} \lrar \mathcal{P}$ endows $\P$ with the structure of an infinitesimal bimodule over itself. Now let $M$ be a $\mathcal{P}$-bimodule under $\mathcal{P}$. Then, $M$ inherits a canonical infinitesimal $\P$-bimodule structure (under $\P$) induced by inserting the unit operations of $\mathcal{P}$ into $M$. This procedure determines a \textit{restriction functor}
	\begin{equation}\label{eq:adjbimodtoinf}
\BMod(\mathcal{P})_{\mathcal{P}/}\lrar \IbMod(\mathcal{P})_{\mathcal{P}/},
	\end{equation}
which admits a left adjoint named \textit{induction functor}.
\end{rem}

Another important one is the notion of \textit{modules over an operadic algebra}. Let $A$ be a $\P$-algebra.

\begin{dfn} An $A$\textbf{-module over} $\P$ is an object  $M \in \cS^{\times \, C}$ equipped, for each sequence $(c_1,\cdots,c_n;c)$, with a mixed $(\P,A)$-action map of the form
	$$ \P(c_1,\cdots,c_n;c) \otimes \;  \bigotimes _{i \in \{1,\cdots,n\} - \{k\}} \; A(c_i) \otimes M(c_k) \lrar M(c) .$$
 These maps must satisfy the essential axioms of associativity, unitality and equivariance. With the obvious maps, $A$-modules over $\P$ form a category, denoted by $\Mod_\P^{A}$.
\end{dfn}

 To reformulate $\Mod_\P^{A}$ as a category of $\cS$-valued enriched functors, one will need the construction of \textit{enveloping operads}. Denote by $\Pairs_C(\cS)$ the category whose objects are the pairs $(\P , A)$ with $\P\in \Op_C(\mathcal{S})$ and $A \in \Alg_\P(\SS)$, and whose morphisms are the pairs $(\varphi , f) : (\P , A) \lrar (\Q , B)$ with $\varphi : \P \rar \Q$ being a map in $\Op_C(\mathcal{S})$ and $f : A \rar B$ a map of $\P$-algebras. There is a canonical functor $ \delta : \Op_C(\mathcal{S}) \lrar \Pairs_C(\cS)$ sending each $C$-colored operad $\P$ to the pair $(\P , \P_0)$ (see Example \ref{ex:initialalgebra}). According to \cite{BergerMoerdijk}, the functor $\delta$ admits a left adjoint $\Env : \Pairs_C(\cS) \lrar \Op_C(\mathcal{S})$ called the \textbf{enveloping functor}.

\begin{dfn} The \textbf{enveloping operad} associated to a  pair $(\P , A) \in \Pairs_C(\cS)$ is defined to be $\Env(\P , A)$ the image of $(\P , A)$ through the enveloping functor.
\end{dfn}

\begin{rem}\label{r:AmodPasfunctor} According to [Theorem 1.10, \cite{BergerMoerdijk}], there is a canonical isomorphism 
	\begin{equation}\label{eq:AmodP}
		\Mod_\P^{A} \cong \Fun(\Env(\P,A)_1 , \cS)  
	\end{equation}
	between the category of $A$-modules over $\P$ and the category of $\cS$-valued enriched functors on $\Env(\P,A)_1$ the underlying category of $\Env(\P,A)$.
\end{rem}

\begin{rem} Another main interest in this construction is that there is a canonical isomorphism $$\Alg_{\Env(\P,A)}(\cS) \cong \Alg_\P(\cS)_{A/}$$ between the categories of $\Env(\P,A)$-algebras and $\P$-algebras under $A$. Furthermore,  by construction there is a canonical map $ j_A : \P \lrar \Env(\P , A)$ of $C$-colored operads. This endows $\Env(\P , A)_0$ with a canonical $\P$-algebra structure and moreover, $\Env(\P , A)_0$ is isomorphic to $A$ as $\P$-algebras. (See around [\cite{BergerMoerdijk}, Lemma 1.7]). 
\end{rem}

\smallskip

It is convenient that each of the categories $\IbMod(\P)$, $\BMod(\P)$, $\LMod(\P)$ and $\RMod(\P)$ can be encoded by an enriched operad (or category). In terms of single-colored operads, these constructions can be found in \cite{Turchin,Julien}.

\begin{notns}\phantomsection 
	\begin{enumerate}
		\item We let $\Fin$ denote the smallest skeleton of the category of finite sets whose objects consist of $\underline{0}:=\emptyset$ and $\underline{m}:=\{1,\cdots,m\}$ for $m\geqslant1$.
		
		\item We denote by $\Fin_*$ the category whose objects are finite pointed sets $\left \langle m \right \rangle := \{0,1,\cdots,m\}$ (with $0$ as the basepoint) for $m\geqslant0$,  and whose morphisms are basepoint-preserving maps. In other words, $\Fin_*$ is the smallest skeleton of the category of finite pointed sets.
	\end{enumerate}
\end{notns}

Note that there is an obvious embedding functor $\Fin \lrar \Fin_*$ taking each $\underline{m}$ to $\l m \r$.

\begin{cons}\label{catencodinginfbimod}
	We now construct an $\cS$-enriched category, denoted $\textbf{Ib}^{\mathcal{P}}$, which encodes infinitesimal $\P$-bimodules. The set of objects of $\textbf{Ib}^{\mathcal{P}}$ is $\Seq(C)$, while its mapping spaces are defined as follows. For each map $\left \langle m \right \rangle  \overset{f}{\longrightarrow} \left \langle n \right \rangle$ in $\Fin_*$, we denote by
	$$ \Map^{f}_{\textbf{Ib}^{\mathcal{P}}}\left( \, \left(c_1,\cdots,c_n;c \right) , \left(d_1,\cdots,d_m;d \right) \, \right) :=  \mathcal{P}\left (c,\{d_j\}_{j\in f^{-1}(0)};d \right ) \otimes \bigotimes_{i=1,\cdots,n} \mathcal{P} \left (\{d_j\}_{j\in f^{-1}(i)};c_i \right ) $$
	in which, for each $i\in \{0,\cdots,n\}$, the elements of $\{d_j\}_{j\in f^{-1}(i)}$ are put in the natural ascending order of $j$. Then, we define
	$$ \Map_{\textbf{Ib}^{\mathcal{P}}}\left( \, \left(c_1,\cdots,c_n;c \right) , \left(d_1,\cdots,d_m;d \right) \, \right) := \bigsqcup_{\left \langle m \right \rangle  \overset{f}{\rightarrow} \left \langle n \right \rangle} \Map^{f}_{\textbf{Ib}^{\mathcal{P}}}\left( \, \left(c_1,\cdots,c_n;c \right) , \left(d_1,\cdots,d_m;d \right) \, \right) .$$
	Observe that 
	$$\Map^{\Id_{\left \langle n \right \rangle}}_{\textbf{Ib}^{\mathcal{P}}} \left( \, \left(c_1,\cdots,c_n;c \right) , \left(c_1,\cdots,c_n;c \right) \, \right) =  \mathcal{P}(c;c) \otimes \mathcal{P}(c_1;c_1) \otimes \cdots \otimes \mathcal{P}(c_n;c_n).$$
	Due to this, we can define the unit morphisms of $\textbf{Ib}^{\mathcal{P}}$ via the unit operations of $\mathcal{P}$. Moreover, the structure maps of $\textbf{Ib}^{\mathcal{P}}$ are canonically defined via the composition in $\mathcal{P}$, along with the symmetric action on $\P$. (See also \cite{Turchin}, $\S$2).
	
\end{cons}

\begin{prop}\label{rewriteinfinitesimal1}
	There is a canonical isomorphism 
	$$ \IbMod(\mathcal{P})\cong \Fun(\textbf{Ib}^{\mathcal{P}},\mathcal{S})$$
	between the category of infinitesimal $\P$-bimodules and the category of $\cS$-valued enriched functors on $\textbf{Ib}^{\mathcal{P}}$.
\end{prop}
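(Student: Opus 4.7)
The plan is to construct mutually inverse functors $\Phi \colon \IbMod(\P) \to \Fun(\textbf{Ib}^{\P},\cS)$ and $\Psi \colon \Fun(\textbf{Ib}^{\P},\cS) \to \IbMod(\P)$. The category $\textbf{Ib}^{\P}$ is built precisely so that the three pieces of structure carried by an infinitesimal $\P$-bimodule—the symmetric action, the infinitesimal left $\P$-action, and the right $\P$-action—are encoded by three elementary classes of morphism, corresponding to three types of map $f \colon \l m \r \to \l n \r$ in $\Fin_*$: bijections fixing $0$ (giving the symmetric action), maps with all $f^{-1}(i)$ singletons for $i \geqslant 1$ (giving the infinitesimal left action), and maps with $f^{-1}(0) = \emptyset$ (giving the right action).

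For $\Phi$, given $M \in \IbMod(\P)$, set $\Phi(M)(c_1,\ldots,c_n;c) := M(c_1,\ldots,c_n;c)$. On a summand $\Map^{f}_{\textbf{Ib}^{\P}}$ indexed by $f \colon \l m \r \to \l n \r$, the enriched functor structure is obtained by combining (a) the infinitesimal left action of $\P$ on $M$, via the distinguished factor $\P(c,\{d_j\}_{j\in f^{-1}(0)};d)$ which acts on top of the output $c$, and (b) the right $\P$-action, via the factors $\P(\{d_j\}_{j\in f^{-1}(i)};c_i)$ plugging into the $i$-th input for each $i=1,\ldots,n$. Conversely, given $F \in \Fun(\textbf{Ib}^{\P},\cS)$, define $\Psi(F)$ to be the underlying $C$-collection $\bar{c} \mapsto F(\bar{c})$, equipped with the symmetric action, infinitesimal left action, and right action obtained by restricting $F$ along the three classes of morphism above, taking all non-essential $\P$-factors to be unit operations of $\P$.

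The main step is to verify that $\Phi$ and $\Psi$ are mutually inverse, which reduces to a generators-and-relations presentation of $\textbf{Ib}^{\P}$. The key structural fact is that every morphism in $\Map^{f}_{\textbf{Ib}^{\P}}$ factors as a permutation followed by a single infinitesimal left insertion followed by a family of right insertions, and that this factorization is unique up to precisely the relations supplied by operadic associativity and $\Sigma$-equivariance in $\P$. I expect the main technical obstacle to be a patient combinatorial unpacking showing that composition in the enriched category $\textbf{Ib}^{\P}$—especially when a left insertion of an operation is composed with a right insertion—corresponds exactly to the compatibility square appearing in Definition \ref{d:infbimod}, together with the separate associativity and unitality axioms for the infinitesimal left and right actions. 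Once these identifications are in hand, both $\Phi(\Psi(F)) = F$ and $\Psi(\Phi(M)) = M$ follow tautologically, and naturality in morphisms of infinitesimal bimodules (respectively, enriched natural transformations) is immediate from the level-wise definitions.
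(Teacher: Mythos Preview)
Your proposal is correct and follows essentially the same approach as the paper: the paper also builds the two mutually inverse assignments by, in one direction, extracting the symmetric action, infinitesimal left action, and right action from specific summands of the mapping spaces by evaluating the extraneous $\P$-factors at unit operations, and in the other direction assembling the enriched functor structure from the combined two-sided action plus the symmetric action; it then waves at the remaining verifications as ``elephantine, but not complicated,'' which is your ``patient combinatorial unpacking.'' Your added framing in terms of a generators-and-relations presentation of $\textbf{Ib}^{\P}$ via the three elementary classes of pointed maps is a helpful organizational device but not a different argument.
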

\begin{proof}[Sketch of proof] (1) Let $M: \textbf{Ib}^{\mathcal{P}} \lrar \mathcal{S}$ be an enriched functor. We construct the associated infinitesimal $\mathcal{P}$-bimodule, still denoted  $M$, as follows. Each permutation $\alpha \in \Sigma_n$ determines a map $\left \langle n \right \rangle  \overset{\alpha}{\lrar} \left \langle n \right \rangle$. Observe now that
	$$ \Map^{\alpha}_{\textbf{Ib}^{\mathcal{P}}}\left( \, \left(c_1,\cdots,c_n;c \right) , (c_{\alpha(1)},\cdots,c_{\alpha(n)};c ) \, \right) = \mathcal{P}(c;c) \otimes \mathcal{P}(c_1;c_1) \otimes \cdots \otimes \mathcal{P}(c_n;c_n).$$
	In particular, the functor structure map of $M$
	$$ \Map_{\textbf{Ib}^{\mathcal{P}}} \left( \, \left(c_1,\cdots,c_n;c \right) , (c_{\alpha(1)},\cdots,c_{\alpha(n)};c ) \, \right) \otimes M(c_1,\cdots,c_n;c) \lrar M(c_{\alpha(1)},\cdots,c_{\alpha(n)};c)$$
	has a component of the form
	$$ \mathcal{P}(c;c) \otimes \mathcal{P}(c_1;c_1) \otimes \cdots \otimes \mathcal{P}(c_n;c_n) \otimes M(c_1,\cdots,c_n;c) \lrar M(c_{\alpha(1)},\cdots,c_{\alpha(n)};c). $$
	Now, the evaluation at the unit operations $\id_c, \id_{c_1}, \cdots, \id_{c_n}$ of $\mathcal{P}$ determines an action map of the form $M(c_1,\cdots,c_n;c) \x{\alpha^{*}}{\lrar} M(c_{\alpha(1)},\cdots,c_{\alpha(n)};c)$.

	Next we define the infinitesimal right action of $\mathcal{P}$ on $M$. Observe that the structure map
	$$ \Map_{\textbf{Ib}^{\mathcal{P}}}\left( \, \left(c_1,\cdots,c_n;c \right) , (c_1,\cdots,c_{i-1},d_1,\cdots,d_m,c_{i+1},\cdots,c_n;c) \, \right) \otimes M(c_1,\cdots,c_n;c) $$  
	$$ \lrar M(c_1,\cdots,c_{i-1},d_1,\cdots,d_m,c_{i+1},\cdots,c_n;c)$$
	has a component of the form
	$$ M(c_1,\cdots,c_n;c) \otimes \mathcal{P}(d_1,\cdots,d_m;c_i) \otimes \mathcal{P}(c;c) \otimes \mathcal{P}(c_1;c_1) \otimes \cdots \otimes \mathcal{P} (c_{i-1};c_{i-1}) \otimes \mathcal{P} (c_{i+1};c_{i+1}) \otimes \cdots \otimes \mathcal{P}(c_n;c_n) $$
	$$ \lrar M(c_1,\cdots,c_{i-1},d_1,\cdots,d_m,c_{i+1},\cdots,c_n;c) .$$
	This induces an infinitesimal right $\mathcal{P}$-action on $M$ by evaluating at the unit operations of $\mathcal{P}$.
	
	To define the infinitesimal left $\mathcal{P}$-action, observe that the structure map
	$$ \Map_{\textbf{Ib}^{\mathcal{P}}}\left( \, \left(d_1,\cdots,d_m;c_i \right) , (c_1,\cdots,c_{i-1},d_1,\cdots,d_m,c_{i+1},\cdots,c_n;c) \, \right) \otimes M(d_1,\cdots,d_m;c_i) $$  
	$$\lrar M(c_1,\cdots,c_{i-1},d_1,\cdots,d_m,c_{i+1},\cdots,c_n;c) $$
	has a component of the form
	$$ \mathcal{P}(c_i,c_1,\cdots,c_{i-1},c_{i+1},\cdots,c_n;c) \otimes \mathcal{P}(d_1;d_1) \otimes \cdots \otimes \mathcal{P}(d_m;d_m)\otimes M(d_1,\cdots,d_m;c_i) $$
	$$ \lrar M(c_1,\cdots,c_{i-1},d_1,\cdots,d_m,c_{i+1},\cdots,c_n;c) .$$
	The latter induces an infinitesimal left action by evaluating at the unit operations of $\mathcal{P}$, again.

	\smallskip
	
	(2) Conversely, let $M$ be an infinitesimal $\mathcal{P}$-bimodule. We need to endow $M$ with a canonical enriched functor structure $\textbf{Ib}^{\mathcal{P}} \longrightarrow \mathcal{S}$. To this end, we have to define the maps of the form
	$$ \Map_{\textbf{Ib}^{\mathcal{P}}}\left( \, \left(c_1,\cdots,c_n;c \right) , (d_1,\cdots,d_m;d ) \, \right) \otimes M(c_1,\cdots,c_n;c) \lrar M(d_1,\cdots,d_m;d) .$$
	This map must consist of, for each $\left \langle m \right \rangle  \overset{f}{\longrightarrow} \left \langle n \right \rangle$, a component map of the form
	$$\mathcal{P}\left (c,\{d_j\}_{j\in f^{-1}(0)};d \right ) \otimes \bigotimes_{i=1,\cdots,n} \mathcal{P} \left (\{d_j\}_{j\in f^{-1}(i)};c_i \right ) \otimes M(c_1,\cdots,c_n;c) \lrar M(d_1,\cdots,d_m;d) .$$
	The latter can be naturally defined using the (two sided) infinitesimal $\mathcal{P}$-action on $M$, along with the action of the symmetric groups on $M$.  
\end{proof}

\begin{cons}\label{rewriterightmodules}
	The enriched category which encodes the category of right $\mathcal{P}$-modules will be denoted by $\textbf{R}^{\mathcal{P}}$. Its set of objects is $\Seq(C)$, while its mapping objects are given by
	$$ \Map_{\textbf{R}^{\mathcal{P}}}\left( \, \left(c_1,\cdots,c_n;c \right) , \left(d_1,\cdots,d_m;c \right) \, \right) := \bigsqcup_{ \underline{m} \overset{f}{\rightarrow} \underline{n}} \left [  \bigotimes_{i=1,\cdots,n} \mathcal{P} \left (\{d_j\}_{j\in f^{-1}(i)};c_i \right )  \right ] $$
	where the coproduct ranges over the hom-set $\Hom_{\Fin}(\underline{m},\underline{n})$. Observe that there is a map 
	$$ \Map_{\textbf{R}^{\mathcal{P}}}\left( \, \left(c_1,\cdots,c_n;c \right) , \left(d_1,\cdots,d_m;c \right) \, \right) \lrar \Map_{\textbf{Ib}^{\mathcal{P}}}\left( \, \left(c_1,\cdots,c_n;c \right) , (d_1,\cdots,d_m;c ) \, \right) $$
	induced by the embedding $\Fin \lrar \Fin_*$ and by inserting the unit operation $\id_c$ into the factor $\P(c;c)$ of the right hand side. The categorical structure of $\textbf{R}^{\mathcal{P}}$ is then defined via the operad structure of $\P$, so that $\textbf{R}^{\mathcal{P}}$ forms a subcategory of $\textbf{Ib}^{\mathcal{P}}$.
\end{cons}

{One can then prove the following}, similarly as in the proof of Proposition \ref{rewriteinfinitesimal1}. 
\begin{prop}\label{p:rewriterightmodules} There is a canonical isomorphism 
	$$ \RMod(\mathcal{P})\cong \Fun(\textbf{R}^{\mathcal{P}},\mathcal{S}) $$
	between the category of right $\P$-modules and the category of $\cS$-valued enriched functors on $\textbf{R}^{\mathcal{P}}$.
\end{prop}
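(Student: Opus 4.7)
My plan is to prove the claim in direct analogy with the proof of Proposition \ref{rewriteinfinitesimal1}, constructing two functors between $\RMod(\mathcal{P})$ and $\Fun(\textbf{R}^{\mathcal{P}},\mathcal{S})$ which are inverse to one another, and which are both the identity on underlying $C$-collections.

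First, given an enriched functor $M : \textbf{R}^{\mathcal{P}} \lrar \mathcal{S}$, I extract a right $\P$-module structure on the underlying $C$-collection $(c_1,\ldots,c_n;c) \mapsto M(c_1,\ldots,c_n;c)$. The symmetric action comes from the fact that each $\sigma \in \Sigma_n$ may be viewed as a bijection $\underline{n} \to \underline{n}$, and the corresponding summand
\[ \bigotimes_{i=1}^{n} \mathcal{P}(c_{\sigma(i)};c_{\sigma(i)}) \subseteq \Map_{\textbf{R}^{\mathcal{P}}}((c_1,\ldots,c_n;c),(c_{\sigma(1)},\ldots,c_{\sigma(n)};c))\]
carries a distinguished element given by the tensor product of unit operations $\id_{c_{\sigma(i)}}$; evaluating the functor-structure map on this element produces $\sigma^{*}$. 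For the right $\P$-action, given a sequence of operations $\nu_i \in \P(e_{i,1},\ldots,e_{i,k_i}; c_i)$ with total arity $m = k_1+\cdots+k_n$, there is a canonical map $f : \underline{m} \to \underline{n}$ (sending each index in the $i$-th block to $i$) whose $f$-indexed summand in the morphism space contains $\nu_1\otimes\cdots\otimes \nu_n$; the functor-structure map evaluated here defines the right action. The functoriality axioms of $M$ translate into the associativity, unitality, and $\Sigma_*$-equivariance axioms of a right $\P$-module.

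Conversely, given a right $\P$-module $M$, I define an enriched functor $\widetilde{M} : \textbf{R}^{\mathcal{P}} \lrar \mathcal{S}$ with $\widetilde{M}(c_1,\ldots,c_n;c) = M(c_1,\ldots,c_n;c)$. For the functor-structure map I must specify, for each $f : \underline{m} \to \underline{n}$, a map
\[ \Bigl[\bigotimes_{i=1}^{n}\mathcal{P}(\{d_j\}_{j\in f^{-1}(i)};c_i)\Bigr]\otimes M(c_1,\ldots,c_n;c) \lrar M(d_1,\ldots,d_m;c). \]
Any such $f$ factors, up to a unique permutation $\tau \in \Sigma_m$, as a block-preserving map $f_0 : \underline{m} \to \underline{n}$ (respecting the linear order on the fibers). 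On a block-preserving map I apply the right $\P$-action directly, and then I compose with $\tau^{*}$ coming from the symmetric action of $M$. The associativity of composition in $\textbf{R}^{\mathcal{P}}$ corresponds precisely to the combination of associativity, unitality, and equivariance axioms of the right $\P$-module; unitality follows from the fact that $\id_c \in \P(c;c)$ act as identity on $M$.

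The two constructions are evidently inverse: both functor-structure and action-structure are determined by their values on summands indexed by block-preserving $f$ together with the symmetric action, and the translations above are mutually inverse on each such piece. The main point to verify carefully is the compatibility between the coproduct decomposition of the morphism spaces of $\textbf{R}^{\mathcal{P}}$ and the interaction of the right action with the $\Sigma_*$-equivariance on $M$; this is a direct unpacking of definitions, and poses no conceptual obstacle once set up. I expect the bookkeeping of indices in the composition axiom (asserting associativity of the functor-structure maps of $\widetilde{M}$ for a pair of composable maps $\underline{k}\to \underline{m}\to \underline{n}$ in $\Fin$) to be the most tedious part of the argument, but no new ideas beyond those already used in Proposition \ref{rewriteinfinitesimal1} are needed.
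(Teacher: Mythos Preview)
Your proposal is correct and follows exactly the approach the paper indicates: the paper does not give a separate proof but simply states that the result ``can then be verified \ldots\ similarly as in the proof of Proposition~\ref{rewriteinfinitesimal1}'', which is precisely the analogy you carry out. Your two-way construction (extracting the symmetric action from bijections in $\Fin$ and the right action from block maps, and conversely assembling the functor structure from the right action plus a permutation) matches the template of that earlier proof.
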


We now wish to construct an operad encoding the category of $\P$-bimodules, yet it will be more convenient for us to start with  $\BMod(\mathcal{P})_{\P/}$ the category of  $\P$-bimodules under $\P$. 

\begin{cons}\label{operadencodebimodulesunderP}
	The $\cS$-enriched operad which encodes  $\mathcal{P}$-bimodules under $\mathcal{P}$ will be denoted by $\textbf{B}^{\mathcal{P}/}$. Its set of colors is again $\Seq(C)$. The nullary operations of $\textbf{B}^{\mathcal{P}/}$ agree with $\mathcal{P}$, i.e.,
	$$ \textbf{B}^{\mathcal{P}/}( ; (c_1,\cdots,c_n;c)) := \mathcal{P}(c_1,\cdots,c_n;c) ,$$
	while its 1-ary operations coincide with those of $\textbf{Ib}^{\mathcal{P}}$ (see Construction \ref{catencodinginfbimod}), i.e.,
	$$ \textbf{B}^{\mathcal{P}/}\left( \, \left(c_1,\cdots,c_n;c \right) ; \left(d_1,\cdots,d_m;d \right) \, \right) = \bigsqcup_{\left \langle m \right \rangle  \overset{f}{\rightarrow} \left \langle n \right \rangle} \left [ \mathcal{P}\left (c,\{d_j\}_{j\in f^{-1}(0)};d \right ) \otimes \bigotimes_{i=1,\cdots,n} \mathcal{P} \left (\{d_j\}_{j\in f^{-1}(i)};c_i \right )  \right ]  $$
	where $f$ ranges over the set $\Hom_{\Fin_*}(\left \langle m \right \rangle, \left \langle n \right \rangle)$. Then we may extend the above formula to obtain the spaces of operations of higher arities. A typical space of $n$-ary operations of $\textbf{B}^{\mathcal{P}/}$ is given by 
	$$ \textbf{B}^{\mathcal{P}/}\left( \,    \left(c_1,\cdots,c_{r_1}; c^{(1)} \right) ,    \left(c_{r_1+1},\cdots,c_{r_1+r_2}; c^{(2)} \right) , \cdots ,  \left(c_{r_1+\cdots+r_{n-1}+1},\cdots,c_{r_1+\cdots+r_{n}}; c^{(n)} \right)  ;   \left(d_1,\cdots,d_m;d \right) \, \right)$$  $$=\bigsqcup_{\left \langle m \right \rangle  \overset{f}{\lrar} \left \langle r_1+\cdots+r_{n} \right \rangle} \left [ \mathcal{P}\left (c^{(1)},\cdots,c^{(n)},\{d_j\}_{j\in f^{-1}(0)};d \right ) \otimes \bigotimes_{i=1,\cdots,r_1+\cdots+r_{n}} \mathcal{P} \left (\{d_j\}_{j\in f^{-1}(i)};c_i \right )  \right ] .$$
	The $\Sigma_n$-action is given by permuting the colors $c^{(1)},\cdots,c^{(n)}$ on the factor $$\mathcal{P}\left (c^{(1)},\cdots,c^{(n)},\{d_j\}_{j\in f^{-1}(0)};d \right )$$ and simultaneously, permuting the terms $\{c_1,\cdots,c_{r_1}\}, \cdots, \{c_{r_1+\cdots+r_{n-1}+1},\cdots,c_{r_1+\cdots+r_{n}}\}$ on the factor $$\bigotimes_{i=1,\cdots,r_1+\cdots + r_n} \mathcal{P} \left (\{d_j\}_{j\in f^{-1}(i)};c_i \right ) .$$ 
	The composition {in} $\textbf{B}^{\mathcal{P}/}$ is canonically defined via the composition {in} $\mathcal{P}$, while its unit operations are exactly those of $\textbf{Ib}^{\mathcal{P}}$. 
\end{cons}

\begin{prop}\label{bimoduleunderPsasalgebras} There is a canonical isomorphism 
	$$ \BMod(\mathcal{P})_{\mathcal{P}/} \cong \Alg_{\textbf{B}^{\mathcal{P}/}}(\cS)$$
	between the category of $\P$-bimodules under $\P$ and the category of algebras over $\textbf{B}^{\mathcal{P}/}$.
\end{prop}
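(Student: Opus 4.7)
The strategy mirrors Proposition \ref{rewriteinfinitesimal1}: we construct mutually inverse functors between $\BMod(\mathcal{P})_{\mathcal{P}/}$ and $\Alg_{\textbf{B}^{\mathcal{P}/}}(\cS)$. Given a $\textbf{B}^{\mathcal{P}/}$-algebra $A$, the associated bimodule-under-$\mathcal{P}$ has underlying $C$-collection simply $M(\bar c) := A(\bar c)$, and its structure decomposes according to the arity of the $\textbf{B}^{\mathcal{P}/}$-operations inducing it. The $0$-ary operations $\textbf{B}^{\mathcal{P}/}(;\bar c) = \mathcal{P}(\bar c)$ produce, through the algebra action, the unit map $\varepsilon : \mathcal{P} \to M$; the $1$-ary operations, whose underlying objects coincide with the mapping spaces of $\textbf{Ib}^{\mathcal{P}}$, produce the symmetric action, the right $\mathcal{P}$-action, and the infinitesimal left $\mathcal{P}$-action by the same argument as in the proof of Proposition \ref{rewriteinfinitesimal1}; and the higher-arity operations supply the remaining data upgrading the infinitesimal left action to a full left $\mathcal{P}$-module structure.

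Concretely, to produce the required left-action component
$$\mathcal{P}(c^{(1)}, \ldots, c^{(n)}; d) \otimes \bigotimes_{k=1}^{n} M(\bar d^{(k)}; c^{(k)}) \lrar M(\bar d; d),$$
with $\bar d$ the concatenation of the $\bar d^{(k)}$, one selects in Construction \ref{operadencodebimodulesunderP} the summand indexed by the identity map of $\l m \r$ in $\Fin_*$ (so that $f^{-1}(0) = \{0\}$ and $f^{-1}(i) = \{i\}$ for $1 \leq i \leq m = r_1 + \cdots + r_n$) and evaluates each tensor factor $\mathcal{P}(c_i; c_i)$ at the unit operation $\id_{c_i}$. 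The $\Sigma_n$-equivariance of the resulting left action is exactly the $\Sigma_n$-action on $\textbf{B}^{\mathcal{P}/}$; its associativity and unitality, its compatibility with the right action, and its compatibility with $\varepsilon$ all follow from the operad axioms of $\textbf{B}^{\mathcal{P}/}$, which by construction collapse to precisely these bimodule-under-$\mathcal{P}$ axioms.

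In the opposite direction, given $(M, \varepsilon) \in \BMod(\mathcal{P})_{\mathcal{P}/}$, one assembles a $\textbf{B}^{\mathcal{P}/}$-algebra structure by defining the component indexed by a map $f : \l m \r \to \l r_1 + \cdots + r_n \r$ in $\Fin_*$ as the composite that: applies the right $\mathcal{P}$-action on the $k$-th input of $M$ using the factors $\mathcal{P}(\{d_j\}_{j \in f^{-1}(i)}; c_i)$ for $i$ in the $k$-th block; applies $\varepsilon$ to produce auxiliary bimodule inputs from unit operations at the colors $\{d_j\}_{j \in f^{-1}(0) \setminus \{0\}}$; performs the left $\mathcal{P}$-action against the factor $\mathcal{P}(c^{(1)}, \ldots, c^{(n)}, \{d_j\}_{j \in f^{-1}(0)}; d)$; and finally applies the symmetric action of $M$ to re-order the concatenated colors into the natural sequence $(d_1, \ldots, d_m)$ prescribed by $f$. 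The main obstacle is precisely this bookkeeping: one must verify that the two constructions are mutually inverse and that the operadic composition in $\textbf{B}^{\mathcal{P}/}$—which shuffles the $\Fin_*$-indexing data—translates exactly into the combined associativity of the two-sided $\mathcal{P}$-action together with the $\varepsilon$-compatibility of Remark \ref{r:bimodunderP}. Each such verification, however, is mechanical once the indexing is pinned down, exactly as in Proposition \ref{rewriteinfinitesimal1}.
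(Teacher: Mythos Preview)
Your proof is correct and follows essentially the same approach as the paper: both directions are constructed by decomposing according to arity of $\textbf{B}^{\mathcal{P}/}$-operations, with the $0$-ary part supplying the structure map $\P \to M$, the $1$-ary part (identified with $\textbf{Ib}^{\mathcal{P}}$) supplying the right action via Proposition~\ref{rewriteinfinitesimal1}, and the higher-arity part supplying the left action by evaluation at unit operations; the converse uses the bimodule structure together with the map $\varepsilon$, and both proofs defer the bookkeeping verifications (invoking Remark~\ref{r:bimodunderP}) as routine. Your account of the converse direction is in fact slightly more explicit than the paper's, which simply says the $\textbf{B}^{\mathcal{P}/}$-algebra structure is built from the bimodule structure ``along with some suitable involvement of $\varepsilon$''.
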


\begin{proof}[Sketch of proof] (1) Let $M$ be a $\textbf{B}^{\mathcal{P}/}$-algebra. Note first that since the underlying category of $\textbf{B}^{\mathcal{P}/}$ agrees with $\textbf{Ib}^{\mathcal{P}}$, $M$ already inherits a canonical right $\mathcal{P}$-module structure (cf. Proposition \ref{rewriteinfinitesimal1}).

	Let us see how $M$ comes equipped with a left $\mathcal{P}$-action. For simplicity, we describe only the action maps of the form
	\begin{equation}\label{eq:leftactionmap}
		\mathcal{P}(c,d;e) \otimes M(c_1,\cdots,c_n;c) \otimes M(d_1,\cdots,d_m;d) \lrar M(c_1,\cdots,c_n,d_1,\cdots,d_m;e)
	\end{equation}
	To this end, observe first that the $\textbf{B}^{\mathcal{P}/}$-algebra structure map of $M$ of the form
	$$ \textbf{B}^{\mathcal{P}/}\left( \,    \left(c_1,\cdots,c_n;c \right) ,    \left(d_1,\cdots,d_m;d \right)   ;   \left(c_1,\cdots,c_n,d_1,\cdots,d_m;e \right) \, \right) \otimes $$
	$$ \otimes \; M(c_1,\cdots,c_n;c) \otimes M(d_1,\cdots,d_m;d) \lrar M(c_1,\cdots,c_n,d_1,\cdots,d_m;e) $$
	has a component of the form
	$$ \mathcal{P}(c,d;e) \otimes \mathcal{P}(c_1;c_1) \otimes \cdots \otimes \mathcal{P}(c_n;c_n) \otimes \mathcal{P}(d_1;d_1) \otimes \cdots \otimes \mathcal{P}(d_m;d_m) \otimes$$
	$$ \otimes \;  M(c_1,\cdots,c_n;c) \otimes M(d_1,\cdots,d_m;d) \lrar M(c_1,\cdots,c_n,d_1,\cdots,d_m;e) .$$
	The evaluation at the unit operations $\id_{c_1},\cdots,\id_{c_n},\id_{d_1},\cdots,\id_{d_m}$ of $\mathcal{P}$ to the latter gives us the action map \eqref{eq:leftactionmap} as desired.

	Moreover, the action of the nullary operations of $\textbf{B}^{\mathcal{P}/}$ on $M$ determines a canonical map $\P \rar M$. 
	
	\smallskip
	
	(2) Conversely, let $M$ be a $\mathcal{P}$-bimodule under $\P$. We let the composed map $$ [\I_C \lrar \P \lrar M] =: \varepsilon $$ exhibit the images of the unit operations of $\mathcal{P}$ in $M$. In order to construct a $\textbf{B}^{\mathcal{P}/}$-algebra structure on $M$, one will need to make use of the $\mathcal{P}$-bimodule structure of $M$ and the map $\varepsilon$ in a suitable way.

\end{proof}

\begin{cons} The $\cS$-enriched operad which encodes the category of $\P$-bimodules will be denoted by $\textbf{B}^{\mathcal{P}}$. Its set of colors is again $\Seq(C)$. A typical space of $n$-ary operations is given by
	$$ \textbf{B}^{\mathcal{P}}\left( \,    \left(c_1,\cdots,c_{r_1}; c^{(1)} \right) ,    \left(c_{r_1+1},\cdots,c_{r_1+r_2}; c^{(2)} \right) , \cdots ,  \left(c_{r_1+\cdots+r_{n-1}+1},\cdots,c_{r_1+\cdots+r_{n}}; c^{(n)} \right)  ;   \left(d_1,\cdots,d_m;d \right) \, \right) $$
	$$ := \bigsqcup_{\underline{m} \x{f}{\lrar} \underline{r_1+\cdots+r_{n}}} \left [ \mathcal{P}\left (c^{(1)},\cdots,c^{(n)};d \right ) \otimes \bigotimes_{i=1,\cdots,r_1+\cdots+r_{n}} \mathcal{P} \left (\{d_j\}_{j\in f^{-1}(i)};c_i \right ) \right ] $$
	where the coproduct ranges over the hom-set $\Hom_{\Fin}(\underline{m} , \underline{r_1+\cdots+r_{n}})$. For each map $f : \underline{m} \lrar \underline{r_1+\cdots+r_{n}}$, we will denote by $\textbf{B}_f^{\mathcal{P}}(-)$ the component of $\textbf{B}^{\mathcal{P}}(-)$ corresponding to $f$, taken from the above formula. As in Construction \ref{operadencodebimodulesunderP}, the operad structure of $\textbf{B}^{\mathcal{P}}$ is canonically defined via the structure of $\P$, so that $\textbf{B}^{\mathcal{P}}$ is in fact a suboperad of $\textbf{B}^{\mathcal{P}/}$.  (See also [\cite{Julien}, \S 2.1.1]).
\end{cons}

As in the proof of Proposition \ref{bimoduleunderPsasalgebras}, {one can verify the following}.
\begin{prop}\label{p:bimodulesasalgebras} There is a canonical isomorphism 
	$$ \BMod(\mathcal{P}) \cong \Alg_{\textbf{B}^{\mathcal{P}}}(\cS)$$
	between the category of $\P$-bimodules and the category of algebras over $\textbf{B}^{\mathcal{P}}$.
\end{prop}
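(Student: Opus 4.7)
The plan is to mirror the proof strategy of Proposition \ref{bimoduleunderPsasalgebras}, exploiting the observation that $\textbf{B}^{\mathcal{P}}$ is obtained from $\textbf{B}^{\mathcal{P}/}$ by replacing the coproduct over $\Hom_{\Fin_*}(\l m \r, \l r_1+\cdots+r_n \r)$ with a coproduct over $\Hom_{\Fin}(\underline{m},\underline{r_1+\cdots+r_n})$, i.e., by removing the factor indexed by the basepoint preimage $f^{-1}(0)$. This change is precisely what drops the data of the canonical map $\P\to M$ encoded by $\textbf{B}^{\mathcal{P}/}$, so $\textbf{B}^{\mathcal{P}}$-algebras should correspond to bare $\P$-bimodules rather than bimodules under $\P$.

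First I would construct, from a $\textbf{B}^{\mathcal{P}}$-algebra $M$, the associated $\P$-bimodule structure. The underlying $C$-collection is automatic since $\Ob(\textbf{B}^{\mathcal{P}}) = \Seq(C)$. The right $\P$-action together with the symmetric action is extracted from the unary operations: for any $f : \underline{m} \to \underline{n}$ in $\Fin$, the algebra structure map has a component
$$ \P(c;c) \otimes \bigotimes_{i=1}^{n} \P(\{d_j\}_{j\in f^{-1}(i)};c_i) \otimes M(c_1,\cdots,c_n;c) \lrar M(d_1,\cdots,d_m;c), $$
and evaluating the first factor at the unit $\id_c$ recovers the right $\P$-module structure exactly as in Proposition \ref{p:rewriterightmodules}. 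The left $\P$-action of the form $\P(c^{(1)},\cdots,c^{(n)};d) \otimes M(-;c^{(1)}) \otimes \cdots \otimes M(-;c^{(n)}) \lrar M(-;d)$ is extracted from the $n$-ary structure map corresponding to the identity $f = \Id_{\underline{r_1+\cdots+r_n}}$, after evaluating the residual factors $\P(d_j;d_j)$ at the unit operations $\id_{d_j}$. The compatibility between left and right actions then follows from the operadic associativity of $\textbf{B}^{\mathcal{P}}$ applied to binary compositions that interleave a 1-ary operation with an $n$-ary one.

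Conversely, starting from a $\P$-bimodule $M$, one assembles the $\textbf{B}^{\mathcal{P}}$-algebra structure map componentwise: for a fixed $f : \underline{m} \to \underline{r_1+\cdots+r_n}$, one first applies the right $\P$-actions at each of the $M$-factors using the operations $\P(\{d_j\}_{j\in f^{-1}(i)};c_i)$ together with the symmetric action needed to put the $d_j$'s in the order specified by $f$, and then applies the left $\P$-action via the factor $\P(c^{(1)},\cdots,c^{(n)};d)$. Checking the associativity and unit axioms for $\textbf{B}^{\mathcal{P}}$-algebras reduces to the bimodule compatibility axioms combined with the operad axioms of $\P$. The two constructions are visibly mutually inverse, and functoriality in $M$ is transparent since in both directions the correspondence is described componentwise by structure maps that any morphism must preserve.

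The main obstacle, exactly as in Proposition \ref{bimoduleunderPsasalgebras}, is the bookkeeping required to verify that $\Sigma_*$-equivariance, associativity, and left/right compatibility genuinely match on the two sides once the coproduct decomposition over $f \in \Hom_{\Fin}(\underline{m},\underline{r_1+\cdots+r_n})$ is taken into account. However, as noted by the authors of \ref{bimoduleunderPsasalgebras}, these verifications are \emph{elephantine but not complicated}: once the component corresponding to a generic $f$ is written down, everything reduces to iterated application of the operad axioms for $\P$, and no genuinely new ingredient is required beyond those already used in the proof of Proposition \ref{bimoduleunderPsasalgebras}.
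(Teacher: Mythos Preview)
Your proposal is correct and follows exactly the approach the paper intends: the paper's proof is a one-line deferral (``As in the proof of Proposition \ref{bimoduleunderPsasalgebras}, it can be shown that:''), and you have simply spelled out that deferred argument in detail. Your observation that passing from $\textbf{B}^{\mathcal{P}/}$ to $\textbf{B}^{\mathcal{P}}$ amounts to dropping the basepoint-preimage factor, and hence the data of the map $\P \to M$, is precisely the conceptual point behind the construction.
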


Finally, we construct an operad encoding the category of left $\P$-modules. 

\begin{cons} We denote by $\textbf{L}^{\P}$ the $\cS$-enriched operad whose set of colors is $\Seq(C)$ and whose spaces of operations are given as follows. For simplicity of equations, we just describe the spaces of 2-ary operations. These are concentrated in objects of the form:
	$$ \textbf{L}^{\P}\left( \,    \left(c_1,\cdots,c_{r_1}; c \right) ,    \left(c_{r_1+1},\cdots,c_{r_1+r_2}; c' \right)  ;   \left(c_{\sigma(1)},\cdots,c_{\sigma(r_1+r_2)};d \right) \, \right)  := \bigsqcup_{\alpha} \P(c,c';d) $$
	where $\sigma \in \Sigma_{r_1+r_2}$. The coproduct ranges over the subset of $\Sigma_{r_1+r_2}$ consisting of those $\alpha$ satisfying that for every $i\in \{1,\cdots,r_1+r_2\}$, the two colors $c_{\sigma(i)}$ and $c_{\alpha(i)}$ coincide. From the above formula, we denote by $\textbf{L}^{\P}_\alpha(-)$ the component of $\textbf{L}^{\P}(-)$ corresponding to $\alpha$. Observe that there is a canonical map 
	$$ \P(c,c';d) = \textbf{L}^{\P}_\alpha \left( \,    \left(c_1,\cdots,c_{r_1}; c \right) ,    \left(c_{r_1+1},\cdots,c_{r_1+r_2}; c' \right)  ;   \left(c_{\sigma(1)},\cdots,c_{\sigma(r_1+r_2)};d \right) \, \right)    \lrar $$
	$$  \textbf{B}^{\P}_\alpha \left( \,    \left(c_1,\cdots,c_{r_1}; c \right) ,    \left(c_{r_1+1},\cdots,c_{r_1+r_2}; c' \right)  ;   \left(c_{\sigma(1)},\cdots,c_{\sigma(r_1+r_2)};d \right) \, \right) = \P(c_1;c_1) \otimes \cdots \otimes \P(c_{r_1+r_2};c_{r_1+r_2}) \otimes \P(c,c';d)$$
	given by inserting the unit operation $\id_{c_i}$ into the factor $\P(c_i;c_i)$ for $i = 1,\cdots,r_1+r_2$. In this way, the operad structure of $\textbf{L}^{\P}$ is defined via the structure of $\P$, so that $\textbf{L}^{\P}$ forms a suboperad of $\textbf{B}^{\P}$.
\end{cons}

\begin{prop}\label{p:leftmodulesasalgebras} There is a canonical isomorphism 
	$$ \LMod(\P) \cong \Alg_{\textbf{L}^{\P}}(\cS) $$
	between the category of left $\P$-modules and the category of algebras over $\textbf{L}^{\P}$.
\end{prop}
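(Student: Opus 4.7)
The plan is to follow the same template used in the proofs of Proposition~\ref{rewriteinfinitesimal1}, Proposition~\ref{bimoduleunderPsasalgebras} and Proposition~\ref{p:bimodulesasalgebras}, namely to construct the correspondence in both directions and then observe that the two constructions are mutually inverse. The essential task is to unpack the definition of $\textbf{L}^\P$ and verify that the data of a $\textbf{L}^\P$-algebra structure on a $\Seq(C)$-indexed family $M$ is the \emph{same} data as a left $\P$-module structure on $M$.

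First I would treat the direction from left $\P$-modules to $\textbf{L}^\P$-algebras. Let $M$ be a left $\P$-module. For each $n$-ary operation space of $\textbf{L}^\P$, the definition of $\textbf{L}^\P$ exhibits it as a coproduct, indexed by color-compatible permutations $\alpha$, of copies of $\P(c^{(1)},\dots,c^{(n)};d)$. On the component corresponding to $\alpha$, I would define the action map
\[
\P(c^{(1)},\dots,c^{(n)};d)\otimes M(c_1,\dots,c_{r_1};c^{(1)})\otimes\cdots\otimes M(c_{r_1+\cdots+r_{n-1}+1},\dots,c_{r_1+\cdots+r_n};c^{(n)})\longrightarrow M(c_{\sigma(1)},\dots,c_{\sigma(r_1+\cdots+r_n)};d)
\]
by first applying the left $\P$-action (landing in $M(c_1,\dots,c_{r_1+\cdots+r_n};d)$) and then applying the symmetric $\Sigma_*$-action of the permutation which identifies the two color-equal sequences. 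The color compatibility condition $c_{\sigma(i)}=c_{\alpha(i)}$ built into the indexing set guarantees that the output genuinely lies in $M(c_{\sigma(1)},\dots;d)$, so that the map has the correct codomain.

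In the opposite direction, given a $\textbf{L}^\P$-algebra $M$, I would recover the left $\P$-action by restricting the algebra structure maps to the component corresponding to the identity permutation (i.e.\ $\sigma=\alpha=\mathrm{id}$); the associativity and unitality axioms of a left $\P$-module follow by unwinding the definition of composition in $\textbf{L}^\P$, which is built directly from the composition in $\P$. The $\Sigma_*$-equivariance of the resulting left module is encoded precisely by the remaining components, indexed by nontrivial $\alpha$, since the operad composition in $\textbf{L}^\P$ forces these components to be compatible with the identity component via the symmetric action. A useful structural check here is to observe that the evident inclusion $\textbf{L}^\P\hookrightarrow\textbf{B}^\P$ induces, under the isomorphism of Proposition~\ref{p:bimodulesasalgebras}, the forgetful functor $\BMod(\P)\to\LMod(\P)$; this provides a consistency check on the construction.

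Finally, one verifies that the two procedures are mutually inverse, which reduces to the same kind of combinatorial book-keeping of permutations as in the proofs of Propositions~\ref{rewriteinfinitesimal1} and~\ref{bimoduleunderPsasalgebras}. The main technical obstacle, as in those earlier results, is precisely this book-keeping: one must keep careful track of how the symmetric action on $M$ is redistributed across the coproduct components indexed by $\alpha$, and confirm that the operadic composition laws in $\textbf{L}^\P$ correspond exactly to the interplay between the left action and the $\Sigma_*$-equivariance in the definition of a left $\P$-module. No new conceptual ingredient is required beyond what appears in the earlier propositions.
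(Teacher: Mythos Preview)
Your proposal is correct and follows essentially the same approach as the paper, which simply says the proof is ``similar to the ones above'' and then singles out one detail: how to extract the symmetric action $\sigma^*:M(c_1,\dots,c_n;c)\to M(c_{\sigma(1)},\dots,c_{\sigma(n)};c)$ from an $\textbf{L}^\P$-algebra. The paper does this by looking at the \emph{1-ary} component $\textbf{L}^\P_\sigma((c_1,\dots,c_n;c);(c_{\sigma(1)},\dots,c_{\sigma(n)};c))=\P(c;c)$ and evaluating the resulting structure map at the unit operation $\id_c$; you leave this point implicit when you say the $\Sigma_*$-equivariance is ``encoded by the remaining components, indexed by nontrivial $\alpha$,'' but the mechanism is the same.
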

\begin{proof} The proof is similar to the  ones above. 
\end{proof}

\smallskip

\subsection{Operadic transferred model structures}\label{s:optransfermod}

\smallskip

In this subsection, we assume further that $\SS$ is a \textbf{symmetric monoidal model category} (cf. Hovey's \cite{Hovey}). Let $\P$ be a $C$-colored operad in $\cS$ and let $A$ be a $\P$-algebra. We list here all the mentioned operadic categories, except the category of $\cS$-enriched operads $\Op(\cS)$, including 
\begin{equation}\label{eq:thesetA}
	\{\Coll_C(\SS), \Op_C(\SS), \LMod(\P), \RMod(\P), \BMod(\P), \IbMod(\P), \Alg_\P(\cS), \Mod_\P^{A}\} =: \mathbb{A} .
\end{equation}

\begin{dfn} Let $\M$ be any of the categories in $\mathbb{A}$. The \textbf{transferred model structure} on $\M$ is the one whose weak equivalences (resp. fibrations) are precisely the levelwise weak equivalences (resp. fibrations).
\end{dfn}

We wish to set up several suitable conditions on the base category $\cS$ assuring the existence of the transferred model structure on every element of $\mathbb{A}$. As we have seen previously, each category $\M\in \mathbb{A}$ can be represented as the category of algebras over a certain operad (or category) (cf. Remarks \ref{r:refomcollection}, \ref{r:AmodPasfunctor}, Example \ref{ex:operadofoperads}, Propositions \ref{rewriteinfinitesimal1}, \ref{p:rewriterightmodules}, \ref{p:bimodulesasalgebras}, \ref{p:leftmodulesasalgebras}).  Consequently, one just needs to consider the transferred model structure on the category $\Alg_\P(\cS)$. According to the literature, we know several criteria assuring the existence of that. Here are several settings.

\begin{dfn} A \textbf{symmetric monoidal fibrant replacement functor} on $\cS$ is a symmetric monoidal functor $\RR : \cS\rar\cS$ together with a monoidal natural transformation $ \varphi : \Id \rar \RR$ such that for each object $X\in \cS$, the map $\varphi_X : X \lrar \RR(X)$ exhibits $\RR(X)$ as a fibrant replacement of $X$. 
\end{dfn}

\begin{dfn} A \textbf{functorial path data} on $\cS$ is a symmetric monoidal functor $\PP : \cS\rar\cS$ together with monoidal natural transformations $s : \Id \rar \PP$  and $d_0,d_1 : \PP \rar \Id$ such that the composed map $$ X \x{s_X}{\lrar } \PP(X) \x{(d_0,d_1)}{\lrar} X\times X $$ exhibits $\PP(X)$ as a path object for $X$.
\end{dfn}

The following statement is due to [\cite{JY}, Theorem 3.11].
\begin{prop}\label{p:modelonalgebras} Suppose that $\SS$ is strongly cofibrantly generated (i.e., cofibrantly generated with domains of the generating cofibrations and trivial cofibrations being small). If $\SS$ admits both a symmetric monoidal fibrant replacement functor $\RR$ and a functorial path data $\PP$ then the transferred model structure on $\Alg_\P(\SS)$ exists for every operad $\P$. 
\end{prop}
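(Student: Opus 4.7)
The plan is to apply Kan's transfer principle to the free-forgetful adjunction $F \dashv U$ between $\cS^{\times C}$ and $\Alg_\P(\cS)$, where $U$ denotes the forgetful functor. Equip $\cS^{\times C}$ with the product model structure, which is strongly cofibrantly generated with generators $I_C$, $J_C$ the images of the generators of $\cS$ under the coordinate inclusions. Transferring would declare a map $f$ in $\Alg_\P(\cS)$ a weak equivalence (resp.\ fibration) exactly when $U(f)$ is one.

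The first step is the smallness hypothesis. Filtered colimits of $\P$-algebras are created by $U$, a standard fact about algebras over an operad enriched in a cocomplete symmetric monoidal category. Combined with the strong cofibrant generation of $\cS$, this guarantees that the domains of $F(I_C)$ and $F(J_C)$ are small in $\Alg_\P(\cS)$, so the small object argument is available.

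The substantive step is to verify that every relative $F(J_C)$-cell complex is sent by $U$ to a weak equivalence. I would argue this via the path object argument. The key observation is that a (lax) symmetric monoidal endofunctor $\Phi$ of $\cS$ together with a monoidal natural transformation $\eta : \Id \rar \Phi$ lifts canonically to an endofunctor of $\Alg_\P(\cS)$: given a $\P$-algebra $A$, the object $\Phi(A)$ inherits a $\Phi(\P)$-algebra structure from lax monoidality of $\Phi$, and then a $\P$-algebra structure by restriction along the induced operad map $\eta_\P : \P \rar \Phi(\P)$; the component $\eta_A : A \rar \Phi(A)$ is automatically a $\P$-algebra map because $\eta$ is monoidal. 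Applying this to $\RR$ and to $\PP$ (together with the monoidal naturality of $s, d_0, d_1$) yields a fibrant replacement functor on $\Alg_\P(\cS)$ and a functorial path object $\PP(A) \rar A \times A$ for every fibrant algebra $A$; the fact that weak equivalences and fibrations in $\Alg_\P(\cS)$ are detected by $U$ makes both verifications immediate.

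With a functorial fibrant replacement and functorial path objects on fibrant algebras in hand, the standard path object argument (as in Berger--Moerdijk, \emph{Axiomatic homotopy theory for operads}, Lemma~2.3) shows that any pushout of a generator $F(j)$ with $j \in J_C$ is a weak equivalence, and transfinite composition extends this to all relative $F(J_C)$-cell complexes. Kan's theorem then delivers the transferred model structure on $\Alg_\P(\cS)$. The main technical obstacle is precisely the initial step of lifting $\RR$ and $\PP$, together with their structural natural transformations, from $\cS$ to $\Alg_\P(\cS)$; this is where the monoidality assumptions on $\RR$, $\PP$, $\varphi$, $s$, $d_0$, $d_1$ are indispensable. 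Once this is secured, the remaining verification of Kan's theorem is routine.
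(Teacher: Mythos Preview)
The proposal is correct and follows essentially the same approach as the paper: both transfer along the free-forgetful adjunction, use that $U$ creates filtered colimits for smallness, and then lift the symmetric monoidal functors $\RR$ and $\PP$ (via the induced operad map $\P \to \RR\P$ and the monoidal natural transformations) to obtain a functorial fibrant replacement and functorial path objects on $\Alg_\P(\cS)$, concluding by the path-object variant of the transfer principle. The only cosmetic difference is that the paper cites [JY, Lemma~3.3] whereas you cite Berger--Moerdijk's Lemma~2.3, but the argument is the same.
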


There is another powerful criterion, thanks to the recent work of Pavlov-Scholbach. Here are several settings. Recall from [\cite{Baber}, Definition 1.1] that a map $f : X \rar Y$ in $\cS$ is an $h$-\textbf{cofibration} if and only if for every diagram of coCartesian squares in $\cS$ of the form 
$$ \xymatrix{
	X \ar[r]\ar_f[d] & A \ar^{g}[r]\ar[d] & B  \ar[d] \\
	Y \ar[r] & A' \ar_{g'}[r] & B'  \\
}$$  
the map $g'$ is a weak equivalence whenever $g$ is one. If $f$ is in addition a weak equivalence then it is an \textbf{acyclic} $h$-\textbf{cofibration}.

Let $\overrightarrow{n} = (n_1,\cdots,n_k)$ be a finite sequence of natural numbers. For a family $s=(s_1,\cdots,s_k)$ of maps in $\cS$, denote by $$s^{\square \overrightarrow{n}}:= \square_i \, s_i^{\square \, n_i} ,$$ where the subcript $``\square$'' refers to the \textbf{pushout-product} of maps. The group $\Sigma_{\overrightarrow{n}} := \bigsqcap_{i} \Sigma_{n_i}$ acts on $s^{\square \overrightarrow{n}}$ in an evident way. 

\begin{dfn} The symmetric monoidal model category $\cS$ is said to be \textbf{symmetric} $h$-\textbf{monoidal} if for any finite family $s=(s_1,\cdots,s_k)$ of cofibrations (resp. acyclic cofibrations) and for any object $X \in \cS$ equipped with a right $\Sigma_{\overrightarrow{n}}$-action, the map $X \otimes_{\Sigma_{\overrightarrow{n}}}s^{\square \overrightarrow{n}}$ is an $h$-cofibration (resp. acyclic $h$-cofibration).
\end{dfn}

\begin{prop}\label{p:modelonalgebras1} (Pavlov-Scholbach,  [\cite{Pavlov}, Theorem 5.10]) Suppose that $\SS$ is a combinatorial symmetric monoidal model category such that weak equivalences are closed under transfinite compositions. If $\cS$ is symmetric $h$-monoidal (the acyclic part is sufficient), then the transferred model structure on $\Alg_\P(\SS)$ exists for every operad $\P$.
\end{prop}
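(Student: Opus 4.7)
The plan is to invoke Kan's transfer principle for combinatorial model categories applied to the free-forgetful adjunction
\[
\F : \cS^{\times C} \rightleftarrows \Alg_\P(\cS) : \U,
\]
where the product model structure on $\cS^{\times C}$ is combinatorial. Combinatoriality provides the smallness of the domains of the generating (trivial) cofibrations, so after applying $\F$ these domains remain small with respect to the relevant classes in $\Alg_\P(\cS)$ (since $\U$ preserves filtered colimits, which can be checked levelwise in $\cS^{\times C}$). Thus the only nontrivial condition to verify is that any relative cell complex built from maps of the form $\F(j)$, with $j$ a generating trivial cofibration of $\cS^{\times C}$, is sent by $\U$ to a weak equivalence.

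The crucial step is to understand pushouts of free $\P$-algebras. Given $A \in \Alg_\P(\cS)$ and a map $j : X \to Y$ in $\cS^{\times C}$, the canonical map $A \to A \sqcup_{\F(X)} \F(Y)$ should be analyzed via a transfinite filtration
\[
A = A_0 \longrightarrow A_1 \longrightarrow A_2 \longrightarrow \cdots
\]
in $\cS^{\times C}$, where each layer $A_{n-1} \to A_n$ is obtained as a pushout of a map of the form
\[
A \otimes_{\Sigma_{\overrightarrow{n}}} j^{\square \overrightarrow{n}},
\]
with $\overrightarrow{n}$ running over finite sequences indexed by the colors and with $\Sigma_{\overrightarrow{n}}$ acting on a suitable colored component of $\P$ tensored with the appropriate copies of $A$. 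This is the operadic analogue of the classical Schwede-Shipley/Harper filtration adapted to the colored, symmetric setting.

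With that filtration in hand, symmetric $h$-monoidality (in its acyclic form) applied to each $\Sigma_{\overrightarrow{n}}$-equivariant pushout-product $j^{\square \overrightarrow{n}}$ with $j$ an acyclic cofibration yields that every transition $A_{n-1} \to A_n$ is an acyclic $h$-cofibration in $\cS^{\times C}$. The hypothesis that weak equivalences are closed under transfinite compositions then lets us conclude that $A \to \colim_n A_n$ is a weak equivalence. Iterating and passing to transfinite compositions of such pushouts shows that every $\F(J)$-cell complex is a weak equivalence on underlying objects, which by Kan's transfer principle produces the desired transferred model structure and simultaneously identifies its fibrations and weak equivalences as levelwise ones.

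The main obstacle, and the technical heart of the Pavlov-Scholbach argument, is the construction of the filtration of $A \sqcup_{\F(X)} \F(Y)$: one must unpack the free $\P$-algebra monad levelwise, identify the $n$-th symmetric layer, and check that its attaching map is precisely a pushout-product of the form controlled by symmetric $h$-monoidality, with the correct $\Sigma_{\overrightarrow{n}}$-action on the coefficient object involving $\P$ and iterated tensor powers of $A$. Once this combinatorial/symmetric-sequence bookkeeping is set up, the remaining steps reduce to formal manipulations using the defining properties of $h$-cofibrations and closure of weak equivalences under transfinite composition.
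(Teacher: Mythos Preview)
The paper does not supply its own proof of this proposition: it is stated purely as a citation to Pavlov--Scholbach \cite{Pavlov}, Theorem 5.10, with no accompanying argument. Your proposal is therefore not competing with anything in the paper; rather, you have sketched the Pavlov--Scholbach proof itself, and your outline (transfer principle via the free-forgetful adjunction, the Schwede--Shipley/Harper-type filtration of pushouts along free maps, symmetric $h$-monoidality controlling each layer, and closure of weak equivalences under transfinite composition) accurately reflects the structure of their argument.
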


We end this subsection by listing some base categories of interest and discussing how they adapt to the criteria mentioned above. 

\begin{examples}\label{ex:basecategories}

		(i) The base category of most interest is the Cartesian monoidal category of \textbf{simplicial sets}, $(\Set_\Delta , \times)$, equipped with the standard (Kan-Quillen) model structure.  The model structure on $\Set_\Delta$ is combinatorial, with weak equivalences closed under filtered colimits. It admits a fibrant replacement functor given by $\Ex^{\infty} := \Sing | - | $ the composition of the realization and singular functors, and admits a functorial path data given by $(-)^{\Delta^{1}}$. Moreover, $\Set_\Delta$ is as well symmetric $h$-monoidal, according to [\cite{Pavlov1}, \S 7.1]. We hence get that $\Set_\Delta$ satisfies the conditions of both {the} propositions \ref{p:modelonalgebras} and \ref{p:modelonalgebras1}.
		
		\smallskip
		
		(ii) The second one is the monoidal category of \textbf{simplicial} $R$-\textbf{modules}, $(\sMod_{R} , \otimes)$, with $R$ being a commutative ring, equipped with the standard model structure transferred from that of $\Set_\Delta$.  As well as simplicial sets, simplicial $R$-modules satisfy the conditions of both {the} propositions \ref{p:modelonalgebras} and \ref{p:modelonalgebras1}. Indeed, note first that $\sMod_{R}$ is combinatorial, has weak equivalences being closed under filtered colimits and moreover, it is a fibrant model category (i.e., all the objects are fibrant). A functorial path data for $\sMod_{R}$ is given by $(-)^{R\{\Delta^{1}\}}$. It is as well symmetric $h$-monoidal, according to [\cite{Pavlov1}, \S 7.3]. 
		
		\smallskip
		
		(iii) Let $\textbf{k}$ be a commutative ring {that contains $\QQ$ the field of rational numbers}. Consider the monoidal category of \textbf{dg} $\textbf{k}$-\textbf{modules}, $(\C(\textbf{k}) , \otimes)$, equipped with the projective model structure. This is also a combinatorial fibrant model category, with weak equivalences closed under filtered colimits. There is a functorial path data for $\C(\textbf{k})$ given by $(-) \otimes \Omega^{*}(\Delta^{1})$  where $\Omega^{*}(\Delta^{1})$ is the \textit{Sullivan's dg algebra of differentials} on the interval $\Delta^{1}\in \Set_\Delta$ (see [\cite{Fresse2}, 5.3]). So we get that $\C(\textbf{k})$ satisfies the conditions of Proposition \ref{p:modelonalgebras}. {Moreover, $\C(\textbf{k})$ is symmetric $h$-monoidal, according to \cite{Pavlov1}}.
		
		\smallskip
		
		(iv) {For $\textbf{k}$ as above}, consider the monoidal category of \textbf{connective dg $\textbf{k}$-modules}, $(\C_{\geqslant0}(\textbf{k}),\otimes)$, equipped with the projective model structure. This model category is combinatorial and its weak equivalences are closed under filtered colimits. Moreover, all its objects are fibrant. However, as far as we know, $\C_{\geqslant0}(\textbf{k})$ does not adapt to the conditions of Proposition \ref{p:modelonalgebras}. The only thing missed is the existence of a functorial path data. {Despite this, $\C_{\geqslant0}(\textbf{k})$ is symmetric $h$-monoidal, as well as the case above.}
		
		\smallskip
		
		(v) More generally, let $R$ be a commutative monoid in $(\C_{\geqslant0}(\textbf{k}) , \otimes)$ with $\textbf{k}$ being {a commutative ring containing $\QQ$}. Consider the monoidal category of $R$\textbf{-modules,} $(\Mod_R , -\otimes_R-)$, equipped with the projective model structure. This is a combinatorial fibrant model category, with weak equivalences closed under filtered colimits. As well as $\C_{\geqslant0}(\textbf{k})$, the category $\Mod_R$ does not admit a functorial path data, yet it satisfies the conditions of Proposition \ref{p:modelonalgebras1} instead. To see the latter, one just needs to verify the symmetric $h$-monoidality. This is in fact transferred from the symmetric $h$-monoidality of $\C_{\geqslant0}(\textbf{k})$ (cf. [\cite{Pavlov1}, Theorem 5.9]).

\end{examples}

In conclusion, we see that all the base categories listed above are nice enough so that the transferred model structure on $\P$-algebras exists for every operad $\P$. Thus for every category $\M\in \mathbb{A}$ \eqref{eq:thesetA}, the transferred model structure on $\M$  exists as well.

\smallskip

\subsection{Dwyer-Kan and canonical model structures on enriched operads}\label{sub:DKcanoical}

\medskip

Let $\cS$  be a \textbf{monoidal model category} and let $\Cat(\cS)$ denote the category of (small)  $\cS$-enriched categories. For each $\C \in \Cat(\cS)$, the \textbf{homotopy category of} $\C$, denoted by $\Ho(\C)$, is the ordinary category whose objects are the same as those of $\C$ and whose hom-sets are given by  $$\Hom_{\Ho(\C)}(x,y) := \Hom_{\Ho(\cS)}(1_\cS,\Map_\C(x,y)) .$$
By convention, a map $f:\C\rar \D$ in $\Cat(\cS)$ is a \textit{levelwise weak equivalence} (resp. \textit{fibration, trivial fibration}, etc) if for every $x,y \in \Ob(\C)$ the  map $\Map_\C(x,y) \lrar \Map_\D(f(x),f(y))$ is a weak equivalence (resp. fibration, trivial fibration, etc) in $\cS$. 

\begin{dfn}\label{d:DKcat} A map $f:\C\rar \D$ in $\Cat(\cS)$ is called a \textbf{Dwyer-Kan equivalence} if it is a levelwise weak equivalence and such that the induced functor $\Ho(f):\Ho(\C)\lrar\Ho(\D)$ between homotopy categories is essentially surjective.
\end{dfn}

\begin{dfn} The \textbf{Dwyer-Kan model structure on} $\Cat(\cS)$ is the one whose weak equivalences are the Dwyer-Kan equivalences and whose trivial fibrations are the levelwise trivial fibrations surjective on objects. (See, e.g., \cite{Muro}, \cite{Luriehtt}).
\end{dfn}

\begin{dfn}(Berger-Moerdijk \cite{Ieke}) The \textbf{canonical model structure on} $\Cat(\cS)$ is the one whose fibrant objects are the levelwise fibrant categories and whose trivial fibrations are the same as those of the Dwyer-Kan model structure. 
\end{dfn}

By extending the two above, G. Caviglia \cite{Caviglia} established both the Dwyer-Kan and canonical model structures on $\Op(\cS)$. Suppose further that $\cS$ is a symmetric monoidal model category. By convention, a map $f : \P \rar \Q$ in $\Op(\cS)$ is called a \textit{levelwise weak equivalence} (resp. \textit{fibration, trivial fibration}, etc) if for every sequence $(c_1,\cdots,c_n;c)$ of colors of $\P$, the induced map $$\P(c_1,\cdots,c_n;c) \lrar \Q(f(c_1),\cdots,f(c_n) ; f(c))$$ is a weak equivalence (resp. fibration, trivial fibration, etc) in $\cS$.

The \textbf{homotopy category of} $\P$ is defined to be $\Ho(\P) := \Ho(\P_1)$ the homotopy category of its underlying category. 

\begin{dfn}\label{d:DKop} A map $f : \P \rar \Q$ in $\Op(\cS)$ is called a \textbf{Dwyer-Kan equivalence} if it is a levelwise weak equivalence and such that the induced functor $\Ho(f):\Ho(\P)\lrar\Ho(\Q)$ between homotopy categories is essentially surjective or alternatively, if $f$ is a levelwise weak equivalence and the underlying map $f_1 : \P_1 \lrar \Q_1$ is a Dwyer-Kan equivalence in $\Cat(\cS)$.
\end{dfn}

\begin{dfn}\label{d:DKope} The \textbf{Dwyer-Kan model structure on} $\Op(\cS)$ is the one whose weak equivalences are the Dwyer-Kan equivalences and whose trivial fibrations are the levelwise trivial fibrations surjective on colors.
\end{dfn}

\begin{dfn}\label{d:canoope} The \textbf{canonical model structure on} $\Op(\cS)$ is the one whose fibrant objects are the levelwise fibrant operads and whose trivial fibrations are the same as those of the Dwyer-Kan model structure.
\end{dfn}

\begin{rem}\label{r:canonicalfib} As originally introduced by G. Caviglia, a map in $\Op(\cS)$ is a fibration (resp. weak equivalence) with respect to the canonical model structure if and only if it is a levelwise fibration (resp. weak equivalence) and such that its underlying map in $\Cat(\cS)$ is a fibration (resp. weak equivalence) with respect to that model structure. (See [\cite{Caviglia}, Definition 4.5] and [\cite{Ieke}, \S 2.2]).
\end{rem}

Following up his work, we give a set of conditions on the base category $\cS$ assuring the existence of the canonical model structure.
\begin{prop}\label{firstfiveconditions}(Caviglia, \cite{Caviglia}) Let $\mathcal{S}$ be a combinatorial symmetric monoidal model category {such that}:

	(S1) the class of weak equivalences is closed under filtered colimits,

	(S2) either (a) $\cS$ admits a symmetric monoidal fibrant replacement functor and a functorial path data, or (b) $\cS$ is symmetric $h$-monoidal,

	(S3) the monoidal unit is cofibrant, and

	(S4) the model structure is right proper.

	\noindent Then $\Op(\mathcal{S})$ admits the canonical model structure, which is as well right proper and combinatorial. Moreover, this model structure coincides then with the Dwyer-Kan model structure.
	\begin{proof} The combinatoriality of $\cS$ implies that it is strongly cofibrantly generated and besides that, implies the existence of a set of \textbf{generating intervals} in the sense of \cite{Ieke}  (cf. Lemma 1.12 of {loc. cit.}). On other hand, by propositions \ref{p:modelonalgebras} and \ref{p:modelonalgebras1}, the condition (S2) ensures the existence of  the transferred model structure on $\Op_C(\mathcal{S})$ for every set $C$. Then by [\cite{Caviglia}, Theorem 4.22 (1)], $\Op(\mathcal{S})$ admits the canonical model structure, which is combinatorial as well. The right properness follows by Proposition 5.3 of  {loc. cit}.

		For the second claim, observe first that the condition (S1), together with the combinatoriality of $\cS$, implies that $\cS$ is \textbf{compactly generated} in the sense of [\cite{Ieke}, Definition 1.2]. Combining this fact with (S2), we get that $\cS$ is \textbf{adequate} in the sense of [\cite{Ieke}, Definition 1.1]. The latter fact, along with the conditions (S3) and (S4), proves that the classes of Dwyer-Kan and canonical weak equivalences in $\Cat(\mathcal{S})$ coincide (cf. \cite{Ieke}, propositions 2.20 and 2.24). Thus, by Remark \ref{r:canonicalfib} these two classes in $\Op(\mathcal{S})$ coincide as well. Combining the latter with the fact that the two model structures have the same trivial fibrations, we deduce that they indeed coincide.
	\end{proof}
\end{prop}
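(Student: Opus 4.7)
The plan is to invoke Caviglia's existence theorem for the canonical model structure on enriched operads, after checking that the hypotheses (S1)--(S4) supply the requisite input. The first step is to observe that combinatoriality of $\cS$ makes it strongly cofibrantly generated, so that (S2) -- through either Proposition~\ref{p:modelonalgebras} or Proposition~\ref{p:modelonalgebras1} applied to the operad $\textbf{O}_C$ of $C$-colored operads (Example~\ref{ex:operadofoperads}) -- yields the transferred projective model structure on $\Op_C(\cS)$ for every color set $C$. Combinatoriality also furnishes a set of generating intervals in the sense of Berger-Moerdijk, which is the essential extra input needed to glue the fiberwise structures on the fibers $\Op_C(\cS)$ into a coherent model structure on the total category $\Op(\cS)$. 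Applying Caviglia's result then produces the canonical model structure, and the resulting structure is combinatorial because it is assembled entirely from combinatorial data.

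For right-properness of $\Op(\cS)$, I would argue by a pullback computation. Canonical fibrations are in particular levelwise fibrations on spaces of operations (cf.\ Remark~\ref{r:canonicalfib}), and limits in $\Op(\cS)$ can be computed by first taking the corresponding limit of color sets and then forming the levelwise limit of the spaces of operations. The levelwise right-properness coming from (S4) then propagates through this description to give stability of weak equivalences under pullback along canonical fibrations.

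The second half of the statement -- that the canonical and Dwyer-Kan model structures coincide -- reduces, by the definitions of the two structures, to proving that their classes of weak equivalences agree, and in view of Remark~\ref{r:canonicalfib} this in turn reduces to the analogous coincidence for $\Cat(\cS)$. To obtain the latter I would first show that (S1) together with combinatoriality makes $\cS$ compactly generated in the sense of Berger-Moerdijk, and then combine this with (S2) to upgrade to adequacy. Once $\cS$ is adequate, the conditions (S3) on the monoidal unit and (S4) on right-properness are exactly what the Berger-Moerdijk comparison theorem for $\Cat(\cS)$ requires to identify its Dwyer-Kan and canonical weak equivalences. Propagating this identification back through Remark~\ref{r:canonicalfib} finishes the proof.

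The main technical obstacle I expect is the right-properness verification: while the behavior of fibrations on individual spaces of operations is straightforward, the fact that a canonical fibration need not be surjective on color sets means that pullbacks in $\Op(\cS)$ have a nontrivial interaction between the color-set data and the operadic data. Giving an honest description of such pullbacks and arguing that the weak equivalence is preserved on every level requires a careful bookkeeping of how colors and operations are simultaneously pulled back, and this is the place where the argument is least formal.
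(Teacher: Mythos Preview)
Your overall strategy matches the paper's proof almost exactly: combinatoriality gives strong cofibrant generation and a set of generating intervals, (S2) via Propositions~\ref{p:modelonalgebras} and~\ref{p:modelonalgebras1} yields the transferred model structure on each $\Op_C(\cS)$, Caviglia's theorem then produces the canonical model structure on $\Op(\cS)$, and the coincidence with the Dwyer-Kan model structure is reduced via Remark~\ref{r:canonicalfib} to the analogous statement for $\Cat(\cS)$, which in turn follows from Berger--Moerdijk once one checks that $\cS$ is adequate (compactly generated via (S1), plus (S2)) and that (S3), (S4) hold. The paper adds one small observation you omit: since the two model structures share trivial fibrations by definition, equality of weak equivalences suffices to conclude equality of the model structures.

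The one substantive divergence is right properness. The paper simply invokes Caviglia's Proposition~5.3, whereas you propose a direct pullback computation. Your sketch, however, only addresses the \emph{levelwise} weak equivalence condition: you argue that pullbacks are computed levelwise on spaces of operations and that (S4) then propagates. But by Remark~\ref{r:canonicalfib}, a canonical weak equivalence in $\Op(\cS)$ is a levelwise weak equivalence whose underlying map in $\Cat(\cS)$ is itself a canonical weak equivalence; the latter condition encodes essential surjectivity on homotopy categories and is not captured by a purely levelwise argument in $\cS$. So your direct route would additionally need right properness of the canonical model structure on $\Cat(\cS)$ (or an independent argument for the essential-surjectivity component under pullback), which is nontrivial and is exactly why the paper defers to Caviglia's result rather than reproving it. The obstacle you flagged is real, but it is larger than the color-bookkeeping issue you describe.
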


\begin{rem} Under the same assumptions as in Proposition \ref{firstfiveconditions}, the canonical model structure on $\Cat(\mathcal{S})$ automatically exists and coincides with the Dwyer-Kan model structure.
\end{rem}

\begin{example} Some typical base categories satisfying the conditions of Proposition \ref{firstfiveconditions} are {presented in} Examples \ref{ex:basecategories}.
\end{example}

\smallskip

\subsection{Tangent categories and Quillen cohomology}\label{s:tangentcategory}

\smallskip

This subsection, based on the works of \cite{YonatanBundle,YonatanCotangent}, contains the most important concepts appearing throughout the paper. Basically, \textit{tangent category} comes after a procedure of taking the stabilization of  a model category of interest. Note that under our setting, stabilizations exist only as semi model categories. Despite this, the needed results from those papers remain valid. 

\begin{dfn} A model category $\textbf{M}$ is said to be \textbf{weakly pointed} if it contains a \textbf{weak zero object}, i.e., an object which is both homotopy initial and terminal.
\end{dfn}
Let $\textbf{M}$ be a weakly pointed model category and let $X$ be an ($\mathbb{N}\times\mathbb{N} $)-diagram in $\textbf{M}$. The \textit{diagonal squares} of $X$ are of the form
$$ \xymatrix{
	X_{n,n} \ar[r]\ar[d] & X_{n,n+1} \ar[d] \\
	X_{n+1,n} \ar[r] & X_{n+1,n+1} \\
}$$
\begin{dfn}\label{dfnspectrumobj} An  ($\mathbb{N}\times\mathbb{N} $)-diagram in $\textbf{M}$ is called

	 (1) a \textbf{prespectrum} if all its off-diagonal entries are weak zero objects in $\textbf{M}$,

	 (2) an \textbf{$\Omega$-spectrum} if it is a prespectrum and all its diagonal squares are homotopy Cartesian,

	 (3) a \textbf{suspension spectrum} if it is a prespectrum and all its diagonal squares are homotopy coCartesian.
\end{dfn}

The projective model category of ($\mathbb{N}\times\mathbb{N} $)-diagrams in $\textbf{M}$ will be denoted by $\textbf{M}^{\mathbb{N}\times\mathbb{N}}_{\proj}$.

\begin{dfn}(\cite{YonatanBundle}) Let $\textbf{M}$ be a weakly pointed model category. A map $f : X\rar Y$ in $\textbf{M}^{\mathbb{N}\times\mathbb{N}}$ is said to be a \textbf{stable equivalence} if for every $\Omega$-spectrum $Z$ the induced map between derived mapping spaces
	$$ \Map^{\der}_{\textbf{M}^{\mathbb{N}\times\mathbb{N}}_{\proj}}(Y,Z) \lrar  \Map^{\der}_{\textbf{M}^{\mathbb{N}\times\mathbb{N}}_{\proj}}(X,Z) $$
	is a {weak equivalence}. (Note that a stable equivalence between $\Omega$-spectra is always a levelwise {weak}  equivalence).
\end{dfn}

Following [\cite{YonatanBundle}, Lemma 2.1.6], the $\Omega$-spectra in $\textbf{M}$ can be characterized as the \textbf{local objects} against a certain set of maps. Inspired by Definition 2.1.3 of  {loc. cit.}, we give the following definition, which is valid due to \cite{White}.

\begin{dfn}\label{d:stabilization} Let $\textbf{M}$ be a weakly pointed combinatorial model category such that the domains of generating cofibrations are cofibrant. The \textbf{stabilization of}  $\textbf{M}$, denoted by $\Sp(\textbf{M})$, is defined to be the left Bousfield localization of $\textbf{M}^{\mathbb{N}\times\mathbb{N}}_{\proj}$ with $\Omega$-spectra as the local objects. Explicitly, $\Sp(\textbf{M})$ is a cofibrantly generated semi model category whose

	- weak equivalences are the stable equivalences, and whose

	- (generating) cofibrations are the same as those of $\textbf{M}^{\mathbb{N}\times\mathbb{N}}_{\proj}$.

	\noindent In particular, fibrant objects of $\Sp(\textbf{M})$ are precisely the levelwise fibrant $\Omega$-spectra.
\end{dfn}

\begin{rem}\label{r:semitangent} When $\textbf{M}$ is in addition left proper then the stabilization $\Sp(\textbf{M})$ exists as a (full) model category. Nevertheless, we do not require the left properness throughout the paper. 
\end{rem}

\begin{dfn}\label{d:stable} (\cite{YonatanBundle}) A model {(or semi-model)} category $\textbf{M}$ is {said to be} \textbf{stable} if the following equivalent conditions hold:
	
		(1) The \textbf{underlying} $\infty$-\textbf{category} $\textbf{M}_\infty$ (cf., \cite{DK, Hinich}) is stable in the sense of \cite{Lurieha}.
		
		(2) $\textbf{M}$ is weakly pointed and such that a square in $\textbf{M}$ is homotopy coCartesian if and only if it is homotopy Cartesian.
		
		(3) $\textbf{M}$ is weakly pointed and such that the adjunction $\Sigma : \adjunction*{}{\Ho(\textbf{M})}{\Ho(\textbf{M})}{}  : \Omega$ of suspension-desuspension functors is an adjoint equivalence.
	
\end{dfn}

\begin{thm}\phantomsection \label{t:stabilization} (Y. Harpaz, J. Nuiten and M. Prasma \cite{YonatanBundle}) Let $\textbf{M}$ and $\textbf{N}$ be two weakly pointed combinatorial model categories such that the domains of their generating cofibrations are cofibrant. 
\begin{enumerate}

		\item There is a Quillen adjunction $\adjunction{\Sigma^{\infty}}{\textbf{M}}{\Sp(\textbf{M})}{\Omega^{\infty}}$ where $\Omega^{\infty}(X)=X_{0,0}$ and $\Sigma^{\infty}({A})$ is the constant diagram with value {$A$}. 
		
		\item The induced functor $(\Omega^{\infty})_\infty : \Sp(\textbf{M})_\infty \lrar \textbf{M}_\infty$ exhibits $\Sp(\textbf{M})_\infty$ as the stabilization of $\textbf{M}_\infty$ in the  sense of \cite{Lurieha}.
		
		\item The stabilization $\Sp(\textbf{M})$ is stable. Furthermore, if $\textbf{M}$ is already stable then the adjunction $\adjunction{\Sigma^{\infty}}{\textbf{M}}{\Sp(\textbf{M})}{\Omega^{\infty}}$ is a Quillen equivalence.
		
		\item A Quillen adjunction $\adjunction{\sF}{\textbf{M}}{\textbf{N}}{\sG}$ lifts to a Quillen adjunction  $$\adjunction{\Sp(\sF)}{\Sp( \textbf{M})}{\Sp( \textbf{N})}{\Sp(\sG)}$$ which is given on underlying categories by the adjunction $\sF^{\mathbb{N}\times\mathbb{N}} \dashv \sG^{\mathbb{N}\times\mathbb{N}}$. Moreover, if $\sF \dashv \sG$ is a Quillen equivalence then $\Sp(\sF) \dashv \Sp(\sG)$ is one. (We will sometimes write $\sF^{\Sp}$ instead $\Sp(\sF)$).
\end{enumerate}
\end{thm}

Let $\I$ be a diagram category. Then the left Quillen functor $\sF : \textbf{M} \lrar \textbf{N}$ gives rise to a left Quillen functor $$[\I,\Sp(\sF)] : \Fun(\I,\Sp(\textbf{M})) \lrar \Fun(\I,\Sp(\textbf{N}))$$ between projective model categories given by {post-composing} with $\Sp(\sF) :\Sp(\textbf{M}) \lrar \Sp(\textbf{N})$.

\begin{obs}\label{ob:stablecohomotopy} Suppose that $\sF$ preserves weak equivalences. Then $\Sp(\sF)$ also preserves weak equivalences and therefore, so does the functor $[\I,\Sp(\sF)]$.
\end{obs}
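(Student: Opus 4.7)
The plan is to exploit two features of the stable semi model structure from Definition \ref{d:stabilization}: its cofibrations coincide with those of $\textbf{M}^{\mathbb{N}\times\mathbb{N}}_{\proj}$, and $\Sp(\sF)$ is given by levelwise application of $\sF$. Together these reduce the claim to a standard cofibrant replacement argument.

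First, I would observe that because $\Sp(\textbf{M})$ is the left Bousfield localization of $\textbf{M}^{\mathbb{N}\times\mathbb{N}}_{\proj}$ along a set of maps, the two model structures share the same (generating) cofibrations, and hence also the same trivial fibrations. In particular a trivial fibration in $\Sp(\textbf{M})$ is, a fortiori, a levelwise trivial fibration in $\textbf{M}$. Since $\sF$ preserves all weak equivalences by hypothesis, applying $\sF^{\mathbb{N}\times\mathbb{N}}=\Sp(\sF)$ levelwise sends projective (levelwise) weak equivalences to projective weak equivalences in $\textbf{N}^{\mathbb{N}\times\mathbb{N}}_{\proj}$, which are themselves stable equivalences in $\Sp(\textbf{N})$.

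Next, given an arbitrary stable equivalence $f\colon X\to Y$ in $\Sp(\textbf{M})$, choose functorial cofibrant replacements $X^{c}\to X$ and $Y^{c}\to Y$ in $\Sp(\textbf{M})$; these are trivial fibrations there, hence projective trivial fibrations by the previous paragraph. Lift $f$ to a map $f^{c}\colon X^{c}\to Y^{c}$ between cofibrant objects, which is then a stable equivalence by the two-out-of-three property. By Facts \ref{fact:stabilization}(4), $\Sp(\sF)$ is a left Quillen functor, so Ken Brown's lemma implies that $\Sp(\sF)(f^{c})$ is a stable equivalence. On the other hand, $\Sp(\sF)$ carries the cofibrant replacement maps $X^{c}\to X$ and $Y^{c}\to Y$ to projective (hence stable) equivalences, by the first step. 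A final application of two-out-of-three in the commutative square
\[
\xymatrix{
\Sp(\sF)(X^{c}) \ar[r]^-{\Sp(\sF)(f^{c})}\ar[d] & \Sp(\sF)(Y^{c}) \ar[d] \\
\Sp(\sF)(X) \ar[r]_-{\Sp(\sF)(f)} & \Sp(\sF)(Y)
}
\]
forces $\Sp(\sF)(f)$ to be a stable equivalence as well.

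For the final clause, weak equivalences in the projective model structure on $\Fun(\I,\Sp(\textbf{N}))$ are, by definition, levelwise stable equivalences, and $[\I,\Sp(\sF)]$ acts by postcomposition with $\Sp(\sF)$; hence it preserves weak equivalences. There is no serious obstacle here: the mildly subtle point is simply the coincidence of trivial fibrations in $\Sp(\textbf{M})$ and in $\textbf{M}^{\mathbb{N}\times\mathbb{N}}_{\proj}$, which is what lets the cofibrant replacement argument bridge the gap between levelwise and stable equivalences.
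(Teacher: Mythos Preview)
Your argument is correct. The paper's proof is extremely terse: it notes that $\sF^{\mathbb{N}\times\mathbb{N}}$ preserves projective (levelwise) weak equivalences and then says the rest is ``straightforward using the definition of stable equivalences,'' which points toward the derived-adjunction argument: for any $\Omega$-spectrum $Z$ in $\textbf{N}$, one uses the Quillen adjunction $\sF^{\mathbb{N}\times\mathbb{N}}\dashv\sG^{\mathbb{N}\times\mathbb{N}}$ (together with the fact that $\sF^{\mathbb{N}\times\mathbb{N}}$ already computes its own left derived functor) to rewrite $\Map^{\h}(\Sp(\sF)(Y),Z)$ as $\Map^{\h}(Y,\sG^{\mathbb{N}\times\mathbb{N}}(Z^{\mathrm{fib}}))$, and then observes that $\sG^{\mathbb{N}\times\mathbb{N}}$ sends fibrant $\Omega$-spectra to $\Omega$-spectra. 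Your route is different: you bypass the mapping-space definition entirely and instead run a Ken Brown argument, using that $\Sp(\textbf{M})$ and $\textbf{M}^{\mathbb{N}\times\mathbb{N}}_{\proj}$ share cofibrations (hence trivial fibrations), so cofibrant replacements in $\Sp(\textbf{M})$ are levelwise equivalences which $\Sp(\sF)$ visibly preserves. Your approach has the advantage of not needing to check that $\sG$ preserves $\Omega$-spectra; the paper's approach is shorter once that check is taken for granted. Either way, the essential content is the same first observation, namely that $\sF^{\mathbb{N}\times\mathbb{N}}$ preserves levelwise weak equivalences. One small remark: you do not actually need functoriality of the cofibrant replacement, only existence of a cofibrant replacement and a lift $f^{c}$, which is available in the semi model structure.
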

\begin{proof} Since $\sF$ preserves weak equivalences, so does the functor $\sF^{\mathbb{N}\times\mathbb{N}} : \textbf{M}^{\mathbb{N}\times\mathbb{N}}_{\proj} \lrar \textbf{N}^{\mathbb{N}\times\mathbb{N}}_{\proj}$. The proof is then straightforward using the definition of stable equivalences.
\end{proof}

\begin{notns}\label{no:aumanted}

		 (i) Let $\C$ be a category containing a terminal object $*$. The \textbf{pointed category} associated to $\C$ is given by $\C_* := \C_{*/}$ the category of objects under $*$.

		 (ii) Suppose that $\C$ contains an initial object $\emptyset$. The \textbf{augmented category} associated to $\C$ is given by $\C^{\aug} := \C_{/\emptyset}$ the category of objects over $\emptyset$.
		
		 (iii) Let $\C$ be a category containing an object $X$. We will denote by $\C_{X//X}:= (\C_{/X})_*$ the pointed category associated to the over category $\C_{/X}$. More explicitly, objects of $\C_{X//X}$ are the diagrams $X\overset{f}{\longrightarrow} A \overset{g}{\longrightarrow} X$ in $\C$ such that $gf=\Id_X$. Alternatively, $\C_{X//X}$ is the same as $(\C_{X/})^{\aug}$ the augmented category associated to the under category $\C_{X/}$.

\end{notns}

Note that a morphism $f:X\rar Y$ in $\C$ gives rise to a canonical adjunction $\adjunction{f_!}{\C_{X//X}}{\C_{Y//Y}}{f^{*}}$ defined by
\begin{gather*} f_!(X \rar A \rar X) = (Y \lrar A \, \underset{X}{\bigsqcup} \, Y \lrar Y)  \; \; \text{and} \\  f^{*}(Y \rar B \rar Y) = (X \lrar B\times_Y X \lrar X).
\end{gather*}

It can be shown that if $\textbf{M}$ is a combinatorial model category such that the domains of generating cofibrations are cofibrant, then so is the transferred model structure on $\textbf{M}_{A//A}$ (see Hirschhorn's \cite{Hirschhorn2}). This makes the following definition valid.

\begin{dfn}\label{d:tangentcat} Let $\textbf{M}$ be a combinatorial model category such that the domains of generating cofibrations are cofibrant and let $A$ be an object of $\textbf{M}$. The \textbf{tangent category} to $\textbf{M}$ at $A$, denoted by $\mathcal{T}_A\textbf{M}$, is defined to be the stabilization of $\textbf{M}_{A//A}$:
\begin{equation}\label{eq:deftan}
	\mathcal{T}_A\textbf{M}:=\Sp(\textbf{M}_{A//A}).
\end{equation}	
\end{dfn}

There is a Quillen adjunction  $ \adjunction{\Sigma^{\infty}_+}{\textbf{M}_{/A}}{\mathcal{T}_A\textbf{M}}{\Omega^{\infty}_+} $ given by the composition
$$ \adjunction*{A\sqcup (-)}{\textbf{M}_{/A}}{\textbf{M}_{A//A}}{\forget} \adjunction*{\Sigma^{\infty}}{}{\mathcal{T}_A\textbf{M}}{\Omega^{\infty}} .$$
Namely, for each $B \in \textbf{M}_{/A}$, then we have $ \Sigma^{\infty}_+(B) = \Sigma^{\infty}(A\longrightarrow A\sqcup B \longrightarrow A)$, while for each $X \in \mathcal{T}_A\textbf{M}$, we have $\Omega^{\infty}_+(X) = (X_{0,0} \lrar A)$.

The following notion keeps a central role {within the framework of Quillen cohomology theory}.

\begin{dfn} Let $\textbf{M}$ be a combinatorial model category such that the domains of generating cofibrations are cofibrant and let $A$ be an object of $\textbf{M}$. The \textbf{cotangent complex} of $A$, denoted by $\rL_A$,  is defined to be the derived suspension spectrum of $A$, i.e., \, $ \rL_A:=\mathbb{L}\Sigma^{\infty}_+(A) \in  \mathcal{T}_A\textbf{M}$.
\end{dfn}

By Theorem \ref{t:stabilization}(iv), a given map $f:A\rightarrow B$ in $\textbf{M}$ gives rise to a Quillen adjunction between tangent categories $$\adjunction{f_!^{\Sp}}{\mathcal{T}_A\textbf{M}=\Sp(\textbf{M}_{A//A})}{\Sp(\textbf{M}_{B//B})= \mathcal{T}_B\textbf{M}}{f^*_{\Sp}} .$$ Moreover, there is a commutative square of left Quillen functors
\begin{equation}\label{eq:Qadj}
	\xymatrix{
		\textbf{M}_{/A}  \ar[r]^{f_!}\ar[d]_{\Sigma^{\infty}_+} & \textbf{M}_{/B} \ar[d]^{\Sigma^{\infty}_+} \\
		\ar[r]_{f_!^{Sp}} \mathcal{T}_A\textbf{M} & \mathcal{T}_B\textbf{M}  \\
	}
\end{equation}

\begin{rem}\label{r:tangentright} {For a weak equivalence $A \lrarsimeq B$ between fibrant-cofibrant objects in $\textbf{M}$, one can show that the induced adjunction $\adjunction*{}{\textbf{M}_{A//A}}{\textbf{M}_{B//B}}{}$ is a Quillen equivalence. Accordingly, the tangent categories to $\textbf{M}$ are homotopy-invariant with respect to the class of such objects. In other words, the tangent category $\mathcal{T}_A\textbf{M}$ has the right type as soon as $A$ is fibrant-cofibrant. Note also that the requirement for fibrancy (resp. cofibrancy) on $A$ can be omitted when $\textbf{M}$ is in addition right (resp. left)  proper. This follows from the fact that, when $\textbf{M}$ is right (resp. left)  proper,  the model category $\textbf{M}_{/X}$ (resp. $\textbf{M}_{X/}$) has already the right type for every object $X$.}
\end{rem}

\begin{dfn}\label{d:relcotangent} Let $\textbf{M}$ be a combinatorial model category such that the domains of generating cofibrations are cofibrant and let $f : A \rar B$ be a map in $\textbf{M}$. We will denote by $$\rL_{B/A}:=\hocofib \, [\,\mathbb{L}\Sigma^{\infty}_+(f) \lrar \rL_B\,]$$ the homotopy cofiber of the map $\mathbb{L}\Sigma^{\infty}_+(f) \lrar \rL_B$ in $\mathcal{T}_B\textbf{M}$, and refer to $\rL_{B/A}$ as the \textbf{relative cotangent complex} of $f$. 
\end{dfn}

Notice that the map $\mathbb{L}\Sigma^{\infty}_+(f) \lrar \rL_B$ can be identified with the map $f_!^{\Sp}(\rL_A)  \lrar \rL_B$, due to the commutativity of the square~\eqref{eq:Qadj}.

\begin{rem}\label{r:relcotant} Suppose that $A$ is cofibrant. Take a factorization $A \lrar B^{\cof} \lrarsimeq B$ in $\textbf{M}$ of the map $f$ into a cofibration followed by a weak equivalence. In particular, $B^{\cof}$ is a cofibrant resolution of $B$ in $\textbf{M}$. Consider the  coCartesian square 
	$$\xymatrix{
		B \sqcup A  \ar[r]\ar[d] & B \sqcup B^{\cof} \ar[d] \\
		\ar[r] B & B \, \underset{A}{\bigsqcup} \, B^{\cof}  \\
	}$$
	When regarded as a coCartesian square in $\textbf{M}_{B//B}$, this square is homotopy coCartesian because the map $A \lrar B^{\cof}$ is a cofibration between cofibrant objects in $\textbf{M}$ and in addition, $B$ is a zero object in $\textbf{M}_{B//B}$. Applying the functor $\Sigma^\infty : \textbf{M}_{B//B} \lrar  \mathcal{T}_B\textbf{M}$ to this square, we obtain a homotopy cofiber sequence in $\mathcal{T}_B\textbf{M}$ {of the form}
	$$ \Sigma^\infty(B \sqcup A) \lrar \Sigma^\infty(B \sqcup B^{\cof}) \lrar \Sigma^\infty(B \, \underset{A}{\bigsqcup} \, B^{\cof})  .$$
	In this sequence, the first term is a model for $\mathbb{L}\Sigma^{\infty}_+(f)$, while the second term is nothing but $\rL_B$. Thus by definition $\Sigma^\infty(B \, \underset{A}{\bigsqcup} \, B^{\cof})$ is a model for the relative cotangent complex $\rL_{B/A}$.
\end{rem}

\begin{dfn}\label{d:Qcohom} (\cite{YonatanCotangent}, Definition 2.2.1) Let $\textbf{M}$ be a combinatorial model category such that the domains of generating cofibrations are cofibrant  and let $X$ be a fibrant object of $\textbf{M}$. Suppose given an object $M\in\mathcal{T}_X\textbf{M}$, regarded as the \textbf{spectrum of coefficients}. For each $n\in\mathbb{Z}$, the \textbf{$n$'th Quillen cohomology group of $X$ with coefficients in $M$} is defined to be
	$$ \sH^{n}_Q (X  ; M) := \pi_0 \Map^{\der}_{\mathcal{T}_X\textbf{M}} (\rL_X , M[n])$$
	where $M[n] : = \Sigma^{n}M$, i.e., the $n$-suspension of $M$ in $\mathcal{T}_X\textbf{M}$.
\end{dfn}

\begin{rem}\label{r:rewriteQcohom} By the Quillen adjunction $ \adjunction{\Sigma^{\infty}_+}{\textbf{M}_{/X}}{\mathcal{T}_X\textbf{M}}{\Omega^{\infty}_+} $, there is a canonical weak equivalence
	$$ \Map^{\der}_{\textbf{M}_{/X}}(X, {\RR}\Omega^{\infty}_+M[n]) \simeq \Map^{\der}_{\mathcal{T}_X\textbf{M}} (\rL_X , M[n]) .$$
 In particular, we have that $$\sH^{n}_Q (X  ; M) = \pi_0 \Map^{\der}_{\mathcal{T}_X\textbf{M}} (\rL_X , M[n])\cong \pi_0\Map^{\der}_{\textbf{M}_{/X}}(X, {\RR}\Omega^{\infty}_+M[n]) .$$
\end{rem}

\begin{rem}\label{r:Qcohominvar} Quillen cohomology is {homotopy-invariant with respect to the class of fibrant objects.} Indeed, a weak equivalence $f:X \lrarsimeq Y$ between fibrant objects induces a right Quillen equivalence $\textbf{M}_{/Y}\lrarsimeq \textbf{M}_{/X}$. Therefore, for any fibrant object $M\in \mathcal{T}_Y\textbf{M}$ we get a canonical weak equivalence
	$$ \Map^{\der}_{\textbf{M}_{/Y}}(Y, \Omega^{\infty}_+M[n])  \lrarsimeq  \Map^{\der}_{\textbf{M}_{/X}}(X, f^{*}\Omega^{\infty}_+M[n])=\Map^{\der}_{\textbf{M}_{/X}}(X, \Omega^{\infty}_+(f_{\Sp}^{*}M)[n]).$$
	This induces an isomorphism $\sH^{n}_Q (Y  ; M)\overset{\cong}{\lrar}\sH^{n}_Q (X  ; f_{\Sp}^{*}M)$, by Remark~\ref{r:rewriteQcohom}. 
\end{rem}

The above remark explains the requirement for fibrancy on objects. Nonetheless, {as in Remark \ref{r:tangentright},}  this requirement can be disregarded when $\textbf{M}$ is in addition right proper.

\smallskip

\section{Conventions}\label{s:convention}

\smallskip

We first recall from \cite{YonatanBundle} the following definition, which itself is inspired by [\cite{Lurieha}, Definition 6.1.1.6].

\begin{dfn}\label{dfndifferentiable} A model category $\textbf{M}$ is said to be \textbf{differentiable} if the derived colimit functor $\mathbb{L} \colim:\textbf{M}^{\mathbb{N}}\longrightarrow \textbf{M}$ preserves finite homotopy limits. A Quillen adjunction $\mathcal{L} :  \adjunction*{}{\textbf{M}}{\textbf{N}}{} : \mathcal{R}$ is said to be \textbf{differentiable} if both $\textbf{M}$ and $\textbf{N}$ are differentiable and the right derived functor $\mathbb{R}\mathcal{R}$ preserves sequential homotopy colimits.
\end{dfn}

\begin{conv}\label{convention} { Let $\cS$ be a combinatorial symmetric monoidal model category such that the domains of generating cofibrations are cofibrant.} 
	
	\noindent  \;	{$\bullet$ We will say that  $\cS$ is \textbf{sufficient} if it satisfies the conditions (S1)-(S4) of Proposition \ref{firstfiveconditions} and in addition:}
	
	\begin{enumerate}[\indent {}]
		
		\item	{(S5) $\mathcal{S}$ is differentiable, and}
		
		\item	{(S6) the unit $1_\mathcal{S}$ is \textbf{homotopy compact} in the sense that the functor $\pi_0 \Map^{\h}_\mathcal{S}(1_\mathcal{S}, - )$ sends filtered homotopy colimits to colimits of sets.}
	\end{enumerate}

	\noindent  \;	{$\bullet$ Moreover, $\cS$ is said to be \textbf{abundant} if it is sufficient, and in addition:} 	
	
	\begin{enumerate}[\indent {}]	
		\item 	{(S7) $\cS$ satisfies the Lurie's \textbf{invertibility hypothesis} [\cite{Luriehtt}, Definition A.3.2.12].}
	\end{enumerate}
	
	\noindent  \;	{$\bullet$ We will say that $\cS$ is \textbf{stably sufficient} (resp. \textbf{stably abundant}) if it is sufficient (resp. abundant), and such that:}
	
	\begin{enumerate}[\indent {}]	
		\item 	{(S8) $\cS$ is a stable model category containing a strict zero object.}
	\end{enumerate}  
\end{conv}

 In particular, {when $\cS$ is sufficient}, we will work on the canonical model structure on $\Op(\mathcal{S})$  (and also, $\Cat(\mathcal{S})$), which coincides with the Dwyer-Kan model structure by Proposition \ref{firstfiveconditions}. On other hand, for any category 
$$  \M \, \in \, \{\Coll_C(\SS), \Op_C(\SS), \LMod(\P), \RMod(\P), \BMod(\P), \IbMod(\P), \Alg_\P(\cS), \Mod_\P^{A}\} ,$$
we will work on the  transferred model structure on $\M$ (see  $\S$\ref{s:optransfermod}).

\begin{rem} Requiring the domains of generating cofibrations to be cofibrant is necessary for the existence of various types of operadic tangent category (cf. $\S$\ref{s:tangentcategory}).
\end{rem}

\begin{rem}\label{categoriesisdifferentiavle} The conditions (S5)-(S6) support the differentiability of  $\Op(\mathcal{S})$ and $\Op_C(\mathcal{S})$ (cf. Lemma \ref{LRisdifferentible}), which is needed for proving Proposition \ref{Quilleneq1}.
\end{rem}

\begin{rem}\label{r:s8} The condition (S7) allows us to inherit [\cite{YonatanCotangent}, Proposition 3.2.1] for the work of $\S$\ref{s:cotangentcplx}. Roughly speaking, the invertibility hypothesis requires that, for any $\C \in \Cat(\mathcal{S})$ containing a morphism $f$, localizing $\C$ at $f$ does not change the homotopy type of $\C$ as long as $f$ is already an isomorphism in $\Ho(\C)$. This condition is in fact {quite common in practice}.  According to \cite{Lawson}, if $\cS$ is a combinatorial monoidal model category satisfying (S1) and such that every object is cofibrant then $\cS$ satisfies the invertibility hypothesis. It also holds for dg modules over a commutative ring by [\cite{Toen}, Corollary 8.7], and for any simplicial monoidal model category, according to [\cite{Iandundas}, Theorem 0.9].
\end{rem}

\begin{example}\label{e:typicalconvention}  {While all the base categories mentioned in Examples \ref{ex:basecategories} are abundant, only $\C(\textbf{k})$ is stably abundant.}
\end{example}

\section{Operadic tangent categories}\label{s:optangent}

\smallskip

{Throughout this section, we will assume that $\cS$ is a  base category that is (at least) \textit{sufficient} in the sense of Conventions \ref{convention}. Moreover, let $\P$ be a fixed $C$-colored operad in $\cS$.}

Note that, as well as every type of monoid, there is a \textit{restriction functor} $\Op_C(\mathcal{S})_{\mathcal{P}/} \lrar \BMod(\mathcal{P})_{\mathcal{P}/}$, which admits a left adjoint called the \textit{induction functor}. Combined with \eqref{eq:L_PandR_P} and \eqref{eq:adjbimodtoinf}, this yields a sequence of adjunctions
\begin{equation}\label{eq:presequenceQuillen}
	\adjunction*{}{\IbMod(\P)_{\mathcal{P}/}}{\BMod(\P)_{\mathcal{P}/}}{} \adjunction*{}{}{\Op_C(\mathcal{S})_{\mathcal{P}/}}{} \adjunction*{}{}{\Op(\mathcal{S})_{\mathcal{P}/}}{} .
\end{equation}
The latter induces a sequence of adjunctions connecting the associated augmented categories 
$$ \adjunction*{}{\IbMod(\P)_{\P//\P}}{\BMod(\P)_{\P//\P}}{} \adjunction*{}{}{\Op_C(\mathcal{S})_{\P//\P}}{} \adjunction*{}{}{\Op(\mathcal{S})_{\P//\P}}{} .$$
Observe that each of the right adjoints in this sequence preserves fibrations and weak equivalences. So  all the adjunctions {above} are Quillen adjunctions. We thus obtain a sequence of Quillen adjunctions connecting the associated tangent categories (cf. Theorem \ref{t:stabilization}(iv)):
\begin{equation}\label{eq:sequenceQuillen}
	\adjunction*{}{\T_\P\IbMod(\P)}{\T_\P\BMod(\P)}{} \adjunction*{}{}{\T_\P\Op_C(\mathcal{S})}{} \adjunction*{}{}{\T_\P\Op(\mathcal{S})}{} \, .
\end{equation}

Recall that the operad $\P \in \Op_C(\cS)$ is said to be $\Sigma$\textbf{-cofibrant} if its underlying $C$-collection is cofibrant as an object of $\Coll_C(\cS)$.  Our goal in this section is to prove that the adjunctions in the above sequence are all Quillen equivalences when $\P$ is $\Sigma$-cofibrant. The work may require making use of the \textbf{Comparison theorem} \cite{Yonatan}, which we now recall.

Let $\textbf{M}$ be a symmetric monoidal model category and  let $\mathcal{O}$ be an $\textbf{M}$-enriched operad. We denote by $\mathcal{O}_{\leqslant 1}$ the operad obtained from $\O$ by removing the operations of arity $>1$. Recall that the collection of nullary operations $\mathcal{O}_0$ inherits the obvious structure of an $\mathcal{O}$-algebra and then, becomes an initial object in the category $\Alg_\mathcal{O}(\textbf{M})$. 

\begin{dfn}The operad $\mathcal{O}$ is said to be \textbf{admissible} if the transferred model structure on $\Alg_\mathcal{O}(\textbf{M})$ exists. Furthermore, $\O$ is called \textbf{stably admissible} (resp. {\textbf{stably semi admissible}}) if it is admissible and the stabilization  $\Sp(\Alg^{\aug}_\mathcal{O}(\textbf{M}))$ exists as a model (resp. {semi-model}) category, where $\Alg^{\aug}_\mathcal{O}(\textbf{M}):=\Alg_\mathcal{O}(\textbf{M})_{\mathcal{O}_0//\mathcal{O}_0}$ the augmented category associated to the category $\Alg_\mathcal{O}(\textbf{M})= \Alg_\mathcal{O}(\textbf{M})_{\mathcal{O}_0/}$.
\end{dfn}

Note that there is a canonical isomorphism $ \Alg_{\,\mathcal{O}_{\leqslant1}}(\textbf{M}) \cong \Alg_{\,\mathcal{O}_{1}}(\textbf{M})_{\mathcal{O}_0/}$. Now the inclusion of operads $\varphi : \mathcal{O}_{\leqslant1}\lrar \mathcal{O}$ induces a Quillen adjunction
$$ \adjunction{\varphi_!^{\aug}}{\Alg^{\aug}_{\,\mathcal{O}_{\leqslant1}}(\textbf{M})=\Alg_{\,\mathcal{O}_{1}}(\textbf{M})_{\mathcal{O}_0//\mathcal{O}_0}}{\Alg_\mathcal{O}(\textbf{M})_{\mathcal{O}_0//\mathcal{O}_0} =\Alg^{\aug}_\mathcal{O}(\textbf{M})}{\varphi^{*}_{\aug}} .$$
\begin{thm}\label{comparison} (\textbf{Comparison theorem}) [Y. Harpaz, J. Nuiten and M. Prasma \cite{Yonatan}] Let $\textbf{M}$ be a differentiable, left proper and combinatorial symmetric monoidal model category and let $\mathcal{O}$ be a $\Sigma$-cofibrant stably admissible operad in $\textbf{M}$. Assume either $\textbf{M}$ is right proper or $\mathcal{O}_0$ is fibrant. Then the induced Quillen adjunction between stabilizations
	$$ \adjunction{\varphi_!^{\Sp}}{\Sp(\Alg^{\aug}_{\,\mathcal{O}_{\leqslant1}}(\textbf{M}))}{\Sp(\Alg^{\aug}_\mathcal{O}(\textbf{M}))}{\varphi^{*}_{\Sp}} $$
	is a Quillen equivalence.
\end{thm}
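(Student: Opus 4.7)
The plan is to reduce to the cotangent complex formalism for operadic algebras and exploit the principle that stabilization retains only the linear part of the operad, so that $\Sp(\Alg^{\aug}_\mathcal{O}(\textbf{M}))$ depends only on $\mathcal{O}_{\leqslant 1}$.

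First, I would verify that $\varphi_!^{\aug} \dashv \varphi^*_{\aug}$ is a Quillen adjunction: fibrations and weak equivalences in both $\Alg^{\aug}_{\mathcal{O}_{\leqslant 1}}(\textbf{M})$ and $\Alg^{\aug}_{\mathcal{O}}(\textbf{M})$ are created by the forgetful functors to $\textbf{M}^{\times C}$, where $C$ denotes the color set of $\mathcal{O}$, and $\varphi^*_{\aug}$ commutes with these forgetful functors, so it preserves fibrations and weak equivalences. Facts~\ref{fact:stabilization}(iv) then supplies a Quillen adjunction between stabilizations, which are genuine model categories by left properness of $\textbf{M}$ (cf.\ Remark~\ref{r:semitangent}).

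The heart of the argument is to identify both stabilizations with a common category of $\mathcal{O}_0$-modules. Under the cotangent complex formalism, the tangent category at an algebra $A$ over an operad $\mathcal{O}$ should be Quillen equivalent to $\Mod_\mathcal{O}^A = \Fun(\Env(\mathcal{O}, A)_1, \textbf{M})$ by Remark~\ref{r:AmodPasfunctor}. Specialising to $A = \mathcal{O}_0$, the key claim is that the natural functor
\[ \Env(\mathcal{O}_{\leqslant 1}, \mathcal{O}_0)_1 \longrightarrow \Env(\mathcal{O}, \mathcal{O}_0)_1 \]
is a Dwyer--Kan equivalence of $\textbf{M}$-enriched categories: morphisms in $\Env(\mathcal{O}, \mathcal{O}_0)_1$ are generated by operations $\mu \in \mathcal{O}(c_1,\ldots,c_n;d)$ paired with $a_i \in \mathcal{O}_0(c_i)$ in all but one input slot, and the $\mathcal{O}$-algebra structure on $\mathcal{O}_0$ collapses these to unary data. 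The $\Sigma$-cofibrancy hypothesis on $\mathcal{O}$ is what ensures that the coends defining the enveloping operad compute the correct derived object, so this collapse is a weak equivalence and not merely a surjection. A more direct route, perhaps easier to execute, is a cellular induction along the arity filtration $\mathcal{O}_{\leqslant 1} = \mathcal{O}^{(1)} \hookrightarrow \mathcal{O}^{(2)} \hookrightarrow \cdots$ with $\mathcal{O} = \colim_n \mathcal{O}^{(n)}$: attaching an operation of arity $n \geqslant 2$ imposes a multilinear operation on the augmentation ideal, and such multilinear operations induce maps on suspension spectra that factor through smash products involving weak zero objects, hence stably vanish. Differentiability of $\textbf{M}$ (and of the associated algebra categories) allows one to commute the stabilization functor through the filtered colimit over the cellular filtration, while the asymmetric hypothesis ``$\textbf{M}$ right proper or $\mathcal{O}_0$ fibrant'' is used to organise the relevant fibrant replacements coherently.

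The main obstacle is the interaction between the left Bousfield localization defining stabilization and the cellular attachment, in particular the control over the $\Sigma_n$-quotients appearing when attaching $n$-ary operations. The combined hypotheses of $\Sigma$-cofibrancy, stable admissibility, left properness and differentiability are precisely what permit one to commute these symmetric quotients with stabilization and to identify the fibrant objects of $\Sp(\Alg^{\aug}_\mathcal{O}(\textbf{M}))$ with honest levelwise-fibrant $\Omega$-spectra, at which point the derived unit and counit can be tested pointwise and verified to be stable equivalences via the linearization argument above.
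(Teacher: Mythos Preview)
The paper does not contain a proof of this statement. Theorem~\ref{comparison} is quoted from \cite{Yonatan} (Harpaz--Nuiten--Prasma, \textit{Tangent categories of algebras over operads}) and is used here as a black box; the only commentary the paper adds is Remark~\ref{remarkoncomparison}, which records that the conclusion survives as a Quillen equivalence of semi model categories when one drops left properness. There is therefore nothing in this paper to compare your proposal against.

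For what it is worth, your sketch is in the right spirit: the proof in \cite{Yonatan} does proceed by showing that attaching operations of arity $\geqslant 2$ contributes nothing after stabilization, and differentiability is indeed what lets one pass through the relevant filtered colimits. Your first ``key claim'', however, that $\Env(\mathcal{O}_{\leqslant 1},\mathcal{O}_0)_1 \to \Env(\mathcal{O},\mathcal{O}_0)_1$ is a Dwyer--Kan equivalence, is not the right formulation: that map is not generally an equivalence of enriched categories (the enveloping category on the right genuinely sees the higher operations composed with $\mathcal{O}_0$), and in any case it is not how the cited reference argues. The actual mechanism in \cite{Yonatan} is closer to your second route: one analyzes the effect of free operadic cell attachments on the category of augmented algebras and shows that the induced square-zero-extension functors become equivalences upon stabilization, using a Goodwillie-calculus-style excision argument for which differentiability and $\Sigma$-cofibrancy are the essential inputs. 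If you want to reconstruct the proof, that is the line to pursue; but for the purposes of this paper the theorem is simply imported.
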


\begin{rem}\label{remarkoncomparison}
	In fact, many model categories of interest are not left proper (where $\Op_C(\mathcal{S})$ and $\Op(\mathcal{S})$ are typical examples) and as a sequel, their stabilizations {may} not exist as (full) model categories (cf. Remark \ref{r:semitangent}). In {loc. cit.}, the authors were aware of this fact, and made sure to include Corollary 4.1.4 saying that the restriction functor $\varphi^{*}_{\aug} : \Alg^{\aug}_\mathcal{O}(\textbf{M}) \lrar \Alg^{\aug}_{\,\mathcal{O}_{\leqslant1}}(\textbf{M})$, under the same assumptions as in {Theorem}~\ref{comparison} except the left properness of $\textbf{M}$, induces an equivalence of \textbf{relative categories} after taking stabilizations  $$ \varphi^{*}_{\Sp'} : \Sp'(\Alg^{\aug}_\mathcal{O}(\textbf{M})) \overset{\simeq}{\longrightarrow} \Sp'(\Alg^{\aug}_{\mathcal{O}_{\leqslant1}}(\textbf{M})) .$$
	In particular, when the stabilizations exist as semi model categories then $\varphi_!^{\Sp}\dashv\varphi^{*}_{\Sp}$ is indeed a Quillen equivalence. So keep in mind that the statement of the Comparison theorem remains valid when $\mathcal{P}$ is just stably semi admissible.
\end{rem}

\subsection{The first Quillen equivalence}\label{s:firsteq}

The Quillen adjunction $\adjunction{\mathcal{L}_\P}{\Op_C(\mathcal{S})_{\mathcal{P}/}}{\Op(\mathcal{S})_{\mathcal{P}/}}{\mathcal{R}_\P}$ \eqref{eq:L_PandR_P} lifts to a Quillen adjunction between the associated tangent categories
$$ \adjunction{\mathcal{L}^{\Sp}_\P}{\T_\P\Op_C(\cS)}{\T_\P\Op(\cS)}{\mathcal{R}^{\Sp}_\P} .$$
Our goal in this subsection is to prove that $\mathcal{L}^{\Sp}_\P\dashv \mathcal{R}^{\Sp}_\P$  is a Quillen equivalence. {But first,} let us start with the following simple observations.

\begin{obss}\label{ob:simpleobs} We write ${\L} : \Op_C(\cS) \lrar \Op(\cS)$ to denote the obvious embedding functor. 

		(i) The functor ${\L}$ preserves and detects weak equivalences, cofibrations and trivial fibrations.

		(ii) The functor ${\L}$ preserves and detects cofibrant and fibrant objects.

		(iii) The functor ${\L}$ preserves and detects cofibrant resolutions.

\begin{proof} Let $f : \P \rar \Q$ be a map between $C$-colored operads.

	(i)	 If $f$ is a weak equivalence in $\Op(\mathcal{S})$ then it is in particular a levelwise weak equivalence, i.e., a weak equivalence in $\Op_C(\mathcal{S})$. Conversely, suppose that $f$ is a weak equivalence in $\Op_C(\mathcal{S})$. Since $f$ is the identity on colors, the induced map $\Ho(f)$ is automatically essentially surjective (cf. Definition \ref{d:DKop}). Thus by definition $f$ is a Dwyer-Kan equivalence. 
		
		The claim about trivial fibrations immediately follows by definition. 
		
		Assume that $f$ is a cofibration in $\Op(\mathcal{S})$. We prove that $f$ is one in $\Op_C(\mathcal{S})$. For any given trivial fibration $f' : \P' \rar \Q'$ in $\Op_C(\mathcal{S})$, we have to show that $f$ has the left lifting property against $f'$. Since $f'$ is also a trivial fibration in $\Op(\mathcal{S})$, the lifting problem has a solution in $\Op(\mathcal{S})$. But such a lift must be the identity on colors, so it is also a lift in $\Op_C(\mathcal{S})$. We just showed that $f$ is a cofibration in $\Op_C(\mathcal{S})$. Conversely, if $f$ is a cofibration in $\Op_C(\mathcal{S})$ then it is one in $\Op(\mathcal{S})$, just by the fact that the embedding $\mathcal{L}_\P : \Op_C(\mathcal{S})_{\mathcal{P}/} \lrar \Op(\mathcal{S})_{\mathcal{P}/}$ is a left Quillen functor.
		
	(ii)	 The claim about the fibrancy immediately follows by definition. We now prove the claim about cofibrancy. It can be shown that ${\L}$ detects cofibrant objects, similar to how we verify it on cofibrations. For the converse direction, since ${\L}$ already preserves cofibrations, we just need to show that the initial $C$-colored operad $\I_C$ is also cofibrant as an object of $\Op(\cS)$. Notice that a map in $\Op(\cS)$ from $\I_C$ to an operad $\O$ is fully characterized by a map from $C$ to the set of colors of $\O$. The claim hence follows by the fact that any trivial fibration in $\Op(\cS)$ has a surjective underlying map between colors.
		
	(iii)	 This follows by the two above. 
\end{proof}
\end{obss}

To prove the adjunction $\mathcal{L}^{\Sp}_\P \dashv \mathcal{R}^{\Sp}_\P$ is a Quillen equivalence, we will make use of [\cite{YonatanBundle}, Corollary 2.4.9]. To be able to use this tool, we must show that  the Quillen adjunction   $$ \adjunction{\mathcal{L}^{\aug}_\P}{\Op_C(\mathcal{S})_{\mathcal{P}//\mathcal{P}}}{\Op(\mathcal{S})_{\mathcal{P}//\mathcal{P}}}{\mathcal{R}^{\aug}_\P} $$  is differentiable (cf. Definition~\ref{dfndifferentiable}). 

\begin{rem}\label{r:SCatSdiff} By convention, the base category $\cS$ is differentiable, with weak equivalences closed under sequential colimits. It implies that the (underived) functor $\colim: \cS^{\mathbb{N}} \longrightarrow \cS$ already preserves homotopy Cartesian squares and homotopy terminal objects. An analogue holds for the functor $\colim: \Cat(\cS)^{\mathbb{N}} \longrightarrow \Cat(\cS)$, due to Remark \ref{categoriesisdifferentiavle} and [\cite{YonatanCotangent}, Lemma 3.1.10] (saying that weak equivalences in $\Cat(\cS)$ are closed under sequential colimits). 
\end{rem}

\begin{lem}\label{LRisdifferentible} The Quillen adjunction $\adjunction{\mathcal{L}_\P}{\Op_C(\mathcal{S})_{\mathcal{P}/}}{\Op(\mathcal{S})_{\mathcal{P}/}}{\mathcal{R}_\P}$ is differentiable. Consequently, the induced Quillen adjunction $$\mathcal{L}^{\aug}_\P : \adjunction*{}{\Op_C(\mathcal{S})_{\mathcal{P}//\P}}{\Op(\mathcal{S})_{\mathcal{P}//\P}}{} : \mathcal{R}^{\aug}_\P$$ is differentiable as well.  
\end{lem}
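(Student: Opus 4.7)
The plan is to verify the two ingredients of differentiability separately: first, that the slice categories $\Op_C(\cS)_{\P/}$ and $\Op(\cS)_{\P/}$ are differentiable model categories, and second, that the right derived functor of $\mathcal{R}_\P$ preserves sequential homotopy colimits. Differentiability passes to slice categories, since homotopy limits and sequential homotopy colimits in $\textbf{M}_{/X}$ are computed via the forgetful functor $\textbf{M}_{/X} \lrar \textbf{M}$, so it is enough to show that $\Op_C(\cS)$ and $\Op(\cS)$ themselves are differentiable. For $\Op_C(\cS)$ I would observe that weak equivalences, fibrations, finite limits and filtered colimits are all detected or computed levelwise on the underlying $C$-collection — the composite product commutes with filtered colimits in each variable, so filtered colimits of $C$-colored operads agree with those of the underlying $C$-collection — and differentiability of $\Coll_C(\cS) \cong \Fun(\sP(C) \times C, \cS)$ is then immediate from (S5). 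For $\Op(\cS)$ one has to be more careful, since the weak equivalences are Dwyer–Kan equivalences rather than levelwise ones; here I would combine Remark~\ref{r:SCatSdiff}, which records that $\Cat(\cS)$ is differentiable, with the description in Definition~\ref{d:DKop}: a map of $\cS$-enriched operads is a Dwyer–Kan equivalence iff it is a levelwise weak equivalence whose underlying morphism in $\Cat(\cS)$ is a Dwyer–Kan equivalence, and both conditions are preserved by sequential colimits.

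For the second point the decisive observation is that $\mathcal{R}_\P$ preserves \emph{all} weak equivalences and strictly commutes with sequential colimits; together these force $\mathbb{R}\mathcal{R}_\P$ to preserve sequential homotopy colimits. Preservation of weak equivalences is straightforward: the restriction $\mathcal{R}_\P(f)$ of a map in $\Op(\cS)_{\P/}$ is by construction the identity on the color set of $\P$, so by Observations~\ref{ob:simpleobs}(i) it is a Dwyer–Kan equivalence as soon as it is levelwise a weak equivalence, and the latter is obvious from the pointwise formula $\mathcal{R}_\P(\Q)(c_1,\ldots,c_n;c) = \Q(f(c_1),\ldots,f(c_n);f(c))$. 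The same formula makes it clear that $\mathcal{R}_\P$ strictly commutes with sequential colimits, since these are computed levelwise on the underlying collection and $\mathcal{R}_\P$ merely picks out the components indexed by a fixed family of color tuples pulled back from $\P$. Given a cofibrant tower $\Q_\bullet^{\cof} \lrarsimeq \Q_\bullet$ in $\Op(\cS)_{\P/}$, we then have $\mathcal{R}_\P(\colim \Q_\bullet^{\cof}) = \colim \mathcal{R}_\P(\Q_\bullet^{\cof})$, and the right-hand side is a model for $\hocolim \mathcal{R}_\P(\Q_\bullet)$ by differentiability of the target.

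The only real subtlety — and the point I would pay most attention to — is precisely the Dwyer–Kan vs.\ levelwise distinction in $\Op(\cS)$; once that is handled by Observations~\ref{ob:simpleobs}(i) the rest of the argument is formal. For the consequence about the augmented categories, note that $\Op(\cS)_{\P//\P}=(\Op(\cS)_{\P/})_{/\P}$, and similarly for $\Op_C(\cS)$, so both sides arise as one further slice of a differentiable model category; the induced right adjoint $\mathcal{R}^{\aug}_\P$ is simply $\mathcal{R}_\P$ applied to augmented objects, so it still preserves weak equivalences and strictly commutes with sequential colimits, yielding differentiability of the induced adjunction by the same reasoning.
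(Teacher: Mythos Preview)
Your proposal is correct and follows essentially the same route as the paper: reduce to differentiability of $\Op_C(\cS)$ and $\Op(\cS)$, handle the former levelwise via (S5), and for the latter use the decomposition ``levelwise plus underlying $\Cat(\cS)$'' together with Remark~\ref{r:SCatSdiff}; then observe that $\mathcal{R}_\P$ preserves weak equivalences and sequential colimits. The one place where the paper is more explicit than your sketch is the differentiability of $\Op(\cS)$: you only argue that both conditions defining Dwyer--Kan equivalences are closed under sequential colimits, whereas what is needed is that the (underived) sequential colimit preserves homotopy Cartesian squares and homotopy terminal objects --- the paper spells out that these notions in $\Op(\cS)$ are again detected levelwise in $\cS$ and in $\Cat(\cS)$ (via Remark~\ref{r:canonicalfib}), which is the missing half of the same idea.
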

\begin{proof} Note first that sequential colimits of $\cS$-enriched categories are created in $\cS$ on each level (cf. [\cite{YonatanCotangent}, Lemma 3.1.10]). We claim that an analogue holds for enriched operads. Consider a sequence of objects in $\Op(\mathcal{S})$
	$$ \mathcal{P}^{(0)}\longrightarrow \mathcal{P}^{(1)} \longrightarrow \mathcal{P}^{(2)}\longrightarrow \cdots .$$
	We construct an operad $\P$ as follows. Let us take $\sCol(\mathcal{P}) := \colim_n\, \sCol(\mathcal{P}^{(n)})$ where $\sCol(-)$ refers to set of colors. For each $\ovl{c}:=(c_1,\cdots,c_{{m}};c) \in \Seq(\sCol(\mathcal{P}))$, we pick $n_0$ large enough such that $\ovl{c}\in \Seq(\sCol(\mathcal{P}^{(n_0)}))$. Then we take $\mathcal{P}(\ovl{c}) := \colim_{n\geqslant n_0}\,\mathcal{P}^{(n)}(\ovl{c})$. The operad structures on the terms $\mathcal{P}^{(n)}$'s together induce an operad structure on $\P$. It can then be verified that $\P \cong \colim_n \P^{(n)}$. In particular, by construction the underlying category $\P_1$ is isomorphic to $\colim_n \P^{(n)}_1$.

	We now claim that weak equivalences in $\Op(\mathcal{S})$ are closed under sequential colimits. Indeed, recall that a map $f$ in $\Op(\mathcal{S})$ is a weak equivalence if and only if it is a levelwise weak equivalence and such that the map $f_1$ between underlying categories is a weak equivalence in $\Cat(\mathcal{S})$ (cf. Remark \ref{r:canonicalfib}). Hence, the claim follows by the above paragraph and by the fact that weak equivalences in $\cS$ and $\Cat(\mathcal{S})$ are closed under sequential colimits.

Next we claim that a square in $\Op(\mathcal{S})$ is homotopy Cartesian if and only if the following two conditions hold:
	\begin{enumerate}
		\item the induced squares of spaces of operations are homotopy Cartesian in $\mathcal{S}$, and
		\item the induced square of underlying categories is homotopy Cartesian in $\Cat(\cS)$.
	\end{enumerate}
	
	\noindent Indeed, {observe} first that the statement is already correct when we forget the word $``$homotopy''. Combined with Remark \ref{r:canonicalfib}, {this observation verifies} the claim. Besides that, it is not hard to show that an object of $\Op(\mathcal{S})$ is homotopy terminal if and only if all its spaces of operations are homotopy terminal in $\mathcal{S}$.

	We now show that $\Op(\mathcal{S})$ is differentiable. By the second paragraph, it suffices to verify that the functor $\colim: \Op(\mathcal{S})^{\mathbb{N}} \longrightarrow \Op(\mathcal{S})$ preserves homotopy Cartesian squares and homotopy terminal objects. This follows by combining the first and third paragraphs, along with Remark \ref{r:SCatSdiff}. In the same manner, one can show that the category $\Op_C(\mathcal{S})$ is also differentiable.

	The differentiability of $\Op(\mathcal{S})_{\mathcal{P}/}$ (resp. $\Op_C(\mathcal{S})_{\mathcal{P}/}$) follows from that of $\Op(\mathcal{S})$ (resp. $\Op_C(\mathcal{S})$). Moreover, it is clear that the functor $\mathcal{R}_\P$ preserves sequential homotopy colimits. Thus the adjunction $\mathcal{L}_\P \dashv \mathcal{R}_\P$ is differentiable, and hence so is the adjunction $\mathcal{L}^{\aug}_\P \dashv \mathcal{R}^{\aug}_\P$. 
\end{proof}

We are now in position to prove the main result of this subsection. 

\begin{prop}\label{Quilleneq1} The adjunction $\adjunction{\mathcal{L}^{\Sp}_\P}{\T_\P \Op_C(\mathcal{S})}{\T_\P \Op(\mathcal{S})}{\mathcal{R}^{\Sp}_\P}$ is a Quillen equivalence.
\end{prop}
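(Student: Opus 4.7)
The plan is to apply \cite{YonatanBundle}, Corollary~2.4.9, which, for a differentiable Quillen adjunction between combinatorial model categories whose generating cofibrations have cofibrant domains, reduces the question of whether the induced Quillen adjunction on stabilizations is a Quillen equivalence to an infinitesimal condition on the original adjunction. The differentiability hypothesis is exactly what Lemma~\ref{LRisdifferentible} supplies, so the remaining work is to verify the infinitesimal condition. This mirrors the strategy used for the categorical analogue in \cite{YonatanCotangent} (recalled above as Theorem~\ref{t:cattangent}), and I would proceed along parallel lines.

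The key structural observation is that $\mathcal{L}_\P$ is a fully faithful inclusion and $\mathcal{R}_\P\mathcal{L}_\P = \Id$ on the nose, since restricting the colors of a $C$-colored operad along the identity of $C$ recovers the original operad. Consequently, the derived unit of $\mathcal{L}_\P\dashv\mathcal{R}_\P$ (and hence of $\mathcal{L}^{\aug}_\P\dashv\mathcal{R}^{\aug}_\P$) is already a natural weak equivalence. The entire content of the infinitesimal condition therefore lies in the derived counit: given a fibrant object $Y = [\P \to A \to \P]$ in $\Op(\cS)_{\P//\P}$, one must show that the derived counit $\varepsilon_Y \colon \mathcal{L}^{\aug}_\P\mathcal{R}^{\aug}_\P(Y) \to Y$ becomes a stable equivalence after applying $\Sigma^{\infty}$.

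Unpacking the counit, the section $s : C \to \sCol(A)$ determined by $\P \to A$ exhibits $\mathcal{R}^{\aug}_\P(Y) = s^*A$ as the sub-$C$-colored operad of $A$ spanned by operations whose input and output colors lie in $s(C) \subseteq \sCol(A)$, and $\varepsilon_Y$ is the evident inclusion $s^*A \hookrightarrow A$. The heuristic reason this inclusion is a stable equivalence is that the set of colors is a discrete invariant: the retraction $A \to \P$ forces every color of $A$ to become $s$-equivalent to a color of $\P$ in the pointed setting of $\Op(\cS)_{\P//\P}$, and iterated suspension (the mechanism that assembles $\Sigma^{\infty}$) only captures first-order perturbations of $\P$, which necessarily preserve the color set $C$. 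Making this precise requires analyzing suspensions in $\Op(\cS)_{\P//\P}$ as homotopy pushouts over $\P$ and checking that such pushouts collapse any $``$extra color'' summand of $A$ to the basepoint, so that the homotopy cofiber of $\varepsilon_Y$ in $\Sp(\Op(\cS)_{\P//\P})$ vanishes.

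The main obstacle is this last computation: showing concretely that the extra-colors cofiber of $\varepsilon_Y$ dies after sufficiently many suspensions, in a way which is robust under the semi model category structure on $\Sp(\Op(\cS)_{\P//\P})$. One approach would be to reduce first to the case where $A$ is built from $\P$ by a single cell attachment and then argue by induction, using the right properness and differentiability established in Lemma~\ref{LRisdifferentible} together with a levelwise comparison of spaces of operations, to pass to general $Y$ by transfinite composition. Once this is in place, Corollary~2.4.9 of \cite{YonatanBundle} immediately yields the desired Quillen equivalence.
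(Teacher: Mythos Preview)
Your overall plan is correct: invoke \cite[Corollary~2.4.9]{YonatanBundle}, supply differentiability via Lemma~\ref{LRisdifferentible}, and dispatch the unit using $\mathcal{R}_\P\mathcal{L}_\P = \Id$. But you have misidentified the infinitesimal condition that Corollary~2.4.9 asks you to verify on the counit side. The hypothesis there is \emph{not} that $\Sigma^{\infty}(\varepsilon_Y)$ is a stable equivalence; it is that $\Omega(\varepsilon_Y)$ is an ordinary weak equivalence in $\Op(\cS)_{\P//\P}$ for every fibrant $Y$. You are instead trying to prove directly the \emph{conclusion} of Corollary~2.4.9 (that the stabilized counit is a stable equivalence), and your proposed cell-induction attack on ``extra colors dying after enough suspensions'' is both vague and unnecessary.

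The paper's route is much shorter because the loop hypothesis is nearly automatic. For fibrant $\Q\in\Op(\cS)_{\P//\P}$ one has $\Omega\Q \simeq \P\times^{\h}_{\Q}\P$, and since colors of a pullback are computed as the pullback of colors, this homotopy pullback is already a $C$-colored operad. Hence $\Omega\Q$ lies in the essential image of $\mathcal{L}^{\aug}_\P$, and the comparison map $\P\times^{\h}_{\mathcal{L}_\P\mathcal{R}_\P(\Q)}\P \lrar \P\times^{\h}_{\Q}\P$ is a weak equivalence (the paper cites the argument of \cite[Lemma~3.1.13]{YonatanCotangent} here). That is the entire counit verification; Corollary~2.4.9 then gives the stable equivalence on $\Omega$-spectra. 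In short, looping already collapses the extra colors for free---you do not need to chase them through iterated suspensions.
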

\begin{proof} Let $\mathcal{Q}\in\Op(\mathcal{S})_{\mathcal{P}// \mathcal{P}}$ be a fibrant object, exhibited by a diagram $\P\rar\Q\rar\P$ in $\Op(\mathcal{S})$ such that the second map is a fibration. The same argument as in the proof of [\cite{YonatanCotangent}, Lemma 3.1.13] shows that the map between the homotopy pullbacks     $$\mathcal{P}\times^{\h}_{\L_\P\R_\P(\mathcal{Q})}\mathcal{P}\lrar \mathcal{P}\times^{\h}_{\mathcal{Q}}\mathcal{P}$$
	is a weak equivalence in $\Op(\mathcal{S})$. {(We note here that, since $\Op(\mathcal{S})$ is right proper, the requirement for fibrancy on $\P$ can be omitted)}. In particular, the induced map $\Omega\,\mathcal{L}^{\aug}_\P\mathcal{R}^{\aug}_\P(\mathcal{Q}) \lrar \Omega\,\mathcal{Q}$ is a weak equivalence in $\Op(\mathcal{S})_{\mathcal{P}//\mathcal{P}}$. This fact, together with Lemma \ref{LRisdifferentible}, allows us to apply [\cite{YonatanBundle}, Corollary 2.4.9] to  deduce that the derived counit of the Quillen adjunction $\mathcal{L}^{\Sp}_\P\dashv \mathcal{R}^{\Sp}_\P$ is a stable equivalence for every fibrant {object}.

	It remains to show that the derived unit of $\mathcal{L}^{\Sp}_\P\dashv \mathcal{R}^{\Sp}_\P$ is a stable equivalence for any cofibrant object. In fact, we will show that this holds for the larger class of levelwise cofibrant objects. Since $\mathcal{R}_\P^{\aug}\circ\mathcal{L}_\P^{\aug}$ is isomorphic to the identity functor and since $\mathcal{R}_\P^{\aug}$ preserves weak equivalences, the derived unit of $\mathcal{L}_\P^{\aug}\dashv \mathcal{R}_\P^{\aug}$ is a weak equivalence. By the first part of [\cite{YonatanBundle}, Corollary 2.4.9], the derived unit of $\mathcal{L}^{\Sp}_\P\dashv \mathcal{R}^{\Sp}_\P$ is a stable equivalence for any levelwise cofibrant prespectrum. But every levelwise cofibrant object in $\T_\P \Op_C(\mathcal{S})$  is stably equivalent to a levelwise cofibrant prespectrum (see \cite{YonatanBundle}, Remark 2.3.6), it therefore suffices to {argue} that $\mathcal{L}^{\Sp}_\P$ preserves stable equivalences. But this immediately follows from Observation \ref{ob:stablecohomotopy}.   
\end{proof} 

\smallskip

\subsection{The second Quillen equivalence}\label{s:infbimod}

\smallskip

{The next step is to prove the following.}

\begin{prop}\label{operad&infinitesimalbimod} Suppose that $\P$ is {$\Sigma$-cofibrant}. Then the adjunction    
	$$ \adjunction*{}{\T_\P \IbMod(\mathcal{P})}{\T_\P \Op_C(\mathcal{S})}{} $$
	is a Quillen equivalence. 
\end{prop}

{The proof will require a technical lemma. Let us take a cofibrant resolution $f:\Q\lrarsimeq\P$ for $\P\in \Op_C(\mathcal{S})$ such that $f$ is a trivial fibration.}
	
	\begin{lem} {Suppose that $\P$ is $\Sigma$-cofibrant. Then the induced adjunctions 
		\begin{gather*} \widetilde{f_!} : \adjunction*{}{\IbMod(\mathcal{Q})_{\mathcal{Q}//\mathcal{Q}}}{\IbMod(\mathcal{P})_{\mathcal{P}//\mathcal{P}}}{} : \widetilde{f_*}, \; \; \text{and} \\ \hat{f_!} : \adjunction*{}{\Op_C(\mathcal{S})_{\mathcal{Q}//\mathcal{Q}}}{\Op_C(\mathcal{S})_{\mathcal{P}//\mathcal{P}}}{} : \hat{f_*}	
		\end{gather*}	
			 are a Quillen equivalence.}
		\begin{proof}  {Let us start with the second one. For an object $\O\in\Op_C(\mathcal{S})_{\mathcal{P}//\mathcal{P}}$, recall by definition that $\hat{f_*}(\O)$ is given by the pullback $\Q\times_\P\O \in \Op_C(\mathcal{S})_{\mathcal{Q}//\mathcal{Q}}$. Note that the map $\hat{f_*}(\O) \lrar \O$ is a trivial fibration, since it is a base change of $f$. In particular, $\hat{f_*}$ creates weak equivalences. Thus, we just need to show that for any cofibrant object $\R\in\Op_C(\mathcal{S})_{\mathcal{Q}//\mathcal{Q}}$ exhibited by a sequence $\Q\lrar\R\lrar\Q$ such that the first map is a cofibration, the unit map $\R \lrar \hat{f_*}\hat{f_!}(\R)$ is a weak equivalence. This is equivalent to saying that the composed map $$ \R \lrar \hat{f_*}\hat{f_!}(\R) \lrar \hat{f_!}(\R) $$ is a weak equivalence. By construction, the latter is given by the induced map $\R\longrightarrow \R\,\underset{\Q}{\bigsqcup}\,\P$, which is indeed a weak equivalence due to the \textbf{relative left properness} (cf., e.g. \cite{Caviglia, Spitzweck, Fresse1}), after having assumed $\P$ to be $\Sigma$-cofibrant.}
			
			We now show that $\widetilde{f_!} \dashv \widetilde{f_*}$ is a Quillen equivalence. First, by the fact that the map $f:\Q\lrarsimeq\P$ is a weak equivalence between levelwise cofibrant operads, we obtain that the induced adjunction
			\begin{equation}\label{eq:secqui}
				\adjunction*{}{ \IbMod(\mathcal{Q})_{\Q/}\cong\Alg_{\, \textbf{B}^{\mathcal{Q}/} _{\leqslant1}}(\cS)}{\Alg_{\, \textbf{B}^{\mathcal{P}/} _{\leqslant1}}(\cS) \cong \IbMod(\mathcal{P})_{\P/}}{}
			\end{equation}	
			is a Quillen equivalence (see Construction \ref{operadencodebimodulesunderP}). {Now, 
			for an object $M\in \IbMod(\mathcal{P})_{\mathcal{P}//\mathcal{P}}$ we have $\widetilde{f_*}(M) = \Q\times_\P M \in \IbMod(\mathcal{Q})_{\mathcal{Q}//\mathcal{Q}}$ where $\P$ and $M$ are considered as infinitesimal $\Q$-bimodules, with the structure induced by $f$. Therefore, similarly to the case above, the map $\widetilde{f_*}(M) \lrar M$ is a trivial fibration in $\IbMod(\mathcal{Q})$, and hence, $\widetilde{f_*}$ also creates weak equivalences.} Thus, along with the previous observation, the same argument as in the case above can be applied to complete the proof.
		\end{proof}
	\end{lem}

	\begin{proof}[\underline{Proof of Proposition \ref{operad&infinitesimalbimod}}] {In the first step, let us assume further that $\mathcal{P}$ is cofibrant.} We regard $\P$ as an algebra over $\textbf{O}_{C}$ the operad of $C$-colored operads. Then we get a canonical isomorphism $ \Alg_{\Env(\textbf{O}_{C},\P)}(\cS) \cong \Op_C(\mathcal{S})_{\mathcal{P}/}$ between the category of algebras over the enveloping operad $\Env(\textbf{O}_{C},\P)$ and the category of $C$-colored operads under $\P$ (cf. \S \ref{s:operadicmodules}). On other hand, similarly as in the proof of [\cite{Guti}, Proposition 3.5], it can be shown that the structure of an infinitesimal $\P$-bimodule is equivalent to that of a $\P$-module over $\textbf{O}_{C}$. So we get an isomorphism of categories $\Alg_{\Env(\textbf{O}_{C},\P)_1}(\cS) \cong \IbMod(\mathcal{P})$ (cf. Remark \ref{r:AmodPasfunctor}). 
		
The symmetric groups act freely on $\textbf{O}_{C}$. In particular,  $\textbf{O}_{C}$ is $\Sigma$-cofibrant. Moreover, since $\mathcal{P}$ is cofibrant, it implies that $\Env(\textbf{O}_{C},\P)$ is $\Sigma$-cofibrant as well (cf. [\cite{Pavlov}, Lemma 6.1]).  This allows us to apply the Comparison theorem \ref{comparison} for $\Env(\textbf{O}_{C},\P)$. The first paragraph shows that the functor
		$$\Alg^{\aug}_{\Env(\textbf{O}_{C},\P)_{\leqslant1}}(\cS)\lrar \Alg_{\Env(\textbf{O}_{C},\P)}^{\aug}(\cS)$$
		coincides with the functor $\IbMod(\mathcal{P})_{\mathcal{P}//\mathcal{P}} \lrar \Op_C(\mathcal{S})_{\mathcal{P}//\mathcal{P}}$. From this, we obtain that the adjunction $\adjunction*{}{\T_\P \IbMod(\mathcal{P})}{\T_\P \Op_C(\mathcal{S})}{}$ is a Quillen equivalence. 
		
		Finally, in order to show that the statement already holds true when $\mathcal{P}$ is $\Sigma$-cofibrant, we just need to make use of the above lemma.
\end{proof}

\smallskip

\subsection{The third Quillen equivalence}\label{s:bimod}

\smallskip

Recall from \S \ref{s:operadicmodules} that the category $\BMod(\mathcal{P})_{\mathcal{P}/}$ can be represented as the category of algebras over the operad $\textbf{B}^{\mathcal{P}/}$, whose underlying category $\textbf{B}^{\mathcal{P}/}_1$ agrees with $\textbf{Ib}^{\P}$ (cf. Constructions \ref{catencodinginfbimod} and \ref{operadencodebimodulesunderP}). The following is the last piece for the proof of Theorem \ref{t:mainoptangent}.

\begin{prop}\label{bimod&infinitesimalbimod} The adjunction $\adjunction*{}{\IbMod(\mathcal{P})_{\mathcal{P}/}}{\BMod(\mathcal{P})_{\mathcal{P}/}}{}$ induces a Quillen equivalence of the associated tangent categories $$\overset{\simeq}{   \adjunction*{}{\T_\P \IbMod(\mathcal{P})}{\T_\P \BMod(\mathcal{P})}{}}$$ whenever $\P$ is $\Sigma$-cofibrant.
\end{prop}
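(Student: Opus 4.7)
The strategy is to apply the Comparison theorem \ref{comparison}, together with Remark \ref{remarkoncomparison}, to the $\Seq(C)$-colored operad $\textbf{B}^{\mathcal{P}/}$ that encodes $\P$-bimodules under $\P$. First I would identify the adjunction in the statement with the induction-restriction adjunction along the inclusion of operads $\iota : (\textbf{B}^{\mathcal{P}/})_{\leqslant 1} \hookrightarrow \textbf{B}^{\mathcal{P}/}$. By Proposition \ref{bimoduleunderPsasalgebras} we have $\BMod(\P)_{\P/} \cong \Alg_{\textbf{B}^{\mathcal{P}/}}(\cS)$; comparing Constructions \ref{catencodinginfbimod} and \ref{operadencodebimodulesunderP} shows that the arity-zero operations of $\textbf{B}^{\mathcal{P}/}$ recover the $\Seq(C)$-collection $\P$ while its arity-one operations coincide with the enriched category $\textbf{Ib}^{\P}$, so Proposition \ref{rewriteinfinitesimal1} yields $\Alg_{(\textbf{B}^{\mathcal{P}/})_{\leqslant 1}}(\cS) \cong \IbMod(\P)_{\P/}$. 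Passing to augmented categories and then to stabilizations, the Comparison theorem will deliver the desired Quillen equivalence $\T_\P \IbMod(\P) \simeq \T_\P \BMod(\P)$, provided its hypotheses are met.

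Next I would verify these hypotheses. Conventions \ref{convention} make $\cS$ combinatorial, differentiable, and right proper, which handles both the categorical assumptions of Theorem \ref{comparison} and the alternative about $(\textbf{B}^{\mathcal{P}/})_0 = \P$ being fibrant; although $\cS$ need not be left proper, Remark \ref{remarkoncomparison} tells us that it is enough for $\textbf{B}^{\mathcal{P}/}$ to be stably semi admissible. The transferred model structure on $\BMod(\P) = \Alg_{\textbf{B}^{\mathcal{P}/}}(\cS)$ exists by the discussion of $\S$\ref{s:optransfermod}, and the augmented category $\BMod(\P)_{\P//\P}$ inherits combinatoriality and cofibrancy of the domains of its generating cofibrations from $\cS$, so Definition \ref{d:stabilization} supplies its stabilization as a cofibrantly generated semi model category. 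This yields the required stable semi admissibility.

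The principal remaining obstacle is to establish that $\textbf{B}^{\mathcal{P}/}$ is $\Sigma$-cofibrant. Unwinding Construction \ref{operadencodebimodulesunderP}, each space of $n$-ary operations of $\textbf{B}^{\mathcal{P}/}$ is a coproduct, indexed by maps $\l m \r \overset{f}{\lrar} \l r_1+\cdots+r_n \r$ in $\Fin_*$, of tensor products of operations of $\P$, with $\Sigma_n$ acting by simultaneously permuting the $n$ distinguished input colors in the central factor $\P(c^{(1)},\ldots,c^{(n)},\{d_j\}_{j\in f^{-1}(0)};d)$ and reshuffling the indexing maps $f$ accordingly. The plan is to stratify the coproduct by $\Sigma_n$-orbits, observe that on each orbit the action decomposes as a tensor product of a free piece coming from block permutation together with a piece built from the $\Sigma$-cofibrant operations of $\P$, and then conclude cofibrancy of the resulting $\Seq(C)$-collection by closure of cofibrant objects in $\Coll_{\Seq(C)}(\cS)$ under coproducts and tensor products with free $\Sigma_n$-cells. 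Once $\Sigma$-cofibrancy is in hand, the Comparison theorem finishes the argument.
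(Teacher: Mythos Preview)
Your proposal is correct and follows essentially the same route as the paper's proof: apply the Comparison theorem (with Remark \ref{remarkoncomparison}) to the operad $\textbf{B}^{\mathcal{P}/}$, after identifying the induction--restriction adjunction along $(\textbf{B}^{\mathcal{P}/})_{\leqslant 1}\hookrightarrow \textbf{B}^{\mathcal{P}/}$ with $\IbMod(\P)_{\P//\P}\rightleftarrows\BMod(\P)_{\P//\P}$. The paper's proof is terser --- it simply asserts that $\textbf{B}^{\mathcal{P}/}$ is $\Sigma$-cofibrant ``by construction'' and does not spell out admissibility --- whereas you give a sketch of both; your outline of the $\Sigma$-cofibrancy via the orbit decomposition of the indexing maps is the right idea behind that assertion.
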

\begin{proof} When $\P$ is $\Sigma$-cofibrant, by construction $\textbf{B}^{\mathcal{P}/}$ is also $\Sigma$-cofibrant. We are now applying the Comparison theorem \ref{comparison}, along with noting Remark \ref{remarkoncomparison}, to the operad $\textbf{B}^{\mathcal{P}/}$. The key point is that the adjunction $ \adjunction*{} {\Alg^{\aug}_{\, \textbf{B}^{\mathcal{P}/} _{\leqslant1}}(\cS)} {\Alg^{\aug}_{\, \textbf{B}^{\mathcal{P}/} }(\cS)} {} $ which arises from the inclusion $(\textbf{B}^{\mathcal{P}/})_{\leqslant1}\lrar \textbf{B}^{\mathcal{P}/}$ is the same as the adjunction of induction-restriction functors
	$$  \adjunction*{} {\IbMod(\mathcal{P})_{\mathcal{P}//\mathcal{P}}} {\BMod(\mathcal{P})_{\mathcal{P}//\mathcal{P}}} {} .$$
	Thus the adjunction $\adjunction*{}{\T_\P \IbMod(\mathcal{P})}{\T_\P \BMod(\mathcal{P})}{}$ is indeed a Quillen equivalence. 
\end{proof}

\smallskip

\subsection{Main statements}

\smallskip

Combining the propositions \ref{Quilleneq1}, \ref{operad&infinitesimalbimod} and \ref{bimod&infinitesimalbimod}, we obtain the main result of this section stated as follows.

\begin{thm}\label{t:mainoptangent} The adjunctions in the sequence 
	\begin{equation}\label{eq:sequenceQuillen1}
		\adjunction*{}{\T_\P\IbMod(\P)}{\T_\P\BMod(\P)}{} \adjunction*{}{}{\T_\P\Op_C(\mathcal{S})}{} \adjunction*{}{}{\T_\P\Op(\mathcal{S})}{}
	\end{equation}
	are all Quillen equivalences provided that $\P$ is $\Sigma$-cofibrant.
\end{thm}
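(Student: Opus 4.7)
The plan is to assemble this theorem from the three Quillen equivalences that have already been proved separately in the preceding subsections. Proposition~\ref{bimod&infinitesimalbimod} establishes the leftmost adjunction $\T_\P\IbMod(\P) \rightleftarrows \T_\P\BMod(\P)$ as a Quillen equivalence under the $\Sigma$-cofibrancy assumption on $\P$, and Proposition~\ref{Quilleneq1} establishes the rightmost adjunction $\T_\P\Op_C(\mathcal{S}) \rightleftarrows \T_\P\Op(\mathcal{S})$ as a Quillen equivalence under the fibrancy assumption. Both of these are direct applications, since the hypotheses of the theorem supply exactly what is needed.

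The remaining middle adjunction $\T_\P\BMod(\P) \rightleftarrows \T_\P\Op_C(\mathcal{S})$ has not been treated in isolation. The strategy here is a 2-out-of-3 argument. Proposition~\ref{refineoperad&infinitesimalbimod} gives that the total composite $\T_\P\IbMod(\P) \rightleftarrows \T_\P\Op(\mathcal{S})$, which is the composite of all three adjunctions in the sequence, is a Quillen equivalence under the combined hypothesis of fibrancy and $\Sigma$-cofibrancy. Passing to derived left adjoints on homotopy categories (equivalently, on the underlying $\infty$-categories), this composite equivalence factors as $\mathbb{L}L_3 \circ \mathbb{L}L_2 \circ \mathbb{L}L_1$, where $\mathbb{L}L_i$ denotes the derived left adjoint of the $i$-th factor. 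Since $\mathbb{L}L_1$ and $\mathbb{L}L_3$ are equivalences by the previous paragraph and $\mathbb{L}L_3 \circ \mathbb{L}L_2 \circ \mathbb{L}L_1$ is an equivalence by Proposition~\ref{refineoperad&infinitesimalbimod}, the two-out-of-three property for equivalences of categories forces $\mathbb{L}L_2$ to be an equivalence as well. Hence the middle Quillen adjunction is a Quillen equivalence.

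No serious obstacle is expected at this stage, since the substantive work has already been carried out. It is nevertheless worth noting the distinct ingredients underlying each of the three earlier propositions: the Comparison theorem~\ref{comparison} applied to the operad $\textbf{B}^{\mathcal{P}/}$ encoding $\P$-bimodules under $\P$ gives the leftmost equivalence; a differentiability argument built on Lemma~\ref{LRisdifferentible} together with [\cite{YonatanBundle}, Corollary~2.4.9] gives the rightmost one; and the Comparison theorem applied to the enveloping operad $\Env(\textbf{O}_C,\P)$, combined with a bifibrant replacement argument to relax cofibrancy of $\P$ down to $\Sigma$-cofibrancy, gives the total composite. The present theorem is therefore essentially a packaging statement, collecting these three inputs into a single sequence via the formal 2-out-of-3 assembly described above.
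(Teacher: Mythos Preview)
Your proposal is correct and follows essentially the same approach as the paper: invoke Proposition~\ref{bimod&infinitesimalbimod} for the leftmost adjunction, Proposition~\ref{Quilleneq1} for the rightmost, Proposition~\ref{refineoperad&infinitesimalbimod} for the total composite, and deduce the middle adjunction by 2-out-of-3. The paper's proof is just the terse version of what you wrote.
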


{We are also interested in the case of a stable base category.} The same argument as in the proof of [\cite{Yonatan}, Lemma 2.2.3] proves the following.

\begin{lem}\label{l:kerstable} {Suppose further that $\cS$ is stably sufficient.} Let $M\in \IbMod(\P) $ be a levelwise cofibrant infinitesimal $\P$-bimodule. Then the adjunction
	$$ (-) \sqcup M : \adjunction*{}{\IbMod(\P)}{\IbMod(\P)_{M//M}}{} : \ker $$
	is a Quillen equivalence, where the functor $\ker$ is defined by sending $[M \rar P \rar M]$ to $P\times_M 0$, while {the} left adjoint takes $N\in\IbMod(\P)$ to $[M \x{i_0}{\lrar} M\sqcup N \x{\Id_M + 0}{\xrightarrow{\hspace*{1cm}}} M]$.
\end{lem}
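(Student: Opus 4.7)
The plan proceeds in three stages: verify the adjunction is Quillen, establish stability of the categories involved, and then use stability to identify the derived unit and counit with equivalences.

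First I would check that the adjunction is a Quillen adjunction. It suffices to show that $(-) \sqcup M$ preserves cofibrations and trivial cofibrations. Since coproducts in $\IbMod(\P) \cong \Fun(\textbf{Ib}^{\P}, \cS)$ are computed levelwise and the cofibrations in $\IbMod(\P)_{M//M}$ coincide with the underlying cofibrations in $\IbMod(\P)$, this follows from the fact that $(-) \sqcup M$ preserves (trivial) cofibrations levelwise whenever $M$ is levelwise cofibrant, which is exactly where this hypothesis on $M$ is first used. Next, I would observe that $\IbMod(\P)$ inherits a stable model structure from $\cS$. Since the model structure is projective, weak equivalences, fibrations, and homotopy (co)limits are all computed levelwise; the constant functor at the strict zero $0 \in \cS$ is a strict zero object, and the characterization of stability as the equivalence of homotopy Cartesian and coCartesian squares (Definition \ref{d:stable}(ii)) passes levelwise from $\cS$ to $\IbMod(\P)$. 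The same inheritance then yields stability for $\IbMod(\P)_{M//M}$.

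Finally, I would analyze the derived unit and counit using the classical fact that in a stable category, every split fiber sequence is simultaneously a split cofiber sequence. For a cofibrant $N$, the projection $M \sqcup N \to M$ admits the section $i_0$, so the homotopy fiber sequence attached to this split epimorphism is $N \to M \sqcup N \to M$; this exhibits the derived unit $N \to \mathbb{R}\ker(M \sqcup N \to M)$ as a weak equivalence. Dually, for a fibrant object $(M \to P \to M)$ in $\IbMod(\P)_{M//M}$, the split fiber sequence $\ker(P \to M) \to P \to M$ becomes a split cofiber sequence by stability, so the counit $M \sqcup \ker(P \to M) \to P$ is a weak equivalence.

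The main obstacle is the strict-versus-homotopy book-keeping: one must verify that $M \sqcup N$ represents the homotopy coproduct (using the levelwise cofibrancy of $M$ together with cofibrancy of $N$) and that, after fibrant replacement in $\IbMod(\P)_{M//M}$, the strict pullback $P \times_M 0$ represents the homotopy fiber. Both statements reduce levelwise to the corresponding facts in $\cS$, where they are standard consequences of the stability hypothesis and the existence of the strict zero object.
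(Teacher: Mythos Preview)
Your proposal is correct and rests on the same underlying mechanism as the paper's proof: the stability of $\IbMod(\P)$ (inherited levelwise from $\cS$) together with the levelwise cofibrancy of $M$ and the fibrancy of $P$ to ensure that the relevant strict (co)products and fibers have the correct homotopy type.

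The organizational difference is that you verify the Quillen equivalence by analyzing the derived unit and counit separately via split fiber/cofiber sequences, whereas the paper uses the single ``adjoint-map'' criterion: for cofibrant $N$ and fibrant $(M\to P\to M)$, a map $f\colon N\sqcup M \to P$ is a weak equivalence if and only if its adjoint $f^{\mathrm{ad}}\colon N \to P\times_M 0$ is. The paper does this with one diagram, pasting the homotopy coCartesian square $0\to N\to M\to N\sqcup M$ with the homotopy Cartesian square $P\times_M 0\to 0\to P\to M$ and invoking stability on the middle square to conclude that $f(\overline{c})$ and $f^{\mathrm{ad}}(\overline{c})$ are simultaneously weak equivalences. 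This is slightly more economical since it handles both directions at once and avoids the explicit fibrant replacement you flag in your final paragraph; your route is more modular and makes the role of the split retraction explicit. Either packaging is fine.
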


\begin{thm}\label{t:mainoptangent'} {Suppose that $\cS$ is stably sufficient and $\P$ is $\Sigma$-cofibrant}. The sequence \eqref{eq:sequenceQuillen1} is then prolonged to a sequence of Quillen equivalences of the form 
	\begin{equation}\label{eq:sequenceQuillen'}
		\adjunction*{(-) \sqcup \P}{\IbMod(\P)}{\IbMod(\P)_{\P//\P}}{\ker} \adjunction*{\Sigma^{\infty}}{}{\T_\P\IbMod(\P)}{\Omega^{\infty}} \adjunction*{\simeq}{}{\T_\P\BMod(\P)}{} \adjunction*{\simeq}{}{\T_\P\Op_C(\mathcal{S})}{} \adjunction*{\simeq}{}{\T_\P\Op(\mathcal{S})}{}
	\end{equation}
\end{thm}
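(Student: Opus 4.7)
The plan is to leverage Theorem~\ref{t:mainoptangent}, which already supplies the three rightmost Quillen equivalences in \eqref{eq:sequenceQuillen'}, and then verify the two additional Quillen equivalences at the left end separately.

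For the first new equivalence $(-)\sqcup\P \dashv \ker$, I would apply Lemma~\ref{l:kerstable} directly with $M = \P$, where $\P$ is viewed as an infinitesimal bimodule over itself via its partial compositions. The only hypothesis to verify is that $\P$ is levelwise cofibrant as an object of $\IbMod(\P)$. Under the identification $\IbMod(\P) \cong \Fun(\textbf{Ib}^{\P},\cS)$ of Proposition~\ref{rewriteinfinitesimal1}, levelwise cofibrancy amounts to the spaces of operations $\P(c_1,\ldots,c_n;c)$ being cofibrant in $\cS$; this is precisely the $\Sigma$-cofibrancy hypothesis on $\P$.

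For the second new equivalence $\Sigma^{\infty} \dashv \Omega^{\infty}$, I would appeal to Facts~\ref{fact:stabilization}(iii), which says that whenever a weakly pointed combinatorial model category is stable, the suspension-spectrum adjunction to its stabilization is a Quillen equivalence. So it suffices to show that $\IbMod(\P)_{\P//\P}$ is stable. Using Proposition~\ref{rewriteinfinitesimal1} once more, $\IbMod(\P)$ is the projective model category $\Fun(\textbf{Ib}^{\P},\cS)$, and because homotopy limits and colimits in the projective model structure are computed levelwise, the stability criterion of Definition~\ref{d:stable}(ii)---namely that homotopy Cartesian and homotopy coCartesian squares coincide---is inherited from $\cS$. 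To pass from stability of $\IbMod(\P)$ to stability of $\IbMod(\P)_{\P//\P}$, one can either observe that the forgetful functor $\IbMod(\P)_{\P//\P} \lrar \IbMod(\P)$ detects homotopy (co)Cartesian squares (and $\P$ is a homotopy zero object in $\IbMod(\P)_{\P//\P}$), or, more efficiently, transport stability along the Quillen equivalence of the previous step.

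The main delicate point is that all of this is taking place in the setting of semi model categories, so one should confirm that the statement of Facts~\ref{fact:stabilization}(iii) and the stability characterization of Definition~\ref{d:stable} remain valid when $\Sp(-)$ is only guaranteed to exist as a semi model category (cf. Remark~\ref{r:semitangent}). The characterization in terms of homotopy (co)Cartesian squares is intrinsic to the underlying $\infty$-category and hence survives the semi model category caveat, and the argument of \cite{YonatanBundle} establishing the Quillen equivalence $\Sigma^{\infty}\dashv\Omega^{\infty}$ under stability is likewise formal enough to carry over; modulo this verification, the proof is essentially a direct assembly of Lemma~\ref{l:kerstable}, Facts~\ref{fact:stabilization}, and Theorem~\ref{t:mainoptangent}.
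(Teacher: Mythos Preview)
Your proposal is correct and follows essentially the same route as the paper: invoke Theorem~\ref{t:mainoptangent} for the three rightmost equivalences, Lemma~\ref{l:kerstable} (with $M=\P$, using $\Sigma$-cofibrancy to get levelwise cofibrancy) for the leftmost one, and stability of $\IbMod(\P)$ (hence of $\IbMod(\P)_{\P//\P}$) together with Facts~\ref{fact:stabilization}(iii) for the $\Sigma^\infty\dashv\Omega^\infty$ equivalence. The only minor imprecision is the phrase ``precisely the $\Sigma$-cofibrancy hypothesis'': $\Sigma$-cofibrancy is in fact a bit stronger than levelwise cofibrancy, but it certainly implies it, so the argument goes through.
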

\begin{proof} The category $\IbMod(\P)$ is stable since $\cS$ is stable (cf. [\cite{YonatanCotangent}, Remark 2.2.2] and Proposition \ref{rewriteinfinitesimal1}) and thus, the category $\IbMod(\P)_{\P//\P}$ is stable as well. So by Theorem \ref{t:stabilization} the adjunction $$\Sigma^{\infty} : \adjunction*{}{\IbMod(\P)_{\P//\P}}{\T_\P\IbMod(\P)}{} : \Omega^{\infty}$$ is a Quillen equivalence. The statement hence follows by Lemma \ref{l:kerstable} and Theorem \ref{t:mainoptangent}.
\end{proof}

\section{Quillen cohomology of enriched operads}\label{chap:Qcohomenrichedop}

This section contains the central results of the paper. As the main goal, we give an explicit description of the cotangent complex of enriched operads. Moreover, we prove the existence of a long exact sequence relating Quillen cohomology and reduced Quillen cohomology of a given operad.

\smallskip

\subsection{An extra condition}\label{sub:extracondition}

\smallskip

{Let $\cS$ be a symmetric monoidal model category such that the transferred model structure on operadic algebras is available and let $\P$ be a $C$-colored operad in $\cS$.}

\begin{notn}\label{not:pointedbimod} We denote by $\BMod(\mathcal{P})^{*}:=\BMod(\mathcal{P})_{\mathcal{P}\circ \mathcal{P}/}$ the category of  $\P$-bimodules under $\mathcal{P}\circ \mathcal{P}$, which {represents} the free $\P$-bimodule generated by $\mathcal{I}_C$, and refer to $\BMod(\mathcal{P})^{*}$ as the category of \textbf{pointed $\mathcal{P}$-bimodules}. 
\end{notn}

Observe that the composition $\mu:\mathcal{P}\circ \mathcal{P} \longrightarrow \mathcal{P}$ exhibits $\mathcal{P}$ itself as a pointed $\mathcal{P}$-bimodule. Let $\mathcal{P}\sqcup\mathcal{P} \x{f + g}{\lrar} \Q$ be a map in $\Op_C(\mathcal{S})$. Then $\Q$ inherits a $\P$-bimodule structure with the left (resp. right) $\P$-action induced by $f$ (resp. $g$).  This determines a restriction functor $$\Op_C(\mathcal{S})_{\mathcal{P}\sqcup\mathcal{P}/} \lrar \BMod(\mathcal{P})^{*} ,$$ which admits a left adjoint denoted by $\E : \BMod(\mathcal{P})^{*}\lrar \Op_C(\mathcal{S})_{\mathcal{P}\sqcup\mathcal{P}/}$. Observe that $\E$ sends $\P\in \BMod(\mathcal{P})^{*}$ to itself $\P\in \Op_C(\mathcal{S})_{\mathcal{P}\sqcup\mathcal{P}/}$ equipped with the fold map $\Id_\P + \Id_\P : \mathcal{P}\sqcup\mathcal{P} \lrar \P$. 

{We recall a hypothesis introduced by Dwyer-Hess in \cite{Hess}.}
\begin{dfn}\label{S9} {(\textbf{Goodness hypothesis})} We will say that the operad $\P\in \Op_C(\mathcal{S})$ is \textbf{good} if the left derived  functor  $$\LL\E:\BMod(\mathcal{P})^{*}\lrar \Op_C(\mathcal{S})_{\mathcal{P}\sqcup\mathcal{P}/}$$ sends $\P$ to itself $\mathcal{P}$. Moreover, the base category $\cS$ is said to be \textbf{good} if every cofibrant operad in $\cS$ is good.
\end{dfn}

{For example, in {loc. cit.}, the authors verified that every \textbf{nonsymmetric simplicial operad} is good. Expanding upon their result, we shall now prove that the category $\Set_\Delta$ is good (when working in the context of symmetric operads). Moreover, we show that the category $\sMod_R$ with $R$ being any commutative ring is also good.}

\begin{prop}\label{p:splcsetS9} The category $\Set_\Delta$ of simplicial sets is good. Namely, for every set of colors $C$ and for every cofibrant simplicial $C$-colored operad $\P$, the  left derived   functor $$\LL\E:\BMod(\mathcal{P})^{*}\lrar \Op_C(\Set_\Delta)_{\mathcal{P}\sqcup\mathcal{P}/}$$ of $\E$ sends $\P$ to itself $\mathcal{P}$.
\end{prop}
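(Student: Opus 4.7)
The plan is to adapt the argument of Dwyer--Hess for nonsymmetric simplicial operads to the symmetric setting over $\Set_\Delta$. Since $\E$ is a left Quillen functor, what must be shown is that for some cofibrant replacement $\widetilde{\P} \lrarsimeq \P$ in $\BMod(\P)^{*}$, the induced map $\E(\widetilde{\P}) \lrar \E(\P) = \P$ is a weak equivalence in $\Op_C(\Set_\Delta)_{\P\sqcup\P/}$. The strategy is to work with an explicit simplicial resolution where both $\widetilde{\P}$ and $\E(\widetilde{\P})$ can be computed by hand.

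First, I would construct the two-sided bar resolution $\rB_\bullet(\F,\F\U,\P)$ associated to the free-forgetful adjunction $\F \dashv \U$ between $\Coll_C(\Set_\Delta)$ and $\BMod(\P)^{*}$, noting that the free pointed $\P$-bimodule on a collection $X$ takes the form $(\mathcal{P}\circ\mathcal{P}) \sqcup_{\BMod(\P)} (\P\circ X\circ \P)$. A standard extra-degeneracy argument produces a weak equivalence of simplicial pointed $\P$-bimodules $|\rB_\bullet(\F,\F\U,\P)| \lrarsimeq \P$, and each level is cofibrant in $\BMod(\P)^{*}$ because $\P$ is $\Sigma$-cofibrant (being cofibrant in $\Op_C(\Set_\Delta)$) and $\Set_\Delta$ is a nice enough base. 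The resulting geometric realization provides the cofibrant replacement needed to compute $\LL\E(\P)$.

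Second, I would compute $\E$ termwise. By adjunction, $\E$ applied to a free pointed $\P$-bimodule of the above form is identified with the free $C$-colored operad under $\P\sqcup\P$ generated by $X$. This gives a concrete simplicial operad $\E(\rB_\bullet(\F,\F\U,\P))$ whose combinatorics mirror the bar construction and which carries the same extra degeneracy in the simplicial direction, witnessed by the fact that $\E(\P) = \P$ together with the composition $\mu:\P\circ\P\lrar\P$. Assembling this extra degeneracy into a simplicial contraction yields $|\E(\rB_\bullet(\F,\F\U,\P))| \simeq \P$ in $\Op_C(\Set_\Delta)_{\P\sqcup\P/}$, which is precisely what we want.

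The main obstacle will be controlling the $\Sigma_n$-actions in the symmetric setting, absent from the Dwyer--Hess argument. Passing from the free pointed bimodule to the free operad involves quotients by symmetric group actions on $X^{\circ n}$, and one must check that these quotients are homotopically well-behaved and that the resulting simplicial object is Reedy cofibrant so realization commutes with the termwise weak equivalence. Here the specifics of $\Set_\Delta$ intervene: the $\Sigma$-cofibrancy of $\P$ ensures that the relevant symmetric actions are free, so the set-theoretic quotients compute homotopy orbits, and the standard latching maps of the bar construction remain cofibrations after applying $\E$. Once this is established, the combination of termwise equivalence, Reedy cofibrancy, and the extra degeneracy on $\E(\rB_\bullet)$ delivers $\LL\E(\P)\simeq \P$.
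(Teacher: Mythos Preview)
Your proposal has a real gap at its center. The extra degeneracy on the bar (Hochschild) resolution $\sH_\bullet\P \to \P$ is a splitting in the category of, say, left $\P$-modules: it inserts a unit at an \emph{outer} position and is not a map in $\BMod(\P)^{*}$. The functor $\E$ is only defined on $\BMod(\P)^{*}$, so there is no a priori reason that contracting homotopy survives to give one on $\E(\sH_\bullet\P) \to \P$. Your justification (``witnessed by $\E(\P)=\P$ together with $\mu$'') only accounts for the augmentation and face maps, not an extra degeneracy. Concretely, $\E\sH_0\P \cong \P\sqcup\P$ and the augmentation is the fold map; to build $s_{-1}$ one would have to move one outer $\P$ into the free piece $\F_*(\P^{\circ(n+1)})$ at the next stage, and there is no natural operad map doing this. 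If such a contraction existed, the nonsymmetric result of Dwyer--Hess would already be a one-liner; it is not.

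The paper avoids this issue entirely. After identifying $\E\sH_n\P \cong \P \sqcup \F_*(\P^{\circ n}) \sqcup \P$, it uses this formula only to conclude that the validity of $\varphi_\P:\diag(\E\sH_\bullet\P)\to\P$ being a weak equivalence is invariant under weak equivalences of cofibrant operads. It then applies the diagonal principle in the \emph{other} simplicial direction (the simplicial degrees of $\P$ itself) to reduce to discrete $\P$, replaces $\P$ by the Hochschild resolution over the operad $\textbf{O}_C$ so that each simplicial level is a free operad on a \emph{free symmetric sequence}, and finally uses the commutative square with the symmetrization functor $\Sym$ to reduce to the nonsymmetric case and invoke Dwyer--Hess. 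Your handling of the symmetric group actions is also too optimistic: in $\Set_\Delta$ every operad is $\Sigma$-cofibrant, so that hypothesis alone does not make the relevant actions free; it is precisely the passage through $\Sym$ of a nonsymmetric operad that buys freeness, and this reduction is the step your outline is missing.
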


The proof first requires constructing a nice cofibrant resolution for $\P$ as an object of $\BMod(\mathcal{P})^{*}$. For this, we will follow Rezk's [\cite{Rezk}, $\S$ 3.7.2]. ({Nonetheless}, note that the operadic model structures considered in {loc. cit.} are different from ours, so it should be used carefully). Let $\M$ be a simplicial model category. The \textbf{diagonal} (or \textbf{realization}) functor $\diag : \M^{\Delta^{\op}} \lrar \M $ is by definition the left adjoint to the functor $\M \lrar \M^{\Delta^{\op}}$ taking each $X\in\M$ to the simplicial object $[n] \mapsto X^{\Delta^{n}}$. For each $Y_\bullet\in\M^{\Delta^{\op}}$, one defines the \textbf{latching object} $\rL_nY_\bullet$ as the coequalizer in $\M$ of the form
$$ \underset{0\leqslant i < j \leqslant n}{\bigsqcup} Y_{n-1}  \; \doublerightarrow{}{} \;  \underset{0\leqslant k \leqslant n}{\bigsqcup} Y_{n} \lrar \rL_nY_\bullet $$
in which one of the two maps sends the $(i,j)$ summand to the $j$'th summand by $s_i$ while the other sends the $(i,j)$ summand to the $i$'th summand by $s_{j-1}$. By convention, one takes  $\rL_{-1}Y_\bullet := \emptyset$.  By construction, there is a unique map $\rL_nY_\bullet \lrar Y_{n+1}$ factoring the map $ s_k : Y_n \lrar Y_{n+1}$ for every $k=0,\cdots,n$. One then  constructs a filtration $\diag(Y_\bullet) = \colim_n \diag_nY_\bullet$ built up inductively by taking $\diag_0Y_\bullet := Y_0$ and then, for each $n\geqslant1$, taking the pushout:
\begin{equation}\label{eq:diag}
	\xymatrix{
		d_nY_\bullet \ar[r]\ar[d] & \Delta^{n}\otimes Y_n \ar[d] \\
		\diag_{n-1}Y_\bullet \ar[r] & \diag_nY_\bullet \\
	}
\end{equation}
in which $$d_nY_\bullet := \Delta^{n}\otimes\rL_{n-1}Y_\bullet \bigsqcup_{\partial\Delta^{n}\otimes\rL_{n-1}Y_\bullet}\partial\Delta^{n}\otimes Y_n.$$ As a consequence, if for every $n\geqslant0$ the \textbf{latching map} $\rL_{n-1}Y_\bullet \lrar Y_n$ is a cofibration then $\diag(Y_\bullet)$ is cofibrant. More generally, we have the following lemma.

\begin{lem}\label{l:diagcofib} Let $X_\bullet \lrar Y_\bullet$ be a map of simplicial objects in $\M$. Suppose that for every $n\geqslant 0$ the (relative) latching map 
	\begin{equation}\label{eq:latchingmap}
		X_{n}\bigsqcup_{\rL_{n-1}X_\bullet}\rL_{n-1}Y_\bullet \lrar Y_{n}
	\end{equation}
	is a cofibration. Then the induced map $\diag(X_\bullet) \lrar \diag(Y_\bullet)$ is a cofibration as well.  
\end{lem}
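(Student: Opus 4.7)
The plan is to prove the lemma by induction along the skeletal filtration \eqref{eq:diag} of the diagonal. Concretely, I will show that every map $\diag_n X_\bullet \to \diag_n Y_\bullet$ is a cofibration by induction on $n$, and then assemble a transfinite composition recovering $\diag(X_\bullet) \to \diag(Y_\bullet)$.

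The base case $n = 0$ reduces immediately to the hypothesis: since $\diag_0(-) = (-)_0$ and by convention $\rL_{-1}(-) = \emptyset$, the relative latching map \eqref{eq:latchingmap} at $n = 0$ is literally $X_0 \to Y_0$, which is therefore a cofibration.

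For the inductive step, assume $\diag_{n-1} X_\bullet \to \diag_{n-1} Y_\bullet$ is a cofibration. Factor $\diag_n X_\bullet \to \diag_n Y_\bullet$ through the intermediate pushout
$$P \; := \; \diag_{n-1} Y_\bullet \sqcup_{\diag_{n-1} X_\bullet} \diag_n X_\bullet.$$
The first factor $\diag_n X_\bullet \to P$ is a pushout of the cofibration $\diag_{n-1}X_\bullet \to \diag_{n-1}Y_\bullet$, hence itself a cofibration. The second factor $P \to \diag_n Y_\bullet$ can be rewritten, via a pushout-of-pushouts rearrangement using \eqref{eq:diag}, as the pushout along $d_n Y_\bullet \to \diag_{n-1} Y_\bullet$ of the map
$$d_n Y_\bullet \sqcup_{d_n X_\bullet} (\Delta^n \otimes X_n) \; \longrightarrow \; \Delta^n \otimes Y_n.$$
The crux of the proof is a direct diagram chase, unwinding the defining pushouts of $d_n X_\bullet$ and $d_n Y_\bullet$, identifying this last map with the pushout-product in the simplicial model category $\M$ of the cofibration $\partial\Delta^n \hookrightarrow \Delta^n$ with the relative latching cofibration $X_n \sqcup_{\rL_{n-1}X_\bullet} \rL_{n-1}Y_\bullet \to Y_n$ supplied by \eqref{eq:latchingmap}. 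The pushout-product (SM7) axiom then ensures it is a cofibration, so $P \to \diag_n Y_\bullet$, and hence the composite $\diag_n X_\bullet \to \diag_n Y_\bullet$, is a cofibration.

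Finally, to transition to the colimit, set $Z_n := \diag(X_\bullet) \sqcup_{\diag_n X_\bullet} \diag_n Y_\bullet$ (with $Z_{-1} := \diag(X_\bullet)$), so that $\colim_n Z_n \cong \diag(Y_\bullet)$. A similar Fubini computation expresses each step $Z_{n-1} \to Z_n$ as a pushout of the same map $d_n Y_\bullet \sqcup_{d_n X_\bullet} (\Delta^n \otimes X_n) \to \Delta^n \otimes Y_n$, already shown to be a cofibration. Since cofibrations in $\M$ are closed under pushout and transfinite composition, the induced map $\diag(X_\bullet) \to \diag(Y_\bullet)$ is a cofibration, completing the proof. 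The main obstacle is the pushout-cube identification exhibiting the displayed map as a pushout-product; once this is carried out, the remaining assembly is a routine exercise in closure properties of cofibrations.
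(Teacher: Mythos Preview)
Your proposal is correct and follows essentially the same route as the paper: both induct along the skeletal filtration, factor $\diag_n X_\bullet \to \diag_n Y_\bullet$ through the same intermediate pushout $P$, and identify the map $d_n Y_\bullet \sqcup_{d_n X_\bullet} (\Delta^n \otimes X_n) \to \Delta^n \otimes Y_n$ with the pushout-product of $\partial\Delta^n \hookrightarrow \Delta^n$ against the relative latching map, invoking the pushout-product axiom. Your final paragraph with the $Z_n$ transfinite composition is a welcome addition—the paper asserts the passage from ``each $\diag_n X_\bullet \to \diag_n Y_\bullet$ is a cofibration'' to ``$\diag(X_\bullet) \to \diag(Y_\bullet)$ is a cofibration'' somewhat loosely, whereas you make this explicit.
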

\begin{proof} By the filtrations of $\diag(X_\bullet)$ and $\diag(Y_\bullet)$ mentioned above, the map $\diag(X_\bullet) \lrar \diag(Y_\bullet)$ is a cofibration as soon as  the map $\diag_nX_\bullet \lrar \diag_nY_\bullet$ is one for every $n\geqslant0$. Note first that when $n=0$ the  map \eqref{eq:latchingmap} coincides with the map $\diag_0X_\bullet \lrar \diag_0Y_\bullet$. Let us assume by induction that the map $\diag_{n-1}X_\bullet \lrar \diag_{n-1}Y_\bullet$ is a cofibration. Then, factor $\diag_nX_\bullet \lrar \diag_nY_\bullet$ as 
	$$ \diag_nX_\bullet \lrar \diag_nX_\bullet\bigsqcup_{\diag_{n-1}X_\bullet}\diag_{n-1}Y_\bullet \x{\varphi}{\lrar} \diag_nY_\bullet .$$
	By the inductive assumption, the first map in this composition is a cofibration. Hence, it remains to show that $\varphi$ is a cofibration. Let us denote by $L_{n-1}(X_\bullet,Y_\bullet) := X_{n}\bigsqcup_{\rL_{n-1}X_\bullet}\rL_{n-1}Y_\bullet$. We can then form a canonical map $$ \Delta^{n}\otimes L_{n-1}(X_\bullet,Y_\bullet) \bigsqcup_{\partial\Delta^{n}\otimes L_{n-1}(X_\bullet,Y_\bullet)} \partial\Delta^{n}\otimes Y_n \lrar \Delta^{n}\otimes Y_n, $$
	which is a cofibration by the pushout-product axiom. {Moreover, it can be shown that this map is isomorphic to the canonical map}
	\begin{equation}\label{eq:latchingmap1}
		d_nY_\bullet \bigsqcup_{d_nX_\bullet} \Delta^{n}\otimes X_n \lrar \Delta^{n}\otimes Y_n .
	\end{equation}
	Now, consider the following commutative cube 
	\begin{center}
		\begin{tikzcd}[row sep=0.5em, column sep = 0.5em]
			d_nX_\bullet \arrow[rr] \arrow[dr,dashed, swap] \arrow[dd,swap] &&
			\Delta^{n}\otimes X_n \arrow[dd] \arrow[dr] \\
			& d_nY_\bullet \arrow[rr] \arrow[dd] &&
			\Delta^{n}\otimes Y_n \arrow[dd] \\
			\diag_{n-1}X_\bullet \arrow[rr,] \arrow[dr] && \diag_nX_\bullet \arrow[dr] \\
			& \diag_{n-1}Y_\bullet \arrow[rr] && \diag_{n}Y_\bullet
		\end{tikzcd}
	\end{center}
	whose the front and back squares are coCartesian. Applying the pasting law of pushouts iteratively, we find that $\varphi$ is a cobase change of the map \eqref{eq:latchingmap1}, which is a cofibration. We therefore get the conclusion.
\end{proof}

{For any simplicial operad $\P$, the category $\BMod(\P)$ admits a simplicial model structure transferred from that on the category of symmetric sequences} (see [\cite{Rezk}, Propositions 3.1.5, 3.2.8]). One constructs the Hochschild resolution of $\P$ as follows.

\begin{cons}  Let $\sH_\bullet\P : \Delta^{\op} \lrar \BMod(\mathcal{P})$ be the simplicial object of $\P$-bimodules with $\sH_n\P := \P^{\circ(n+2)}$, the face map $d_i : \sH_n\P\lrar \sH_{n-1}\P$ given by using the composition $\mu : \P\circ\P \lrar \P$ to combine the factors $i+1$ and $i+2$ in $\sH_n\P$ and with the degeneracy map $s_i$ given by inserting the unit operations of $\P$ between the factors $i+1$ and $i+2$. The realization $\diag(\sH_\bullet\P) \in \BMod(\mathcal{P})$ has $n$-simplices being those of  $\sH_n\P$. The  map $\mu$ induces a canonical map of simplicial objects $\sH_\bullet\P \lrar \P$. According to [\cite{Rezk}, Corollary 3.7.6], the induced map $\psi : \diag(\sH_\bullet\P) \lrar \diag(\P)=\P$ is a weak equivalence of $\P$-bimodules, {and exhibits $\diag(\sH_\bullet\P)$ as the \textbf{Hochschild resolution of} $\P\in \BMod(\P)$}.

	 On other hand, since $\P\circ\P = \sH_0\P$, there is a unique map of simplicial objects $\P\circ\P \lrar \sH_\bullet\P$, which is the identity on degree 0. Now, the diagonal functor determines a map $ \rho : \P\circ\P \lrar \diag(\sH_\bullet\P)$ of $\P$-bimodules. Moreover, the composition $ \P\circ\P \x{\rho}{\lrar} \diag(\sH_\bullet\P) \x{\psi}{\lrar} \P$ agrees with $\mu : \P\circ\P \lrar \P$.
\end{cons}

\begin{lem}\label{c:acofibration} Suppose that $\P$ is a $\Sigma$-cofibrant simplicial operad. The map $\psi$ exhibits $\diag(\sH_\bullet\P)$ as a cofibrant resolution for $\P\in \BMod(\P)$. Moreover, the map $\rho : \P\circ\P \lrar \diag(\sH_\bullet\P)$ is a cofibration of $\P$-bimodules. In particular, $\diag(\sH_\bullet\P)$ is also a {cofibrant} resolution for $\P$ when regarded as a pointed $\P$-bimodule.
\end{lem}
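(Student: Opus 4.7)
The plan is to apply Lemma \ref{l:diagcofib} to the map of simplicial $\P$-bimodules $(\P\circ\P)_\bullet \to \sH_\bullet\P$, where the source is the constant simplicial object on $\P\circ\P$ and the map at degree $n$ factors $\P\circ\P = \sH_0\P$ into $\sH_n\P$ via the iterated degeneracy. First note that $\P\circ\P = \P\circ\I_C\circ\P$ is the free $\P$-bimodule on $\I_C$, which is projectively cofibrant in $\Coll_C(\Set_\Delta)$ since $\Delta^0$ is cofibrant in $\Set_\Delta$; hence $\P\circ\P$ is cofibrant in $\BMod(\P)$. If Lemma \ref{l:diagcofib} applies, it will yield that $\rho\colon \P\circ\P \to \diag(\sH_\bullet\P)$ is a cofibration, so that $\diag(\sH_\bullet\P)$ is cofibrant in both $\BMod(\P)$ and $\BMod(\P)^{*}$. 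Combined with the already-established weak equivalence $\psi$, this will give the cofibrant resolution claims in each case.

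To verify the hypothesis of Lemma \ref{l:diagcofib}, I compute the relative latching maps. Since the source is constant, its latching objects are all equal to $\P\circ\P$ with latching maps the identity; the relative latching map at level $n\ge 1$ thus reduces to the absolute latching map $\rL_{n-1}\sH_\bullet\P \hookrightarrow \sH_n\P = \P^{\circ(n+2)}$ (while at $n=0$ it is the identity, trivially a cofibration). To identify this, observe that the degeneracies $s_i\colon \P^{\circ(n+1)} \to \P^{\circ(n+2)}$ for $0\le i\le n-1$ leave the first and last factors untouched, merely inserting the unit $u\colon \I_C\to\P$ into one of the middle positions. Writing $\sH_n\P = \P \circ K_n \circ \P$ with $K_n := \P^{\circ n}$, these degeneracies have the form $\P\circ\bar{s}_i\circ\P$ for $\bar{s}_i\colon K_{n-1}\to K_n$ inserting $u$ at position $i+1$. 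Since $\P\circ(-)\circ\P\colon \Coll_C(\Set_\Delta) \to \BMod(\P)$ is a left adjoint, it preserves colimits, yielding $\rL_{n-1}\sH_\bullet\P \cong \P \circ \rL_{n-1}K_\bullet \circ \P$, with the latching map being $\P\circ(-)\circ\P$ applied to the inclusion $\rL_{n-1}K_\bullet\hookrightarrow K_n$.

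It therefore suffices to show that $\rL_{n-1}K_\bullet\hookrightarrow K_n$ is a cofibration in $\Coll_C(\Set_\Delta)$, since applying the left Quillen functor $\P\circ(-)\circ\P$ (left Quillen by the transfer principle, which is available under $\Sigma$-cofibrancy of $\P$) then produces the required cofibration in $\BMod(\P)$. At each $C$-sequence $\overline{c}$, the simplicial set $K_n(\overline{c})$ has its latching object canonically identified with the subset of degenerate $n$-simplices, namely those obtained by inserting the distinguished unit vertex at one of the middle $n$ slots (using that $u$ is a monomorphism picking out the unit), and this identification is pointwise a monomorphism by the Eilenberg--Zilber lemma. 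The main obstacle I anticipate is promoting this pointwise monomorphism to a projective cofibration of $C$-collections: one must check that the inclusion is compatible with the $\Sigma$-equivariant structure on $K_n = \P^{\circ n}$ and can be built from generating projective cofibrations. This is precisely where the $\Sigma$-cofibrancy of $\P$ enters, ensuring that the symmetric actions distribute freely over the degenerate part of $\P^{\circ n}$.
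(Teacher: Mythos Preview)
Your setup coincides with the paper's almost verbatim: you reduce both claims to Lemma~\ref{l:diagcofib}, correctly observe that for the constant source the relative latching maps collapse to the absolute latching maps of $\sH_\bullet\P$, and factor these through the free $\P$-bimodule functor applied to a map of $C$-collections. Up to notation, your inclusion $\rL_{n-1}K_\bullet \hookrightarrow K_n = \P^{\circ n}$ is exactly the map the paper calls $k_n : K_{n-1} \to \P^{\circ n}$ (quoting \cite[Lemma 3.7.8]{Rezk}).

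The gap is precisely the one you flag as an obstacle, and your proposed Eilenberg--Zilber argument does not close it. Knowing that $k_n$ is a levelwise monomorphism is not enough to conclude it is a projective cofibration in $\Coll_C(\Set_\Delta)$, and invoking ``$\Sigma$-cofibrancy makes the actions free'' is too vague to carry the argument: one would still need to exhibit $k_n$ as a retract of a relative cell complex, which is not immediate from the degenerate-simplex description. The paper instead handles this step by a clean induction: $k_1$ is the unit map $\I_C \to \P$, which is a cofibration of $C$-collections by $\Sigma$-cofibrancy of $\P$ (this is exactly where that hypothesis enters), and $k_{n+1}$ is identified with the pushout-product of $k_n$ with $k_1$ with respect to the composite product. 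One then invokes \cite[Lemma 11.5.1]{Fresse1}, which says that for symmetric sequences the pushout-product of two cofibrations is again a cofibration provided the domain of one of them is cofibrant; since $\dom(k_1) = \I_C$ is cofibrant, the induction goes through. Replacing your final paragraph with this inductive pushout-product argument completes the proof.

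A minor correction: the functor $\P\circ(-)\circ\P$ is left Quillen simply because it is the free functor in a transferred model structure; $\Sigma$-cofibrancy of $\P$ plays no role there. Its role is solely in making the unit map $\I_C \to \P$ a cofibration of collections, which seeds the induction above.
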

\begin{proof} For the first statement, it suffices to show that the latching map $\rL_{n-1}\sH_\bullet\P \lrar \sH_n\P$ is a cofibration for every $n\geqslant 0$. To this end, one can follow up the argument as in the proof of [\cite{Rezk}, Corollary 3.7.6]. Besides that, one will need {to make use of} the fact that, for two maps $f$ and $g$ of symmetric sequences in a sufficiently nice symmetric monoidal model category, the pushout-product of $f$ with $g$ is a cofibration as soon as both of them are one and in addition, the domain of $g$ is cofibrant (cf. [\cite{Fresse1},  Lemma 11.5.1]). 
	
To prove that the map $\rho$ is a cofibration, we make use of Lemma \ref{l:diagcofib}. Note that since $\P\circ\P$ is considered as a constant simplicial object, the latching map \eqref{eq:latchingmap} is simply given by the map $\rL_{n-1}\sH_\bullet\P \lrar \sH_n\P$ when $n\geqslant 1$ and the identity map $\Id_{\P\circ\P}$ when $n=0$. 
\end{proof}

\begin{rem}\label{r:hochschildresalg} Let $\P$ be a {$\Sigma$-cofibrant} simplicial operad and let $A$ be any $\P$-algebra. {The} \textbf{Hochschild resolution of} $A$ is {given by} the realization of the simplicial $\P$-algebra $\sH^{\P}_\bullet A$  with $\sH^{\P}_n A = \P^{\circ(n+1)}\circ A$. The augmentation map $\diag(\sH^{\P}_\bullet A) \lrar A$ is a weak equivalence by [\cite{Rezk}, Corollary 3.7.4], and indeed exhibits $\diag(\sH^{\P}_\bullet A)$ as a cofibrant resolution for $A$. To see this, one just needs to show that the latching map $\rL_{n-1}\sH^{\P}_\bullet A \lrar \sH_n^{\P}A$ is a cofibration for every $n\geqslant 0$. This is an analogue of the first statement in the above lemma. 
\end{rem}

\begin{proof}[\underline{Proof of Proposition~\ref{p:splcsetS9}}] The idea will be to first argue that it suffices to treat only the case where $\P$ is the symmetrization of a discrete free nonsymmetric operad.

\noindent $\textbf{Step 1.}$ Let us first follow the arguments as in [\cite{Hess}, $\S$5]. Applying the functor $\E$ to $\sH_\bullet\P$ degreewise, one obtains a simplicial object $\E \sH_\bullet\P$ of operads under $\mathcal{P}\sqcup\mathcal{P}$. The realization $\diag(\E \sH_\bullet\P)$ is then an operad under $\mathcal{P}\sqcup\mathcal{P}$. Since $\E(\P) \cong \P$ in $\Op_C(\Set_\Delta)_{\P\sqcup\P/}$, there is a canonical map $\varphi_\P : \diag(\E \sH_\bullet\P) \lrar \P$ of operads under $\mathcal{P}\sqcup\mathcal{P}$.
	One observes that there is a canonical isomorphism $\diag(\E \sH_\bullet\P)\cong \E(\diag(\sH_\bullet\P))$ of operads under $\mathcal{P}\sqcup\mathcal{P}$ and over $\P$ (cf. [\cite{Hess}, Proposition 5.3]). Since $\E(\diag(\sH_\bullet\P))$ is a model for $\LL \E(\P)$ by Lemma \ref{c:acofibration}, it remains to show that the map $ \varphi_\P : \diag(\E \sH_\bullet\P) \lrar \P$ is a weak equivalence of operads. Now, observe that $\E \sH_n\P \cong \P \sqcup \F_*(\P^{\circ n}) \sqcup \P$ where $$\F_* : \Coll_C(\Set_\Delta)_{\I_C/} \lrar \Op_C(\Set_\Delta)$$ refers to the free-operad functor on \textit{pointed $C$-collections}. Consequently, if $\Q\lrarsimeq\P$ is a weak equivalence between cofibrant operads then the induced map  $\E \sH_n\Q \lrar \E \sH_n\P$ is also a weak equivalence. Therefore, by the {\textit{realization lemma} (see e.g., [\cite{Goerss}, $\S$4.1]),} $\varphi_\Q$ is a weak equivalence if and only if $\varphi_\P$ is one. Applying the {realization lemma} in the other direction, we get that $\varphi_\P$ is a weak equivalence as soon as the map $\varphi_{\P_{n}}$ is one for every $n\geqslant0$ (where $\P_{n}$ denotes the  operad of $n$-simplices of $\P$).

	Now, consider $\P$ as an $\textbf{O}_C$-algebra with $\textbf{O}_C$ being the operad of simplicial $C$-colored operads. Remark \ref{r:hochschildresalg} suggests that we can make use of $\P^{c} := \diag(\sH^{\textbf{O}_C}_\bullet \P)$ as (another) cofibrant model for $\P$. By the above paragraph, it suffices to verify that the map $\varphi_{\P^{c}}$ is a weak equivalence and to this end, we just need to show that the map $\varphi_{\P^{c}_{n}}$ is a weak equivalence for every $n$. By construction, we have  $$\P^{c}_{n} = ((\textbf{O}_C)^{\circ(n+1)} \circ \P)_{n} = (\textbf{O}_C)^{\circ(n+1)} \circ \P_{n} .$$ (The second identification is because of the fact that $\textbf{O}_C$ is a discrete operad). In particular, $\P^{c}_{n}$ is a discrete free $\textbf{O}_C$-algebra. This is in fact equivalent to saying that $\P^{c}_{n}$ is the symmetrization of a discrete free nonsymmetric operad. 
	
	\smallskip

\noindent $\textbf{Step 2.}$ By the first step, we can assume without  loss of generality that $\P$ is the symmetrization of a discrete free nonsymmetric operad. Nevertheless, for the remainder we just need to assume that $\P$ is the symmetrization of a nonsymmetric operad $\Q$, i.e., $\P =\Sym(\Q)$ (see \S \ref{s:operad}). Observe that the symmetrization functor lifts to the functors
\begin{gather*} \Sym : \BMod(\Q)^{*} \lrar \BMod(\Sym(\Q))^{*}, \; \; \text{and} \\
\Sym : \nsOp_C(\Set_\Delta)_{\mathcal{Q}\sqcup\mathcal{Q}/} \lrar \Op_C(\Set_\Delta)_{\Sym(\Q)\sqcup\Sym(\Q)/},
\end{gather*}
which are left adjoints to the associated forgetful functors. Moreover, we have a commutative square of left Quillen functors 
	$$ \xymatrix{
		\BMod(\Q)^{*} \ar^{\E \;\;\;\;\;\;}[r]\ar_{\Sym}[d] & \nsOp_C(\Set_\Delta)_{\mathcal{Q}\sqcup\mathcal{Q}/} \ar^{\Sym}[d] \\
		\BMod(\Sym(\Q))^{*} \ar_{\E \;\;\;\;\;\;\;\;\;}[r] & \Op_C(\Set_\Delta)_{\Sym(\Q)\sqcup\Sym(\Q)/} \\
	}$$
	(for this, it suffices to verify the commutativity of the associated square of right adjoints). According to [\cite{Hess}, Proposition 5.4], we have that $\LL\E\Q \cong \Q$. Thus, by the commutativity of the above square and by the fact that the symmetrization functor preserves weak equivalences, we obtain the expected identification $\LL\E\Sym(\Q) \cong \Sym(\Q)$.
\end{proof}

Let $R$ be any commutative ring. Hochschild resolutions work in the context of simplicial $R$-modules under a slightly different setting. Let $\P\in \Op_C(\sMod_R)$ be given such that the unit map $\I_C \lrar \P$ is a cofibration of symmetric sequences. Then the composition $$\P\circ\P\lrar\diag(\sH_\bullet\P)\lrarsimeq \P$$ exhibits $\diag(\sH_\bullet\P)$ as a cofibrant resolution for $\P\in \BMod(\mathcal{P})^{*}$. {Moreover, for $A$ a levelwise cofibrant $\P$-algebra,} the augmentation map $\diag(\sH^{\P}_\bullet A) \lrarsimeq A$ exhibits $\diag(\sH^{\P}_\bullet A)$ as a cofibrant resolution for $A$.

\begin{prop} \label{p:splcmoduleS9} The category $\sMod_R$ is good. Namely, for every set of colors $C$ and for every cofibrant object $\P \in \Op_C(\sMod_R)$, the left derived functor $$\LL\E:\BMod(\mathcal{P})^{*}\lrar \Op_C(\sMod_R)_{\mathcal{P}\sqcup\mathcal{P}/}$$ of $\E$ sends $\P$ to itself $\mathcal{P}$.
\end{prop}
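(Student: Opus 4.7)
The plan is to mirror the proof of Proposition \ref{p:splcsetS9}, replacing inputs about $\Set_\Delta$ by their $\sMod_R$-analogues provided by Remark \ref{r:hochschildresModR}, and then reducing the final ``free'' case to the simplicial set result we already have. First, I would construct the Hochschild resolution $\diag(\sH_\bullet\P) \lrarsimeq \P$ of $\P$ viewed as a pointed $\P$-bimodule using Remark \ref{r:hochschildresModR}; the hypothesis that $\I_C \lrar \P$ is a cofibration of symmetric sequences holds for cofibrant $\P \in \Op_C(\sMod_R)$ by the same kind of inductive analysis as in Lemma \ref{c:acofibration}. As in the simplicial set case there is a canonical isomorphism $\diag(\E\sH_\bullet\P) \cong \E(\diag(\sH_\bullet\P))$, so it suffices to show that the augmentation $\varphi_\P : \diag(\E\sH_\bullet\P) \lrar \P$ is a weak equivalence, and the diagonal principle reduces this to checking that $\E\sH_n\P \lrar \P$ is a weak equivalence for each $n \geqslant 0$.

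Next, I would exploit the decomposition $\E\sH_n\P \cong \P \sqcup \F_*(\P^{\circ n}) \sqcup \P$ to conclude that the property of $\varphi_\P$ being a weak equivalence is homotopy-invariant in cofibrant $\P$. Using the Hochschild resolution of $\P$ as an algebra over the operad $\textbf{O}_C$ of $C$-colored operads in $\sMod_R$, one replaces $\P$ by a cofibrant model whose simplicial levels are free operads on free symmetric sequences. A further application of the diagonal principle—in the reverse direction—lets us assume that $\P$ is itself the symmetrization $\Sym(\Q)$ of a nonsymmetric operad $\Q \in \nsOp_C(\sMod_R)$.

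The final reduction uses the commutative square of left Quillen functors
\[
\begin{tikzcd}
\BMod(\Q)^{*} \arrow[r, "\E"] \arrow[d, "\Sym"'] & \nsOp_C(\sMod_R)_{\Q \sqcup \Q/} \arrow[d, "\Sym"] \\
\BMod(\Sym(\Q))^{*} \arrow[r, "\E"] & \Op_C(\sMod_R)_{\Sym(\Q) \sqcup \Sym(\Q)/}
\end{tikzcd}
\]
together with the statement $\LL\E(\Q) \cong \Q$ for every cofibrant $\Q \in \nsOp_C(\sMod_R)$, which would yield $\LL\E(\Sym(\Q)) \cong \Sym(\Q) = \P$, since $\Sym$ preserves cofibrations and weak equivalences between cofibrant objects. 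This nonsymmetric analogue is the main obstacle. I would handle it by transfer along the strong symmetric monoidal left Quillen functor $R[-] : \Set_\Delta \lrar \sMod_R$, which lifts to a compatible left Quillen functor on nonsymmetric operads, bimodules, and operads under $\Q \sqcup \Q$, and commutes with $\E$ on the nose by universality of the adjunctions involved. Since $R[-]$ preserves cofibrations and carries weak equivalences of simplicial sets to weak equivalences of simplicial $R$-modules, the Dwyer--Hess result [\cite{Hess}, Proposition 5.4] for nonsymmetric simplicial operads transports to operads of the form $R[\Q_0]$. A further Hochschild-style resolution of $\Q$ as an algebra over the operad of $C$-colored nonsymmetric operads in $\sMod_R$ reduces the general cofibrant case to such operads at each simplicial level, which completes the proof modulo the standard verification that the relevant Hochschild simplicial objects in $\sMod_R$ behave analogously to their simplicial set counterparts.
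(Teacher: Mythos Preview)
Your approach is correct in spirit but takes a longer detour than the paper does. You mirror the entire structure of the simplicial-set proof inside $\sMod_R$: reduce via Hochschild to free operads on free symmetric sequences, then pass to the nonsymmetric case via $\Sym$, and finally transfer the nonsymmetric statement from $\Set_\Delta$ to $\sMod_R$ along $R[-]$, which forces you into yet another Hochschild reduction for nonsymmetric operads before you can invoke Dwyer--Hess. The paper instead exploits the fact that once the first Hochschild reduction is done, $\P$ is already a free operad on a \emph{discrete} free symmetric sequence, hence literally equals $R\{\Q\}$ for some cofibrant $\Q \in \Op_C(\Set_\Delta)$; it then uses the commutative square
\[
\begin{tikzcd}
\BMod(\Q)^{*} \arrow[r, "\E"] \arrow[d, "R^{b}\{-\}"'] & \Op_C(\Set_\Delta)_{\Q\sqcup\Q/} \arrow[d, "R\{-\}"] \\
\BMod(\P)^{*} \arrow[r, "\E"] & \Op_C(\sMod_R)_{\P\sqcup\P/}
\end{tikzcd}
\]
together with the already-proved Proposition~\ref{p:splcsetS9} (the full symmetric case over $\Set_\Delta$) to conclude directly. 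So the paper transfers along $R\{-\}$ at the symmetric level and reuses the finished result, whereas you transfer along $R[-]$ at the nonsymmetric level and have to redo the endgame. Your route works, but the paper's saves one Hochschild resolution and one symmetrization square by not unwinding Proposition~\ref{p:splcsetS9} back to its Dwyer--Hess core.
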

\begin{proof} As discussed above, we can make use of $\diag(\sH_\bullet\P)$ as a cofibrant resolution for $\P\in \BMod(\mathcal{P})^{*}$ and $\diag(\sH^{\textbf{O}_C}_\bullet \P)$ as (another) cofibrant model for $\P\in \Op_C(\sMod_R)$. ({Here we} note that since the operad $\textbf{O}_C$ is discrete and $\Sigma$-cofibrant, its unit map is {indeed} a cofibration). {Then} using the same argument as in the first step of the proof of Proposition \ref{p:splcsetS9}, we may assume that $\P$ is {the symmetrization of a discrete free nonsymmetric operad, or alternatively, $\P$ is} the free operad generated by a discrete free symmetric sequence. In particular, there exists a {(discrete)} cofibrant simplicial operad $\Q\in \Op_C(\Set_\Delta)$ such that $R\{\Q\} = \P$ {where $R\{-\}$ signifies the canonical left Quillen functor $\Op_C(\Set_\Delta) \lrar \Op_C(\sMod_R)$. Accordingly, we may complete  the proof just by applying Proposition \ref{p:splcsetS9}.}
\end{proof}

{In what follows, we provide additional examples for the goodness hypothesis.}

\begin{example} Combined with the \textbf{operadic Dold-Kan correspondence} obtained in \cite{Hoang}, the above proposition implies that the category $\C_{\geqslant0}({\textbf{k}})$ of connective dg ${\textbf{k}}$-modules,  {with $\textbf{k}$ being a commutative ring containing $\QQ$ (see Example \ref{ex:basecategories})}, is also good. 
\end{example}

\begin{examples} {Suppose that $\cS$ is good and suppose we are given a symmetric monoidal model category $\widetilde{\cS}$, and together with a \textbf{weak monoidal Quillen adjunction} $\F: \adjunction*{}{\cS}{\widetilde{\cS}}{} : \sU$ (cf. \cite{Schwede}). Then for every cofibrant operad $\P$ in $\cS$, it is straightforward to show that $\F(\P)$ is a good operad in $\widetilde{\cS}$ as well. Due to this, we may include more examples for the goodness hypothesis. For instance, every \textbf{spectral operad} (i.e. an operad enriched over symmetric spectra) that comes from a cofibrant simplicial (or topological) operad is good. Moreover, due to the above example, every operad in $\C({\textbf{k}})$ that comes from a cofibrant operad in $\C_{\geqslant0}({\textbf{k}})$ is good as well.}
\end{examples}

\smallskip

\subsection{Cotangent complex of enriched operads}\label{s:cotangentcplx}

\smallskip

{Suppose we are given a base category $\cS$ that is (at least) sufficient in the sense of $\S$\ref{s:convention}, and let $\P$ be a fixed $C$-colored operad in $\cS$.}

\begin{notns} To avoid confusion, in the remainder of this section, we will use the symbols $$\bigsqcup \; , \; \; \underset{\bullet}{\bigsqcup} \; , \; \; \underset{\ast}{\bigsqcup} \; \; \; \text{and} \; \; \; \underset{\circ}{\bigsqcup}$$ to denote the coproduct operations in  $\Op(\mathcal{S})$, $\BMod(\mathcal{P})$, $\BMod(\mathcal{P})^{*}$ and $\Op_C(\cS)$, respectively. On other hand, the pushouts will always be understood from the context.
\end{notns}

\begin{notns}\label{no:cotangentcomplex} We let $\rL_\P\in \T_\P \Op(\mathcal{S})$ and $\rL_\P^{b}\in \T_\P \BMod(\mathcal{P})$ respectively denote the \textbf{cotangent complexes of} $\P$ when regarded as an object of $\Op(\mathcal{S})$ and $\BMod(\mathcal{P})$. Besides that, we denote by $\rL^{\red}_\P\in \T_\P \Op_C(\mathcal{S})$ the cotangent complex of $\P$ when regarded as an object of $\Op_C(\mathcal{S})$.
\end{notns}

\begin{conv}\label{conv:reducedQcohom} From now on, by \textbf{Quillen cohomology of} $\P$ we will mean the Quillen cohomology of $\P$ when regarded as an object of $\Op(\mathcal{S})$, which is therefore classified by $\rL_\P$. On other hand, by \textbf{reduced Quillen cohomology of} $\P$ we will mean the Quillen cohomology of $\P$ when regarded as an object of $\Op_C(\mathcal{S})$, which is classified by $\rL^{\red}_\P$.
\end{conv}

By Theorem \ref{t:mainoptangent}, when $\mathcal{P}$ is $\Sigma$-cofibrant, we have a sequence of Quillen equivalences connecting the tangent categories:
$$ \adjunction*{\simeq}{\T_\P\IbMod(\P)}{\T_\P\BMod(\P)}{} \adjunction*{\simeq}{}{\T_\P\Op_C(\mathcal{S})}{} \adjunction*{\simeq}{}{\T_\P\Op(\mathcal{S})}{} .$$

\begin{notns}\label{no:severaladj} We will write $\F^{ib}_{\P} \dashv \U^{ib}_{\P}$ for the adjunction $\adjunction*{}{\T_\P\IbMod(\P)}{\T_\P\Op(\mathcal{S})}{}$ and write $\F^{b}_{\P} \dashv \U^{b}_{\P}$  for the adjunction  $\adjunction*{}{\T_\P\BMod(\P)}{\T_\P\Op(\mathcal{S})}{}$. 
\end{notns}

In order to get the desired formula of Quillen cohomology of $\P$, we will describe the derived image of $\rL_\P\in\T_\P \Op(\mathcal{S})$ under the right Quillen equivalence $ \U^{ib}_{\P} : \T_\P \Op(\mathcal{S}) \overset{\simeq}{\longrightarrow} \T_\P \IbMod(\mathcal{P})$. As the first step, we will show that the derived image of $\rL_\P$ in $\T_\P\BMod(\P)$ is up to a shift weakly equivalent to $\rL_\P^{b}$. Our result therefore extends [\cite{YonatanCotangent}, Proposition 3.2.1], {albeit through a different approach}. For our approach, the operad $\P$ is technically required to be good in the sense of Definition \ref{S9}. After having done that first step, it remains to compute the derived image of $\rL_\P^{b}$ in $\T_\P \IbMod(\mathcal{P})$.

As discussed above, we first wish to prove the following.

\begin{prop}\label{cotantgentcplxOperadandBimodule} {Suppose that $\cS$ is abundant (see Conventions \ref{convention}) and $\P$ is a good cofibrant operad (see Definition \ref{S9}). Then under the left Quillen equivalence $$\F_\P^{b} : \T_\P \BMod(\mathcal{P}) \lrarsimeq \T_\P \Op(\mathcal{S}),$$  $\rL^{b}_{\mathcal{P}} \in \T_\P \BMod(\mathcal{P})$ is identified with $\rL_{\mathcal{P}}[1] \in \T_\P \Op(\mathcal{S})$.}
\end{prop}

The proof will require several technical lemmas.

Given two $\cS$-enriched categories $\C$ and $\D$, the tensor product $\C\otimes\D$ is by definition the $\cS$-enriched category whose set of objects is $\Ob(\C\otimes\D) := \Ob(\C) \times \Ob(\D)$ and such that for every $c,c'\in \Ob(\C)$ and $d,d'\in \Ob(\D)$ we have
$$ \Map_{\C\otimes\D}((c,d),(c',d')) := \Map_\C(c,c') \otimes \Map_\D(d,d') .$$
Recall that the category of $\C$-bimodules is isomorphic to $\Fun(\C^{\op}\otimes\C,\mathcal{S})$ the category of $\cS$-valued enriched  functors on $\C^{\op}\otimes\C$. Under this identification, the functor $$\Map_\C : \C^{\op}\otimes\C \lrar \mathcal{S}, (x,y) \mapsto \Map_\C(x,y)$$ is exactly $\C$ viewed as a bimodule over itself.

\begin{lem}\label{cotantcplxofenrichedcat} {Suppose that $\cS$ is abundant and let $\mathcal{C}\in \Op(\mathcal{S})$ be a levelwise cofibrant operad that is} concentrated in arity 1. Then there is a weak equivalence $\theta_\C : \F^{b}_\C(\rL^{b}_{\mathcal{C}}) \lrarsimeq  \rL_{\mathcal{C}}[1]$ in $\T_\C \Op(\mathcal{S})$. 
\end{lem}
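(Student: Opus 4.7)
My plan is to adapt the Harpaz--Prasma--Nuiten argument (Theorem \ref{t:cotangentcatsum}) for enriched categories to the operadic setting, with condition (S8) \ref{S8} providing the key new technical input. The strategy is to identify both $\F^{b}_{\C}(\rL^{b}_{\C})$ and $\rL_{\C}[1]$ with $\Sigma^{\infty}$ of a common homotopy pushout in $\Op(\cS)_{\C//\C}$.

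First, using homotopy invariance of the cotangent complex (Remark \ref{r:Qcohominvar}) and noting that levelwise cofibrancy implies $\Sigma$-cofibrancy when $\C$ is concentrated in arity~$1$, I may cofibrantly resolve $\C$ in $\Op(\cS)$ and reduce to the case where $\C$ is cofibrant. I then identify $\rL_{\C}[1]$ with the relative cotangent complex $\rL_{\C/\C\sqcup\C}$ associated to the fold map $\nabla\colon\C\sqcup\C\to\C$. In the stable tangent category $\T_{\C}\Op(\cS)$, the cofiber sequence
\begin{equation*}
\nabla^{\Sp}_{!}(\rL_{\C\sqcup\C})\longrightarrow \rL_{\C}\longrightarrow \rL_{\C/\C\sqcup\C}
\end{equation*}
combined with the additive decomposition $\rL_{\C\sqcup\C}\simeq \iota_{L,!}^{\Sp}(\rL_{\C})\oplus \iota_{R,!}^{\Sp}(\rL_{\C})$ induced by the two splittings $\iota_{L},\iota_{R}$ of $\nabla$, identifies the first arrow with the fold $\rL_{\C}\oplus \rL_{\C}\to \rL_{\C}$; since the cofiber of a fold in a stable category is a shift, we obtain $\rL_{\C/\C\sqcup\C}\simeq \rL_{\C}[1]$.

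Next, condition (S8) provides an explicit cofibrant resolution of $\C$ in $\Op_{C}(\cS)_{\C\sqcup\C/}$: applying the left adjoint $\E$ to the cofibration $\rho\colon \C\circ\C\hookrightarrow \diag(\sH_{\bullet}\C)$ from Lemma \ref{c:acofibration} yields a cofibration $\C\sqcup\C\hookrightarrow \E(\diag(\sH_{\bullet}\C))$, whose composite with the weak equivalence $\E(\diag(\sH_{\bullet}\C))\xrightarrow{\sim}\C$ (supplied by (S8)) realizes the fold map. Remark \ref{r:relcotant} together with Proposition \ref{Quilleneq1} then yields the concrete model
\begin{equation*}
\rL_{\C/\C\sqcup\C}\simeq \Sigma^{\infty}\bigl(\C\sqcup_{\C\sqcup\C}\E(\diag(\sH_{\bullet}\C))\bigr)\in \T_{\C}\Op(\cS).
\end{equation*}

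Finally, I compute $\F^{b}_{\C}(\rL^{b}_{\C})$ via the parallel bimodule-level construction. By the first part of Lemma \ref{c:acofibration}, $\diag(\sH_{\bullet}\C)$ is also a cofibrant resolution of $\C$ in $\BMod(\C)$, so $\rL^{b}_{\C}$ is modeled by $\Sigma^{\infty}$ of a pushout in $\BMod(\C)_{\C//\C}$ involving $\diag(\sH_{\bullet}\C)$. The key observation is that the underlying induction functor $\BMod(\C)_{\C/}\to \Op_{C}(\cS)_{\C/}$ behind $\F^{b}_{\C}$ factors through $\E$ via a base change along the fold: for a bimodule $M$ under $\C$, the induced operad under $\C$ is $\E(M)\sqcup_{\C\sqcup\C}\C$, where $M$ is viewed as a pointed bimodule through $\C\circ\C\xrightarrow{\mu}\C\to M$. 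Colimit-preservation of $\F^{b}_{\C}$ then produces the same model $\Sigma^{\infty}(\E(\diag(\sH_{\bullet}\C))\sqcup_{\C\sqcup\C}\C)$, yielding the desired weak equivalence $\theta_{\C}\colon \F^{b}_{\C}(\rL^{b}_{\C})\lrarsimeq \rL_{\C}[1]$. The main obstacle will be establishing the factorization of the induction functor through $\E$ and verifying its compatibility with the chain of Quillen equivalences from Theorem \ref{t:mainoptangent}, together with the additivity decomposition of $\rL_{\C\sqcup\C}$ used in step one; both require careful tracking of the augmented slices defining the stabilizations.
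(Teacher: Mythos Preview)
Your approach differs substantially from the paper's, and in doing so both overcomplicates matters and introduces a genuine gap.

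The paper's proof is essentially a one-line citation. Since $\C$ is concentrated in arity~$1$, it \emph{is} an $\cS$-enriched category, and the statement is precisely Theorem~\ref{t:cotangentcatsum} (i.e.\ \cite[Proposition~3.2.1]{YonatanCotangent}) for enriched categories. The only thing to check is that the embedding $\Cat(\cS)\hookrightarrow\Op(\cS)$ induces a commutative square of left Quillen functors between the relevant tangent categories
\[
\xymatrix{
\T_{\Map_\C}\Fun(\C^{\op}\otimes\C,\cS) \ar[r]\ar[d]_{\F^{b}_{\Map_\C}} & \T_\C\BMod(\C)\ar[d]^{\F^{b}_\C}\\
\T_\C\Cat(\cS)\ar[r] & \T_\C\Op(\cS)
}
\]
in which the horizontal embeddings preserve cotangent complexes. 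The weak equivalence $\theta_\C$ is then simply the image of the categorical $\ovl\theta_\C$ under the bottom embedding. Crucially, this uses only condition~(S7) (the invertibility hypothesis, via \cite{YonatanCotangent}) and \emph{not} condition~(S8). In the paper's architecture, the present lemma supplies the ``base case'' at $\I_C$; it is later combined with the separate (S8)-based computation of the \emph{relative} cotangent complex (Lemma~\ref{l:relcotanbimod}) via cofiber sequences to obtain Proposition~\ref{cotantgentcplxOperadandBimodule}. By invoking (S8) here you are collapsing that two-step strategy.

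Beyond the architectural point, your step~4 does not go through. Your factorization $I_1(M)\cong \E(\mu^{*}M)\sqcup_{\C\sqcup\C}\C$ is correct (since $\mu$ is split epi, $\mu^{*}$ is fully faithful), but applying it to $M=\C\sqcup_b H$ with $H=\diag(\sH_\bullet\C)$ yields $\E\bigl(\mu^{*}(\C\sqcup_b H)\bigr)\sqcup_{\C\sqcup\C}\C$, not $\E(H)\sqcup_{\C\sqcup\C}\C$ as you claim. The pointed bimodule $\mu^{*}(\C\sqcup_b H)$ carries the structure map $\C\circ\C\xrightarrow{\mu}\C\hookrightarrow\C\sqcup_b H$, which is \emph{not} the map $\rho\colon\C\circ\C\to H$; nor is it a cofibration, so the derived $\E$ cannot be evaluated on it via (S8). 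This is exactly why the paper computes $\F^{b}_\P$ only on the \emph{relative} object $\rL^{b}_{\P/\P\circ\P}=\Sigma^\infty(\P\sqcup^{b}_{\P\circ\P}\P^{\cof})$: that pushout \emph{is} a coproduct in $\BMod(\P)^{*}$, on which $\E$ and (S8) behave well. The absolute $\rL^{b}_\C$ does not have this form, which is why the paper needs the present lemma as a separate input rather than as a consequence of the same (S8) argument.
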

\begin{proof} We also regard $\C$ as an $\cS$-enriched category. The proof is then straightforward by observing that the category $\Cat(\mathcal{S})$ is already a $``$neighborhood'' of $\mathcal{C}$ in $\Op(\mathcal{S})$.  This idea is expressed as follows.  There is  a commutative square of left Quillen functors
	$$ \xymatrix{
		\T_{\Map_\C}\Fun(\C^{\op}\otimes\C,\mathcal{S}) \ar[r]\ar_{\F^{b}_{\Map_\C}}[d] & \T_\C \BMod(\mathcal{C}) \ar^{\F^{b}_{\C}}[d] \\
		\T_\C \Cat(\mathcal{S}) \ar[r] & \T_\C \Op(\mathcal{S}) \\
	}$$
	The horizontal functors are the obvious embedding functors, which preserve cotangent complexes, while the functor $\F^{b}_{\Map_\C}$ is the left Quillen equivalence appearing in [\cite{YonatanCotangent}, Theorem 3.1.14]. According to  Proposition 3.2.1 of {loc. cit.}, there is a weak equivalence $\ovl{\theta}_\C : \F^{b}_{\Map_\C}(\rL_{\Map_{\mathcal{C}}}) \lrarsimeq \rL_{\mathcal{C}}[1]$ in $\T_\C \Cat(\mathcal{S})$. Finally, the expected weak equivalence $\theta_\C$ is given by the image of $\ovl{\theta}_\C$ under the embedding functor $\T_\C \Cat(\mathcal{S}) \lrar \T_\C \Op(\mathcal{S})$.
\end{proof}

In what follows, we consider the case where $\C =\I_C$ the initial $C$-colored operad and describe the weak equivalence $\theta_{\I_C} : \F^{b}_{\I_C}(\rL^{b}_{\I_C}) \lrarsimeq \rL_{\I_C}[1]$ of the above lemma. Let us pick up several notations of [\cite{YonatanCotangent}, \S 3.2]. We denote by $*$ the category which has a single object whose endomorphism object is $1_\cS$.  Moreover, let $[1]_\cS$ denote the category with objects $0, 1$ and mapping spaces $\Map_{[1]_\cS}(0,1) = 1_\cS, \Map_{[1]_\cS}(1,0) = \emptyset$ and $\Map_{[1]_\cS}(0,0) = \Map_{[1]_\cS}(1,1) = 1_\cS$. Localizing $[1]_\cS$ at the unique non-trivial morphism $0 \rar 1$ gives us the category $[1]_\cS^{\sim}$, which is the same as $[1]_\cS$ except that $\Map_{[1]^{\sim}_\cS}(1,0) = 1_\cS$. By construction, the canonical map $[1]^{\sim}_\cS \lrar *$ is a weak equivalence. Take a factorization $$[1]_\cS \lrar \mathcal{E} \lrarsimeq [1]_\cS^{\sim}$$ of the canonical map $[1]_\cS \lrar [1]_\cS^{\sim}$ into a cofibration followed by a trivial fibration. We now obtain a sequence of maps $$*\bigsqcup* \lrar [1]_\cS \lrar \mathcal{E} \lrarsimeq [1]_\cS^{\sim} \lrarsimeq *$$
such that the first two maps are cofibrations, while the others are weak equivalences. Tensoring with $\I_C$ (viewed as an $\cS$-enriched category) produces a sequence of maps in $\Cat(\mathcal{S})$ 
$$ \I_C \bigsqcup \I_C \lrar \I_C \otimes [1]_\cS \lrar \I_C \otimes \mathcal{E} \lrarsimeq  \I_C \otimes [1]_\cS^{\sim} \lrarsimeq \I_C.$$
The last two maps are weak equivalences, while the others are again cofibrations because $\I_C$ is discrete.

{By the above discussion}, the pushout $\I_C \, \underset{\I_C \sqcup \I_C}{\bigsqcup} \, \I_C\otimes \mathcal{E}$ is a cofibrant model for $\I_C \, \underset{\I_C \sqcup \I_C}{\x{\h}{\bigsqcup}} \, \I_C \in \Op(\cS)_{\I_C//\I_C}$, and hence $\rL_{\I_C}[1]$ is given by
$$ \rL_{\I_C}[1] \x{\defi}{=} \Sigma^{\infty}(\I_C \, \underset{\I_C \sqcup \I_C}{\x{\h}{\bigsqcup}} \, \I_C) \simeq \Sigma^{\infty}(\I_C \, \underset{\I_C \sqcup \I_C}{\bigsqcup} \, \I_C\otimes \mathcal{E}) .$$
On the other hand, note that $\BMod(\I_C)$ is isomorphic to $\Coll_C(\cS)$ the category of $C$-collections, so we find that $\F^{b}_{\I_C}(\rL^{b}_{\I_C}) = \Sigma^{\infty}(\Fr(\I_C))$ where $\Fr$ is the free functor $\Coll_C(\cS) \lrar \Op_C(\cS)$ and $\Fr(\I_C)$ is regarded as an object in $\Op(\mathcal{S})_{\I_C//\I_C}$. Moreover, we can show that $\Fr(\I_C)$ is the same as the pushout $\I_C \underset{\I_C\sqcup\I_C}{\bigsqcup}\I_C \otimes [1]_\cS$. Finally, we find the expected map
\begin{equation}\label{eq:thetaIC}
	\theta_{\I_C} : \F^{b}_{\I_C}(\rL^{b}_{\I_C})  = \Sigma^{\infty}\left(\I_C \underset{\I_C\sqcup\I_C}{\bigsqcup}\I_C \otimes [1]_\cS \right) \lrarsimeq \Sigma^{\infty}\left(\I_C \underset{\I_C\sqcup\I_C}{\bigsqcup}\I_C \otimes \mathcal{E} \right) = \rL_{\I_C}[1]
\end{equation}
canonically induced by the map $[1]_\cS \lrar \mathcal{E}$. 

\smallskip

Consider $\mu : \P\circ\P \lrar \P$ as a map in $\BMod(\mathcal{P})$ and the unit map $\eta : \I_C \lrar \P$ as a map in $\Op(\mathcal{S})$. Recall that $\Sigma^{\infty}_+(\mu)$ is the image of $\mu$ under the left Quillen functor $\Sigma^{\infty}_+ : \BMod(\P)_{/\P} \lrar \T_\P \BMod(\mathcal{P})$. Note that $\Sigma^{\infty}_+(\mu)$ has already the right type since $\P\circ\P \in \BMod(\mathcal{P})$ is cofibrant. Also, $\Sigma^{\infty}_+(\eta)$ is the image of $\eta$ under the left Quillen functor $\Sigma^{\infty}_+ : \Op(\mathcal{S})_{/\P} \lrar \T_\P \Op(\mathcal{S})$ and has the right type.

\begin{lem}\label{l:muandeta} There is a weak equivalence $\F_\P^{b}(\Sigma^{\infty}_+(\mu)) \lrarsimeq \Sigma^{\infty}_+(\eta)[1]$ in $\tpop$.
\end{lem}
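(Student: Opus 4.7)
The plan is to reduce the statement to the case $\P = \I_C$, which is precisely Lemma \ref{cotantcplxofenrichedcat} applied to $\I_C$ (a fibrant and levelwise cofibrant operad concentrated in arity 1, thanks to our standing assumption that $1_\cS$ is cofibrant). Indeed, when $\P = \I_C$, both $\mu$ and $\eta$ coincide with the identity of $\I_C$, so $\Sigma^{\infty}_+(\mu) \simeq \rL^b_{\I_C}$ and $\Sigma^{\infty}_+(\eta) \simeq \rL_{\I_C}$, and the claimed equivalence becomes exactly $\theta_{\I_C}: \F^b_{\I_C}(\rL^b_{\I_C}) \simeq \rL_{\I_C}[1]$.

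To promote this to arbitrary $\P$, I would perform base change along the unit map $\eta: \I_C \to \P$. On the operad side, $\eta$ induces a left Quillen functor $\eta_!^{\aug}: \Op(\cS)_{\I_C//\I_C} \to \Op(\cS)_{\P//\P}$ by $(\I_C \to X \to \I_C) \mapsto (\P \to X \,\underset{\I_C}{\bigsqcup}\, \P \to \P)$, which stabilizes to $\eta_!^{\Sp}: \T_{\I_C}\Op(\cS) \to \T_\P\Op(\cS)$. A direct pushout computation, applied to the cofibrant model $\I_C \to \I_C \sqcup \I_C \to \I_C$ of $\rL_{\I_C}$, shows $\eta_!^{\Sp}(\rL_{\I_C}) \simeq \Sigma^{\infty}_+(\eta)$, and hence $\eta_!^{\Sp}(\rL_{\I_C}[1]) \simeq \Sigma^{\infty}_+(\eta)[1]$. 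Analogously, on the bimodule side I would construct a left Quillen functor $\Phi: \T_{\I_C}\BMod(\I_C) \to \T_\P\BMod(\P)$ by stabilizing the functor sending $\I_C \x{i}{\lrar} M \lrar \I_C$ to the pushout $\P \,\underset{\P\circ\P}{\bigsqcup}\, (\P\circ M\circ\P)$ formed along $\mu: \P\circ\P \to \P$ and $\P\circ i\circ\P$, equipped with the augmentation to $\P$ inherited from $M \to \I_C$. Applied to the cofibrant model $\I_C \to \I_C \,\underset{b}{\bigsqcup}\, \I_C \to \I_C$ of $\rL^b_{\I_C}$, this produces exactly $\P \to \P \,\underset{b}{\bigsqcup}\, (\P\circ\P) \to \P$ with augmentation $(\Id_\P, \mu)$, that is, $\Sigma^{\infty}_+(\mu)$.

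The final step is to verify that the square
\begin{equation*}
\begin{CD}
\T_{\I_C}\BMod(\I_C) @>\F^b_{\I_C}>> \T_{\I_C}\Op(\mathcal{S})\\
@V\Phi VV @VV\eta_!^{\Sp}V\\
\T_\P\BMod(\P) @>>\F^b_\P> \T_\P\Op(\mathcal{S})
\end{CD}
\end{equation*}
commutes up to natural weak equivalence, expressing the naturality of the operadic induction with respect to base change along $\eta$. Applying this commutativity to $\theta_{\I_C}: \F^b_{\I_C}(\rL^b_{\I_C}) \lrarsimeq \rL_{\I_C}[1]$ would then yield the desired equivalence $\F^b_\P(\Sigma^{\infty}_+(\mu)) \lrarsimeq \Sigma^{\infty}_+(\eta)[1]$ via the chain $\F^b_\P(\Sigma^{\infty}_+(\mu)) \simeq \F^b_\P(\Phi(\rL^b_{\I_C})) \simeq \eta_!^{\Sp}(\F^b_{\I_C}(\rL^b_{\I_C})) \simeq \eta_!^{\Sp}(\rL_{\I_C}[1]) \simeq \Sigma^{\infty}_+(\eta)[1]$.

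The main obstacle will be verifying the derived commutativity of this square. At the level of underlying 1-categories, it follows by adjunction from the evident commutative diagram of the corresponding restriction functors, but the passage to the stabilized setting requires controlling cofibrancy and weak equivalences carefully. I expect the extra condition (S8) \ref{S8}, which governs precisely the interaction of the specific pointed $\P$-bimodule $\P$ itself with the operadic induction, to play a key role, in line with the analogous role it plays in the proof of Proposition \ref{cotantgentcplxOperadandBimodule}.
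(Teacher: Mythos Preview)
Your approach is essentially the same as the paper's: both reduce to the case $\P=\I_C$ via base change along $\eta$, invoking the commutative square of left Quillen functors
\[
\xymatrix{
\T_{\mathcal{I}_C}\BMod(\mathcal{I}_C)   \ar^{\F_{\I_C}^{b}}[r]\ar_{\eta^{b}_!}[d] & \T_{\mathcal{I}_C}\Op(\mathcal{S}) \ar^{\eta^{op}_!}[d] \\
\T_\P \BMod(\mathcal{P})\ar_{\F_\P^{b}}[r] & \T_\P \Op(\mathcal{S}) \\
}
\]
and then applying $\theta_{\I_C}$. Your functor $\Phi$ is exactly the paper's $\eta^{b}_!$: since $\BMod(\I_C)=\Coll_C(\cS)$, the left adjoint $\BMod(\I_C)\to\BMod(\P)$ is the free $\P$-bimodule functor $\P\circ(-)\circ\P$, and the induced functor on pointed categories is what you wrote down.

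The one point where you overcomplicate matters is your expectation that (S8) is needed to verify derived commutativity of the square. It is not. The square already commutes strictly at the level of underlying $1$-categories (check it on right adjoints, which are all restriction functors), and all four functors are left Quillen; hence the square of left derived functors commutes automatically. Condition (S8) enters only later, in Lemma~\ref{l:relcotanbimod}, where one must compute $\LL\E$ on the pointed $\P$-bimodule $\P$ itself---a genuinely non-formal step. The present lemma is purely formal.
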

\begin{proof} The map $\eta : \I_C \lrar \P$ gives rise to a commutative square of left Quillen functors 
	\begin{equation}\label{eq:lbplp}
		\xymatrix{
			\T_{\mathcal{I}_C}\BMod(\mathcal{I}_C)   \ar^{\F_{\I_C}^{b}}[r]\ar_{\eta^{b}_!}[d] & \T_{\mathcal{I}_C}\Op(\mathcal{S}) \ar^{\eta^{op}_!}[d] \\
			\T_\P \BMod(\mathcal{P})\ar_{\F_\P^{b}}[r] & \T_\P \Op(\mathcal{S}) \\
		}
	\end{equation}
	Let us start with the cotangent complex $\rL^{b}_{\mathcal{I}_C}\in \T_{\mathcal{I}_C}\BMod(\mathcal{I}_C)$. Note that the functor $\BMod(\mathcal{I}_C) \lrar \BMod(\mathcal{P})$ agrees with the free $\P$-bimodule functor $\Coll_C(\cS)\lrar \BMod(\mathcal{P})$, which sends $\I_C$ to $\P\circ\P$. Thus we find that the functor $\eta^{b}_!$ sends $\rL^{b}_{\mathcal{I}_C}$ to $\Sigma^{\infty}(\P \, \underset{\bullet}{\bigsqcup} \, (\P\circ\P))\in \T_\P \BMod(\mathcal{P})$, which is a model for $\Sigma^{\infty}_+(\mu)$. The commutativity of \eqref{eq:lbplp} shows that $$\F_\P^{b}(\Sigma^{\infty}_+(\mu)) \simeq \eta^{op}_! \F_{\I_C}^{b}(\rL^{b}_{\mathcal{I}_C}).$$ On other hand, {by the discussion after Definition \ref{d:relcotangent}} we have $\Sigma^{\infty}_+(\eta)[1] = \eta^{op}_!(\rL_{\mathcal{I}_C}[1])$. Using $\Sigma^{\infty}(\I_C \, \underset{\I_C \sqcup \I_C}{\bigsqcup} \, \I_C\otimes \mathcal{E})$ as a cofibrant model for $\rL_{\mathcal{I}_C}[1]$ as discussed above, we find the desired weak equivalence given by 
	$$ \F_\P^{b}(\Sigma^{\infty}_+(\mu)) = \eta^{op}_! \F_{\I_C}^{b}(\rL^{b}_{\mathcal{I}_C}) \x{\eta^{op}_!(\theta_{\I_C})}{\underset{\simeq}{\xrightarrow{\hspace*{1.3cm}}}} \eta^{op}_!(\rL_{\mathcal{I}_C}[1]) = \Sigma^{\infty}_+(\eta)[1] $$
	where $\theta_{\I_C}$ is the weak equivalence \eqref{eq:thetaIC}.
\end{proof}

\begin{rem}\label{r:necess} It is necessary to give an explicit description of the map $\eta^{op}_!(\theta_{\I_C})$. We have that $$\F_\P^{b}(\Sigma^{\infty}_+(\mu)) \simeq \eta^{op}_! \F_{\I_C}^{b}(\rL^{b}_{\mathcal{I}_C}) = \Sigma^{\infty} \left ( \P \, \underset{\I_C}{\bigsqcup} \, \Fr(\I_C) \right ) = \Sigma^{\infty} \left ( \P \, \underset{\I_C \sqcup \I_C}{\bigsqcup} \, \I_C \otimes [1]_\cS \right ),$$ 
	and $$ \Sigma^{\infty}_+(\eta)[1] = \eta^{op}_!(\rL_{\mathcal{I}_C}[1]) = \Sigma^{\infty} \left ( \P \, \underset{\I_C}{\bigsqcup} \; (\I_C \, \underset{\I_C \sqcup \I_C}{\bigsqcup} \, \I_C\otimes \mathcal{E}) \right ) = \Sigma^{\infty} \left ( \P \, \underset{\I_C \sqcup \I_C}{\bigsqcup} \, \I_C \otimes \mathcal{E} \right ) .$$
	Under these identifications, the map $\eta^{op}_!(\theta_{\I_C})$ is given by applying $\Sigma^{\infty}$ to the map 
	$$ \P \, \underset{\I_C \sqcup \I_C}{\bigsqcup} \, \I_C \otimes [1]_\cS \lrar \P \, \underset{\I_C \sqcup \I_C}{\bigsqcup} \, \I_C \otimes \mathcal{E}$$
	canonically induced by the map $[1]_\cS \lrar  \mathcal{E}$.
\end{rem}

Consider the Quillen adjunction $\mathcal{L}^{\Sp}_\P: \adjunction*{}{\T_\P \Op_C(\mathcal{S})}{\T_\P \Op(\mathcal{S})}{} : \mathcal{R}^{\Sp}_\P$.

\begin{lem}\label{CotangentCplxOpC&OP} The left Quillen functor $\mathcal{L}^{\Sp}_\P$ sends $\rL^{\red}_{\P} \in \T_\P \Op_C(\mathcal{S})$  to $\rL_{\P/\I_C}$ the relative cotangent complex of the unit map $\eta : \I_C \lrar \P$ (cf. $\S$\ref{s:tangentcategory}).
\end{lem}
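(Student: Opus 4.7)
The plan is to unwind the two sides of the claimed identification by choosing an explicit cofibrant resolution of $\P$ and then observing that coproducts in $\Op_C(\cS)$ coincide with pushouts over $\I_C$ in $\Op(\cS)$.

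First I would pick a factorization $\I_C \lrar \P^{\cof} \lrarsimeq \P$ in $\Op_C(\cS)$ of the unit map $\eta$ into a cofibration followed by a trivial fibration. By Observations~\ref{ob:simpleobs}, the same data is also a cofibrant resolution of $\P$ in $\Op(\cS)$, and $\I_C \lrar \P^{\cof}$ remains a cofibration there. This lets me compute both cotangent complexes using the same cofibrant model.

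Next, by definition of $\Sigma^{\infty}_+$ (the words right after Definition~\ref{d:tangentcat}), the reduced cotangent complex is modeled by
\[
\rL^{\red}_{\P} \;\simeq\; \Sigma^{\infty}\!\left(\P \underset{c}{\bigsqcup} \P^{\cof}\right) \;\in\; \T_\P\Op_C(\cS),
\]
where $\P \underset{c}{\bigsqcup} \P^{\cof}$ is equipped with its canonical augmentation to $\P$ given by the fold map. Under the commutative square of left Quillen functors analogous to \eqref{eq:Qadj},
\[
\xymatrix{
\Op_C(\cS)_{/\P} \ar[r]^{\mathcal{L}_\P}\ar[d]_{\Sigma^{\infty}_+} & \Op(\cS)_{/\P} \ar[d]^{\Sigma^{\infty}_+}\\
\T_\P\Op_C(\cS) \ar[r]_{\mathcal{L}^{\Sp}_\P} & \T_\P\Op(\cS)
}
\]
the object $\mathcal{L}^{\Sp}_\P(\rL^{\red}_\P)$ is represented by the same suspension spectrum, now viewed inside $\T_\P\Op(\cS)$. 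The key observation is that coproducts in $\Op_C(\cS)$ are computed as pushouts over $\I_C$ in $\Op(\cS)$, so
\[
\P \underset{c}{\bigsqcup} \P^{\cof} \;\cong\; \P \underset{\I_C}{\bigsqcup} \P^{\cof}
\]
as objects of $\Op(\cS)_{\P//\P}$.

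Finally, I would apply Remark~\ref{r:relcotant} to the map $\eta : \I_C \lrar \P$: since $\I_C$ is cofibrant in $\Op(\cS)$ (Observations~\ref{ob:simpleobs}(ii)) and $\I_C \lrar \P^{\cof} \lrarsimeq \P$ is a cofibration followed by a weak equivalence, the homotopy cofiber sequence in $\T_\P\Op(\cS)$ identifies
\[
\rL_{\P/\I_C} \;\simeq\; \Sigma^{\infty}\!\left(\P \underset{\I_C}{\bigsqcup} \P^{\cof}\right).
\]
Combining the two displays yields the claimed identification $\mathcal{L}^{\Sp}_\P(\rL^{\red}_\P) \simeq \rL_{\P/\I_C}$. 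I do not anticipate a serious obstacle here: the statement is essentially a bookkeeping exercise comparing the two notions of coproduct, and both models are already cofibrant so no further resolution is required.
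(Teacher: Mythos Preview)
Your proposal is correct and follows essentially the same argument as the paper: choose a cofibrant resolution $\P^{\cof}\lrarsimeq\P$ in $\Op_C(\cS)$, identify $\rL^{\red}_\P$ with $\Sigma^{\infty}(\P\underset{c}{\bigsqcup}\P^{\cof})$, observe that the coproduct in $\Op_C(\cS)$ is the pushout over $\I_C$ in $\Op(\cS)$, and invoke Remark~\ref{r:relcotant} to recognize the result as $\rL_{\P/\I_C}$. Your version is slightly more explicit about the commutative square of left Quillen functors and the cofibrancy of $\I_C$, but the substance is identical.
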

\begin{proof} Let $\P^c \lrarsimeq \P$ be a cofibrant resolution of $\P$ in $\Op_C(\mathcal{S})$. By Remark \ref{r:relcotant} we have that $\rL_{\P/\I_C} = \Sigma^{\infty}(\P \, \underset{\I_C}{\bigsqcup} \, \P^c)$. On other hand, by definition the reduced cotangent complex $\rL^{\red}_{\P} \in \T_\P \Op_C(\mathcal{S})$ is given by $\Sigma^{\infty}(\P \, \underset{\circ}{\bigsqcup} \, \P^c)$ the constant spectrum on $\P \, \underset{\circ}{\bigsqcup} \, \P^c \in \Op_C(\mathcal{S})$ considered as a $C$-colored operad over and under $\P$. {Moreover,} since $\I_C$ is {an} initial object in $\Op_C(\mathcal{S})$, {it implies that} the coproduct $\P \, \underset{\circ}{\bigsqcup} \, \P^c$, when regarded as an object in $\Op(\cS)$, {coincides with} the pushout $\P \, \underset{\I_C}{\bigsqcup} \,\P^c \in \Op(\cS)$. We hence get the conclusion.
\end{proof}

Suppose that $\P$ is cofibrant. Then $\rL_{\P/\I_C}$ is simply given by $\Sigma^{\infty}(\P \, \underset{\I_C}{\bigsqcup} \, \P)$. Nevertheless, we will need two more models for this. Let $\R$ and $\R'$ be in $\Op(\cS)$ with $$\R = \P \, \underset{\I_C}{\bigsqcup} \, (\I_C \otimes [1]_\cS) \; \; \text{and} \; \;  \R' = \P \, \underset{\I_C}{\bigsqcup} \, (\I_C \otimes \mathcal{E}) .$$ 
We then form a diagram of coCartesian squares in $\Op(\cS)$ as follows 
\begin{equation}\label{eq:pushoutsss}
	\xymatrix{
		\I_C \ar[r]\ar_{i_0}[d] & \P \ar[d] \\
		\I_C \, \bigsqcup \, \I_C   \ar[r]\ar[d] & \P \, \bigsqcup \,  \I_C  \ar[r]\ar[d] & \P \, \bigsqcup \, \P \ar[d] \\
		\I_C \otimes [1]_\cS   \ar[r]\ar[d] & \R  \ar[r]\ar[d] & \R \, \underset{\I_C}{\bigsqcup} \, \P \ar[d] \\
		\I_C \otimes \mathcal{E}  \ar[r]\ar_{\simeq}[d] & \R'  \ar[r]\ar_{\simeq}[d] & \R' \, \underset{\I_C}{\bigsqcup} \, \P \ar_{\simeq}[d] \\
		\I_C \ar[r] & \P \ar[r] & \P \, \underset{\I_C}{\bigsqcup} \, \P \\
	}
\end{equation}
The middle column is nothing but the image of the first column through the left adjoint functor $\Op(\cS)_{\I_C//\I_C} \lrar \Op(\cS)_{\P//\P}$ induced by $\eta : \I_C \lrar \P$. We consider $\R$ and $\R'$ as objects in $\Op(\cS)_{\P//\P}$ via that way. The three squares on the right hand side are considered as coCartesian squares in $\Op(\cS)_{\P//\P}$. Moreover, note that all the arrows in this diagram are cofibrations, except the three bottom vertical maps, which are all weak equivalences (the last two ones are a homotopy cobase change of the weak equivalence $\I_C \otimes \mathcal{E} \lrarsimeq \I_C$).

\begin{lem}\label{l:twomodels} The map $\R \lrar \R'$ induces a weak equivalence of spectrum objects in $\T_\P \Op(\mathcal{S})$: $$\theta_{\P/\I_C} : \Sigma^{\infty}(\R \, \underset{\I_C}{\bigsqcup} \, \P) \lrarsimeq \Sigma^{\infty}(\R' \, \underset{\I_C}{\bigsqcup} \, \P).$$  Moreover, the two are both weakly equivalent to the relative cotangent complex $\rL_{\P/\I_C}$.
\end{lem}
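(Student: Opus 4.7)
The plan is to first deal with the easier identification involving $\R'$ and then tackle the weak equivalence $\theta_{\P/\I_C}$.

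\textbf{Step 1.} To identify $\Sigma^{\infty}(\R'\,\underset{\I_C}{\bigsqcup}\,\P)$ with $\rL_{\P/\I_C}$, I will read off the rightmost column of diagram \eqref{eq:pushoutsss}. Since $\P$ is cofibrant, Remark \ref{r:relcotant} gives $\rL_{\P/\I_C}\simeq \Sigma^{\infty}(\P\,\underset{\I_C}{\bigsqcup}\,\P)$. The vertical map $\R'\,\underset{\I_C}{\bigsqcup}\,\P\lrar \P\,\underset{\I_C}{\bigsqcup}\,\P$ is a weak equivalence in $\Op(\mathcal{S})_{\P//\P}$: it is the cobase change, along the cofibration $\P\sqcup\I_C\hookrightarrow \P\sqcup\P$, of the weak equivalence $\R'\lrarsimeq \P$, which in turn is obtained by cobase change of the weak equivalence $\I_C\otimes \mathcal{E}\lrarsimeq \I_C$ along the cofibration $\I_C\sqcup\I_C\hookrightarrow \P\sqcup\I_C$, using the right properness of $\Op(\mathcal{S})$ guaranteed by (S4) and Proposition \ref{firstfiveconditions}. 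Applying $\Sigma^{\infty}$ then yields the desired identification.

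\textbf{Step 2.} For the key map $\theta_{\P/\I_C}$, the plan is to exhibit it as the image of $\theta_{\I_C}$ under a left Quillen functor induced by the unit map. Concretely, I first observe that
$$ \R\,\underset{\I_C}{\bigsqcup}\,\P \;=\; (\P\sqcup\P)\,\underset{\I_C\sqcup\I_C}{\bigsqcup}\,\I_C\otimes [1]_\cS, \qquad \R'\,\underset{\I_C}{\bigsqcup}\,\P \;=\; (\P\sqcup\P)\,\underset{\I_C\sqcup\I_C}{\bigsqcup}\,\I_C\otimes \mathcal{E}, $$
so $\theta_{\P/\I_C}$ is obtained by applying $\Sigma^{\infty}$ to the cobase change, along the fold $(\eta,\eta)\colon \I_C\sqcup\I_C\to \P\sqcup\P$, of the map $\I_C\otimes [1]_\cS\to \I_C\otimes \mathcal{E}$. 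The fold map induces a left Quillen functor $(\eta,\eta)_!^{\aug}\colon \Op(\mathcal{S})_{\I_C\sqcup\I_C//\I_C\sqcup\I_C}\lrar \Op(\mathcal{S})_{\P\sqcup\P//\P\sqcup\P}$ and then, after further cobase change along the fold $\P\sqcup\P\to \P$ coming from $\Id_\P+\Id_\P$, a left Quillen functor landing in $\Op(\mathcal{S})_{\P//\P}$. Under this composite, the induced map on suspension spectra identifies with the image of the map
$$ \Sigma^{\infty}\bigl(\I_C\,\underset{\I_C\sqcup\I_C}{\bigsqcup}\,\I_C\otimes [1]_\cS\bigr)\lrar \Sigma^{\infty}\bigl(\I_C\,\underset{\I_C\sqcup\I_C}{\bigsqcup}\,\I_C\otimes \mathcal{E}\bigr), $$
which is precisely the weak equivalence $\theta_{\I_C}$ of \eqref{eq:thetaIC} provided by Lemma \ref{cotantcplxofenrichedcat}.

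\textbf{Step 3.} It remains to argue that the left Quillen change-of-base preserves this stable equivalence. Both source and target of $\theta_{\I_C}$ are levelwise cofibrant prespectra (the underlying objects $\I_C\,\underset{\I_C\sqcup\I_C}{\bigsqcup}\,\I_C\otimes [1]_\cS$ and $\I_C\,\underset{\I_C\sqcup\I_C}{\bigsqcup}\,\I_C\otimes\mathcal{E}$ are cofibrant in $\Op(\mathcal{S})_{\I_C//\I_C}$, being pushouts of cofibrations out of $\I_C\sqcup\I_C$). The induced left Quillen functor on stabilizations therefore takes $\theta_{\I_C}$ to a weak equivalence, yielding the required $\theta_{\P/\I_C}$. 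The two items of the lemma now follow: the map $\theta_{\P/\I_C}$ is a weak equivalence by Steps 2--3, and each of its source and target agrees with $\rL_{\P/\I_C}$ in $\T_\P\Op(\mathcal{S})$ via Step 1 and the 2-out-of-3 property.

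The main subtlety will be Step 3: ensuring that the change-of-base is indeed left Quillen and preserves the relevant cofibrancy, so that the weak equivalence $\theta_{\I_C}$ of cofibrant objects is transported to a genuine stable equivalence after base change; the argument rests on the cofibrancy of $\P$ in $\Op(\mathcal{S})$ (ensuring cofibrancy of the pushouts appearing after base change) together with the fact that $\Sigma^{\infty}$ of a levelwise weak equivalence between cofibrant prespectra is a stable equivalence.
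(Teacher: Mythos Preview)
Your Step 1 reaches the right conclusion, though the appeal to ``right properness'' is misplaced: right properness concerns pullbacks along fibrations, not pushouts. The reason $\R'\,\underset{\I_C}{\bigsqcup}\,\P\to\P\,\underset{\I_C}{\bigsqcup}\,\P$ is a weak equivalence is simply that every map in the diagram \eqref{eq:pushoutsss} above the bottom row is a cofibration between cofibrant objects, so the pushouts are already homotopy pushouts.

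The real problem is in Steps 2--3. The map $\theta_{\P/\I_C}$ is \emph{not} the image of $\theta_{\I_C}$ under any natural left Quillen functor $\T_{\I_C}\Op(\mathcal{S})\to\T_\P\Op(\mathcal{S})$. The functor $\eta_!^{op}$ induced by $\eta:\I_C\to\P$ sends $\theta_{\I_C}$ to the map
\[
\Sigma^{\infty}\Bigl(\P\,\underset{\I_C\sqcup\I_C}{\bigsqcup}\,\I_C\otimes[1]_\cS\Bigr)\lrar\Sigma^{\infty}\Bigl(\P\,\underset{\I_C\sqcup\I_C}{\bigsqcup}\,\I_C\otimes\mathcal{E}\Bigr),
\]
as computed explicitly in Remark \ref{r:necess}; this has a single copy of $\P$ where $\theta_{\P/\I_C}$ has $\P\sqcup\P$. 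Your attempted detour through $\Op(\mathcal{S})_{\I_C\sqcup\I_C//\I_C\sqcup\I_C}$ does not repair this: $\theta_{\I_C}$ lives in $\T_{\I_C}\Op(\mathcal{S})$, not in the stabilization of that double-slice, so the composite you describe cannot be applied to it. Consequently the transport argument of Step 3, while correct in principle for cofibrant objects, is being applied to the wrong map.

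The paper's argument proceeds differently and unstably: it shows that the map $\R\,\underset{\I_C}{\bigsqcup}\,\P\to\R'\,\underset{\I_C}{\bigsqcup}\,\P$ is $(-1)$-cotruncated in $\Op(\mathcal{S})_{\P//\P}$, from which the stable equivalence follows. The point is that $(-1)$-cotruncatedness is stable under both homotopy cobase change and under left Quillen functors (applied to cofibrant objects), so one reduces first to $\R\to\R'$ in $\Op(\mathcal{S})_{\P//\P}$, then to $\I_C\otimes[1]_\cS\to\I_C\otimes\mathcal{E}$ in $\Op(\mathcal{S})_{\I_C//\I_C}$, and finally checks the latter via the invertibility hypothesis (S7): the fold map $\mathcal{E}\,\underset{[1]_\cS}{\bigsqcup}\,\mathcal{E}\to\mathcal{E}$ is a weak equivalence in $\Cat(\mathcal{S})$. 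This two-step reduction (left Quillen transport \emph{then} cobase change) is exactly what your single base-change cannot accomplish, and the invertibility hypothesis---which your argument never invokes---is the essential input.
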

\begin{proof} By the weak equivalence $\R' \, \underset{\I_C}{\bigsqcup} \, \P \lrarsimeq \P \, \underset{\I_C}{\bigsqcup} \, \P$ of objects in $\Op(\cS)_{\P//\P}$, the constant spectrum $\Sigma^{\infty}(\R' \, \underset{\I_C}{\bigsqcup} \, \P)$ is indeed a model for $\rL_{\P/\I_C}$. It remains to show that $\theta_{\P/\I_C}$ is a weak equivalence. For this, we will follow the argument as in the proof of [\cite{YonatanCotangent}, Proposition 3.2.1]. It hence suffices to show that the map $\R \, \underset{\I_C}{\bigsqcup} \, \P \lrar \R' \, \underset{\I_C}{\bigsqcup} \, \P$ is (-1)-cotruncated in $\Op(\cS)_{\P//\P}$. Since the latter is a homotopy cobase change in $\Op(\cS)_{\P//\P}$ of the map $\R \lrar \R'$, we just need to show that $\R \lrar \R'$ is (-1)-cotruncated in $\Op(\cS)_{\P//\P}$. Moreover, since the map $\R \lrar \R'$ agrees with the image of the map $\I_C \otimes [1]_\cS \lrar \I_C \otimes \mathcal{E}$ through the left Quillen functor $\Op(\cS)_{\I_C//\I_C} \lrar \Op(\cS)_{\P//\P}$, the proof will be completed after showing that the latter map is (-1)-cotruncated in $\Op(\cS)_{\I_C//\I_C}$. For this, it suffices to show that the fold map $$\I_C \otimes \mathcal{E} \, \underset{\I_C \otimes [1]_\cS}{\bigsqcup} \, \I_C \otimes \mathcal{E}  \lrar \I_C \otimes \mathcal{E}$$ is a weak equivalence in $\Op(\cS)_{\I_C//\I_C}$ or in $\Cat(\cS)$, alternatively. This follows by the fact that the fold map $\mathcal{E} \, \underset{[1]_\cS}{\bigsqcup} \, \mathcal{E}  \lrar  \mathcal{E}$ is a weak equivalence in $\Cat(\cS)$, due to the convention that $\cS$ satisfies the invertibility hypothesis (cf. $\S$\ref{s:convention}).
\end{proof}

We will denote by $\rL^{b}_{\mathcal{P}/\mathcal{P}\circ \mathcal{P}}\in \T_\P \BMod(\mathcal{P})$ the relative cotangent complex of the map $\mu : \P\circ\P\lrar \P$ regarded as a map in $\BMod(\mathcal{P})$. Take a factorization $\mathcal{P}\circ \mathcal{P} \lrar \P^{b} \lrarsimeq \mathcal{P}$ in $\BMod(\mathcal{P})$ of the map $\mu$ into a cofibration followed by a weak equivalence. By Remark \ref{r:relcotant} we have that $\Sigma^{\infty}(\P\underset{\P\circ \P}{\bigsqcup}\P^{b})$ is a (cofibrant) model for $\rL^{b}_{\mathcal{P}/\mathcal{P}\circ \mathcal{P}}$. {The following lemma illustrates how we can make use of the goodness hypothesis.}

\begin{lem}\label{l:relcotanbimod}  Suppose that $\P$ is cofibrant and is good in the sense of Definition \ref{S9}. Then the left Quillen functor $\F_\P^{b} : \T_\P \BMod(\mathcal{P}) \lrar \tpop$ sends $\rL^{b}_{\mathcal{P}/\mathcal{P}\circ \mathcal{P}} \in \T_\P \BMod(\mathcal{P})$ to $ \rL_{\P/\I_C}[1]$. 
\end{lem}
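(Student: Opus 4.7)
The plan is to prove the lemma by factoring $\F^{b}_\P$ through $\T_\P\Op_C(\mathcal{S})$. By Lemma~\ref{CotangentCplxOpC&OP}, the left Quillen equivalence $\mathcal{L}^{\Sp}_\P:\T_\P\Op_C(\mathcal{S})\lrarsimeq\T_\P\Op(\mathcal{S})$ sends $\rL^{\red}_\P$ to $\rL_{\P/\I_C}$, and consequently sends $\rL^{\red}_\P[1]$ to $\rL_{\P/\I_C}[1]$. It therefore suffices to establish the following $C$-colored analogue: the left Quillen equivalence $\T_\P\BMod(\P)\lrarsimeq\T_\P\Op_C(\mathcal{S})$ arising from the induction functor sends $\rL^{b}_{\P/\P\circ\P}$ to $\rL^{\red}_\P[1]$.

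To prove this analogue, I would first fix a cofibrant resolution $\P\circ\P\rightarrowtail\P^{\cof}\lrarsimeq\P$ of $\P$ in the pointed bimodule category $\BMod(\P)^{*}=\BMod(\P)_{\P\circ\P/}$. By Remark~\ref{r:relcotant}, the suspension spectrum $\Sigma^{\infty}(M)$ of $M:=\P\underset{\P\circ\P}{\bigsqcup}\P^{\cof}$, viewed in $\BMod(\P)_{\P//\P}$ with the canonical section (first inclusion) and augmentation (given by $\Id_\P$ and $\P^{\cof}\lrarsimeq\P$), is a cofibrant model for $\rL^{b}_{\P/\P\circ\P}$. The key categorical observation I will exploit is that there are canonical identifications $\BMod(\P)_{\P/}=(\BMod(\P)^{*})_{\P/}$ (with $\P$ pointed by $\mu$) and $\Op_C(\mathcal{S})_{\P/}=(\Op_C(\mathcal{S})_{\P\sqcup\P/})_{\P/}$ (with $\P$ pointed by the fold), and under these identifications the induction functor corresponds to the functor on under-categories induced by the Quillen adjunction $\E:\BMod(\P)^{*}\to\Op_C(\mathcal{S})_{\P\sqcup\P/}$, composed with base-change along the equivalence $\LL\E(\P)\simeq\P$ supplied by condition (S8)~\ref{S8}.

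Applying this description and using that $\LL\E$ commutes with homotopy pushouts, the image of $M$ in $\Op_C(\mathcal{S})_{\P//\P}$ is weakly equivalent to the homotopy pushout $\P\underset{\P\sqcup\P}{\bigsqcup}\P$ in which both arrows $\P\sqcup\P\to\P$ are the fold map. This will rely on three facts: $\E(\P\circ\P)=\P\sqcup\P$ since $\P\circ\P$ is the initial object of $\BMod(\P)^{*}$; $\LL\E(\P)\simeq\P$ with the fold pointing by condition (S8); and $\LL\E(\P^{\cof})\simeq\LL\E(\P)\simeq\P$ with the fold pointing, since $\P^{\cof}$ is a cofibrant resolution of $\P$ in $\BMod(\P)^{*}$. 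This homotopy pushout is, by definition, the suspension in the pointed model category $\Op_C(\mathcal{S})_{\P//\P}$ of the object $\P\underset{c}{\bigsqcup}\P$ regarded as an augmented object via the first inclusion and the fold augmentation, so applying $\Sigma^{\infty}$ will yield $\rL^{\red}_\P[1]$ as desired. The main obstacle I anticipate is the careful identification, in the second paragraph, of the induction functor with the one induced by $\E$ on under-categories; this requires checking that the corresponding restriction functors are compatible with the categorical identifications above, which is a bookkeeping verification ultimately relying on the coherent way in which operad units determine pointed-bimodule structures.
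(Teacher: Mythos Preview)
Your proposal is correct and follows essentially the same route as the paper's proof: factor $\F^{b}_\P$ through $\T_\P\Op_C(\mathcal{S})$, identify the induction functor $\BMod(\P)_{\P//\P}\to\Op_C(\mathcal{S})_{\P//\P}$ with the functor induced by $\E$ on under/over categories, use that $\E$ (as a left adjoint) preserves the coproduct $\P\,\underset{pb}{\bigsqcup}\,\P^{\cof}$ together with condition~(S8) to recognise the result as the suspension of $\P\,\underset{c}{\bigsqcup}\,\P$ in $\Op_C(\mathcal{S})_{\P//\P}$, and finish with Lemma~\ref{CotangentCplxOpC&OP}. One minor remark: the identification $(\Op_C(\mathcal{S})_{\P\sqcup\P/})_{\P//\P}\cong\Op_C(\mathcal{S})_{\P//\P}$ uses the \emph{strict} equality $\E(\P)=\P$ (which holds by adjunction, independently of (S8)), so no ``base-change along $\LL\E(\P)\simeq\P$'' is needed at that stage; (S8) enters only to ensure $\E(\P^{\cof})\simeq\P$.
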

\begin{proof} We first describe the image of $\rL^{b}_{\mathcal{P}/\mathcal{P}\circ \mathcal{P}}$ through the functor $\T_\P \BMod(\mathcal{P}) \lrar \T_\P \Op_C(\mathcal{S})$. It will suffice to describe the image of $\P\underset{\P\circ \P}{\bigsqcup}\P^{b}$ through the functor $\BMod(\P)_{\P//\P} \lrar \Op_C(\mathcal{S})_{\P//\P}$, which is the same as the composed functor 
	$$ \BMod(\P)_{\P//\P} \cong \BMod(\P)^{*}_{\P//\P} \x{\widetilde{\E}}{\lrar}  (\Op_C(\mathcal{S})_{\P \, \underset{\circ}{\bigsqcup} \, \P /})_{\P//\P} \cong  \Op_C(\mathcal{S})_{\P //\P} $$
	where $\widetilde{\E}$ is the functor induced by $\E : \BMod(\P)^{*} \lrar \Op_C(\mathcal{S})_{\P \, \underset{\circ}{\bigsqcup} \, \P /}$ (see $\S$\ref{sub:extracondition}). Note that the pushout $\P\underset{\P\circ \P}{\bigsqcup}\P^{b}$, when regarded as an object in $\BMod(\P)^{*}$, is exactly $\P \, \underset{\ast}{\bigsqcup} \, \P^{b}$ the  coproduct of $\P$ with $\P^{b}$ as pointed $\P$-bimodules. Denote by $\Q\in\Op_C(\mathcal{S})_{\P \, \underset{\circ}{\bigsqcup} \, \P /}$ the image of $\P^{b}$ under $\E$. So the underlying $C$-colored operad of $\E(\P \, \underset{\ast}{\bigsqcup} \, \P^{b})$ is given by the pushout $\Q \, \underset{\P \, \underset{\circ}{\bigsqcup} \, \P}{\bigsqcup} \, \P$. Since $\P$ is good, it implies that $\Q$ is a cofibrant resolution of $\P$ when considered as an object in $\Op_C(\mathcal{S})_{\P \, \underset{\circ}{\bigsqcup} \, \P /}$ and hence, the latter pushout is a model for $\Sigma(\P \, \underset{\circ}{\bigsqcup} \, \P)$ the suspension of $\P \, \underset{\circ}{\bigsqcup} \, \P$ considered as an object in $\Op_C(\mathcal{S})_{\P // \P}$. Thus we find that the derived image of $\rL^{b}_{\mathcal{P}/\mathcal{P}\circ \mathcal{P}}$ in $\T_\P \Op_C(\mathcal{S})$ is given by $\Sigma^{\infty}(\Sigma(\P \, \underset{\circ}{\bigsqcup} \, \P))$, which is nothing but $\rL^{\red}_\mathcal{P}[1]$ the suspension of $\rL^{\red}_\P$. Finally, we deduce by using Lemma \ref{CotangentCplxOpC&OP}, which proves that the derived image of $\rL^{\red}_\mathcal{P}[1]$ through the functor $\T_\P \Op_C(\mathcal{S}) \lrar \tpop$ is exactly $\rL_{\P/\I_C}[1]$. 
\end{proof}

\begin{proof}[\underline{Proof of Proposition~\ref{cotantgentcplxOperadandBimodule}}] By the definition of relative cotangent complex, we have two cofiber sequences in $\T_\P \Op(\mathcal{S})$:
	$$ \F_\P^{b}(\Sigma^{\infty}_+(\mu)) \lrar \F_\P^{b}(\rL^{b}_{\mathcal{P}}) \lrar \F_\P^{b}(\rL^{b}_{\P / \P \circ \P}) \; \; \; \text{and} \; \; \; \Sigma^{\infty}_+(\eta) \lrar \rL_{\mathcal{P}} \lrar \rL_{\mathcal{P}/\mathcal{I}_C} .$$
	Let us consider the corresponding shifted cofiber sequences:
	\begin{gather*}
		 \F_\P^{b}(\rL^{b}_{\P / \P \circ \P})[-1] \x{\alpha}{\lrar} \F_\P^{b}(\Sigma^{\infty}_+(\mu)) \lrar  \F_\P^{b}(\rL^{b}_{\mathcal{P}}), \; \;  \text{and} \\ \rL_{\mathcal{P}/\mathcal{I}_C} \x{\beta}{\lrar} \Sigma^{\infty}_+(\eta)[1] \lrar \rL_{\mathcal{P}}[1].
	\end{gather*}
The map $\alpha$ is described as follows. From Remark \ref{r:necess} we have that $$\F_\P^{b}(\Sigma^{\infty}_+(\mu)) \simeq \Sigma^{\infty} ( \P \, \underset{\I_C \sqcup \I_C}{\bigsqcup} \, \I_C \otimes [1]_\cS) .$$ On other hand, by Lemma \ref{l:relcotanbimod} we have an identification $\F_\P^{b}(\rL^{b}_{\P / \P \circ \P})[-1] \simeq \rL_{\mathcal{P}/\mathcal{I}_C}$. From Lemma \ref{l:twomodels}, we have two models for $\rL_{\mathcal{P}/\mathcal{I}_C}$ given by $\Sigma^{\infty}(\R \, \underset{\I_C}{\bigsqcup} \, \P)$ and $\Sigma^{\infty}(\R' \, \underset{\I_C}{\bigsqcup} \, \P)$.  We will exhibit the first one as a model for $\F_\P^{b}(\rL^{b}_{\P / \P \circ \P})[-1]$:
	$$ \F_\P^{b}(\rL^{b}_{\P / \P \circ \P})[-1] \simeq \Sigma^{\infty}(\R \, \underset{\I_C}{\bigsqcup} \, \P) = \Sigma^{\infty} \left ( (\P \, \bigsqcup \, \P) \underset{\I_C \sqcup \I_C}{\bigsqcup} \, \I_C \otimes [1]_\cS \right) .$$
	Then by construction, $\alpha$ is given by the map between constant spectra $$ \alpha : \Sigma^{\infty} \left ( (\P \, \bigsqcup \, \P) \underset{\I_C \sqcup \I_C}{\bigsqcup} \, \I_C \otimes [1]_\cS \right) \lrar \Sigma^{\infty} \left ( \P \, \underset{\I_C \sqcup \I_C}{\bigsqcup} \, \I_C \otimes [1]_\cS \right ) $$
	canonically induced by the fold map $\P \, \bigsqcup \, \P \lrar \P$.

	We now describe the map $\beta$. Again by Remark \ref{r:necess}, we have $\Sigma^{\infty}_+(\eta)[1] \simeq \Sigma^{\infty} \left ( \P \, \underset{\I_C \sqcup \I_C}{\bigsqcup} \, \I_C \otimes \mathcal{E} \right )$. Furthermore, when using $\Sigma^{\infty}(\R' \, \underset{\I_C}{\bigsqcup} \, \P) = \Sigma^{\infty} \left ( (\P \, \bigsqcup \, \P) \underset{\I_C \sqcup \I_C}{\bigsqcup} \, \I_C \otimes \mathcal{E} \right)$ as a model for $\rL_{\mathcal{P}/\mathcal{I}_C}$, then by construction $\beta$ is given by the map between constant spectra
	$$ \beta : \Sigma^{\infty} \left ( (\P \, \bigsqcup \, \P) \underset{\I_C \sqcup \I_C}{\bigsqcup} \, \I_C \otimes \mathcal{E} \right) \lrar \Sigma^{\infty} \left ( \P \, \underset{\I_C \sqcup \I_C}{\bigsqcup} \, \I_C \otimes \mathcal{E} \right) $$
	canonically induced by the fold map $\P \, \bigsqcup \, \P \lrar \P$ again.

	We now obtain a commutative square in $\T_\P \Op(\mathcal{S})$ 
	$$ \xymatrix{
		\Sigma^{\infty} \left ( (\P \, \bigsqcup \, \P) \underset{\I_C \sqcup \I_C}{\bigsqcup} \, \I_C \otimes [1]_\cS \right)  \ar^{ \; \; \; \; \; \alpha}[r]\ar_{\theta_{\P/\I_C}}^{\simeq}[d] & \Sigma^{\infty} \left ( \P \, \underset{\I_C \sqcup \I_C}{\bigsqcup} \, \I_C \otimes [1]_\cS \right ) \ar_{\simeq}^{\eta^{op}_!(\theta_{\I_C})}[d] \\
		\Sigma^{\infty} \left ( (\P \, \bigsqcup \, \P) \underset{\I_C \sqcup \I_C}{\bigsqcup} \, \I_C \otimes \mathcal{E} \right) \ar_{ \; \; \; \; \beta}[r] & \Sigma^{\infty} \left ( \P \, \underset{\I_C \sqcup \I_C}{\bigsqcup} \, \I_C \otimes \mathcal{E} \right) \\
	}$$
	in which the left and right vertical maps are the weak equivalences of lemmas \ref{l:twomodels} and \ref{l:muandeta}, respectively. It hence induces a weak equivalence between homotopy cofibers of the maps $\alpha$ and $\beta$. We thus obtain a natural weak equivalence $\theta_\P : {\F_\P^{b}}(\rL^{b}_{\mathcal{P}}) \lrarsimeq \rL_{\mathcal{P}}[1]$ in $\T_\P \Op(\mathcal{S})$ as expected.
\end{proof}

In the next step, we will describe the cotangent complex $\rL^{b}_{\mathcal{P}} \in \T_\P \BMod(\mathcal{P})$. We will need the following lemma.

\begin{lem}\label{bimodules&leftmodules} The forgetful functor $\sU:\BMod(\mathcal{P}) \lrar \LMod(\mathcal{P})$ is a left Quillen functor provided that $\mathcal{P}$ is $\Sigma$-cofibrant.
\end{lem}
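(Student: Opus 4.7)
The plan is to verify that $\sU$ preserves cofibrations and trivial cofibrations and that it is a left adjoint. The main observation is the factorization
\[ \sU \circ \F^{b} \;=\; \F^{l} \circ (-\circ\P), \]
where $\F^{b}(M) = \P\circ M\circ \P$ and $\F^{l}(M) = \P\circ M$ are the free bimodule and free left module functors from $\Coll_C(\cS)$. Since the transferred model structures on $\BMod(\P)$ and $\LMod(\P)$ have generating (trivial) cofibrations of the form $\F^{b}(i)$ and $\F^{l}(i)$ respectively, with $i: A \to B$ ranging over generating (trivial) cofibrations in $\Coll_C(\cS)$, the identity above reduces the question of preservation of the generating (trivial) cofibrations of $\BMod(\P)$ to showing that the endofunctor $-\circ\P$ of $\Coll_C(\cS)$ preserves (trivial) cofibrations.

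For this, I would invoke the pushout-product axiom for the composite product $\circ$ on $\Coll_C(\cS)$, which is part of the monoidal model category structure carried by $(\Coll_C(\cS), \circ, \I_C)$ under our running conventions on $\cS$. The $\Sigma$-cofibrancy of $\P$ means precisely that the unit map $\emptyset \to \P$ is a cofibration in $\Coll_C(\cS)$. Taking the pushout-product of this map with $i : A \to B$ produces exactly $i\circ\P : A\circ\P \to B\circ\P$, which is therefore a (trivial) cofibration, and then $\F^{l}(i\circ\P) = \sU(\F^{b}(i))$ is a (trivial) cofibration in $\LMod(\P)$ because $\F^{l}$ is left Quillen.

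To extend preservation from generators to all cofibrations and to supply a right adjoint for $\sU$, I would show that $\sU$ preserves all small colimits. Filtered colimits are preserved because both $\BMod(\P)$ and $\LMod(\P)$ have them created by the forgetful to $\Coll_C(\cS)$, and the underlying collection of $\sU M$ agrees with that of $M$. For pushouts along a free map $\F^{b}(i)$, I would invoke the standard filtration describing the pushout of an algebra along a free map coming from $\Coll_C(\cS)$: combined with the factorization $\sU\circ\F^{b} = \F^{l}\circ(-\circ\P)$, this identifies, stage by stage, the underlying left module of the bimodule pushout with the corresponding pushout in $\LMod(\P)$ along $\F^{l}(i\circ\P)$. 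Coproducts and general finite colimits then reduce to the same analysis. Once cocontinuity is established, the adjoint functor theorem in the combinatorial setting yields a right adjoint to $\sU$, completing the proof.

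The principal obstacle is the stagewise comparison of the pushout filtrations in $\BMod(\P)$ and $\LMod(\P)$. The successive cells involve symmetric-sequence tensor products of $\P$ with the graded pieces of $B/A$, and one must use the $\Sigma$-cofibrancy to ensure these are identifiable after applying $\sU$. Once this combinatorial bookkeeping is done, the identification of the two filtrations gives the required natural isomorphism $\sU(X \sqcup^{b}_{\F^{b}(A)} \F^{b}(B)) \cong (\sU X) \sqcup^{l}_{\F^{l}(A\circ\P)} \F^{l}(B\circ\P)$, from which the remaining claims follow.
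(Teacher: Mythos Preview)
Your overall strategy agrees with the paper's: reduce to generating cofibrations via the factorisation $\sU\circ\F^{b}=\F^{l}\circ(-\circ\P)$, and then show that $i\circ\P$ is a cofibration of $C$-collections when $i$ is one and $\P$ is $\Sigma$-cofibrant. However, the step where you justify the latter contains a genuine gap.

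You invoke ``the pushout-product axiom for the composite product $\circ$ on $\Coll_C(\cS)$, which is part of the monoidal model category structure carried by $(\Coll_C(\cS),\circ,\I_C)$''. No such structure exists in general: $(\Coll_C(\cS),\circ)$ is a (non-symmetric) monoidal category, but it is \emph{not} a monoidal model category; the pushout-product axiom fails for $\circ$ without extra hypotheses. What is true, and what the paper uses (citing Fresse, Lemma~11.5.1), is a restricted statement: the pushout-product $f\,\square_{\circ}\,g$ of two cofibrations of symmetric sequences is a cofibration provided the domain of $g$ is cofibrant. Relatedly, your claim that the pushout-product of $i:M\to N$ with $\emptyset_C\to\P$ ``is exactly $i\circ\P$'' is not correct: $M\circ\emptyset_C$ is the level-$0$ component of $M$, not the initial object, so the pushout-product is the map $N\circ\emptyset_C\sqcup_{M\circ\emptyset_C}M\circ\P\to N\circ\P$. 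The paper handles this by factoring $i\circ\P$ through that pushout and treating the two pieces separately; the second piece is exactly where the restricted pushout-product lemma (with the cofibrant-domain hypothesis, here $\emptyset_C$) is applied.

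For cocontinuity of $\sU$ and the existence of a right adjoint, your filtration argument is workable but much heavier than needed. The paper observes in one line that the right $\P$-action is \emph{linear} (equivalently, $\RMod(\P)\cong\Fun(\mathbf{R}^{\P},\cS)$), so forgetting it preserves all colimits; the adjoint functor theorem then yields the right adjoint immediately. With that in hand, preservation of (trivial) cofibrations follows from the generating case without any cell-by-cell comparison of pushout filtrations.
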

\begin{proof} The functor $\sU$ is given by forgetting the right $\P$-action, which is linear. It implies that $\sU$ preserves colimits. By the adjoint functor theorem and by the combinatoriality of $\cS$, $\sU$ is indeed a left adjoint.

	Since $\sU$ preserves weak equivalences, the proof will be completed after showing that it preserves cofibrations. To this end, we first prove that every cofibration in $\RMod(\mathcal{P})$ has underlying map in $\Coll_C(\cS)$ being a cofibration as well. Observe that the model structure on $\RMod(\mathcal{P})$ admits a set of generating cofibrations given by $ \{i \circ \P : M \circ \P \lrar N \circ \P \}_i $ where  $i:M\rar N$ ranges over the set of those of $\Coll_C(\cS)$. Since the forgetful functor $\RMod(\mathcal{P}) \lrar \Coll_C(\cS)$ is a left adjoint, it suffices to show that each map $i \circ \P : M \circ \P \lrar N \circ \P$ is a cofibration in $\Coll_C(\cS)$. Let $\emptyset_C$ denote the initial $C$-collection, which agrees with $\emptyset_\cS$ on every level. Then, factor the map $i \circ \P$ as $$M \circ \P \lrar N \circ \emptyset_C \underset{M \circ \emptyset_C}{\bigsqcup} M \circ \P \lrar N \circ \P .$$ Since $i$ is a cofibration,  the map $i \circ \emptyset_C : M \circ \emptyset_C \lrar N \circ \emptyset_C$ is a cofibration as well. So the first map of the above composition is a cofibration. The second map is also a cofibration by [\cite{Fresse1},  Lemma 11.5.1], along with the $\Sigma$-cofibrancy of $\P$. We just showed that $i \circ \P$ is indeed a cofibration.

	It can be shown that the model structure on $\BMod(\mathcal{P})$ admits a set of generating cofibrations given by $\{\P\circ j : \P\circ K \lrar \P\circ L \}_j $ where $j:K\rar L$ ranges over the set of those of $\RMod(\mathcal{P})$. It therefore suffices to show that each map $\P\circ j:\P\circ K\lrar \P\circ L$ is a cofibration in $\LMod(\mathcal{P})$. This is now clear since  $\mathcal{P}\circ(-)$ represents the free left $\P$-module functor $\Coll_C(\cS) \lrar \LMod(\mathcal{P})$, which is a left Quillen functor, and since $j$ is a cofibration in $\Coll_C(\cS)$ as indicated above.
\end{proof}

Suppose that $\P$ is $\Sigma$-cofibrant. Properly, a model for $\rL^{b}_{\mathcal{P}}$ is given by $\Sigma^{\infty}(\mathcal{P} \, \underset{\bullet}{\bigsqcup} \, \mathcal{P}^{b})$ with $\mathcal{P}^{b}$ being a cofibrant resolution of $\P$ in $\BMod(\mathcal{P})$. But the map $\mathcal{P} \, \underset{\bullet}{\bigsqcup} \, \mathcal{P}^{b} \lrar \mathcal{P} \, \underset{\bullet}{\bigsqcup} \, \mathcal{P}$ is a weak equivalence of $\P$-bimodules due to Lemma \ref{bimodules&leftmodules}, so we may exhibit $\Sigma^{\infty}(\mathcal{P} \, \underset{\bullet}{\bigsqcup} \, \mathcal{P})$ as a model for $\rL^{b}_{\mathcal{P}}$. According to [\cite{YonatanBundle}, Corollary 2.3.3], $\rL^{b}_{\mathcal{P}}=\Sigma^{\infty}(\mathcal{P} \, \underset{\bullet}{\bigsqcup} \, \P)$ admits a \textbf{suspension spectrum replacement} simply given by fixing $\mathcal{P} \, \underset{\bullet}{\bigsqcup} \, \P$ as its value at the bidegree $(0,0)$, and hence the value at the bidegree $(n,n)$ is given by $\Sigma^{n}(\mathcal{P} \, \underset{\bullet}{\bigsqcup} \, \P)$ the $n$-suspension of $\mathcal{P} \, \underset{\bullet}{\bigsqcup} \,  \P$ in $\BMod(\P)_{\P//\P}$. We would like to exhibit this suspension spectrum as a model for $\rL^{b}_{\mathcal{P}}$ and to describe the $\P$-bimodule  $\Sigma^{n}(\mathcal{P} \, \underset{\bullet}{\bigsqcup} \, \P)$ for every $n\geqslant0$.

\begin{notns} (i) For each $n \geqslant 0$, we denote by $\sS^{n}:=\Sigma^{n}(1_\mathcal{S}\sqcup 1_\mathcal{S})\in\cS$ with the suspension $\Sigma(-)$ taken in $\mathcal{S}_{1_\mathcal{S}//1_\mathcal{S}}$, and refer to $\sS^{n}$ as the \textbf{pointed} $n$\textbf{-sphere} in $\cS$.
	
(ii)	 Furthermore, we write $\sS_C^{n}$  for the $C$-collection which has $\sS_C^{n}(c;c)=\sS^{n}$ for every $c\in C$ and agrees with $\emptyset_\cS$ on the other levels. Note that for every $n \geqslant 0$, $\sS_C^{n}$ is weakly equivalent to $\Sigma^{n}(\sS_C^{0})$ the $n$-suspension of $\sS_C^{0}\in\Coll_C(\cS)_{\I_C//\I_C}$.
\end{notns}

We can now describe $\Sigma^{n}(\mathcal{P} \, \underset{\bullet}{\bigsqcup} \, \P)$. By Lemma~\ref{bimodules&leftmodules}, the underlying left $\mathcal{P}$-module of $\Sigma^{n}(\mathcal{P} \, \underset{\bullet}{\bigsqcup} \,  \mathcal{P})$ is weakly equivalent to
	$\Sigma^{n}(\mathcal{P}  \, \underset{\diamond	}{\bigsqcup} \,  \mathcal{P}) \in \LMod(\mathcal{P})_{\mathcal{P}//\mathcal{P}}$ where $``\underset{\diamond	}{\bigsqcup}$'' refers to the coproduct operation in $\LMod(\mathcal{P})$. The good thing is that $\P$ is free (generated by $\I_C$) as a left module over itself. So we get that $\mathcal{P} \, \underset{\diamond	}{\bigsqcup} \,  \mathcal{P} \in \LMod(\mathcal{P})$ is isomorphic to $\mathcal{P} \circ \sS_C^{0}$ the free left $\P$-module generated by $\sS_C^{0}$. We have further that
	$$  \Sigma^{n}(\mathcal{P} \, \underset{\diamond	}{\bigsqcup} \,  \mathcal{P}) \simeq \Sigma^{n}(\P\circ\sS_C^{0}) \simeq  \mathcal{P}\circ \sS_C^{n} .$$ 
In particular, for each $C$-sequence $\overline{c}:=(c_1,\cdots,c_m;c)$, we have
$$ \Sigma^{n}(\mathcal{P} \, \underset{\diamond	}{\bigsqcup} \,  \mathcal{P})\,(\overline{c})\simeq\mathcal{P}\circ \sS_C^{n}\,(\overline{c})=\mathcal{P}(\overline{c})\otimes (\sS^{n})^{\otimes m}.$$

\begin{notns}\label{notn:LL_P} (i) We denote by $\widetilde{\rL}_\P\in \T_\P\IbMod(\P)$ the derived image of $\rL^{b}_\P\in \T_\P\BMod(\P)$ under the right Quillen equivalence   $\T_\P\BMod(\P) \lrarsimeq \T_\P\IbMod(\P)$ (cf. Theorem \ref{t:mainoptangent}).
	
	(ii) Furthermore, {recall from Theorem \ref{t:mainoptangent'} that when $\cS$ is stably sufficient, } we have a sequence of right Quillen equivalences
	$$ \T_\P\IbMod(\P) \overset{\Omega^{\infty}}{\underset{\simeq}{\longrightarrow}} \IbMod(\mathcal{P})_{\mathcal{P}//\mathcal{P}} \overset{\ker}{\underset{\simeq}{\longrightarrow}} \IbMod(\mathcal{P}).$$
In this situation, we will denote by $\ovl{\rL}_\P := \mathbb{R}(\ker\circ \, \Omega^{\infty})(\widetilde{\rL}_\P)$.
\end{notns}

In what follows, we give the explicit descriptions of $\widetilde{\rL}_\P$ and $\ovl{\rL}_\P$.

	\smallskip
	
\; $\bullet$ It is not hard to see that the Quillen adjunction $\adjunction*{}{\IbMod(\mathcal{P})_{\P//\P}}{\BMod(\mathcal{P})_{\P//\P}}{}$ is differentiable (see Definition~\ref{dfndifferentiable}). Thus by [\cite{YonatanBundle}, Corollary 2.4.8], $\widetilde{\rL}_\P$ is simply given by the underlying prespectrum of infinitesimal $\P$-bimodules of $\rL^{b}_{\mathcal{P}}$, and so we get $(\widetilde{\rL}_\P)_{n,n} \simeq \mathcal{P}\circ \sS_C^{n}$ for every $n \geqslant 0$.

	\smallskip

\; $\bullet$ Suppose {further} that $\cS$ is {stably sufficient}. By [\cite{YonatanBundle}, Remark 2.4.7], we have {a weak equivalence} $\mathbb{R}\Omega^{\infty}(\widetilde{\rL}_\P)\simeq \hocolim_n\Omega^{n}(\widetilde{\rL}_\P)_{n,n}$. Moreover, we find that 
	$$\mathbb{R}(\ker\circ \, \Omega^{\infty})(\widetilde{\rL}_\P)\simeq ( \hocolim_n\Omega^{n}(\mathcal{P}\circ \sS_C^{n}) ) \overset{\h}{\times_\P}  0 \simeq \hocolim_n\Omega^{n}  [\,(\mathcal{P}\circ \sS_C^{n})  \overset{\h}{\times_\P}  0 \,] $$
	in which the last desuspension $\Omega(-)$ is taken in $\IbMod(\mathcal{P})$. More explicitly, for each $\overline{c}=(c_1,\cdots,c_m;c)$ we find that
	\begin{align*} \mathbb{R}(\ker\circ \, \Omega^{\infty})(\widetilde{\rL}_\P)\,(\overline{c}) &\simeq \hocolim_n\Omega^{n} [\,(\mathcal{P}(\overline{c})\otimes (\sS^{n})^{\otimes m})  \overset{\h}{\times_{\P(\overline{c})}}  0\,] \\
		 &\simeq \P(\ovl{c})\otimes \hocolim_n\Omega^{n} [\, (\sS^{n})^{\otimes m}  \overset{\h}{\times_{1_\mathcal{S}}}  0\,]		
	\end{align*}
	 in which the last desuspension $\Omega(-)$ is now taken in $\cS$. Here the second weak equivalence is because of the fact that homotopy pullbacks in $\cS$ are also homotopy pushouts and that the functor $\mathcal{P}(\overline{c})\otimes (-)$ preserves colimits.

\smallskip

Combining  the above computations with Proposition~\ref{cotantgentcplxOperadandBimodule}, we obtain the following statement.
\begin{thm}\label{conclusionCotangentCplx} {Suppose that $\cS$ is abundant and that $\P$ is a good cofibrant operad}. Then the  right Quillen equivalence $$\U^{ib}_{\P}:\T_\P \Op(\mathcal{S}) \lrarsimeq \T_\P\IbMod(\P)$$ identifies the cotangent complex $\rL_\P$ to $\widetilde{\rL}_\P[-1]\in \T_\P\IbMod(\P)$ in which $\widetilde{\rL}_\P$ is the prespectrum with $(\widetilde{\rL}_\P)_{n,n} = \P\circ \sS_C^{n}$ for $n\geq0$. When $\mathcal{S}$ is in addition {stably abundant}, then under the right Quillen equivalence $$ \T_\P \Op(\mathcal{S}) \lrarsimeq \IbMod(\mathcal{P}) ,$$ the cotangent complex $\rL_\P$ is identified {with} $\ovl{\rL}_\P[-1]$ with $\ovl{\rL}_\P\in \IbMod(\mathcal{P})$ being given by
	$$ \ovl{\rL}_\P(\overline{c}) = \P(\ovl{c})\otimes \hocolim_n\Omega^{n} [\, (\sS^{n})^{\otimes m}  \times_{1_\mathcal{S}}^{\h}  0\,] $$
	for each $C$-sequence $\overline{c} :=(c_1,\cdots,c_m;c)$.
\end{thm}

By the definition of Quillen cohomology groups~\ref{d:Qcohom}, we {obtain} the following conclusion.

\begin{cor}\label{c:quillencohom} Suppose we are given an object $M\in \T_\P\IbMod(\P)$. Under the same assumptions as in Theorem \ref{conclusionCotangentCplx}, the $n$'th Quillen cohomology group of $\P$ with coefficients in $M$ is computed by the formula
	$$ \sH^{n}_Q(\P;M) \cong \pi_0\Map^{\h}_{\T_\P\IbMod(\P)}(\widetilde{\rL}_\P[-1],M[n]) \cong \pi_0\Map^{\h}_{\T_\P\IbMod(\P)}(\widetilde{\rL}_\P,M[n+1]).$$
	{Suppose further that $\cS$ is stably abundant}. For a given object $M\in \IbMod(\P)$, the $n$'th Quillen cohomology group of $\P$ with coefficients in $M$ is given by
	$$ \sH^{n}_Q(\P;M) \cong \pi_0\Map^{\h}_{\IbMod(\P)}(\ovl{\rL}_\P[-1],M[n]) \cong \pi_0\Map^{\h}_{\IbMod(\P)}(\ovl{\rL}_\P,M[n+1]).$$
\end{cor}

\begin{rem}\label{r:remainvalid} Suppose that the base category $\cS$ {is abundant, and in addition,} is good in the sense of Definition \ref{S9}. Then Proposition~\ref{cotantgentcplxOperadandBimodule} remains valid for any operad $\P$ that is $\Sigma$-cofibrant. Indeed, we first take $f : \Q \lrarsimeq \P$ to be a {cofibrant} resolution of $\P$ in $\Op(\mathcal{S})$ such that $f$ is a map in $\Op_C(\mathcal{S})$ (cf. Observations \ref{ob:simpleobs}). In particular, $\Q$ is good. The map $f$ gives rise to a commutative square of left Quillen equivalences
	\begin{equation}\label{eq:WLOGsigmacofib}
		\xymatrix{
			\T_\Q \BMod(\mathcal{Q}) \ar[r]^{\F_\Q^{b}}_{\simeq}\ar[d]_{\simeq} & \T_\Q \Op(\mathcal{S}) \ar[d]^{\simeq} \\
			\T_\P \BMod(\mathcal{P})\ar[r]_{\F_\P^{b}}^{\simeq} & \T_\P \Op(\mathcal{S}) \\
		}
	\end{equation}
	It is then not hard to see that the vertical functors preserve cotangent complexes and thus, the statement of Proposition~\ref{cotantgentcplxOperadandBimodule} holds for $\P$. Consequently, Theorem \ref{conclusionCotangentCplx} and moreover, all the statements in the next subsection hold for $\P$ as well.
\end{rem} 

\smallskip

\subsection{Comparison between Quillen cohomology and reduced Quillen cohomology}\label{sub:longexact}

\smallskip

{Let $\cS$ and $\P$ be as in the statement of Theorem \ref{conclusionCotangentCplx}}. The unit map $\eta : \I_C \lrar \P$ gives rise to the Quillen adjunctions
\begin{gather*}
	\eta_!^{ib} : \adjunction*{}{\mathcal{T}_{\mathcal{I}_C}\IbMod(\mathcal{I}_C)}{\T_\P \IbMod(\mathcal{P})}{} : \eta^{*}_{ib}, \;\; \text{and} \\ \eta_!^{op} : \adjunction*{}{\mathcal{T}_{\mathcal{I}_C}\Op(\mathcal{S})}{\mathcal{T}_\mathcal{P}\Op(\mathcal{S})}{} : \eta^{*}_{op}.
\end{gather*}
Moreover, there is a commutative diagram of Quillen adjunctions of the form 
\begin{center}
	\begin{tikzcd}[row sep=3.5em, column sep =3.5em]
		\mathcal{T}_{\mathcal{I}_C}\IbMod(\mathcal{I}_C) \arrow[r, shift left=1.5, "\mathcal{F}_{\mathcal{I}_C}^{ib}"] \arrow[d, shift right=1.5, "\eta_!^{ib}"'] & \mathcal{T}_{\mathcal{I}_C}\Op(\mathcal{S}) \arrow[l, shift left=1.5, "\perp"', "\mathcal{U}_{\mathcal{I}_C}^{ib}"] \arrow[d, shift left=1.5, "\eta_!^{op}"]  & \\
		\T_\P \IbMod(\mathcal{P}) \arrow[r, shift right=1.5, "\mathcal{F}^{ib}_\P"'] \arrow[u, shift right=1.5, "\dashv", "\eta^{*}_{ib}"'] & \mathcal{T}_\mathcal{P}\Op(\mathcal{S})  \arrow[l, shift right=1.5, "\downvdash", "\mathcal{U}^{ib}_\P"'] \arrow[u, shift left=1.5, "\vdash"', "\eta^{*}_{op}"] &
	\end{tikzcd}
\end{center}

The following is an analogue of [\cite{YonatanCotangent}, Corollary 3.2.9].
\begin{lem}\label{cofibersequence1} There is a homotopy cofiber sequence in $\T_\P\IbMod(\P)$ of the form
	\begin{equation}\label{eq:cofib2}
		\RR \U^{ib}_{\P,C}(\rL_\P^{\red}) \lrar \LL\eta_!^{ib}\,(\widetilde{\rL}_{\I_C}) \lrar \widetilde{\rL}_{\P}
	\end{equation}
	where $\U^{ib}_{\P,C}$ is the right Quillen equivalence $\T_\P \Op_C(\mathcal{S}) \lrarsimeq \T_\P\IbMod(\P)$ appearing in Theorem \ref{t:mainoptangent}.
\end{lem}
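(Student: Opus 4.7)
The plan is to transport the canonical cofiber sequence defining the relative cotangent complex $\rL_{\P/\I_C}$ in $\T_\P\Op(\mathcal{S})$ over to $\T_\P\IbMod(\P)$ via the right Quillen equivalence $\U^{ib}_\P$, and to identify the resulting three terms.

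First, by Definition \ref{d:relcotangent} applied to the unit map $\eta : \I_C \lrar \P$, there is a canonical cofiber sequence $\LL\eta^{op}_!(\rL_{\I_C}) \lrar \rL_\P \lrar \rL_{\P/\I_C}$ in $\T_\P\Op(\mathcal{S})$. By Lemma \ref{CotangentCplxOpC&OP} the third term is identified with $\LL\mathcal{L}^{\Sp}_\P(\rL^{\red}_\P)$. Since $\T_\P\Op(\mathcal{S})$ is stable, rotating gives a cofiber sequence $\LL\mathcal{L}^{\Sp}_\P(\rL^{\red}_\P)[-1] \lrar \LL\eta^{op}_!(\rL_{\I_C}) \lrar \rL_\P$.

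Next, I apply $\RR\U^{ib}_\P$ to this sequence. Since both $\T_\P\Op(\mathcal{S})$ and $\T_\P\IbMod(\P)$ are stable (semi-)model categories, cofiber and fiber sequences coincide; in particular the right Quillen equivalence $\U^{ib}_\P$ preserves cofiber sequences. It then remains to identify each of the three resulting terms. For the right-most term, Corollary \ref{conclusionCotangentCplx} directly gives $\RR\U^{ib}_\P(\rL_\P) \simeq \widetilde{\rL}_\P[-1]$. For the left-most term, I use that $\U^{ib}_\P$ factors as the right Quillen equivalence $\mathcal{R}^{\Sp}_\P \circ \U^{ib}_{\P,C}$ (up to composition order); combined with the fact that $\LL\mathcal{L}^{\Sp}_\P \dashv \RR\mathcal{R}^{\Sp}_\P$ is a Quillen equivalence, one obtains $\RR\U^{ib}_\P \circ \LL\mathcal{L}^{\Sp}_\P \simeq \RR\U^{ib}_{\P,C}$, so the left-most term becomes $\RR\U^{ib}_{\P,C}(\rL^{\red}_\P)[-1]$.

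For the middle term, the commutative diagram of Quillen adjunctions displayed just before the lemma says in particular that $\LL\eta^{op}_! \circ \LL\F^{ib}_{\I_C} \simeq \LL\F^{ib}_\P \circ \LL\eta^{ib}_!$ as functors between homotopy categories. Because $\LL\F^{ib}_{\I_C}$ and $\LL\F^{ib}_\P$ are derived equivalences with quasi-inverses $\RR\U^{ib}_{\I_C}$ and $\RR\U^{ib}_\P$, I may compose both sides with $\RR\U^{ib}_\P$ on the left and $\RR\U^{ib}_{\I_C}$ on the right to deduce the Beck--Chevalley identification $\RR\U^{ib}_\P \circ \LL\eta^{op}_! \simeq \LL\eta^{ib}_! \circ \RR\U^{ib}_{\I_C}$. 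Applying this to $\rL_{\I_C}$ and then invoking Corollary \ref{conclusionCotangentCplx} for the (fibrant and $\Sigma$-cofibrant) operad $\I_C$ to identify $\RR\U^{ib}_{\I_C}(\rL_{\I_C}) \simeq \widetilde{\rL}_{\I_C}[-1]$, the middle term becomes $\LL\eta^{ib}_!(\widetilde{\rL}_{\I_C})[-1]$. Shifting the whole sequence by $[1]$ produces the desired cofiber sequence \eqref{eq:cofib2}.

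The main technical obstacle will be the Beck--Chevalley step above, since the commutativity provided by the stated square is naturally on left adjoints, and one has to move it across the equivalences $\LL\F^{ib}$ to obtain a compatibility between a right adjoint ($\RR\U^{ib}_\P$) and a left adjoint ($\LL\eta^{op}_!$). A secondary bookkeeping point is that Corollary \ref{conclusionCotangentCplx} is invoked both for $\P$ and for $\I_C$, and one has to ensure (or, when the unit is not fibrant, arrange via a fibrant resolution of the monoidal unit) that the hypotheses of that Corollary are met for $\I_C$ as well.
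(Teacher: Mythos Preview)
Your proposal is correct and follows essentially the same route as the paper's proof: both start from the cofiber sequence for the relative cotangent complex $\rL_{\P/\I_C}$ in $\T_\P\Op(\mathcal{S})$, transport it via $\RR\U^{ib}_\P$, and identify the three terms using Corollary~\ref{conclusionCotangentCplx} (for $\P$ and for $\I_C$), Lemma~\ref{CotangentCplxOpC&OP}, and the factorization $\U^{ib}_\P = \U^{ib}_{\P,C}\circ\mathcal{R}^{\Sp}_\P$. The only cosmetic difference is in the middle-term identification: the paper carries it out object-by-object, passing through the adjoint $\widetilde{\theta}_{\I_C}^{\ad}:\LL\F^{ib}_{\I_C}(\widetilde{\rL}_{\I_C})\lrarsimeq \rL_{\I_C}[1]$, applying $\LL\eta_!^{op}$, and taking adjoints back across the commuting square, whereas you phrase the same manipulation as a Beck--Chevalley identity $\RR\U^{ib}_\P\circ\LL\eta^{op}_!\simeq\LL\eta^{ib}_!\circ\RR\U^{ib}_{\I_C}$ at the level of derived functors; these are the same argument. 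Your flagged concern about the hypotheses on $\I_C$ is also implicitly present in the paper's proof, which invokes Corollary~\ref{conclusionCotangentCplx} for $\I_C$ without further comment.
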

\begin{proof} Theorem \ref{conclusionCotangentCplx} proves the existence of weak equivalences $\widetilde{\theta}_{\I_C} : \widetilde{\rL}_{\I_C}[-1]\overset{\simeq}{\lrar}\mathbb{R}\mathcal{U}_{\mathcal{I}_C}^{ib}(\rL_{\mathcal{I}_C})$ and $ \widetilde{\theta}_\P:\widetilde{\rL}_{\P}[-1]\overset{\simeq}{\lrar}\mathbb{R}\mathcal{U}^{ib}_\P(\rL_{\mathcal{P}})$ in $\mathcal{T}_{\mathcal{I}_C}\IbMod(\mathcal{I}_C)$ and $\mathcal{T}_{\P}\IbMod(\P)$, respectively. Applying $\LL\eta_!^{op}$ to $\widetilde{\theta}_{\I_C}^{ad}:\LL\mathcal{F}_{\mathcal{I}_C}^{ib}\,(\widetilde{\rL}_{\I_C})\lrarsimeq \rL_{\mathcal{I}_C}[1]$ (i.e., the adjoint of $\widetilde{\theta}_{\I_C}$), and taking then the adjoint of the result, we obtain a weak equivalence in $\T_\P\IbMod(\P)$ of the form $\LL\eta_!^{ib}\,(\widetilde{\rL}_{\I_C})\overset{\simeq}{\lrar}\RR\mathcal{U}^{ib}_\P\,\LL\eta_!^{op} (\rL_{\mathcal{I}_C}[1])$.

	On other hand, by the definition of relative cotangent complex, there is a cofiber sequence in $\tpop$ of the form $$ \rL_{\P/\I_C} \lrar \LL\eta_!^{op}(\rL_{\I_C})[1] \lrar \rL_\P[1]  .$$
	By applying $\mathbb{R}\mathcal{U}^{ib}_\P$ to the latter and by the first paragraph, we get a cofiber sequence in $\T_\P\IbMod(\P)$:
	$$  \mathbb{R}\mathcal{U}^{ib}_\P(\rL_{\P/\I_C}) \lrar \LL\eta_!^{ib}\,(\widetilde{\rL}_{\I_C})\lrar \widetilde{\rL}_{\P} .$$
	Lemma~\ref{CotangentCplxOpC&OP} shows that  there is a weak equivalence $\RR \U^{ib}_{\P,C}(\rL_\P^{\red}) \lrarsimeq \mathbb{R}\mathcal{U}^{ib}_\P(\rL_{\P/\I_C})$ in $\T_\P\IbMod(\P)$. So we get the desired cofiber sequence. 
\end{proof}

We end this section with the following statement.

\begin{thm}\label{c:longexseq1}  Given an object $M\in \T_\P\IbMod(\P)$, there is a long exact sequence of abelian groups of the form
	$$ \cdots \longrightarrow \sH^{n-1}_Q (\mathcal{P};M) \lrar  \sH^{n}_{Q,r} (\mathcal{P};M) \lrar \sH^{n}_{Q,\red} (\mathcal{P};M) \lrar \sH^{n}_Q (\mathcal{P};M) \lrar \sH^{n+1}_{Q,r} (\mathcal{P};M)  \lrar \cdots $$
	where $\sH^{\bullet}_{Q,r} (\mathcal{P};-)$ refers to Quillen cohomology group of $\P$ when regarded as a right module over itself, while $\sH^{\bullet}_Q (\mathcal{P};-)$ refers to Quillen cohomology group of $\P$ and $\sH^{\bullet}_{Q,\red} (\mathcal{P};-)$ refers to reduced Quillen cohomology group of $\P$ (cf. Conventions \ref{conv:reducedQcohom}).
\end{thm}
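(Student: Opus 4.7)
The strategy is to apply the contravariant functor $\pi_0\Map^h_{\T_\P\IbMod(\P)}(-, M[\bullet])$ to the homotopy cofiber sequence
$$\RR\U^{ib}_{\P,C}(\rL_\P^{\red}) \lrar \LL\eta_!^{ib}(\widetilde{\rL}_{\I_C}) \lrar \widetilde{\rL}_\P$$
furnished by Lemma~\ref{cofibersequence1}. Since $\T_\P\IbMod(\P)$ is stable (as a tangent category, cf.~Facts~\ref{fact:stabilization}(iii)), this cofiber sequence yields, upon mapping into $M[n+1]$ with varying $n$, a long exact sequence of abelian groups linking $\pi_0\Map^h(\RR\U^{ib}_{\P,C}(\rL_\P^{\red}), M[\bullet])$, $\pi_0\Map^h(\LL\eta_!^{ib}(\widetilde{\rL}_{\I_C}), M[\bullet])$ and $\pi_0\Map^h(\widetilde{\rL}_\P, M[\bullet])$. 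It then remains to identify each of these with the appropriate Quillen cohomology group.

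The first and third identifications are straightforward. Theorem~\ref{c:quillencohom} gives directly $\pi_0\Map^h(\widetilde{\rL}_\P, M[n+1]) \cong \sH^n_Q(\P,M)$. For the term involving $\rL_\P^{\red}$, the fact that $\U^{ib}_{\P,C}:\T_\P\Op_C(\cS)\lrarsimeq \T_\P\IbMod(\P)$ is a right Quillen equivalence (Theorem~\ref{t:mainoptangent}) implies that it preserves derived mapping spaces; combined with the defining formula $\sH^k_{Q,\red}(\P,M) = \pi_0\Map^h_{\T_\P\Op_C(\cS)}(\rL_\P^{\red}, M[k])$ this yields $\pi_0\Map^h(\RR\U^{ib}_{\P,C}(\rL_\P^{\red}), M[k]) \cong \sH^k_{Q,\red}(\P,M)$.

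The heart of the argument is the identification of the middle term with right-module Quillen cohomology. By the adjunction $\LL\eta_!^{ib}\dashv \RR\eta_{ib}^{*}$ one has
$$\pi_0\Map^h_{\T_\P\IbMod(\P)}(\LL\eta_!^{ib}(\widetilde{\rL}_{\I_C}), M[k]) \cong \pi_0\Map^h_{\T_{\I_C}\IbMod(\I_C)}(\widetilde{\rL}_{\I_C}, \RR\eta_{ib}^{*}(M)[k]).$$
Since $\I_C$ has only identity operations, $\IbMod(\I_C)$ coincides canonically with $\Coll_C(\cS)$ and with $\RMod(\I_C)$, under which $\widetilde{\rL}_{\I_C}$ becomes the $C$-collection cotangent complex of $\I_C$. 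The critical observation is that $\P\cong \I_C\circ\P$ exhibits $\P$ as the \emph{free} right $\P$-module generated by the $C$-collection $\I_C$; consequently, the free--forgetful Quillen adjunction $(-)\circ\P:\Coll_C(\cS) \rightleftarrows \RMod(\P) : U$ induces, by an argument modelled on Proposition~\ref{refineoperad&infinitesimalbimod}, a Quillen equivalence of tangent categories $\T_{\I_C}\Coll_C(\cS)\lrarsimeq \T_\P\RMod(\P)$ sending $\widetilde{\rL}_{\I_C}$ to the right-module cotangent complex $\rL_\P^r$ of $\P$, and sending $\RR\eta_{ib}^{*}(M)$ to the image of $M$ under the restriction $\IbMod(\P)\to \RMod(\P)$. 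The display above therefore identifies with $\sH^k_{Q,r}(\P,M)$.

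The main obstacle will be this last identification: one must verify carefully that freeness of $\P$ as a right $\P$-module really does upgrade the free--forgetful adjunction to a Quillen equivalence on the relevant tangent categories, and track how $\widetilde{\rL}_{\I_C}$ transports to $\rL_\P^r$ along this equivalence, including the correct shift. Once these identifications are secured, assembling the cyclic six-term pattern $\sH^{n-1}_Q \to \sH^n_{Q,r} \to \sH^n_{Q,\red} \to \sH^n_Q \to \sH^{n+1}_{Q,r}$ from the long exact sequence is formal: a cofiber sequence $A\to B\to C$ produces the chain $[A,M[n]] \to [C,M[n+1]] \to [B,M[n+1]] \to [A,M[n+1]] \to [C,M[n+2]]$ on $\pi_0$, and substituting our identifications (recalling the shift $\rL_\P = \widetilde{\rL}_\P[-1]$) recovers the desired sequence.
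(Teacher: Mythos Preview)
Your overall strategy and the identifications of the first and third terms are correct and match the paper's proof. The gap is in your treatment of the middle term.

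The claimed Quillen equivalence $\T_{\I_C}\Coll_C(\cS)\lrarsimeq \T_\P\RMod(\P)$ is not correct. Since $\I_C(\overline{c})=\emptyset_\cS$ for $\overline{c}$ not of the form $(c;c)$, any $C$-collection over $\I_C$ must vanish on those levels; consequently $\T_{\I_C}\Coll_C(\cS)$ only records information in the diagonal levels $(c;c)$, while $\T_\P\RMod(\P)$ sees all arities. The analogy with Proposition~\ref{refineoperad&infinitesimalbimod} is misleading: there one compares tangent categories \emph{at the same base object} $\P$ across different ambient categories, invoking the Comparison theorem; here you would be comparing tangent categories at $\I_C$ and at $\P$, and no such equivalence is available.

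The repair, which is exactly the paper's route, is simpler and avoids any equivalence claim. Do not adjunct over $\eta_!^{ib}$ all the way to $\T_{\I_C}\Coll_C(\cS)$. Instead, factor $\eta_!^{ib}$ as the composite of left Quillen functors
\[
\T_{\I_C}\Coll_C(\cS)\;\lrar\;\T_\P\RMod(\P)\;\lrar\;\T_\P\IbMod(\P),
\]
using that the free infinitesimal $\P$-bimodule functor factors through right $\P$-modules (Remark~\ref{r:freeinfbimod}). The first leg sends $\widetilde{\rL}_{\I_C}$ to $\rL^r_\P$ simply because left Quillen functors commute with $\Sigma^\infty_+$ (the square~\eqref{eq:Qadj}) and the free right-module functor $(-)\circ\P$ sends $\I_C$ to $\P$; no equivalence is needed. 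Hence $\LL\eta_!^{ib}(\widetilde{\rL}_{\I_C})$ is the derived image of $\rL^r_\P$ under the second leg, and adjuncting only over that leg gives
\[
\Map^{\h}_{\T_\P\IbMod(\P)}\bigl(\LL\eta_!^{ib}(\widetilde{\rL}_{\I_C}),M[k]\bigr)\;\simeq\;\Map^{\h}_{\T_\P\RMod(\P)}\bigl(\rL^r_\P,M'[k]\bigr)=\sH^{k}_{Q,r}(\P,M),
\]
with $M'$ the restriction of $M$. With this correction the remainder of your argument goes through unchanged.
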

\begin{proof} In the cofiber sequence of Lemma \ref{cofibersequence1}, by notation $\RR \U^{ib}_{\P,C}(\rL_\P^{\red})$ classifies the reduced Quillen cohomology of $\P$, while $\widetilde{\rL}_{\P}$ classifies the Quillen cohomology of $\P$ by Corollary \ref{c:quillencohom}. This cofiber sequence will hence give rise to the desired long exact sequence after having that $\LL\eta_!^{ib}\,(\widetilde{\rL}_{\I_C})$ classifies the Quillen cohomology of $\P$ when regarded as a right module over itself. To see this, we first consider the Quillen adjunction $\adjunction*{}{\T_\P\RMod(\P)}{\T_\P\IbMod(\P)}{}$ which is induced by the free-forgetful adjunction $\adjunction*{}{\RMod(\P)}{\IbMod(\P)}{}$. We denote by $\rL^{r}_\P \in \T_\P\RMod(\P)$ the cotangent complex of $\P$ when regarded as a right module over itself. It will suffice to prove that the derived image of $\rL^{r}_\P$ in $\T_\P\IbMod(\P)$ is weakly equivalent to $\LL\eta_!^{ib}\,(\widetilde{\rL}_{\I_C})$. For this last claim, observe first that $\eta_!^{ib}$ is the same as the functor $\T_{\I_C}\Coll_C(\cS) \lrar \T_\P\IbMod(\P)$ induced by the free functor $\Coll_C(\cS) \lrar \IbMod(\P)$. Moreover, under the identification $\mathcal{T}_{\mathcal{I}_C}\IbMod(\mathcal{I}_C) \simeq \T_{\I_C}\Coll_C(\cS)$, the object $\widetilde{\rL}_{\I_C}$ is nothing but the cotangent complex of $\mathcal{I}_C$ when regarded as an object of $\Coll_C(\cS)$, whose derived image through the left Quillen functor $\T_{\I_C}\Coll_C(\cS) \lrar \T_\P\RMod(\P)$ is exactly $\rL^{r}_\P$. The proof is therefore completed. 
\end{proof}

\begin{rem} The {gap} between reduced Quillen cohomology and Quillen cohomology of $\P$ is {really accessible} because the abelian groups $\sH^{\bullet}_{Q,r} (\mathcal{P};-)$ are computable. Indeed, when considering $\mathcal{P}$ as a right module over itself, it is free generated by $\I_C$. So we have a canonical isomorphism 
	$$ \sH^{n}_{Q,r} (\mathcal{P};M) \cong \sH^{n}_{Q,col} (\I_C;\ovl{M}) $$
	where the right hand side is the $n$'th Quillen cohomology group of $\I_C$ regarded as a $C$-collection, with coefficients in the derived image of $M$ in $\T_{\I_C}\Coll_C(\cS)$. When $\cS$ is further {stably abundant}, the cotangent complex of $\I_C$ is identified {with}  $\I_C$ itself under the right Quillen equivalence $ \T_{\I_C}\Coll_C(\cS) \lrarsimeq \Coll_C(\cS) $ (cf. Theorem \ref{t:mainoptangent'}). Therefore, in this situation we have that
	\begin{align*}
		 \sH^{n}_{Q,col} (\I_C;\ovl{M}) &\cong  \pi_0\Map^{\h}_{\Coll_C(\cS)}(\I_C,\RR\ker\Omega^{\infty}(\Sigma^{n}\ovl{M})) \\ &\cong \bigsqcap_{c\in C} \pi_0\Map^{\h}_{\cS}(1_\cS,\Sigma^{n}(\ovl{M}(c;c)\times^{\h}_{1_\cS}0))  .
	\end{align*}
\end{rem}

\smallskip

\section{Quillen cohomology of simplicial operads}\label{s:simplicialoperad}

\smallskip

The category $\Set_\Delta$ of simplicial sets, together with the standard (Kan-Quillen) model structure, {is abundant in the sense} of Conventions \ref{convention}, and moreover, is good in the sense of Definition \ref{S9} (cf. Example \ref{ex:basecategories} and Proposition \ref{p:splcsetS9}). So we may inherit the results of $\S$\ref{s:cotangentcplx} for $\Sigma$-cofibrant simplicial operads (see also Remark \ref{r:remainvalid}).

In the first subsection, we revisit the \textbf{straightening} and \textbf{unstraightening} constructions, according to [\cite{Luriehtt}, \S 2.2.1]. Given a simplicial (co)presheaf  $\F$ over a simplicial category $\C$, the unstraightening of $\F$ is in particular a simplicial set over $\sN\C$. We shall first describe the simplices of the unstraightening of $\F$, and then describe its \textbf{spaces of {right} morphisms}.

According to the work of Y. Harpaz, J. Nuiten and M. Prasma (\cite{YonatanCotangent}), the cotangent complex of an $\infty$-category (or a fibrant simplicial category) can be represented as a spectrum valued functor on its \textbf{twisted arrow $\infty$-category} (see Theorem \ref{t:Qcohominftycat}). In the second subsection, we introduce the construction of twisted arrow $\infty$-categories of simplicial operads. For a fibrant simplicial operad $\P$, the twisted arrow $\infty$-category $\Tw(\P)$ is given by the unstraightening of the simplicial copresheaf $\P : \textbf{Ib}^{\mathcal{P}} \lrar \Set_\Delta$, which encodes the data of $\P$ as an infinitesimal bimodule over itself (see $\S$\ref{s:operadicmodules}).

In the last subsection, for the main purpose we will show that the cotangent complex of a simplicial operad can be represented as a spectrum valued functor on its twisted arrow $\infty$-category. The cotangent complex of the little discs operads will be also particularly regarded. 

\subsection{Unstraightening of simplicial (co)presheaves}

\smallskip

We denote by $\mathfrak{C}[-] : \Set_\Delta \lrar \Cat(\Set_\Delta)$ the \textbf{rigidification functor}, which is left adjoint to the simplicial nerve functor $\sN$. Recall by definition that $\mathfrak{C}[\Delta^{n}]$ is the simplicial category whose set of objects is $[n] = \{0,\cdots,n\}$ and whose mapping spaces are given by $\Map_{\mathfrak{C}[\Delta^{n}]}(i,j) := \sN \rP_{i,j}$ the nerve of the poset $$\rP_{i,j} = \{A \, | \, \{i,j\} \subseteq A \subseteq [i,j] \} ,$$
with $[i,j] := \{i,i+1,\cdots,j\}$. (In particular, $\Map_{\mathfrak{C}[\Delta^{n}]}(i,j) = \emptyset$ when $i>j$). The composition in $\mathfrak{C}[\Delta^{n}]$ is induced by the maps  $\rP_{i,j} \times \rP_{j,k} \lrar \rP_{i,k}$ taking $(A,B)$ to $A \cup B$.

Depending if one wants to  $``$unstraighten'' simplicial presheaves or copresheaves, one would need to use the \textbf{contravariant} or \textbf{covariant unstraightening} functor, respectively.  Suppose we are given a simplicial set $S$ and a simplicial category $\C$. Let $\phi : \mathfrak{C}[S] \lrar \C$ be a simplicial functor. For each simplicial set $X$ over $S$, one takes two simplicial categories
$$\M_X^{\triangleleft} := \mathfrak{C}[X^{\triangleleft}]  \underset{\mathfrak{C}[X]}{\bigsqcup} \C \; \; \; \text{and} \; \; \; \M_X^{\triangleright} := \mathfrak{C}[X^{\triangleright}]  \underset{\mathfrak{C}[X]}{\bigsqcup} \C$$
where $X^{\triangleleft}$ and $X^{\triangleright}$ are respectively the left and right cones of $X$. We will always use the letter $``v$''  for the cone point, for both left and right cones.

The \textbf{covariant straightening functor} associated to $\phi$
$$ \St_{\phi}^{\triangleleft} : (\Set_\Delta)_{/S} \lrar \Fun(\C , \Set_\Delta)$$
is defined by sending each simplicial set $X$ over $S$ to the functor $\Map_{\M_X^{\triangleleft}}(v,-) : \C \lrar \Set_\Delta$. The functor $\St_{\phi}^{\triangleleft}$ admits a right adjoint denoted by $\Un_{\phi}^{\triangleleft}$ and called the \textbf{covariant unstraightening functor}. In fact, the adjunction $\St_{\phi}^{\triangleleft} \dashv \Un_{\phi}^{\triangleleft}$ forms a Quillen adjunction when one endows the category $\Fun(\C , \Set_\Delta)$ with the projective model structure and $(\Set_\Delta)_{/S}$ with the \textbf{covariant model structure}. Moreover, this adjunction is a Quillen equivalence as long as $\phi$ is a weak equivalence of simplicial categories. (See \cite{Luriehtt}, Chapter 3 for more details).

Dually, the \textbf{contravariant straightening functor} associated to $\phi$
$$ \St_{\phi}^{\triangleright} : (\Set_\Delta)_{/S} \lrar \Fun(\C^{\op} , \Set_\Delta) $$
is defined by sending each simplicial set $X$ over $S$ to the functor $\Map_{\M_X^{\triangleright}}(-,v) : \C^{\op} \lrar \Set_\Delta$. We denote by $\Un_{\phi}^{\triangleright}$ the right adjoint of $\St_{\phi}^{\triangleright}$ and refer to it as the \textbf{contravariant unstraightening functor}. The adjunction $\St_{\phi}^{\triangleright} \dashv \Un_{\phi}^{\triangleright}$ forms a Quillen adjunction when one endows the category $\Fun(\C^{\op} , \Set_\Delta)$ with the projective model structure and $(\Set_\Delta)_{/S}$ with the \textbf{contravariant model structure}, and moreover, {$\St_{\phi}^{\triangleright} \dashv \Un_{\phi}^{\triangleright}$} becomes a Quillen equivalence if $\phi$ is a weak equivalence.

\begin{notns}\label{no:unstoverS}
	
		(i) We are mainly interested in the case where $S = \sN \C$ and $\phi$ is given by the counit map $\varepsilon_\C : \mathfrak{C}[\sN \C] \lrar \C$. The corresponding covariant (resp. contravariant) straightening-unstraightening adjunction will be then denoted by $\St_{\C}^{\triangleleft} \dashv \Un_{\C}^{\triangleleft}$ (resp. $\St_{\C}^{\triangleright} \dashv \Un_{\C}^{\triangleright}$).  
		
		(ii) When $\C = \mathfrak{C}[S]$ and $\phi$ is the identity functor, the corresponding covariant (resp. contravariant) straightening-unstraightening adjunction will be denoted by $\St_{S}^{\triangleleft} \dashv \Un_{S}^{\triangleleft}$ (resp. $\St_{S}^{\triangleright} \dashv \Un_{S}^{\triangleright}$).
	
\end{notns}

\begin{rem}\label{r:Un_C} Note that $\Un_{\C}^{\triangleleft}$ is the same as the composed functor $$\Fun(\C , \Set_\Delta) \x{\varepsilon_\C^{*}}{\xrightarrow{\hspace*{0.7cm}}} \Fun(\mathfrak{C}[\sN \C] , \Set_\Delta) \x{\Un_{\sN\C}^{\triangleleft}}{\xrightarrow{\hspace*{0.7cm}}} (\Set_\Delta)_{/\sN\C} ,$$ (this follows from \cite{Luriehtt}, 2.2.1.1(2)). The same statement holds for the contravariant case.
\end{rem}

Let $\F : \C \lrar \Set_\Delta$ be a simplicial functor. We wish to understand the structure of $\Un_{\C}^{\triangleleft}(\F)$ as a simplicial set over $\sN\C$. For this, we will follow \cite{Rezk1}, yet in the opposite convention. For each $n\in\NN$, one establishes a simplicial functor 
$$ \mathfrak{D}^{\triangleleft}_{\Delta^{n}} : \mathfrak{C}[\Delta^{n}] \lrar \Set_\Delta $$
given by sending each $i\in [n]$ to $\mathfrak{D}^{\triangleleft}_{\Delta^{n}}(i):=\sN \rP^{\triangleleft}(i)$ the nerve of the poset $$\rP^{\triangleleft}(i) := \{A \, | \, \{i\}\subseteq A \subseteq [0,i] \, \}.$$ The structure maps of simplicial functor are defined by applying the union operation of subsets in an obvious way. Moreover, for each map $\delta : \Delta^{m}\lrar\Delta^{n}$, one defines a natural transformation
$ \mathfrak{D}^{\triangleleft}_{\delta} : \mathfrak{D}^{\triangleleft}_{\Delta^{m}} \lrar \mathfrak{D}^{\triangleleft}_{\Delta^{n}} \circ \mathfrak{C}[\delta] $
of the simplicial functors $\mathfrak{C}[\Delta^{m}] \lrar \Set_\Delta$ given at each $i\in [m]$ by the map $\mathfrak{D}^{\triangleleft}_{\delta}(i) : \mathfrak{D}^{\triangleleft}_{\Delta^{m}}(i)\lrar\mathfrak{D}^{\triangleleft}_{\Delta^{n}}(\delta i)$ which is induced by the map of posets $$ \rP^{\triangleleft}(i) \lrar \rP^{\triangleleft}(\delta i) \; , \; S \mapsto \delta(S) .$$

\begin{cons}\label{con:Un_CF} The data of an $n$-simplex of $\Un_{\C}^{\triangleleft}(\F)$ consist of

 (i) an $n$-simplex  $\varphi\in\sN(\C)$, i.e., a functor $\varphi : \mathfrak{C}[\Delta^{n}]\lrar \C$, and

 (ii) a natural transformation $t:\mathfrak{D}^{\triangleleft}_{\Delta^{n}}\lrar \F\circ \varphi$ between simplicial functors $\mathfrak{C}[\Delta^{n}] \lrar \Set_\Delta$.

	\noindent For each map $\delta : \Delta^{m}\lrar\Delta^{n}$, the corresponding structure map $\Un_{\C}^{\triangleleft}(\F)_n\lrar\Un_{\C}^{\triangleleft}(\F)_m$ is given by sending each pair $(\varphi,t)\in \Un_{\C}^{\triangleleft}(\F)_n$  to the pair
	$$ \left(\mathfrak{C}[\Delta^{m}] \x{\mathfrak{C}[\delta]}{\xrightarrow{\hspace*{0.7cm}}} \mathfrak{C}[\Delta^{n}] \x{\varphi}{\xrightarrow{\hspace*{0.7cm}}} \C \, \; , \, \; \mathfrak{D}^{\triangleleft}_{\Delta^{m}} \x{\mathfrak{D}^{\triangleleft}_{\delta}}{\xrightarrow{\hspace*{0.7cm}}} \mathfrak{D}^{\triangleleft}_{\Delta^{n}} \circ \mathfrak{C}[\delta] \x{t\circ \Id}{\xrightarrow{\hspace*{0.7cm}}} \F\circ \varphi \circ \mathfrak{C}[\delta] \right).$$
\begin{proof} Observe that for each map $\varphi_z: \Delta^{n} \lrar \sN\C$ there is a commutative square of right adjoint functors 
	$$ \xymatrix{
		\Fun(\C , \Set_\Delta) \ar^{ \; \; \; \; \; \; \Un_{\C}^{\triangleleft}}[r]\ar_{\varphi_z^{*}}[d] & (\Set_\Delta)_{/\sN\C} \ar^{\varphi_z^{*}}[d] \\
		\Fun(\mathfrak{C}[\Delta^{n}],\Set_\Delta) \ar_{ \; \; \; \; \; \; \; \; \; \Un_{\Delta^{n}}^{\triangleleft}}[r] & (\Set_\Delta)_{/\Delta^{n}} \\
	} $$
	(this follows by combining the two parts of [\cite{Luriehtt}, Proposition 2.2.1.1], along with Remark \ref{r:Un_C}). This implies the existence of a Cartesian square of the form 
	$$ \xymatrix{
		\Un^{\triangleleft}_{\Delta^{n}}(\F \circ \varphi_z) \ar[r]\ar[d] & \Delta^{n} \ar^{\varphi_z}[d] \\
		\Un_{\C}^{\triangleleft}(\F) \ar[r] & \sN\C. \\
	} $$
	So the data of an $n$-simplex $z \in \Un_{\C}^{\triangleleft}(\F)_n$ is equivalent to that of a map $\varphi_z : \Delta^{n} \lrar \sN\C$ and a section $\Delta^{n} \lrar \Un^{\triangleleft}_{\Delta^{n}}(\F \circ \varphi_z)$. By adjunction, the latter is identified {with} a natural transformation $\St^{\triangleleft}_{\Delta^{n}}(\Id_{\Delta^{n}}) \lrar \F \circ \varphi_z$. But by definition  $\St^{\triangleleft}_{\Delta^{n}}(\Id_{\Delta^{n}}) = \Map_{\mathfrak{C}[(\Delta^{n})^{\triangleleft}]}(v,-)$. Hence, it remains to establish for each $n$ an isomorphism  $$ \mathfrak{D}^{\triangleleft}_{\Delta^{n}}  \lrarsimeq \Map_{\mathfrak{C}[(\Delta^{n})^{\triangleleft}]}(v,-)$$ 
	between simplicial functors $\mathfrak{C}[\Delta^{n}] \lrar \Set_\Delta$ compatible with simplicial maps $\Delta^{m} \lrar \Delta^{n}$. For this, note that under the identification $(\Delta^{n})^{\triangleleft} \cong \Delta^{n+1}$, the functor $\Map_{\mathfrak{C}[(\Delta^{n})^{\triangleleft}]}(v,-)$ is isomorphic to the functor $[n] \ni i \mapsto \Map_{\mathfrak{C}[\Delta^{n+1}]}(0,i+1) = \sN \rP_{0,i+1}$. For each $i \in [n]$, we define an isomorphism of posets $$\rP^{\triangleleft}(i) \lrarsimeq \rP_{0,i+1} \; \; , \; \; A \mapsto \{0\} \sqcup (A+1)$$ with $A+1 := \{a+1 \, | \, a \in A \}$. This indeed gives us the desired isomorphism $\mathfrak{D}^{\triangleleft}_{\Delta^{n}}  \lrarsimeq \Map_{\mathfrak{C}[(\Delta^{n})^{\triangleleft}]}(v,-)$.
\end{proof}		
\end{cons}

\begin{rem}\label{r:vertexedge}  Unwinding definition, each vertex of $\Un_{\C}^{\triangleleft}(\F)$ is the choice of an object $x \in \Ob(\C)$ and a vertex $\mu \in \F(x)$. Let $x,y$ be two objects of $\C$. The data of an edge of $\Un_{\C}^{\triangleleft}(\F)$ from $\mu \in \F(x)$ to $\nu \in \F(y)$ consist of a vertex $\alpha \in \Map_\C(x,y)$ and an edge of $\F(y)$ of the form $t : \nu \lrar \alpha^{*}(\mu)$ where $\alpha^{*} : \F(x) \lrar \F(y)$ is the map induced by the simplicial functor structure of $\F : \C \lrar \Set_\Delta$.
\end{rem}

Let $\F' : \C^{\op} \lrar \Set_\Delta$ be a simplicial functor. Similarly as in the covariant case, one can get an explicit description of $\Un_{\C}^{\triangleright}(\F')$ the contravariant unstraightening of $\F'$. For each $n\in\NN$, one defines a simplicial functor 
$$ \mathfrak{D}^{\triangleright}_{\Delta^{n}} : \mathfrak{C}[\Delta^{n}]^{\op} \lrar \Set_\Delta $$
taking each $i\in [n]$ to $\mathfrak{D}^{\triangleright}_{\Delta^{n}}(i):=\sN \rP_n^{\triangleright}(i)$ the nerve of the poset $$\rP_n^{\triangleright}(i) := \{A \, | \, \{i\}\subseteq A \subseteq [i,n] \, \}.$$ For each map $\delta : \Delta^{m}\lrar\Delta^{n}$, there is also a natural transformation $\mathfrak{D}^{\triangleright}_{\delta} : \mathfrak{D}^{\triangleright}_{\Delta^{m}} \lrar \mathfrak{D}^{\triangleright}_{\Delta^{n}} \circ \mathfrak{C}[\delta]^{\op}$ of the simplicial functors $\mathfrak{C}[\Delta^{m}]^{\op} \lrar \Set_\Delta$. The structure maps are defined naturally as well as in the covariant case (see Construction \ref{con:Un_CF}).

\begin{cons}\label{con:Un_CF'}  The data of an $n$-simplex of $\Un_{\C}^{\triangleright}(\F')$ consist of

 (i) an $n$-simplex  $\varphi\in\sN(\C)$, i.e., a functor $\varphi : \mathfrak{C}[\Delta^{n}] \lrar \C$, and

 (ii) a natural transformation $t:\mathfrak{D}^{\triangleright}_{\Delta^{n}}\lrar \F'\circ \varphi^{\op}$ between simplicial functors $\mathfrak{C}[\Delta^{n}]^{\op} \lrar \Set_\Delta$.

	\noindent The simplicial structure maps of $\Un_{\C}^{\triangleright}(\F')$ are induced by the natural transformations $\mathfrak{D}^{\triangleright}_{\delta}$ mentioned above. (See also \cite{Rezk1}).
\end{cons}

When $\C$ is the terminal category $* \in \Cat(\Set_\Delta)$, we obtain the self-adjunctions on $\Set_\Delta$:
$$ \St_{*}^{\triangleleft} : \adjunction*{}{\Set_\Delta}{\Set_\Delta}{} : \Un_{*}^{\triangleleft} \; \; \; \text{and} \; \; \; \St_{*}^{\triangleright} : \adjunction*{}{\Set_\Delta}{\Set_\Delta}{} : \Un_{*}^{\triangleright} .$$
Note that $\St_{*}^{\triangleright}\dashv\Un_{*}^{\triangleright}$ agrees with the adjunction $|-|_{Q^{\bullet}} \dashv \Sing_{Q^{\bullet}}$ of [\cite{Luriehtt}, \S 2.2.2].

\begin{rem}\label{r:XKan} For a simplicial set $X$, there is a natural map $X \lrar \Un_{*}^{\triangleright}(X)$, which is in fact a weak equivalence as long as $X$ is a Kan complex (see [\cite{Luriehtt}, 2.2.2.7, 2.2.2.8]). On other hand, it can be shown that there exists a natural isomorphism $\Un_{*}^{\triangleleft}(X) \cong \Un_{*}^{\triangleright}(X)^{\op}$. So we get a natural map $X^{\op} \lrar \Un_{*}^{\triangleleft}(X)$, which is a weak equivalence when $X$ is Kan. In other words, while the (derived) contravariant unstraightening $\Un_{*}^{\triangleright}$ is weakly equivalent to the identity functor, the covariant one $\Un_{*}^{\triangleleft}$ is weakly equivalent to the opposite functor $X \mapsto X^{\op}$.
\end{rem}

We will need the following in the last subsection.

\begin{rem}\label{r:fiberkan} As in [\cite{Luriehtt}, Remark 2.2.2.11], for a given object $x \in \C$, there is a canonical isomorphism $$\Un_{\C}^{\triangleleft}(\F) \times_{\sN\C} \{x\} \cong \Un_{*}^{\triangleleft}(\F(x)) .$$ This follows by the compatibility of the unstraightening functor with taking base change along the map $\{x\} \lrar \sN\C$. Combining with Remark \ref{r:XKan}, we get a weak equivalence
	\begin{equation}\label{eq:r:XKan}
		\F(x)^{\op} \lrarsimeq \Un_{\C}^{\triangleleft}(\F) \times_{\sN\C} \{x\}
	\end{equation}
	whenever $\F(x)$ is a Kan complex.
\end{rem}

Let $x,y\in \Ob(\C)$ be two objects of $\C$. Suppose we are given two vertices $\mu \in \F(x)$ and $\nu \in \F(y)$ regarded as vertices of $\Un_{\C}^{\triangleleft}(\F)$ (see Remark \ref{r:vertexedge}). We wish to give a convenient model for $\Hom^{\rR}_{\Un_{\C}^{\triangleleft}(\F)}(\mu,\nu)$ the space of right morphisms from $\mu$ to $\nu$ in $\Un_{\C}^{\triangleleft}(\F)$. Recall by definition that an $n$-simplex of $\Hom^{\rR}_{\Un_{\C}^{\triangleleft}(\F)}(\mu,\nu)$ is an $(n+1)$-simplex $T : \Delta^{n+1} \lrar  \Un_{\C}^{\triangleleft}(\F)$ of $\Un_{\C}^{\triangleleft}(\F)$ such that $d_{n+1}T$ is degenerate on $\mu$ and $\restr{T}{\Delta^{\{n+1\}}} = \nu$. According to Construction \ref{con:Un_CF}, the data of $T$ consist of:

$\bullet$ An $(n+1)$-simplex $H : \Delta^{n+1} \lrar  \sN\C$ satisfying that $d_{n+1}H$ is degenerate on $x$ and $\restr{H}{\Delta^{\{n+1\}}} = y$. In other words, $H$ is nothing but an $n$-simplex of $\Hom^{\rR}_{\sN\C}(x, y)$.

$\bullet$ In addition, a natural transformation $t : \mathfrak{D}^{\triangleleft}_{\Delta^{n+1}} \lrar \F \circ H$ between simplicial functors $\mathfrak{C}[\Delta^{n+1}] \lrar \Set_\Delta$ satisfying that for every $i\in \{0,\cdots,n\}$ the map $$t(i) : \mathfrak{D}^{\triangleleft}_{\Delta^{n+1}}(i) \lrar (\F \circ H)(i) = \F(x)$$ collapses its domain to the vertex $\mu$ and that the initial vertex of $t(n + 1)$ agrees with $\nu$. (Note that $t(n + 1)$ performs an $(n + 1)$-cube in $\F(y)$).

Let us now denote by $\rho_\mu$ the composed map $$\Map_\C(x,y) \x{\F}{\lrar} \Map_{\Set_\Delta}(\F(x),\F(y)) \x{ev_\mu}{\lrar} \F(y)$$ with $ev_\mu$ being the evaluation at $\mu$. By [\cite{Luriehtt}, 2.2.2.13], there is an isomorphism $\Hom^{\rR}_{\sN\C}(x, y) \cong \Un_{*}^{\triangleright}\Map_\C(x,y)$. So we can form a canonical map 
\begin{equation}\label{eq:mapH}
	\Hom^{\rR}_{\sN\C}(x, y) \cong \Un_{*}^{\triangleright}\Map_\C(x,y) \x{\Un_{*}^{\triangleright}(\rho_\mu)}{\xrightarrow{\hspace*{1cm}}}  \Un_{*}^{\triangleright}\F(y) \; .
\end{equation}

\begin{lem}\label{l:mappingUnF} There is a canonical isomorphism 
	$$ \Hom^{\rR}_{\Un_{\C}^{\triangleleft}(\F)}(\mu,\nu) \cong (\Un_{*}^{\triangleright}\F(y))_{\nu/} \times_{\Un_{*}^{\triangleright}\F(y)} \Hom^{\rR}_{\sN\C}(x, y) =: \mathbb{P}_{\mu,\nu} .$$
\end{lem}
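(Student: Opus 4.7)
The plan is to unfold both sides explicitly on $n$-simplices and match them. By Construction \ref{con:Un_CF}, an $n$-simplex of $\Hom^{\rR}_{\Un_{\C}^{\triangleleft}(\F)}(\mu,\nu)$ is an $(n+1)$-simplex $T$ of $\Un_{\C}^{\triangleleft}(\F)$ whose last face collapses to $\mu$ and whose final vertex is $\nu$. As described just before the lemma, such $T$ is a pair $(H,t)$ where $H:\mathfrak{C}[\Delta^{n+1}]\to\C$ sends $0,\dots,n$ to $x$ and $n+1$ to $y$, and $t:\mathfrak{D}^{\triangleleft}_{\Delta^{n+1}}\to\F\circ H$ is a natural transformation. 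The boundary conditions force $t(0),\dots,t(n)$ to be constant at $\mu\in\F(x)$, so the only free component is the $(n+1)$-cube $t(n+1):\sN\rP^{\triangleleft}(n+1)\to\F(y)$ with initial vertex $\nu$. Moreover, $H$ with these boundary conditions is precisely an $n$-simplex of $\Hom^{\rR}_{\sN\C}(x,y)\cong\Un^{\triangleright}_{*}\Map_\C(x,y)$, i.e., a quasi $n$-simplex $H^{0}$ of $\Map_\C(x,y)$.

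First I would isolate the constraints the naturality of $t$ places on $t(n+1)$. For every morphism $\alpha\in\Map_{\mathfrak{C}[\Delta^{n+1}]}(i,n+1)$ with $i\leq n$, naturality forces $t(n+1)\circ\mathfrak{D}^{\triangleleft}_{\Delta^{n+1}}(\alpha)=\F(H(\alpha))(\mu)$. Organized across all such $\alpha$, these conditions say that, for each $i$, the sub-cube of $t(n+1)$ on $\{A\in\rP^{\triangleleft}(n+1)\mid A\supseteq [0,i]\}$ is determined by $H^{i}_{*}(\mu)$, and that the adjacent sub-cube $\{A\ni i\}$ is degenerate along the direction $[0,i]\to A$. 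Using the poset isomorphism $\{A\in\rP^{\triangleleft}(n+1)\mid A\supseteq [0,i]\}\cong\rP^{\triangleleft}(n+1-i)$ together with the index-reversing identification $\rP^{\triangleright}_{n+1}(i)\cong\rP^{\triangleleft}(n+1-i)$ used in the proof of Lemma \ref{l:quasisimplex}(2), I would translate $t(n+1)$ into a map $\widetilde{t}:\sN\rP^{\triangleright}_{n+1}(0)\to\F(y)$; the naturality conditions then transform exactly into the end-face degeneracy conditions that make $\widetilde{t}$ a quasi $(n+1)$-simplex of $\F(y)$ with initial vertex $\nu$.

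By Lemma \ref{l:quasisimplex}(1) applied to $\F(y)$, the quasi simplex $\widetilde{t}$ is an $(n+1)$-simplex of $\Un^{\triangleright}_{*}\F(y)$. Reading off its faces under the above identification, $\widetilde{t}$ has initial vertex $\nu$ and its restriction to the remaining $n+1$ vertices is the quasi $n$-simplex corresponding to the sub-cube of $t(n+1)$ on $\{A\ni n+1\}$, which by the previous paragraph is precisely $H^{0}_{*}(\mu)$, i.e., the image of $H$ under the map \eqref{eq:mapH}. Hence $\widetilde{t}$ is exactly an $n$-simplex of the right mapping space $\Hom^{\rR}_{\Un^{\triangleright}_{*}\F(y)}(\nu,H^{0}_{*}(\mu))$, or equivalently an $n$-simplex of $(\Un^{\triangleright}_{*}\F(y))_{\nu/}$ lying over $H^{0}_{*}(\mu)\in\Un^{\triangleright}_{*}\F(y)$. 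The assignment $T\mapsto (H^{0},\widetilde{t})$ therefore takes values in $\mathbb{P}_{\mu,\nu}$, and its inverse reconstructs $(H,t)$ from a pair in $\mathbb{P}_{\mu,\nu}$ via the dual combinatorial recipe; compatibility with simplicial structure maps is immediate from the functoriality of $\mathfrak{D}^{\triangleleft}_{\bullet}$, $\mathfrak{D}^{\triangleright}_{\bullet}$ and of $\Un^{\triangleright}_{*}$.

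The main obstacle, as in Lemma \ref{l:quasisimplex}, is the combinatorial bookkeeping: one must verify that the naturality squares of $t$ transform under the index-reversing poset isomorphisms into exactly the end-face degeneracy conditions defining a quasi $(n+1)$-simplex, and one must check that the bottom face of the resulting quasi simplex is indeed $H^{0}_{*}(\mu)$ rather than some other boundary cube. These verifications are elephantine but routine, and closely parallel the proofs of Lemma \ref{l:quasisimplex} and of Proposition \ref{rewriteinfinitesimal1}.
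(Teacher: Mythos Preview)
Your overall strategy is right and matches the paper's: unfold both sides on $n$-simplices, reduce the data of $T=(H,t)$ to the single $(n+1)$-cube $t(n+1)$, and check that the naturality constraints on $t(n+1)$ coincide with the quasi-simplex-plus-matching conditions defining an $n$-simplex of $\mathbb{P}_{\mu,\nu}$. The problem is the translation step through the index-reversing isomorphism $\rP^{\triangleright}_{n+1}(0)\cong\rP^{\triangleleft}(n+1)$: it does not do what you claim. Condition $(\ast\ast)$ says that the end face $\{A\in\rP^{\triangleleft}(n+1)\mid A\ni i\}$ is degenerate on $\{A\supseteq[0,i]\}$; under $A\mapsto(n+1)-A$ this becomes, for $j=n+1-i$, the condition that $\{B\in\rP^{\triangleright}_{n+1}(0)\mid B\ni j\}$ is degenerate on $\{B\supseteq[j,n+1]\}$, which is \emph{not} the standard quasi-simplex condition for $\Un^{\triangleright}_*$ (that one requires degeneracy on $\{B\supseteq[0,j]\}$). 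This is precisely why the bijection in Lemma~\ref{l:quasisimplex}(2) only yields $\Un^{\triangleleft}_*(X)\cong\Un^{\triangleright}_*(X)^{\op}$ rather than an isomorphism of simplicial sets. Relatedly, your identification of the relevant boundary face with ``the sub-cube of $t(n+1)$ on $\{A\ni n+1\}$'' is vacuous in $\rP^{\triangleleft}(n+1)$, since every element there contains $n+1$.

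The paper's proof sidesteps this entirely: it keeps $t(n+1)$ on $\rP^{\triangleleft}(n+1)$ and observes that this poset is already an $(n+1)$-cube with coordinates indexed by $\{0,\dots,n\}$, under which the conditions ``$\{A\ni i\}$ degenerate on $\{A\supseteq[0,i]\}$'' for $i=0,\dots,n$ \emph{are} the quasi $(n+1)$-simplex conditions verbatim. The $i=0$ case is trivial as a degeneracy (since $\{A\supseteq[0,0]\}=\{A\ni 0\}$) but carries the additional content that this end face equals $H^0_*(\mu)$. Thus $t(n+1)$ is directly a quasi $(n+1)$-simplex of $\F(y)$ with initial vertex $\nu$ whose end face $\{A\ni 0\}$ is $H^0_*(\mu)$, which is exactly an $n$-simplex of $\mathbb{P}_{\mu,\nu}$. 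If you simply drop the index-reversing detour and read the degeneracy conditions in $\rP^{\triangleleft}(n+1)$ itself, your argument becomes the paper's.
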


\begin{proof} The proof is straightforward using the above analyses.
\end{proof}

Suppose that both $\C$ and $\F : \C \lrar \Set_\Delta$ are fibrant. Then the map $\Un_{\C}^{\triangleleft}(\F) \lrar \sN\C$ is a left fibration and $\sN\C$ is an $\infty$-category. So $\Un_{\C}^{\triangleleft}(\F)$ is an $\infty$-category as well. Our main interest in this subsection is as follows.
\begin{prop}\label{p:mappingunCF}  Let $x,y\in \Ob(\C)$ be two objects of $\C$. Given two vertices $\mu \in \F(x)$ and $\nu \in \F(y)$ regarded as vertices of $\Un_{\C}^{\triangleleft}(\F)$, there is a homotopy equivalence 
	$$ \{\nu\} \times_{\F(y)}^{\h} \Map_\C(x,y) \lrarsimeq \Map_{\Un_{\C}^{\triangleleft}(\F)}(\mu,\nu) .$$
	
\end{prop}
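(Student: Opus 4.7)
The plan is to package Lemma~\ref{l:mappingUnF} together with the explicit comparison map~\eqref{eq:modelmappingspace}, reducing the proposition to two standard homotopical facts: that $\Hom^{\rR}$ computes the mapping space in any $\infty$-category, and that the contravariant unstraightening is homotopy-inverse to the identity on Kan complexes (Remark~\ref{r:XKan}).

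First I would observe that $\Un_{\C}^{\triangleleft}(\F) \lrar \sN\C$ is a left fibration with base an $\infty$-category (as $\C$ is fibrant), so $\Un_{\C}^{\triangleleft}(\F)$ is itself an $\infty$-category. Consequently $\Map_{\Un_{\C}^{\triangleleft}(\F)}(\mu,\nu)$ is weakly equivalent to its right-morphism model $\Hom^{\rR}_{\Un_{\C}^{\triangleleft}(\F)}(\mu,\nu)$, which by Lemma~\ref{l:mappingUnF} is isomorphic to $\mathbb{P}_{\mu,\nu}$. It therefore suffices to exhibit $\mathbb{P}_{\mu,\nu}$ as a model for $\{\nu\}\times^{\h}_{\F(y)}\Map_\C(x,y)$, and I would take the natural map~\eqref{eq:modelmappingspace} as the candidate.

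Since $\F$ is levelwise fibrant and $\C$ is fibrant, $\F(y)$ and $\Map_\C(x,y)$ are Kan complexes; the projection $\F(y)_{\nu/}\lrar\F(y)$ is then a Kan fibration with contractible source, so the ordinary pullback $\F(y)_{\nu/}\times_{\F(y)}\Map_\C(x,y)$ is a legitimate model for the homotopy fibre $\{\nu\}\times^{\h}_{\F(y)}\Map_\C(x,y)$. By the analogous left-fibration argument applied to $(\Un_*^{\triangleright}\F(y))_{\nu/}\lrar\Un_*^{\triangleright}\F(y)$, the target $\mathbb{P}_{\mu,\nu}$ is likewise a homotopy pullback computing $\{\nu\}\times^{\h}_{\Un_*^{\triangleright}\F(y)}\Un_*^{\triangleright}\Map_\C(x,y)$. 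Remark~\ref{r:XKan} furnishes the natural weak equivalences $\F(y)\lrarsimeq\Un_*^{\triangleright}\F(y)$ and $\Map_\C(x,y)\lrarsimeq\Un_*^{\triangleright}\Map_\C(x,y)$, which are compatible with $\rho_\mu$ by naturality. The homotopy invariance of homotopy pullbacks then forces~\eqref{eq:modelmappingspace} to be a weak equivalence, producing the required homotopy equivalence.

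The main obstacle will be verifying that the map~\eqref{eq:modelmappingspace}, built through the slightly delicate inclusion $\Un_*^{\triangleright}(\F(y)_{\nu/}) \hookrightarrow (\Un_*^{\triangleright}\F(y))_{\nu/}$ described before the proposition, is genuinely compatible with the Lemma~\ref{l:mappingUnF} identification at the simplex level — that is, under Lemma~\ref{l:quasisimplex} an $n$-simplex of $\mathbb{P}_{\mu,\nu}$ coming from a point in the source corresponds to a natural transformation $t:\mathfrak{D}^{\triangleleft}_{\Delta^{n+1}}\lrar \F\circ H$ with the prescribed initial vertex $\nu$ and end face $H^{0}_*(\mu)$. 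This amounts to a direct if tedious unwinding of the definitions that appear in the analysis preceding Lemma~\ref{l:mappingUnF}, after which the homotopy-pullback comparison above closes the argument.
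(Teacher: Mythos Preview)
Your proposal is correct and follows essentially the same route as the paper: reduce via Lemma~\ref{l:mappingUnF} to showing the comparison map~\eqref{eq:modelmappingspace} is a weak equivalence, with $\F(y)_{\nu/}\times_{\F(y)}\Map_\C(x,y)$ as the model for the homotopy fibre. The one place the paper is cleaner is the second map in~\eqref{eq:modelmappingspace}: rather than invoking homotopy invariance of homotopy pullbacks or worrying about simplex-level compatibility, it simply observes that both $\Un_*^{\triangleright}(\F(y)_{\nu/})$ and $(\Un_*^{\triangleright}\F(y))_{\nu/}$ are contractible, so the induced map on pullbacks is automatically a weak equivalence --- your ``main obstacle'' paragraph is therefore unnecessary.
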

\begin{proof} We make use of the pullback $\F(y)_{\nu/} \times_{\F(y)} \Map_\C(x,y)$ as a model for the left hand side. Since $\F(y)$ and $\Map_\C(x,y)$ are Kan complexes, that pullback is Kan as well. By Remark \ref{r:XKan} there is a weak equivalence
	$$  \F(y)_{\nu/} \times_{\F(y)} \Map_\C(x,y)  \lrar  \Un_{*}^{\triangleright}(\F(y)_{\nu/}) \times_{\Un_{*}^{\triangleright}\F(y)} \Un_{*}^{\triangleright}\Map_\C(x,y)  .$$
Furthermore, there is a canonical map 
$$ \Un_{*}^{\triangleright}(\F(y)_{\nu/}) \times_{\Un_{*}^{\triangleright}\F(y)} \Un_{*}^{\triangleright}\Map_\C(x,y) \lrar \mathbb{P}_{\mu,\nu} $$	
induced by the canonical embedding $\Un_{*}^{\triangleright}(\F(y)_{\nu/}) \lrar (\Un_{*}^{\triangleright}\F(y))_{\nu/}$. The above map is also a weak equivalence, since $\Un_{*}^{\triangleright}(\F(y)_{\nu/})$ and $(\Un_{*}^{\triangleright}\F(y))_{\nu/}$ are both contractible. We deduce then by using Lemma \ref{l:mappingUnF}.
\end{proof}

\smallskip

\subsection{Twisted arrow $\infty$-categories of simplicial operads}\label{s:twoperad}

\smallskip

Let $\mathcal{E}$ be an ordinary category. The \textit{(covariant) twisted arrow category} $\Tw(\mathcal{E})$ is by definition the category whose objects are the morphisms of $\mathcal{E}$ and such that maps from $f:x\rar y$ to $f':x'\rar y'$ are given by commutative diagrams of the form 
\begin{equation}\label{eq:twisted}
	\begin{tikzcd}[row sep=2em, column sep =2em]
		x \arrow[d, "f"'] & x' \arrow[l] \arrow[d, "f'"] & \\
		y \arrow[r] & y'
	\end{tikzcd}
\end{equation}
This classical notion was generalized into the $\infty$-categorical framework due to Lurie [\cite{Lurieha}, \S 5.2.1]. For an $\infty$-category $\D$, the twisted arrow $\infty$-category $\Tw(\D)$ is the $\infty$-category whose $n$-simplices are the $(2n+1)$-simplices of {$\D$}.	In particular, objects of $\Tw(\D)$ are the morphisms of $\D$. A map from $f:x \rar y$ to $f':x'\rar y'$ can be also depicted as a diagram of the type \eqref{eq:twisted} commutative up to a chosen homotopy. Furthermore, the twisted arrow $\infty$-category of a fibrant simplicial category $\C$ is simply given by $\Tw(\C) := \Tw(\sN\C)$. As we will see below, $\Tw(\C)$ can be represented as the unstraightening of a certain simplicial copresheaf. Due to this fact, we propose the construction of \textit{twisted arrow $\infty$-categories of (fibrant) simplicial operads}. 

\medskip

We will need the following notations and conventions:

$\bullet$ Let $S$ be a simplicial set. We denote by $(\Set_\Delta^{\cov})_{/S}$ the \textbf{covariant model category of simplicial sets over} $S$ whose cofibrations are monomorphisms and whose fibrant objects are left fibrations over $S$. (See [\cite{Luriehtt}, \S 2.1.4] for more details).

$\bullet$ Let $\C$ be a simplicial category. Recall that the category of $\C$-bimodules is equivalent to the category $\Fun(\C^{\op}\times\C , \Set_\Delta)$. We endow the latter with the projective model structure.

$\bullet$ For the remainder, we consider only the covariant unstraightening construction. So we will simply write $\Un_\C(-)$  for the covariant unstraightening of simplicial functors $\C \lrar \Set_\Delta$. 

\smallskip

When $\C$ is fibrant, according to [\cite{Lurieha}, Proposition 5.2.1.11] the unstraightening functor
\begin{equation}\label{eq:uncat}
	\Un_{\C^{\op}\times\C} : \Fun(\C^{\op}\times\C , \Set_\Delta) \lrarsimeq (\Set_\Delta^{\cov})_{/\sN\C^{\op}\times\sN\C},
\end{equation}
which is a right Quillen equivalence, identifies the functor
$$ \Map_\C : \C^{\op}\times\C \lrar \Set_\Delta \, , \; (x,y) \mapsto \Map_\C(x,y) $$
to $\Tw(\C)$. Alternatively, the unstraightening of $\C$ (viewed as a bimodule over itself) is exactly a model for $\Tw(\C)$. 

Let us now fix $\P$ to be a fibrant simplicial $C$-colored operad. Recall from $\S$\ref{s:infbimod} that there is a canonical isomorphism $\IbMod(\P) \cong \Fun(\textbf{Ib}^{\mathcal{P}},\Set_\Delta)$ between the categories of infinitesimal $\P$-bimodules and simplicial functors $\textbf{Ib}^{\mathcal{P}} \lrar \Set_\Delta$. The unstraightening construction gives a right Quillen equivalence 
\begin{equation}\label{eq:unop}
	\Un_{\textbf{Ib}^{\mathcal{P}}}: \Fun(\textbf{Ib}^{\mathcal{P}},\Set_\Delta) \lrarsimeq (\Set_\Delta^{\cov})_{/\sN(\textbf{Ib}^{\mathcal{P}})}.
\end{equation}
Consider the functor 
$$\P : \textbf{Ib}^{\mathcal{P}} \lrar \Set_\Delta \, , \; (c_1,\cdots,c_m;c) \mapsto \P(c_1,\cdots,c_m;c),$$ which encodes the data of $\P$ as an infinitesimal bimodule over itself. By assumption, we get a left fibration $\Un_{\textbf{Ib}^{\mathcal{P}}}(\P) \lrar \sN(\textbf{Ib}^{\mathcal{P}})$. So $\Un_{\textbf{Ib}^{\mathcal{P}}}(\P)$ is in particular an $\infty$-category. 

\begin{dfn} The \textbf{twisted arrow $\infty$-category of} $\P$ is given by $$\Tw(\P) := \Un_{\textbf{Ib}^{\mathcal{P}}}(\P)$$ the unstraightening of the simplicial functor $\P : \textbf{Ib}^{\mathcal{P}} \lrar \Set_\Delta$ or alternatively, {the} unstraightening of $\P$ regarded as an infinitesimal bimodule over itself.
\end{dfn}

\begin{prop}\label{p:Twisgood} {The construction $\Tw(-)$  determines a functor from fibrant simplicial operads to $\infty$-categories. Moreover, this functor sends Dwyer-Kan equivalences to $\infty$-categorical equivalences.}
\end{prop} 
\begin{proof} Let $f:\P\rar\Q$ be a map between fibrant simplicial operads. By the compatibility of the unstraightening functor with taking base change along $f:\P\rar\Q$, we obtain a Cartesian square of simplicial sets
	$$ \xymatrix{
		\Un_{\textbf{Ib}^{\mathcal{P}}}(f^*\Q)  \ar[r]\ar[d] & \Un_{\textbf{Ib}^{\mathcal{Q}}}(\Q) \ar[d] \\
		\sN(\textbf{Ib}^{\mathcal{P}}) \ar[r] & \sN(\textbf{Ib}^{\mathcal{Q}}) \\
	}$$
in which $f^*\Q$ is considered as an infinitesimal $\P$-bimodule (under $\P$) with the structure induced by $f$. Note that this square is also homotopy Cartesian with respect to the Joyal model structure, due to the fact that the right vertical map is a left fibration. The corresponding map $\Tw(f):\Tw(\P) \lrar\Tw(\Q)$ is then given by the composition
$$ \Un_{\textbf{Ib}^{\mathcal{P}}}(\P) \lrar \Un_{\textbf{Ib}^{\mathcal{P}}}(f^*\Q) \lrar \Un_{\textbf{Ib}^{\mathcal{Q}}}(\Q) .$$

Suppose that $f:\P\rar\Q$ is a {Dwyer-Kan} equivalence. Since $f$ is in particular a levelwise weak equivalence, it implies that the map $\P\lrar f^*\Q$ is a weak equivalence of infinitesimal $\P$-bimodules and thus, the map $\Un_{\textbf{Ib}^{\mathcal{P}}}(\P) \lrar \Un_{\textbf{Ib}^{\mathcal{P}}}(f^*\Q)$ is an equivalence of $\infty$-categories. On other hand, it is not hard to verify that the induced map $\textbf{Ib}^{f} : \textbf{Ib}^{\mathcal{P}}\lrar \textbf{Ib}^{\mathcal{Q}}$ is a {Dwyer-Kan} equivalence between (fibrant) simplicial categories. So the map $\sN(\textbf{Ib}^{\mathcal{P}}) \lrar \sN(\textbf{Ib}^{\mathcal{Q}})$ is an equivalence of $\infty$-categories. Combined with the first paragraph, it implies that the map $\Un_{\textbf{Ib}^{\mathcal{P}}}(f^*\Q)  \lrar \Un_{\textbf{Ib}^{\mathcal{Q}}}(\Q)$ is an equivalence of $\infty$-categories as well. We hence deduce that $\Tw(f):\Tw(\P) \lrar\Tw(\Q)$ is an equivalence.
\end{proof}

Due to the previous subsection, we can describe the simplicial structure of $\Tw(\P)$ as follows.
\begin{cons}\label{cons:TwP} The data of an $n$-simplex of $\Tw(\P)$ consist of

 (i) an $n$-simplex  $\varphi\in\sN(\textbf{Ib}^{\mathcal{P}})$, i.e., a functor $\varphi : \mathfrak{C}[\Delta^{n}]\lrar \textbf{Ib}^{\mathcal{P}}$, and

 (ii) a natural transformation $t:\mathfrak{D}^{\triangleleft}_{\Delta^{n}}\lrar \P \circ \varphi$ between simplicial functors $\mathfrak{C}[\Delta^{n}] \lrar \Set_\Delta$.

\noindent For each map $\delta : \Delta^{m}\lrar\Delta^{n}$, the simplicial structure map $\Tw(\P)_n\lrar\Tw(\P)_m$ is induced by the natural transformation  $\mathfrak{D}^{\triangleleft}_{\delta} : \mathfrak{D}^{\triangleleft}_{\Delta^{m}} \lrar \mathfrak{D}^{\triangleleft}_{\Delta^{n}} \circ \mathfrak{C}[\delta]$, as in Construction \ref{con:Un_CF}.
\end{cons}

\begin{rem}\label{r:objtwp}   Objects of $\Tw(\P)$ are precisely the \textbf{operations} of $\P$ (i.e., the vertices of the spaces of operations of $\P$), while morphisms of $\Tw(\P)$ are given by the \textit{factorizations}. Let $\mu \in \P(c_1, \cdots ,c_m;c)$ and $\nu\in\P(d_1, \cdots ,d_n;d)$ be two operations of $\P$. Denote by $\ovl{c}:=(c_1, \cdots ,c_m;c)$ and by $\ovl{d}:=(d_1, \cdots ,d_n;d)$. Explicitly, the data of a morphism $\mu\rar \nu$ in $\Tw(\P)$ consist of

	$\bullet$ a map $f:\langle n \rangle \lrar \langle m \rangle$ in $\Fin_*$,

	$\bullet$ a tuple of operations  $\alpha = (\alpha_0, \alpha_1,\cdots,\alpha_m) \in \Map^{f}_{\textbf{Ib}^{\P}}(\ovl{c} , \ovl{d})$, and

	$\bullet$ an edge $t : \nu \lrar \alpha^{*}(\mu)$ in $\P(\ovl{d})$ where $\alpha^{*} : \P(\ovl{c}) \lrar \P(\ovl{d})$ is the map determined by the simplicial functor structure of $\P : \textbf{Ib}^{\P} \lrar \Set_\Delta$. 
	
	 It is convenient to depict such a morphism as a diagram of the form
	\begin{equation}\label{eq:1simplex}
		\begin{tikzcd}[row sep=3.5em, column sep =3em]
			(c_1, \cdots ,c_m) \arrow[d, "\mu"'] & (d_1, \cdots ,d_n) \arrow[l, "(\alpha_1  \, \cdots  \, \alpha_m )"'] \arrow[d, "\nu"] & \\
			(c) \arrow[r, "\alpha_0"'] & (d)
		\end{tikzcd}
	\end{equation}
	which is commutative up to a chosen homotopy. 
\end{rem}

\begin{examples}\label{ex:twistcom&ass} When $\P$ is discrete then $\Tw(\P)$ is isomorphic to the nerve of an ordinary category. In this situation, we will identify $\Tw(\P)$ with the corresponding ordinary category and refer to it as the \textbf{twisted arrow category of $\P$}. For example, it is not hard to show that the twisted arrow category of the \textbf{commutative operad} $\Com$ is equivalent to $\Fin_*^{\op}$. Moreover, the twisted arrow category of the \textbf{associative operad} $\Ass$ is equivalent to the simplex category $\Delta$. We will describe the twisted arrow $\infty$-categories of the little discs operads later.
\end{examples}

\begin{rem}\label{r:mainTcom} By construction, there is a canonical left fibration $\Tw(\P) \lrar \sN(\text{Ib}^\P)$ given on objects by sending each operation $\mu\in\P(\ovl{c})$ to $\ovl{c}$. Let $M$ be an infinitesimal $\P$-bimodule. Then $M$ can be identified with a functor $M : \sN(\text{Ib}^\P) \lrar (\Set_\Delta)_\infty$. Composed with {the above map}, it induces a functor $M^\star : \Tw(\P) \lrar (\Set_\Delta)_\infty$. Now, since $\Tw(\P)$ is by definition the unstraightening of $\P$ considered as an infinitesimal bimodule over itself, we get a canonical weak equivalence of spaces:
	$$ 	\lim_{\Tw(\P)}M^\star \lrarsimeq \Map^{\h}_{\IbMod(\P)}(\P,M).$$
\end{rem}

{Construction \ref{cons:TwP} provides us with the simplicial structure of $\Tw(\P)$. We will now delve into the $\infty$-categorical structure by describing the mapping spaces in $\Tw(\P)$.}  

Again, let $\mu \in \P(\ovl{c})$ and $\nu\in\P(\ovl{d})$ be two operations of $\P$, regarded as objects of $\Tw(\P)$. We take $\rho_\mu$ to be the composite map 
$$ \Map_{\textbf{Ib}^{\P}}(\ovl{c} , \ovl{d}) \x{\P}{\lrar} \Map_{\Set_\Delta}(\mathcal{P}(\ovl{c}) , \mathcal{P}(\ovl{d})) \x{ev_\mu}{\lrar} \mathcal{P}(\ovl{d}) $$
where $ev_\mu$ is the evaluation at $\mu$. 

\begin{prop}\label{p:mappingtwp} There is a canonical homotopy equivalence 
	$$ \{\nu\} \times^{\h}_{\P(\ovl{d})}\Map_{\textbf{Ib}^{\P}}(\ovl{c} , \ovl{d}) \lrarsimeq \Map_{\Tw(\P)}(\mu , \nu) .$$
\end{prop}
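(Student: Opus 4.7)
The plan is to deduce this statement directly from the general formula established in Proposition~\ref{p:mappingunCF} for mapping spaces in the covariant unstraightening. By definition $\Tw(\P)=\Un_{\textbf{Ib}^{\P}}(\P)$, where $\P : \textbf{Ib}^{\P} \lrar \Set_\Delta$ is the simplicial functor encoding $\P$ as an infinitesimal bimodule over itself. Thus I would apply Proposition~\ref{p:mappingunCF} with the data $\C=\textbf{Ib}^{\P}$, $\F=\P$, $x=\ovl{c}$, $y=\ovl{d}$, and the chosen vertices $\mu\in\P(\ovl{c})$, $\nu\in\P(\ovl{d})$.

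To do so I need to verify the hypotheses of that proposition, namely that $\textbf{Ib}^{\P}$ is a fibrant simplicial category and that $\P : \textbf{Ib}^{\P} \lrar \Set_\Delta$ is levelwise fibrant. Both follow at once from the assumption that $\P$ is fibrant as a simplicial operad: this forces every space of operations $\P(\ovl{e})$ to be a Kan complex, which simultaneously makes $\P$ levelwise fibrant as a diagram and makes each mapping space
$$ \Map_{\textbf{Ib}^{\P}}(\ovl{c},\ovl{d}) = \bigsqcup_{f:\l n \r \rar \l m \r} \mathcal{P}(c,\{d_j\}_{j\in f^{-1}(0)};d) \otimes \bigotimes_{i=1}^{n} \mathcal{P}(\{d_j\}_{j\in f^{-1}(i)};c_i) $$
a Kan complex, i.e.\ $\textbf{Ib}^{\P}$ is fibrant. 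Once these verifications are in place, Proposition~\ref{p:mappingunCF} yields the homotopy equivalence
$$ \{\nu\}\times^{\h}_{\P(\ovl{d})} \Map_{\textbf{Ib}^{\P}}(\ovl{c},\ovl{d}) \lrarsimeq \Map_{\Un_{\textbf{Ib}^{\P}}(\P)}(\mu,\nu) = \Map_{\Tw(\P)}(\mu,\nu). $$

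There is essentially no obstacle to overcome here; the proof is a direct specialization. If anything, the only work is cosmetic: unwinding Proposition~\ref{p:mappingunCF} so that the homotopy pullback on the left is exhibited through the explicit model $\P(\ovl{d})_{\nu/}\times_{\P(\ovl{d})}\Map_{\textbf{Ib}^{\P}}(\ovl{c},\ovl{d})$ used in the proof of Lemma~\ref{l:mappingUnF}, and checking that the map $\rho_\mu$ defined just above the statement matches the map of the same name introduced before Lemma~\ref{l:mappingUnF}. Both comparisons are immediate from the construction of the simplicial functor $\P : \textbf{Ib}^{\P}\rar\Set_\Delta$, so the proof reduces to a one-line citation of Proposition~\ref{p:mappingunCF}.
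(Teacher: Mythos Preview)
Your proposal is correct and matches the paper's own proof exactly: the paper simply writes ``This follows immediately by Proposition~\ref{p:mappingunCF}.'' Your extra remarks verifying fibrancy of $\textbf{Ib}^{\P}$ and levelwise fibrancy of $\P$ just make explicit what the paper leaves implicit.
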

\begin{proof} This immediately follows  from Proposition \ref{p:mappingunCF}.
\end{proof}

\begin{rem}\label{r:mappingcomponent} Note that there is a canonical map of $\infty$-categories $\Tw(\P) \lrar \sN(\Fin_*^{\op})$ factoring through $\sN(\textbf{Ib}^{\mathcal{P}})$. Consider the induced map $\Map_{\Tw(\P)}(\mu , \nu) \lrar \Hom_{\Fin_*^{\op}}(\langle m \rangle , \langle n \rangle)$. For each map $f : \langle n \rangle \lrar \langle m \rangle$ in $\Fin_*$, we denote by $\Map^{f}_{\Tw(\P)}(\mu , \nu)$ the component of $\Map_{\Tw(\P)}(\mu , \nu)$ over $f$, so that we can write 
	$$ \Map_{\Tw(\P)}(\mu , \nu) = \bigsqcup_{\langle n \rangle \x{f}{\lrar} \langle m \rangle} \Map^{f}_{\Tw(\P)}(\mu , \nu) .$$
	By the above proposition, there is a homotopy equivalence 
	$$ \{\nu\} \times^{\h}_{\P(\ovl{d})}\Map^{f}_{\textbf{Ib}^{\P}}(\ovl{c} , \ovl{d}) \lrarsimeq \Map^{f}_{\Tw(\P)}(\mu , \nu)  .$$
\end{rem}

{For further illustration,} we find a class of operads whose twisted arrow $\infty$-categories admit terminal objects.

\begin{dfn}\label{d:unihoconnected} A simplicial operad is said to be \textbf{homotopy unital} if all its spaces of nullary operations are weakly contractible. Furthermore, a simplicial operad is said to be \textbf{unitally homotopy connected} if it is homotopy unital with weakly contractible spaces of 1-ary operations.
\end{dfn}

\begin{prop}\label{l:terminal} Let $\P$ be a fibrant and homotopy unital simplicial operad. Let $d$ be a color of $\P$ and $\mu_d \in \P(d)$ a nullary operation of $\P$. Then $\mu_d$ is a terminal object of $\Tw(\P)$ if and only if for every color $c$, the space $\P(c;d)$ is contractible. Consequently, if $\P$ is fibrant and unitally homotopy connected then $\Tw(\P)$ admits terminal objects being precisely the nullary operations of $\P$. 
\end{prop}
\begin{proof} By definition, $\mu_d$ is a terminal object of $\Tw(\P)$ precisely if for every operation $\mu\in \P$ the mapping space $\Map_{\Tw(\P)}(\mu,\mu_d)$ is contractible. Given any operation $\mu\in \P(c_1,\cdots,c_m ; c)$, it suffices to show that there is a homotopy equivalence of spaces:
	$$ \P(c;d) \simeq \Map_{\Tw(\P)}(\mu,\mu_d) .$$
	By Proposition \ref{p:mappingtwp} there is a  homotopy equivalence 
	$$ \{\mu_d\}\times^{h}_{\P(d)}\Map_{\textbf{Ib}^{\P}}( (c_1,\cdots,c_m ; c) , (d) ) \lrarsimeq  \Map_{\Tw(\P)}(\mu,\mu_d) .$$
	Since $\P(d)$ is contractible, the homotopy pullback is equivalent to $\Map_{\textbf{Ib}^{\P}}( (c_1,\cdots,c_m ; c) , (d) )$. Furthermore, note that $$ \Map_{\textbf{Ib}^{\P}}( (c_1,\cdots,c_m ; c) , (d) ) = \P(c;d) \times \P(c_1) \times \cdots \times \P(c_m) ,$$ which is therefore weakly equivalent to $ \P(c;d)$. Thus we obtain the  desired homotopy equivalence.
\end{proof}

\smallskip

{Several interesting examples would emerge upon examination of the case of the little discs operads. Let us recall the definition of these. Let $\{U_i\}_{i\in I}$ be a finite collection of open subsets of $\RR^n$ and let $V\subseteq\RR^n$ be another open subset. We will say that a map $\underset{i\in I}{\bigsqcup}U_i \lrar V$ is a \textbf{rectilinear embedding} if it is an open embedding and such that each component map $U_i \lrar V$ is a rectilinear embedding in the usual sense, i.e. it is given by a formula of the form $x \mapsto \lambda x +c$ for $\lambda > 0$.}  We now denote by $$\sD^n:=\{x\in\RR^n \, | \, \lVert x \rVert <1 \}$$ the \textbf{open (unit) $n$-disc}. Then by definition, the \textbf{little $n$-discs operad} $\rE_n$ is a single-colored topological operad such that
$$ \rE_n(k) = \Rect(\underset{k}{\bigsqcup}\sD^n,\sD^n)$$ 
the space of rectilinear embeddings of $k$ disjoint open $n$-discs in another open $n$-disc, endowed with the topology as a subspace of the mapping space in $\Top$. The operadic composition is given by the composition of rectilinear embeddings. One may obtain the simplicial version of $\rE_n$ by applying the singular functor $\Sing : \Top \lrar \Set_\Delta$. 

\begin{conv} We use the same notation $\rE_n$ to refer to the simplicial version of the little $n$-discs operad. On other hand, by an $\rE_\infty$-\textbf{operad} we will mean a fibrant and $\Sigma$-cofibrant model for the simplicial commutative operad $\Com$.
\end{conv}

\begin{rem} We can show that the functor $\Tw(-)$ commutes with filtered homotopy colimits of simplicial operads. As a consequence, we obtain an $\infty$-categorical filtration of the nerve of $\Fin_*^{\op}$:
$$ \sN(\Delta) \simeq \Tw(\E_1) \lrar \Tw(\E_2) \lrar \cdots \lrar \Tw(\E_\infty) \simeq \sN(\Fin_*^{\op}).$$
The functor $\Tw(\E_1) \lrar \Tw(\E_\infty)$ {can be represented by the simplicial circle in the following sense. We will use the standard model for $\sS^1$ which has only two nondegenerate simplices. Moreover, we consider $\sS^1$ as a finite pointed simplicial set, so that it determines a functor $\sS^1 : \Delta^{\op} \lrar \Fin_*$. This latter {corresponds up to weak equivalence to} $\Tw(\E_1) \lrar \Tw(\E_\infty)$. More concretely, $\sS^1$ is given on objects by sending $[m]$ to $\l m \r$; and such that for each map $f : [m] \lrar [n]$ in $\Delta$, the map $\sS^1(f) : \l n \r \lrar \l m \r$ is given by, for each $1\leq j \leq m$, letting
	$$ \sS^1(f)^{-1}(j)  := \{ i \, | \, f(j-1) < i \leq f(j) \}.$$}  
\end{rem}

In the rest of this subsection, we shall give a description of the twisted arrow $\infty$-category of the operad $\E_n$. For this, we will need to use the following notations and conventions.

$\bullet$ We will {use the \textit{one point compactification}} of $\mathbb{R}^n$ as a model for the $n$-sphere. Thus we will write $\sS^n := \mathbb{R}^n \cup \{\infty\}$, and consider it as a pointed space with base point $``\infty$''. 

$\bullet$ We let $\sD^n_r \subseteq \mathbb{R}^n$ denote the $n$-dimensional open disc of radius $r$ (and with center at $0$). In particular, $\sD^n_1$ is simply written as  $\sD^n$, as mentioned above. 

$\bullet$ We denote by $\mathfrak{D}^n_r :=  \sS^n\setminus\overline{\sD^n_r}$, i.e. the complement of the closure of $\sD^n_r$ in $\sS^n = \mathbb{R}^n \cup \{\infty\}$. For simplicity, we will abbreviate $\mathfrak{D}^n := \mathfrak{D}^n_1$, and consider it as a pointed space with base point $``\infty$''. 

$\bullet$ By convention, a \textbf{rectilinear embedding} $\mathfrak{D}^n \longrightarrow \mathfrak{D}^n$ is a pointed embedding such that the induced map $\mathfrak{D}^n \setminus \{\infty\} \longrightarrow \mathfrak{D}^n \setminus \{\infty\}$ is an embedding of the form $x \mapsto rx$ for some fixed $r \geq 1$. Similarly, a map $\mathfrak{D}^n \longrightarrow \sS^n$ is a \textbf{rectilinear embedding} if it is a pointed embedding such that the induced map $\mathfrak{D}^n \setminus \{\infty\} \longrightarrow \sS^n\setminus \{\infty\} = \mathbb{R}^n$ is an embedding of the form $x \mapsto rx$ for some fixed $r > 0$. We will use the same notation $\delta_r$ to denote these two types of maps. In both cases, $\delta_r$ maps $\mathfrak{D}^n$ onto the subspace $\mathfrak{D}^n_r \subseteq \sS^n$. 

$\bullet$ Moreover, a map $\sD^n \longrightarrow \mathfrak{D}^n$ is a \textbf{rectilinear embedding} if it comes from a rectilinear embedding $\sD^n \longrightarrow \mathfrak{D}^n \setminus \{\infty\}$ between subsets of $\mathbb{R}^n$ in the usual sense. Similarly, a map $\sD^n \longrightarrow \sS^n$ is a \textbf{rectilinear embedding} if it comes from an actual rectilinear embedding $\sD^n \longrightarrow \sS^n\setminus \{\infty\} = \mathbb{R}^n$.

\begin{cons} We construct a topological category, denoted $\Disc_*^+$, as follows. An object of $\Disc_*^+$ is either (1) a finite coproduct of spaces of the form $\underset{k}{\bigsqcup}\sD^n \sqcup \, \mathfrak{D}^n$, with $k\geq0$, and regarded as a pointed space with the base point being given by $\infty \in \mathfrak{D}^n$, or (2) the (pointed) $n$-sphere $\sS^n = \mathbb{R}^n \cup \{\infty\}$. The mapping spaces of $\Disc_*^+$ are given as follows.
	
	$\bullet$ $\Map_{\Disc^+_*}(\underset{k}{\bigsqcup}\sD^n \sqcup \, \mathfrak{D}^n, \underset{l}{\bigsqcup}\sD^n \sqcup \, \mathfrak{D}^n)$ is the space of open embeddings $\underset{k}{\bigsqcup}\sD^n \sqcup \, \mathfrak{D}^n \longrightarrow \underset{l}{\bigsqcup}\sD^n \sqcup \, \mathfrak{D}^n$ that preserve base points and such that the restriction to each component is a rectilinear embedding.
	
	$\bullet$ $\Map_{\Disc^+_*}(\underset{k}{\bigsqcup}\sD^n \sqcup \, \mathfrak{D}^n, \sS^n)$ is defined in the same manner as described above.
	
	$\bullet$ $\Map_{\Disc^+_*}(\sS^n, \sS^n) = \{*\}$. 
	
	$\bullet$ $\Map_{\Disc^+_*}(\sS^n, \underset{k}{\bigsqcup}\sD^n \sqcup \, \mathfrak{D}^n) = \emptyset$. 

\noindent The composition in $\Disc_*^+$ is defined in an evident way. We will let $\Disc_* \subseteq \Disc_*^+$ denote the full subcategory spanned by the collection of objects $\{\underset{k}{\bigsqcup}\sD^n \sqcup \, \mathfrak{D}^n\}_{k\geq0}$.
\end{cons}

Next, we will use the same notations to denote the $\infty$-categories corresponding to $\Disc_*$ and $\Disc_*^+$. Moreover, we denote by
$$ \C := \Disc_*\times_{\Disc^+_*} (\Disc^+_*)_{/\sS^n} $$
the $\infty$-category formed by the pullback of $\Disc_*\lrar\Disc^+_*$ and the projection $(\Disc^+_*)_{/\sS^n} \lrar \Disc^+_*$. In particular, objects of $\C$ are the maps in $\Disc_*^+$ of the form $\underset{k}{\bigsqcup}\sD^n \sqcup \, \mathfrak{D}^n \lrar \sS^n$.

\begin{prop}\label{p:TwEn} There is an equivalence of $\infty$-categories $\psi : \C^{\op} \lrarsimeq \Tw(\rE_n)$.
\end{prop}

The proof will require two technical lemmas. Let us first elaborate more on the data of $\Disc_*^+$. 

\begin{rem}\label{r:elaborate} By construction, a map in $\Disc_*^+$ of the form $\underset{k}{\bigsqcup}\sD^n\sqcup \, \mathfrak{D}^n \lrar \mathfrak{D}^n$ is the choice of a map $\delta_r : \mathfrak{D}^n \lrar \mathfrak{D}^n$ with $r\geq1$, and together with a rectilinear embedding $(f_1,\cdots,f_k) : \underset{k}{\bigsqcup}\sD^n \lrar \sD^n_r\setminus \ovl{\sD^n}$. In visual terms, this simply shows a picture of $k$ disjoint $n$-dimensional (open) discs positioned between the boundaries of $\ovl{\sD^n}$ and $\ovl{\sD^n_r}$. On other hand, the data of a map $\underset{k}{\bigsqcup}\sD^n\sqcup \, \mathfrak{D}^n \lrar \sS^n$ consist of a map $\delta_r : \mathfrak{D}^n \lrar \sS^n$ with $r>0$, and along with a rectilinear embedding $(f_1,\cdots,f_k) : \underset{k}{\bigsqcup}\sD^n \lrar \sD^n_r$. Thus, such a map corresponds to a picture of $k$ disjoint $n$-dimensional discs, all contained within $\sD^n_r$. 
\end{rem}

Using our notations, we will write $(f_1,\cdots,f_k;\delta_r)$ for a map of the first type, and write $[f_1,\cdots,f_k;\delta_r]$  for a typical map $\underset{k}{\bigsqcup}\sD^n\sqcup \, \mathfrak{D}^n \lrar \sS^n$ as mentioned in the above remark.

\begin{lem}\label{l:mappingdisc+} For every $k\geq0$, there is a canonical homotopy equivalence $$\rho : \Map_{\Disc^+_*}(\underset{k}{\bigsqcup}\sD^n \sqcup \, \mathfrak{D}^n, \sS^n) \lrarsimeq \rE_n(k).$$
\begin{proof} Let us denote by $\Map^1_{\Disc^+_*}(\underset{k}{\bigsqcup}\sD^n \sqcup \, \mathfrak{D}^n, \sS^n) \subseteq \Map_{\Disc^+_*}(\underset{k}{\bigsqcup}\sD^n \sqcup \, \mathfrak{D}^n, \sS^n)$ the subspace consisting of those maps of the form $[f_1,\cdots,f_k;\delta_1]$. We have in fact a retraction
\begin{gather*}
\rho^1 : \Map_{\Disc^+_*}(\underset{k}{\bigsqcup}\sD^n \sqcup \, \mathfrak{D}^n, \sS^n) \lrar \Map^1_{\Disc^+_*}(\underset{k}{\bigsqcup}\sD^n \sqcup \, \mathfrak{D}^n, \sS^n)  , \\  [f_1,\cdots,f_k;\delta_r]  \mapsto [\frac{1}{r}f_1,\cdots,\frac{1}{r}f_k;\delta_1].
\end{gather*}	
Moreover, this exhibits the space on the right as a (strong) deformation retract of that on the left via a homotopy defined as follows: 
 $$  h_t : [f_1,\cdots,f_k;\delta_r] \mapsto [\lambda_{t,r}f_1,\cdots,\lambda_{t,r}f_k;\delta_{r\lambda_{t,r}}]$$
where $\lambda_{t,r} := \frac{1}{r}[t + (1-t)r]$ for every $0\leq t \leq 1$. The map $\rho$ is then defined to be the composed map
 $$ \Map_{\Disc^+_*}(\underset{k}{\bigsqcup}\sD^n \sqcup \, \mathfrak{D}^n, \sS^n) \overset{\rho^1}{\lrarsimequ} \Map^1_{\Disc^+_*}(\underset{k}{\bigsqcup}\sD^n \sqcup \, \mathfrak{D}^n, \sS^n) \overset{\cong}{\lrar} \rE_n(k)$$
in which the second map is the obvious homeomorphism given by $[f_1,\cdots,f_k;\delta_1] \mapsto (f_1,\cdots,f_k)$.
\end{proof}
\end{lem}

We will write $\varepsilon : \sD^n \lrar \sS^n$ to denote the actual inclusion. Let $k_0\geq1$ be a given integer. The inclusion $\mathfrak{D}^n \subseteq \sS^n$ allows us to identify $\Map_{\Disc^+_*}(\underset{k_0-1}{\bigsqcup}\sD^n \sqcup \, \mathfrak{D}^n, \mathfrak{D}^n)$ with the subspace 
\begin{equation}\label{eq:preiota}
	\Map^\bullet_{\Disc^+_*}(\underset{k_0}{\bigsqcup}\sD^n \sqcup \, \mathfrak{D}^n, \sS^n) \subseteq \Map_{\Disc^+_*}(\underset{k_0}{\bigsqcup}\sD^n \sqcup \, \mathfrak{D}^n, \sS^n)
\end{equation}
containing those maps of the form $[\varepsilon,f_1,\cdots,f_{k_0-1};\delta_r]$ with $r\geq1$. We now obtain a composed map
\begin{equation}\label{eq:iota}
	\Map_{\Disc^+_*}(\underset{k_0-1}{\bigsqcup}\sD^n \sqcup \, \mathfrak{D}^n, \mathfrak{D}^n) \cong \Map^\bullet_{\Disc^+_*}(\underset{k_0}{\bigsqcup}\sD^n \sqcup \, \mathfrak{D}^n, \sS^n) \lrar \Map_{\Disc^+_*}(\underset{k_0}{\bigsqcup}\sD^n \sqcup \, \mathfrak{D}^n, \sS^n)  \overset{\rho}{\lrarsimequ} \rE_n(k_0).
\end{equation}
Unwinding the definitions, we see that this composed map is given by the rule $$(f_1,\cdots,f_{k_0-1};\delta_r) \mapsto   (\frac{1}{r}\varepsilon,\frac{1}{r}f_1,\cdots,\frac{1}{r}f_{k_0-1}).$$ 

\begin{lem}\label{l:DiscandIbEn} There is a canonical weak equivalence of topological categories $$\iota : (\Disc_*)^{\op} \lrarsimeq \textbf{Ib}^{\rE_n}.$$
\end{lem}
\begin{proof} Since $\rE_n$ has a single color, the set of objects of $\textbf{Ib}^{\rE_n}$ is identified with $\mathbb{N}$. Suppose we are given a map $\alpha:\langle k \rangle \lrar \langle l \rangle$ in $\Fin_*$. Denote by $k_i$ the cardinality of $\alpha^{-1}(i)$ for $i=0,\cdots,l$. Recall by construction that the component of $\Map_{\textbf{Ib}^{\rE_n}}(l,k)$ over $\alpha$ is given by
	$$ \Map_{\textbf{Ib}^{\rE_n}}^\alpha(l,k) = \rE_n(k_0) \times \rE_n(k_1) \times\cdots\times \rE_n(k_l) .$$
	
	We let $\iota$ be defined on objects by sending $\underset{k}{\bigsqcup}\sD^n \sqcup \, \mathfrak{D}^n$ to $k$. Moreover, there is a canonical projection $\Disc_* \lrar \Fin_*$ such that the component of $\Map_{\Disc_*}(\underset{k}{\bigsqcup}\sD^n \sqcup \, \mathfrak{D}^n, \, \underset{l}{\bigsqcup}\sD^n \sqcup \, \mathfrak{D}^n)$ over $\alpha$ is given by
	$$ \Map_{\Disc_*}^\alpha(\underset{k}{\bigsqcup}\sD^n \sqcup \, \mathfrak{D}^n, \, \underset{l}{\bigsqcup}\sD^n \sqcup \, \mathfrak{D}^n) = \Map_{\Disc_*}(\underset{k_0-1}{\bigsqcup}\sD^n \sqcup \, \mathfrak{D}^n, \, \mathfrak{D}^n) \times \rE_n(k_1) \times\cdots\times \rE_n(k_l) . $$ 
	In this way, the structure map $\Map_{\Disc_*}^\alpha(\underset{k}{\bigsqcup}\sD^n \sqcup \, \mathfrak{D}^n, \, \underset{l}{\bigsqcup}\sD^n \sqcup \, \mathfrak{D}^n) \lrar \Map_{\textbf{Ib}^{\rE_n}}^\alpha(l,k)$ of $\iota$ is the map induced by the composed map \eqref{eq:iota}. Verifying the functoriality of $\iota$ is then straightforward. We shall complete the proof after showing the latter map is a weak equivalence. For this, it suffices to prove that the inclusion \eqref{eq:preiota} is one. We will denote by $$\Map^\odot_{\Disc^+_*}(\underset{k_0}{\bigsqcup}\sD^n \sqcup \, \mathfrak{D}^n, \sS^n)  \subseteq \Map^1_{\Disc^+_*}(\underset{k_0}{\bigsqcup}\sD^n \sqcup \, \mathfrak{D}^n, \sS^n)$$ the subspace comprising maps of the form $[\frac{1}{r}\varepsilon,f_1,\cdots,f_{k_0-1};\delta_1]$ with $r\geq1$. Then observe that there is a commutative square of the form
		$$ \xymatrix{
		\Map^\bullet_{\Disc^+_*}(\underset{k_0}{\bigsqcup}\sD^n \sqcup \, \mathfrak{D}^n, \sS^n)  \ar[r]\ar[d]^\cong & \Map_{\Disc^+_*}(\underset{k_0}{\bigsqcup}\sD^n \sqcup \, \mathfrak{D}^n, \sS^n) \ar[d]_{\simeq}^{\rho^1} \\
		\Map^\odot_{\Disc^+_*}(\underset{k_0}{\bigsqcup}\sD^n \sqcup \, \mathfrak{D}^n, \sS^n) \ar[r]_\simeq & \Map^1_{\Disc^+_*}(\underset{k_0}{\bigsqcup}\sD^n \sqcup \, \mathfrak{D}^n, \sS^n) \\
	}$$
in which the left vertical map is defined in the same manner as $\rho^1$, and can be readily verified to be a homeomorphism. Moreover, the bottom inclusion is weakly equivalent to the map between configuration spaces
$$ \Conf(\sD^n\setminus\{0\}, k_0-1 ) \lrar \Conf(\sD^n, k_0),$$
which is in fact a weak equivalence (cf., e.g. \cite{Cohen}). We deduce that the top inclusion is indeed a weak equivalence.
\end{proof}

\begin{rem}\label{r:harmonize} The map $\rho$ and the functor $\iota$ work harmoniously together in the following sense. Let $g\in \Map_{\Disc^+_*}(\underset{l}{\bigsqcup}\sD^n \sqcup \, \mathfrak{D}^n, \sS^n)$ be any map. Then we have a commutative square of the form
	$$ \xymatrix{
		\Map_{\Disc_*}(\underset{k}{\bigsqcup}\sD^n \sqcup \, \mathfrak{D}^n, \, \underset{l}{\bigsqcup}\sD^n \sqcup \, \mathfrak{D}^n)  \ar[r]^{ \;\;\;\;\;\;\;\;\;\;\;\;\;\;\;\;\;\;\;\;\; \iota}\ar[d] & \Map_{\textbf{Ib}^{\rE_n}}(l,k) \ar[d] \\
		\Map_{\Disc^+_*}(\underset{k}{\bigsqcup}\sD^n \sqcup \, \mathfrak{D}^n, \sS^n) \ar[r]_{\;\;\;\;\;\;\;\;\;\;\;\;\;\;\;\;\;\;\;\;\rho} & \rE_n(k) \\
	}$$
in which the left and right vertical maps are respectively induced by $g : \underset{l}{\bigsqcup}\sD^n \sqcup \, \mathfrak{D}^n \lrar \sS^n$ and the operation $\rho(g) \in \rE_n(l)$. This will support the proof below.
\end{rem}

\begin{proof}[\underline{Proof of Proposition \ref{p:TwEn}}] We will use the same notations to denote the topological categories associated to $\mathfrak{C}[\Delta^{m}]$ and $\mathfrak{D}^{\triangleleft}_{\Delta^{m}}$ for $m\geq0$. An $m$-simplex of $\C^{\op}$ is a functor $\widetilde{\varphi} : \mathfrak{C}[\Delta^{m+1}]^{\op} \lrar \Disc^+_*$ such that $\widetilde{\varphi}(0)=\sS^n$ and $\widetilde{\varphi}(j)\in\Disc_*$ for $j=1,\cdots,m+1$. We put $\widetilde{\varphi}(j)=\underset{k_j}{\bigsqcup}\sD^n \sqcup \, \mathfrak{D}^n$. The restriction of $\widetilde{\varphi}$ to $\mathfrak{C}[\Delta^{\{1,\cdots,m+1\}}]^{\op}$ determines a functor $\mathfrak{C}[\Delta^{m}] \lrar (\Disc_*)^{\op}$ taking $j$ to $\underset{k_{j+1}}{\bigsqcup}\sD^n \sqcup \, \mathfrak{D}^n$ for $j=0,\cdots,m$. Composed with $\iota : (\Disc_*)^{\op} \lrar \textbf{Ib}^{\rE_n}$ (see Lemma \ref{l:DiscandIbEn}), it yields a functor 
	$$\varphi : \mathfrak{C}[\Delta^{m}] \lrar \textbf{Ib}^{\rE_n} .$$
	\noindent We now construct a natural transformation $t:\mathfrak{D}^{\triangleleft}_{\Delta^{m}}\lrar \rE_n \circ \, \varphi$ between functors $\mathfrak{C}[\Delta^{m}] \lrar \Top$. For each $j\in\{0,\cdots,m\}$, we need to define a natural map $t(j):\mathfrak{D}^{\triangleleft}_{\Delta^{m}}(j)\lrar \rE_n(k_{j+1})$. This is given by the composed map
	$$ \mathfrak{D}^{\triangleleft}_{\Delta^{m}}(j) \overset{\cong}{\lrar} \Map_{\mathfrak{C}[\Delta^{m+1}]^{\op}}(j+1,0) \lrar \Map_{\Disc^+_*}(\underset{k_{j+1}}{\bigsqcup}\sD^n \sqcup \, \mathfrak{D}^n, \sS^n) \x{\rho}{\lrar} \rE_n(k_{j+1})$$ 
	where the second map is the structure map of $\widetilde{\varphi}$. It can be verified that the assignment $\widetilde{\varphi} \mapsto (\varphi, t)$ determines a functor $\psi : \C^{\op} \lrar \Tw(\rE_n)$. In particular, $\psi$ is given on objects by sending a map $f : \underset{k}{\bigsqcup}\sD^n\sqcup \, \mathfrak{D}^n \lrar \sS^n$ to an operation $\rho(f)\in \rE_n(k)$ as in Lemma \ref{l:mappingdisc+}.
	
	We shall now prove that $\psi$ is an equivalence. Since $\psi$ is surjective on objects, it remains to show that the induced maps between mapping spaces are all weak equivalences. Let $f : \underset{k}{\bigsqcup}\sD^n\sqcup \, \mathfrak{D}^n \lrar \sS^n$ and $g : \underset{l}{\bigsqcup}\sD^n\sqcup \, \mathfrak{D}^n \lrar \sS^n$ be any two objects of $\C$. The map $\Map_{\C}(f,g) \lrar \Map_{\Tw(\rE_n)}(\rho(g), \rho(f))$ is weakly equivalent to the map between homotopy fibers
	$$ \{f\} \times^{\h}_{\Map_{\Disc^+_*}(\underset{k}{\bigsqcup}\sD^n \sqcup \, \mathfrak{D}^n, \sS^n)} \Map_{\Disc_*}(\underset{k}{\bigsqcup}\sD^n \sqcup \, \mathfrak{D}^n, \, \underset{l}{\bigsqcup}\sD^n \sqcup \, \mathfrak{D}^n) \lrar \{\rho(f)\} \times^{\h}_{\rE_n(k)} \Map_{\textbf{Ib}^{\rE_n}}(l,k),$$ 
	which is a weak equivalence due to Lemmas \ref{l:mappingdisc+} and \ref{l:DiscandIbEn}. (See also Remark \ref{r:harmonize}, Proposition \ref{p:mappingtwp}   and [\cite{Luriehtt}, Proposition 5.5.5.12]).
\end{proof}

\smallskip

\subsection{Cotangent complex of simplicial operads}\label{sub:cotantsplcop}

\smallskip

Let $\P$ be a $C$-colored simplicial operad which is fibrant and $\Sigma$-cofibrant. We will show that the cotangent complex $\rL_\P \in \T_\P\Op(\Set_\Delta)$ can be represented as a spectrum valued functor on $\Tw(\P)$.

As introduced previously, $\Tw(\P)$ is the image of $\P$ through the unstraightening functor $$\Un_{\textbf{Ib}^{\mathcal{P}}} : \IbMod(\P)  \lrarsimeq  (\Set_\Delta^{\cov})_{/\sN(\textbf{Ib}^{\mathcal{P}})} .$$ We will abbreviate $\Un_{\textbf{Ib}^{\mathcal{P}}}$ by $\U n$. Recall that there is a canonical left fibration $\Tw(\P) \lrar \sN(\textbf{Ib}^{\mathcal{P}})$. Observe now that $\U n$ induces a right Quillen equivalence $$\U n_{\P//\P} : \IbMod(\P)_{\P//\P} \lrarsimeq  (\Set_\Delta^{\cov})_{\Tw(\P)//\Tw(\P)}$$ where $(\Set_\Delta^{\cov})_{\Tw(\P)//\Tw(\P)}$ denotes the pointed model category associated to $(\Set_\Delta^{\cov})_{/\Tw(\P)}$. So we have a right Quillen equivalence of stabilizations
$$ \U n^{\Sp}_{\P//\P} : \T_\P\IbMod(\P) \lrarsimeq \Sp((\Set_\Delta^{\cov})_{\Tw(\P)//\Tw(\P)}) .$$
Moreover, the straightening functor $\St_{\Tw(\P)} : (\Set_\Delta^{\cov})_{/\Tw(\P)} \lrarsimeq \Fun(\mathfrak{C}[\Tw(\P)] , \Set_\Delta)$ induces a left Quillen equivalence 
$$ \St^{\Sp}_{\Tw(\P)} : \Sp((\Set_\Delta^{\cov})_{\Tw(\P)//\Tw(\P)}) \lrarsimeq \Fun(\mathfrak{C}[\Tw(\P)] , \Spectra) $$
where $\Spectra$ refers to the stable model category of spectra. We thus obtain a sequence of right or left Quillen equivalences 
	$$\T_\P\Op(\Set_\Delta) \x{\U^{ib}_\P}{\lrarsimequ} \T_\P\IbMod(\P) \underset{\simeq}{\x{\U n^{\Sp}_{\P//\P}}{\xrightarrow{\hspace*{1cm}}}} \Sp((\Set_\Delta^{\cov})_{\Tw(\P)//\Tw(\P)})  \underset{\simeq}{\x{\St^{\Sp}_{\Tw(\P)}}{\xrightarrow{\hspace*{1cm}}}} \Fun(\mathfrak{C}[\Tw(\P)] , \Spectra) .$$
Let us describe the derived image of $\rL_\P\in \T_\P\Op(\Set_\Delta)$ through this composed functor. We have seen that $\RR\U^{ib}_\P(\rL_\P) [1] \simeq \widetilde{\rL}_\P$ (cf. Theorem \ref{conclusionCotangentCplx}). More explicitly, $\widetilde{\rL}_\P\in\T_\P\IbMod(\P)$ is the prespectrum with $(\widetilde{\rL}_\P)_{k,k}=\P\circ\sS_C^{k}$ {where $\sS^{k}_C$ is the $C$-collection given by $\sS_C^{k}(c;c)=\sS^{k}$ for every color $c$ and taking the value $\emptyset$ on the other levels.} Furthermore, $\RR\U n^{\Sp}_{\P//\P}$ sends $\widetilde{\rL}_\P$ to the prespectrum $$\U n(\P\circ \sS_C^{\bullet}) \in \Sp((\Set_\Delta^{\cov})_{\Tw(\P)//\Tw(\P)})$$ with $\U n(\P\circ \sS_C^{\bullet})_{k,k} = \U n(\P\circ \sS_C^{k})$. We now denote by 
	$$\F_\P := \LL\St^{\Sp}_{\Tw(\P)}(\U n(\P\circ \sS_C^{\bullet})) \in \Fun(\mathfrak{C}[\Tw(\P)] , \Spectra) .$$
So $\F_\P$ is the derived image of $\rL_\P [1]$ under the composed functor $$\T_\P\Op(\Set_\Delta) \lrar \Fun(\mathfrak{C}[\Tw(\P)] , \Spectra).$$ To get a description of the functor $\F_\P$, observe first that there is an equivalence of $\infty$-categories
$$ \Fun(\mathfrak{C}[\Tw(\P)] , \Spectra)_\infty \lrarsimeq \Fun(\Tw(\P) , \Sp) $$ 
where $\Sp$ is the \textbf{$\infty$-category of spectra}. Thus we will regard $\F_\P$ as an $\infty$-functor $\Tw(\P) \lrar \Sp$. Let $\mu\in \P(c_1,\cdots,c_m;c)$ be an operation of $\P$, regarded as an object of $\Tw(\P)$. By construction, $\F_\P(\mu) \in \Sp$ is the prespectrum with $\F_\P(\mu)_{k,k}$ being given by the value at $\mu$ of the functor
$$\St_{\Tw(\P)}(\U n(\P\circ \sS_C^{k})) : \mathfrak{C}[\Tw(\P)] \lrar \Set_\Delta.$$
By using [\cite{Luriehtt}, Proposition 2.2.3.15] (with the opposite convention), we obtain that
$$ \F_\P(\mu)^{\op}_{k,k} \simeq \U n(\P\circ \sS_C^{k}) \times_{\U n(\P)} \{\mu\} .$$
It follows by construction that $(\P\circ \sS_C^{k}) (c_1,\cdots,c_m;c) = \P(c_1,\cdots,c_m;c) \times (\sS^{k})^{\times m}$.  
Using Remark \ref{r:fiberkan}, we can then show that $\F_\P(\mu)_{k,k} \simeq (\sS^{k})^{\times m}$ and hence, $\F_\P(\mu) \simeq \mathbb{S}^{\oplus m}$ the $m$-fold coproduct of the \textbf{sphere spectrum}. 

\begin{rem}\label{r:FEinfty} We consider the operad $\rE_\infty$ and describe the functor $\F_{\E_\infty}$. Since $\Tw(\E_\infty) \simeq \sN(\Fin_*^{\op})$, we may regard $\F_{\E_\infty}$ as a functor $\Fin_*^{\op} \lrar \Spectra$ given on objects by sending $\l m \r$ to $\mathbb{S}^{\oplus m}$. Let $f : \l n \r \lrar \l m \r$ be map in $\Fin_*$. Unwinding the definitions, the structure map 
	$$ \F_{\E_\infty}(f) : \mathbb{S}^{\oplus \{1,\cdots,m\}} \lrar \mathbb{S}^{\oplus \{1,\cdots,n\}}  $$ 
is given by, for each $i\in \{1,\cdots,m\}$, copying the $i$'th summand to the summands of position $j\in f^{-1}(i)$ when this fiber is nonempty or collapsing that summand to the zero spectrum otherwise. 
\end{rem}

{Here is another significant remark.}

\begin{rem} {Without loss of generality, we can assume that $\P$ comes equipped with a map $\varPhi : \P \lrar \E_\infty$. We then claim that the (derived) functor $\varPhi^* : \T_{\E_\infty}\Op(\Set_\Delta) \lrar \T_\P\Op(\Set_\Delta)$ sends $\rL_{\E_\infty}$ to $\rL_\P$. Indeed, by using Theorem \ref{conclusionCotangentCplx}, we just need to verify that the induced right adjoint $\varPhi^* : \T_{\E_\infty}\IbMod(\E_\infty) \lrar \T_\P\IbMod(\P)$ sends the prespectrum $\widetilde{\rL}_{\E_\infty}$ to $\widetilde{\rL}_\P$ (see also [\cite{YonatanBundle}, Corollary 2.4.8]). For this, we need to show that the functor $$\varPhi^* : \IbMod(\E_\infty)_{\E_\infty//\E_\infty} \lrar \IbMod(\P)_{\P//\P}$$ sends $\E_\infty\circ\sS_*^{k}$ to $\P\circ\sS_C^{k}$ for every $k\geq0$. (Here we write $\sS_*^{k}$, reflecting that the operad $\E_\infty$ has a single color).  This follows from the isomorphisms of the form $$\P(c_1,\cdots,c_m;c) \times (\sS^{k})^{\times m} \; \cong \; \P(c_1,\cdots,c_m;c) \times_{\E_\infty(m)} (\E_\infty(m)\times (\sS^{k})^{\times m}).$$ 
Accordingly, we deduce that the functor $\F_\P$ can be represented as the composed functor   
		\begin{equation}\label{eq:F_P}
			\Tw(\P) \lrar \sN(\Fin_*^{\op}) \x{\F_{\E_\infty}}{\lrar} \Sp.
	\end{equation}}
\end{rem}

\smallskip

We summarize the above steps in the following statement.
\begin{thm}\label{c:main} Let $\P$ be a fibrant and $\Sigma$-cofibrant simplicial operad. There is an equivalence of $\infty$-categories
	$$ \T_\P\Op(\Set_\Delta)_\infty \lrarsimeq \Fun(\Tw(\P) , \Sp) .$$
	Moreover, under this equivalence, the cotangent complex $\rL_\P$ is identified {with} $\F_\P[-1]$ the desuspension of the functor $ \F_\P : \Tw(\P)  \lrar \Sp$ \eqref{eq:F_P}, which is given on objects by sending each operation $\mu\in\P$ of arity $m$  to $\F_\P(\mu) = \mathbb{S}^{\oplus m}$. 
\end{thm}

\begin{cor}\label{cor:main} For a given functor $\F : \Tw(\P) \lrar \Sp$, the $n$'th Quillen cohomology group of $\P$ with coefficients in $\F$ is computed by the formula
	$$ \sH^{n}_Q (\P ; \F) = \pi_{0}\Map_{\Fun(\Tw(\P) , \Sp)} (\F_\P , \F[n+1]) .$$
\end{cor}

We are particularly concerned about the cotangent complex of the little $n$-discs operad $\rE_n$. Let $\mu_0 \in \E_n(0)$ denote the unique nullary operation of $\E_n$ and let $\eta : \{*\}  \lrar \Tw(\E_n)$ be the functor classified by $\mu_0$. Moreover, we write $$\eta_! : \Fun(\{*\},\Sp) \lrar \Fun(\Tw(\E_n),\Sp)$$ to denote the left adjoint functor induced by $\eta$. Let $\textbf{S} := (\Set_\Delta)_\infty$ denote the $\infty$-category of spaces. We also write $\eta_! : \Fun(\{*\},\textbf{S}) \lrar \Fun(\Tw(\E_n),\textbf{S})$ for the functor induced by $\eta$.

We wish to describe the functor $\eta_!(\mathbb{S}) : \Tw(\E_n) \lrar \Sp$ where we regard $\mathbb{S}$ as a functor $\{*\} \lrar \Sp$ classified by the sphere spectrum $\mathbb{S}\in\Sp$.  Observe that $\eta_!(\mathbb{S})$ is equivalent to the image of $\{*\} \x{\Delta^0}{\lrar}\textbf{S}$ through the composed functor
$$ \Fun(\{*\},\textbf{S}) \x{\eta_!}{\lrar} \Fun(\Tw(\E_n),\textbf{S})  \lrar \Fun(\Tw(\E_n),\Sp)$$ 
in which the second functor is given by the post-composition with $\Sigma^{\infty}_+ : \textbf{S} \lrar \Sp$.  We first describe the functor $\eta_!(\Delta^0) : \Tw(\E_n) \lrar \textbf{S}$. Let $\mu$ be an operation of $\E_n$ regarded as an object of $\Tw(\E_n)$. Observe that $\eta_!(\Delta^0)$ sends $\mu$ to $\Map_{\Tw(\E_n)}(\mu_0,\mu) \in \textbf{S}$. Suppose further that $\mu \in \E_n(k)$. Using Proposition \ref{p:mappingtwp}, we may show that $\Map_{\Tw(\E_n)}(\mu_0,\mu)$ is  equivalent to the fiber over $\mu$ of the map
$$ (-) \circ_{1}\mu_0 : \E_n(k+1) \lrar \E_n(k) .$$
A classical result asserts that this fiber is equivalent to the $k$-fold wedge sum of the $(n-1)$-sphere {(cf., e.g. \cite{Cohen})}. So we get that 
$$\Map_{\Tw(\E_n)}(\mu_0,\mu) \simeq \underset{k}{\bigvee} \sS^{n-1}  .$$
Moreover, we have $\Sigma^{\infty}_+(\underset{k}{\bigvee} \sS^{n-1}) \simeq \left(\underset{k}{\bigoplus} \, \mathbb{S}[n-1] \right )\oplus\mathbb{S}$. We thus obtain that $\eta_!(\mathbb{S}) : \Tw(\E_n) \lrar \Sp$ sends each operation of arity $k$ to $\left(\underset{k}{\bigoplus} \, \mathbb{S}[n-1] \right )\oplus\mathbb{S}$. 

{Next, we write $\id \in \E_n(1)$ for the identity operation, and write $\gamma : \{*\}  \lrar \Tw(\E_n)$ for the functor classified by $\id$. Moreover, we let $$\gamma_* : \Fun(\{*\},\Sp) \lrar \Fun(\Tw(\E_n),\Sp)$$
denote the right adjoint to the restriction functor along the functor $\gamma$. We also regard the functor  $\gamma_*(\mathbb{S}) : \Tw(\E_n) \lrar \Sp$. Unwinding the definitions, this is given on objects by sending each operation $\mu \in \E_n(k)$ to
$$\gamma_*(\mathbb{S})(\mu) \, \simeq \, [\Map_{\Tw(\E_n)}(\mu,\id), \mathbb{S}] \, \simeq \, [\E_n(2) \sqcup \; \underline{k} \, , \mathbb{S}] \, \simeq \, [\sS^{n-1} \sqcup \; \underline{k} \, , \mathbb{S}] \, \simeq \, \left(\mathbb{S}\oplus\mathbb{S}[-n+1]\right) \oplus \mathbb{S}^{\oplus k}$$
where $[-,-]$ signifies the powering of simplicial sets over spectra and $\underline{k}$ represents a discrete space of cardinality $k$. (For the computation of $\Map_{\Tw(\E_n)}(\mu,\id)$, we make use of Proposition \ref{p:mappingtwp} again).}

\begin{rem}\label{r:twocofib} {We may regard $\F_{\E_n}$ as a sub-functor of  $\gamma_*(\mathbb{S})$ via the embedding $\F_{\E_n} \lrar \gamma_*(\mathbb{S})$ classified by the identity $\mathbb{S} = \F_{\E_n}(\id) \lrar \mathbb{S}$. More precisely, there is a canonical cofiber sequence in $\Fun(\Tw(\E_n),\Sp)$ of the form
\begin{equation}\label{eq:twocofib1}
	\F_{\E_n}  \lrar \gamma_*(\mathbb{S}) \lrar \ovl{\mathbb{S}} \oplus \ovl{\mathbb{S}}[-n+1]
\end{equation}
where $\ovl{\mathbb{S}} : \Tw(\E_n) \lrar \Sp$ refers to the constant functor on $\mathbb{S} \in \Sp$, and the second map is given at each operation $\mu\in\E_n$ by the projection onto the component $[\sS^{n-1} , \mathbb{S}] \simeq \mathbb{S}\oplus\mathbb{S}[-n+1]$ of $\gamma_*(\mathbb{S})(\mu)$. Additionally, we have another cofiber sequence in $\Fun(\Tw(\E_n),\Sp)$ of the form
\begin{equation}\label{eq:twocofib2}
\eta_!(\mathbb{S}[-n+1]) \lrar \gamma_*(\mathbb{S}) \lrar \ovl{\mathbb{S}}
\end{equation}
where the first map is the embedding classified by the inclusion $$\mathbb{S}[-n+1] \lrar \mathbb{S} \oplus \mathbb{S}[-n+1] \simeq \gamma_*(\mathbb{S})(\mu_0)$$ (here we note that $\eta_!(\mathbb{S}[-n+1])$ sends each operation of arity $k$ to $\mathbb{S}[-n+1] \oplus \mathbb{S}^{\oplus k}$), and while the second map is given at each object by the projection onto the summand $``\mathbb{S}$'' contained in  $[\sS^{n-1} , \mathbb{S}]$.}      
\end{rem}

\begin{thm}\label{p:cotantEn} There is a canonical cofiber sequence in $\Fun(\Tw(\E_n),\Sp)$ of the form
	\begin{equation}\label{eq:cotantEn}
\eta_!(\mathbb{S}) \lrar \ovl{\mathbb{S}} \lrar \F_{\E_n}[n]
	\end{equation}
where the first map is classified by the identity on $\mathbb{S}$.
\begin{proof} {The cofiber sequence \eqref{eq:twocofib2} gives rise to a coCartesian square in $\Fun(\Tw(\E_n),\Sp)$ of the form
		$$ \xymatrix{
		\eta_!(\mathbb{S}[-n+1])  \ar[r]\ar[d] & \gamma_*(\mathbb{S}) \ar[d] \\
		\ovl{\mathbb{S}}[-n+1] \ar[r] & \ovl{\mathbb{S}} \oplus \ovl{\mathbb{S}}[-n+1] \\
	}$$
in which the left vertical map is induced by the identity on $\mathbb{S}[-n+1]$. We can then deduce by combining this with the cofiber sequence \eqref{eq:twocofib1}.}
\end{proof}
\end{thm}

\begin{cor}\label{c:cotantEn} Let $\F : \Tw(\E_n) \lrar \Sp$ be a given functor. There is a long exact sequence of abelian groups of the form
	$$ \cdots \lrar \sH^{-k-n-1}_Q(\E_n ; \F) \lrar \pi_k\lim\F \lrar \pi_k\F(\mu_0) \lrar \sH^{-k-n}_Q(\E_n ; \F) \lrar \pi_{k-1}\lim\F \lrar \cdots .$$
In particular, when $\F(\mu_0)=0$ then for every $k\in\ZZ$, we have a canonical isomorphism
$$ \sH^{-k-n}_Q(\E_n ; \F) \lrarsimeq \pi_{k-1}\lim\F .$$
\end{cor}

{These are explored more extensively in \cite{Hoang1}, where we further investigate the cotangent and \textbf{Hochschild complexes} of algebras over an operad. Notably, the Quillen principle for $\E_n$-spaces is established there.}

For more illustration, we shall now relate Quillen cohomology of {the operad} $\E_\infty$ to {the} \textbf{stable cohomotopy of right $\Gamma$-modules} (cf. \cite{Pirashvili}). Let $\textbf{k}$ be a commutative ring. By definition, a \textbf{right $\Gamma$-module} is a functor $\Fin_*^{\op} \lrar \Mod_\textbf{k}$. One particularly regards the right $\Gamma$-module $t : \Fin_*^{\op} \lrar \Mod_\textbf{k}$ given by taking $\l m \r$ to $t\l m \r=[\l m \r,\textbf{k}]$ the $\textbf{k}$-module of based maps from $\l m \r$ to $\textbf{k}$ (where $\textbf{k}$ has base point $0_\textbf{k}$). 

\begin{rem}\label{r:tandfcinfty}
	We can show that $t\l m \r \cong \textbf{k}^{\oplus \, m}$. Moreover, if $f : \l m \r \lrar \l n \r$ is a map in $\Fin_*$ then the structure map $ t(f) :   \textbf{k}^{\oplus \, n} \lrar \textbf{k}^{\oplus \, m} $ is given by, for each $i\in \{1,\cdots,n\}$, copying the $i$'th summand to the summands of position $j\in f^{-1}(i)$ when this fiber is nonempty or collapsing that summand to the zero module otherwise. 
\end{rem}

We now fix $T : \Fin_*^{\op} \lrar \Mod_\textbf{k}$ to be a right $\Gamma$-module. According to \cite{Pirashvili}, the \textbf{$k$'th stable cohomotopy group} of $T$ is computed by the formula 
$$ \pi^{k}T \cong \Ext_{\Gamma}^{k}(t,T) \cong \pi_0\Map^{\h}_{\Fun(\Fin_*^{\op},\C(\textbf{k}))}(t,T[k]) $$
where $\C(\textbf{k})$ denotes the category of dg $\textbf{k}$-modules. Denote by $\ovl{T}$ the composed functor
$$ \Fin_*^{\op} \x{T}{\lrar}  \Mod_\textbf{k} \lrar \Sp(\sMod_\textbf{k}) $$
in which the second functor is defined by taking $M\in\Mod_\textbf{k}$ to the suspension spectrum on $M$. Furthermore, consider the adjunction 
$$ \Sp(\mathbb{F}) : \adjunction*{}{\Spectra}{\Sp(\sMod_\textbf{k})}{} : \Sp(\mathbb{U})$$
induced by the free-forgetful adjunction $\mathbb{F} : \adjunction*{}{(\Set_\Delta)_*}{\sMod_\textbf{k} }{} : \mathbb{U}$ between pointed simplicial sets and simplicial $\textbf{k}$-modules where a simplicial $\textbf{k}$-module is considered as a pointed simplicial set with base point $0$. We denote by $\widetilde{T}$ the composed functor 
$$\Fin_*^{\op} \x{\ovl{T}}{\lrar} \Sp(\sMod_\textbf{k}) \x{\Sp(\mathbb{U})}{\xrightarrow{\hspace*{0.8cm}}} \Spectra .$$

\begin{prop}\label{p:stablecohomotopy} The $(k-1)$'th Quillen cohomology group of the operad $\E_\infty$ with coefficients in $\widetilde{T}$ is isomorphic to the $k$'th stable cohomotopy group of $T$:
	$$ \sH^{k-1}_Q (\E_\infty ; \widetilde{T}) \cong \pi^{k}T .$$
	
\end{prop}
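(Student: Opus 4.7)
The plan is to chase the definition of $\pi^{k}T$ through the computation provided by Theorem~\ref{c:main}. Since $\Tw(\E_\infty) \simeq \Fin_*^{\op}$ with $\F_{\E_\infty}(\langle m\rangle) = \mathbb{S}^{\times m}$ (Example~\ref{ex:quillencom}), Theorem~\ref{c:main} immediately identifies
$$ \sH^{k-1}_Q(\E_\infty;\widetilde{T}) \cong \pi_0\Map^{\h}_{\Fun(\Fin_*^{\op},\Spectra)}(\F_{\E_\infty},\widetilde{T}[k]).$$

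Next I would transport this mapping space into an algebraic setting via the Quillen adjunction $\Sp(\mathbb{F}_*) \dashv \Sp(\mathbb{U}_*)$, applied pointwise over $\Fin_*^{\op}$ (as in Observation~\ref{ob:stablecohomotopy}). Since by definition $\widetilde{T} = \Sp(\mathbb{U}_*) \circ \ovl{T}$, and $\ovl{T}$ is pointwise an $\Omega$-spectrum (so that the shift $[k]$ is compatible with $\Sp(\mathbb{U}_*)$ up to weak equivalence), the derived adjunction yields
$$ \Map^{\h}_{\Fun(\Fin_*^{\op},\Spectra)}(\F_{\E_\infty},\widetilde{T}[k]) \simeq \Map^{\h}_{\Fun(\Fin_*^{\op},\Sp(\sMod_\textbf{k}))}\bigl(\LL\Sp(\mathbb{F}_*)\circ\F_{\E_\infty},\;\ovl{T}[k]\bigr).$$
To identify the derived left adjoint, I would compute level-wise: using $\mathbb{S} \simeq \ovl{\Sigma^\infty} S^{0}$ together with stability (finite products agree with finite coproducts in $\Sp(\sMod_\textbf{k})$), one has
$$\Sp(\mathbb{F}_*)(\mathbb{S}^{\times m}) \simeq \ovl{\Sigma^\infty}(\textbf{k})^{\times m} \simeq \ovl{\Sigma^\infty}(\textbf{k}^{\oplus m}).$$
Comparing the copy-and-collapse structure maps of $\F_{\E_\infty}$ (cf.\ \eqref{eq:spheremap}) with those of $t$ spelled out in Remark~\ref{r:tandfcinfty}, this would identify $\LL\Sp(\mathbb{F}_*)\circ\F_{\E_\infty}$ naturally with $\ovl{t} := \ovl{\Sigma^\infty}\circ t$.

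The final step is the standard stable Dold–Kan equivalence $\Sp(\sMod_\textbf{k}) \simeq \C(\textbf{k})$, under which $\ovl{\Sigma^\infty}$ corresponds to the inclusion of $\Mod_\textbf{k}$ into $\C(\textbf{k})$ as chain complexes concentrated in degree $0$. Hence $\ovl{t}$ is identified with $t$ and $\ovl{T}$ with $T$, yielding
$$ \pi_0\Map^{\h}_{\Fun(\Fin_*^{\op},\Sp(\sMod_\textbf{k}))}(\ovl{t},\ovl{T}[k]) \cong \pi_0\Map^{\h}_{\Fun(\Fin_*^{\op},\C(\textbf{k}))}(t,T[k]) \cong \Ext^{k}_\Gamma(t,T) = \pi^{k}T,$$
where the last identification is the one recalled just before the proposition.

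The main technical obstacle I anticipate is the natural identification $\LL\Sp(\mathbb{F}_*)\circ\F_{\E_\infty} \simeq \ovl{t}$: one must verify not only the object-wise equivalences $\Sp(\mathbb{F}_*)(\mathbb{S}^{\times m}) \simeq \ovl{\Sigma^\infty}(\textbf{k}^{\oplus m})$ but also the compatibility of structure maps for every $f : \langle n\rangle \to \langle m\rangle$ in $\Fin_*$, which requires tracking the splittings $\mathbb{S}^{\times m} \simeq \mathbb{S}^{\vee m}$ functorially through $\Sp(\mathbb{F}_*)$ and checking that the resulting fold/project/zero maps really reproduce the structure of $t$. Everything else in the argument is either a direct application of a theorem established earlier in the paper or a standard fact about the stabilization of simplicial modules.
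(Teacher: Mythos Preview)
Your proposal is correct and follows the same overall scaffold as the paper's proof: apply Theorem~\ref{c:main} and Example~\ref{ex:quillencom}, pass through the adjunction $\Sp(\mathbb{F}_*)\dashv\Sp(\mathbb{U}_*)$ over $\Fin_*^{\op}$, identify $\LL\Sp(\mathbb{F}_*)\circ\F_{\E_\infty}$ with $t$ after transporting along the stable Dold--Kan equivalence, and check the structure maps against Remark~\ref{r:tandfcinfty}. The one substantive difference is how you carry out the identification $\LL\Sp(\mathbb{F}_*)(\mathbb{S}^{\times m})\simeq \ovl{\Sigma^\infty}(\textbf{k}^{\oplus m})$. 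The paper computes the prespectrum levelwise as $\coker(\textbf{k}\to\textbf{k}\{\sS^n\}^{\otimes m})$, applies the normalized complex functor (using that its lax monoidal maps are weak equivalences) to get $\bigoplus_{i=1}^m \sC^i_m\,\textbf{k}[in]$, and then evaluates $\hocolim_n\Omega^n$ to see that only the $i=1$ summand survives. Your argument is more conceptual: use stability in $\Spectra$ to rewrite $\mathbb{S}^{\times m}\simeq\mathbb{S}^{\vee m}$, push through the left adjoint $\Sp(\mathbb{F}_*)$ (which preserves coproducts), and use stability again in $\Sp(\sMod_\textbf{k})$. Your route is shorter and avoids the explicit Eilenberg--Zilber/colimit computation; the paper's route has the advantage of making the cross-terms $\sC^i_m\,\textbf{k}[in]$ for $i>1$ visibly disappear, which also makes the verification of the structure maps somewhat more transparent. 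Either way, the functoriality check you flag as the main obstacle is exactly the point the paper leaves to Remark~\ref{r:tandfcinfty} at the end.
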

\begin{proof} By Corollary \ref{cor:main}, we just need to prove the existence of a weak equivalence 
	$$ \Map^{\h}_{\Fun(\Fin_*^{\op} , \, \Spectra)} (\F_{\E_\infty} , \widetilde{T}[k]) \simeq \Map^{\h}_{\Fun(\Fin_*^{\op},\C(\textbf{k}))}(t,T[k]) .$$
	Note first that the right hand side can be replaced equivalently by $\Map^{\h}_{\Fun(\Fin_*^{\op},\C_{\geqslant0}(\textbf{k}))}(t,T[k])$ where $\C_{\geqslant0}(\textbf{k})$ denotes the category of connective dg $\textbf{k}$-modules. On the other hand, by the Quillen adjunction $\Sp(\mathbb{F})\dashv \Sp(\mathbb{U})$, we have a weak equivalence
	$$ \Map^{\h}_{\Fun(\Fin_*^{\op} , \, \Spectra)} (\F_{\E_\infty} , \widetilde{T}[k]) \simeq \Map^{\h}_{\Fun(\Fin_*^{\op} , \, \Sp(\sMod_\textbf{k}))}(\Sp(\mathbb{F})\circ \F_{\E_\infty} , \ovl{T}[k] ) .$$
	Here we note that $\Sp(\mathbb{F})\circ\F_{\E_\infty}$ has already the right type, by Observation \ref{ob:stablecohomotopy}. It therefore suffices to prove the existence of a weak equivalence of the form
	\begin{equation}\label{eq:stablecohomotopy}
		\Map^{\h}_{\Fun(\Fin_*^{\op},\C_{\geqslant0}(\textbf{k}))}(t,T[k]) \simeq  \Map^{\h}_{\Fun(\Fin_*^{\op} , \, \Sp(\sMod_\textbf{k}))}(\Sp(\mathbb{F})\circ \F_{\E_\infty} , \ovl{T}[k] ).
	\end{equation}
	Consider the composed adjunction 
	$$ \adjunction*{}{\C_{\geqslant0}(\textbf{k})}{\Sp(\C_{\geqslant0}(\textbf{k})) }{} \adjunction*{}{}{\Sp(\sMod_\textbf{k})}{} $$ 
	in which the second adjunction is induced by the Dold-Kan correspondence. Observe now that the functor $\Sp(\mathbb{F})\circ \F_{\E_\infty}$ is weakly equivalent to the composed functor $$\Fin_*^{\op} \x{t}{\lrar} \C_{\geqslant0}(\textbf{k}) \lrar \Sp(\sMod_\textbf{k}).$$ On the other hand, it {is} not hard to show that the derived image of $\ovl{T}[k]$ in $\Fun(\Fin_*^{\op},\C_{\geqslant0}(\textbf{k}))$ is weakly equivalent to $T[k]$. Thus, by adjunction, we deduce the existence of the weak equivalence \eqref{eq:stablecohomotopy}.
\end{proof}

\newpage

\bibliographystyle{amsplain}

\end{document}